\theoremstyle{plain}
\newtheorem{theorem}{Theorem}[section]
\newtheorem{lemma}[theorem]{Lemma}
\newtheorem{proposition}[theorem]{Proposition}
\newtheorem{corollary}[theorem]{Corollary}
\theoremstyle{definition}
\newtheorem{definition}[theorem]{Definition}
\newtheorem{remark}[theorem]{Remark}
\theoremstyle{remark} 
\newcommand{\N}{\mathbb N}
\newcommand{\R}{\mathbb R}
\newcommand{\C}{\mathbb C}
\newcommand{\supp}{{\rm supp}}
\newcommand{\dd}{\mathrm{d}}
\newcommand{\dx}{\, \dd x}
\newcommand{\e}{\varepsilon}
\renewcommand{\o}{\Omega}
\newcommand{\f}{\mathcal{F}}
\newcommand{\average}{{\mathchoice {\kern1ex\vcenter{\hrule height.4pt
width 6pt
depth0pt} \kern-9.7pt} {\kern1ex\vcenter{\hrule height.4pt width 4.3pt
depth0pt}
\kern-7pt} {} {} }}
\newcommand{\blue}[1]{{\color{black} {#1}}}
\newcommand{\dmo}{\mathcal{M}_\mathrm{d}(\Omega)}
\newcommand{\Pd}{\mathcal{P}_\mathrm{d}(\Omega)}
\newcommand{\dmr}{\mathcal{P}_\mathrm{d}(\R^+)}
\newcommand{\pr}{\mathcal{P}(\R^+)}
\renewcommand{\e}{\mathcal{E}}
\newcommand{\ca}{\mathbbm{1}}
\title[Asymptotic optimality of the triangular lattice]{Asymptotic optimality of the triangular lattice for a class of optimal location problems}
\author{David P.~Bourne, Riccardo Cristoferi}
\address{Maxwell Institute for Mathematical Sciences and Department of Mathematics, Heriot-Watt University, UK}
\email {d.bourne@hw.ac.uk} 
\address{Radboud University \\ Department of Mathematics \\ Nijmegen \\ Netherlands}
\email{riccardo.cristoferi@ru.nl}
\keywords{crystallization, quantization, optimal partition, optimal sampling, Honeycomb conjecture, semi-discrete optimal transport}
\subjclass[2010]{}
\date{\today}
\begin{document}

\maketitle

\begin{abstract}
We prove an asymptotic crystallization result in two dimensions for a class of nonlocal particle systems.
To be precise, we consider the best approximation with respect to the 2-Wasserstein metric of a given absolutely continuous probability measure $f  \dd x$ by a discrete probability measure $\sum_i m_i \delta_{z_i}$, subject to a constraint on the particle sizes $m_i$.
The locations $z_i$ of the particles, their sizes $m_i$, and the number of particles are all unknowns of the problem. We study a one-parameter family of constraints. 
This is an example of an optimal location problem (or an optimal sampling or quantization problem) and it has applications in economics, signal compression, and numerical integration.
We establish the asymptotic minimum value of the (rescaled) approximation error as the number of particles goes to infinity. In particular, we show that for the constrained best approximation of the Lebesgue measure by a discrete measure, the discrete measure whose support is a triangular lattice is asymptotically optimal. In addition, we prove an analogous result for a problem where the constraint is replaced by a penalization. These results can also be viewed as the asymptotic optimality of the hexagonal tiling for an optimal partitioning problem.
They generalise the crystallization result of Bourne, Peletier and Theil (Communications in Mathematical Physics, 2014) from a single particle system to a class of particle systems, and prove a case of a conjecture by Bouchitt\'{e}, Jimenez and Mahadevan (Journal de Math\'ematiques Pures et Appliqu\'ees, 2011).
\blue{Finally, we prove a crystallization result which states that optimal configurations with energy close to that of a triangular lattice are geometrically close to a triangular lattice.}
\end{abstract}


\section{Introduction}

Consider the problem of approximating an absolutely continuous probability measure by a discrete probability measure.
To quantify the quality of the approximation, we measure the approximation error in the 2-Wasserstein metric.
Let $\Omega\subset\R^d$ be the closure of an open and bounded set, and let
\begin{equation}
\label{eq:f_ass}
f\in L^1(\Omega), \quad \quad f\geq c > 0, \quad \quad \int_\Omega f(x) \, \dd x=1,
\end{equation}
 be the density of the absolutely continuous probability measure.
We approximate $f  \dd x$ by a discrete measure from the set
\[
\Pd := \left\{ \mu = \sum_{i=1}^{N_\mu} m_i \delta_{z_i} : N_\mu \in \mathbb{N}, \, m_i > 0, \,
	\sum_{i=1}^{N_\mu} m_i = 1, \, z_i \in \Omega, \, z_i \ne z_j \textrm{ if } i \ne j \right\}.
\]
For $\mu \in \Pd$, we define $N_\mu := \# \mathrm{supp}(\mu)$, which is not fixed a priori.
For $p\geq1$, the $p$-Wasserstein distance (see \cite{S,VILL1}) between $f \dd x$ and $\mu \in \Pd$ is
\newpage
\begin{multline}
\label{eq:Wassdist_p}
W_p(f,\mu) \:= \\ \inf\left\{ \int_\Omega |x-T(x)|^p f(x)\,\dd x \,:\, 
	T:\Omega\to \{z_i\}_{i=1}^{N_\mu} \text{ is Borel},\, \int_{T^{-1}(\{z_i\}) }f(x) \,\dd x = m_i \,\, \forall \, i \right\}^{\frac 1p}.
\end{multline}
Observe that
\[
\inf\{ W_p(f,\mu): \mu \in \Pd \}=0
\]
since there exists a sequence of discrete measures $\mu_n$ converging weakly$^*$ to $f \dd x$, with $N_{\mu_n} \to \infty$ as $n \to \infty$. 
On the other hand, for each $N\in\N$, $\inf\{ W_p(f,\mu): \mu \in \Pd, \, N_\mu\leq N \} > 0$. Therefore the problem $\inf\{ W_p(f,\mu): \mu \in \Pd \}$ has no solution.
To obtain a minimizer we must constrain the number of atoms $N_\mu$, either explicitly (with a constraint) or implicitly (with a penalization).
Given an \emph{entropy} $H:\Pd\to[0,\infty]$ (defined below) we consider the \emph{constrained} optimal location problem
\begin{equation}\label{eq:minpbconstr}
\inf\left\{ W^p_p(f,\mu) : \mu\in\Pd,\,H(\mu)\leq L \right\} =: \mathcal{E}^{p,d}_H(L),
\end{equation}
where $L>0$, and the \emph{penalized} optimal location problem
\begin{equation}\label{eq:minpbpenal}
\inf\left\{ W^p_p(f,\mu) + \delta H(\mu) : \mu\in \Pd \right\} =: \mathcal{F}^{p,d}_H(\delta),
\end{equation}
where $\delta>0$.
If $H$ satisfies $H(\mu)\to \infty$ as $N_\mu \to \infty$,
then minimising sequences for problems \eqref{eq:minpbconstr} and \eqref{eq:minpbpenal} have a uniformly bounded number of atoms.
If in addition $H$ is lower semi-continuous with respect to the weak$^*$ convergence of measures, then problems \eqref{eq:minpbconstr} and \eqref{eq:minpbpenal} admit a solution.

When $L$ or $\delta$ are fixed, the geometry of the set $\Omega$ has a strong effect on optimal particle arrangements, and it is very difficult to characterise minimising configurations.
As $L$ increases, or $\delta$ decreases, the optimal number of particles $N_\mu$ increases, and it is believed that
optimal configurations locally form regular, periodic patterns; see the numerical evidence in Figures \ref{fig:numerics} and \ref{fig:numerics2}.
This phenomenon is known as \emph{crystallization} (see Section \ref{sec:other} for more on this).
The specific geometry of these patterns depends on the choice of $p$ in the Wasserstein distance, the choice of $H$, and the dimension $d$.
In this paper we will study the crystallization problem by taking the limits $L\to\infty$ and $\delta\to0$.

\begin{figure}
\centering
\begin{subfigure}[t]{0.31\linewidth}
	\centering
	\subcaption*{\footnotesize $\alpha=0.1$, $\delta=2.243 \times 10^{-3}$}
	\includegraphics[width=\linewidth]{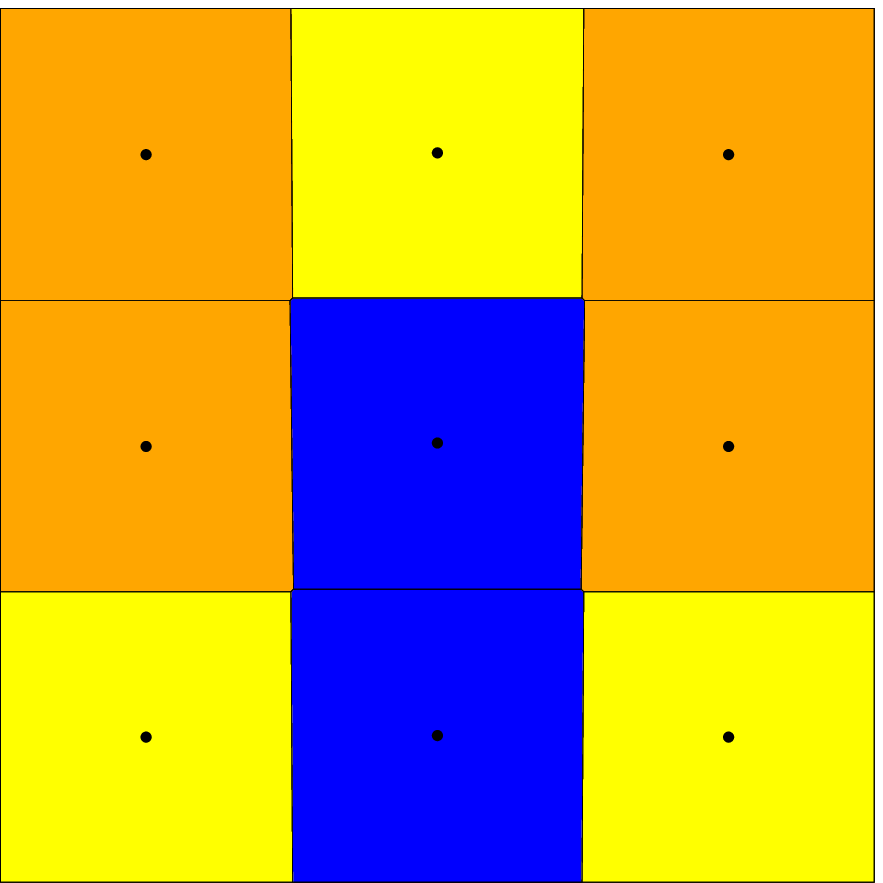}
\end{subfigure}
\begin{subfigure}[t]{0.31\linewidth}
	\centering
	\subcaption*{\footnotesize $\alpha=0.1$, $\delta=1.303 \times 10^{-4}$}
	\includegraphics[width=\linewidth]{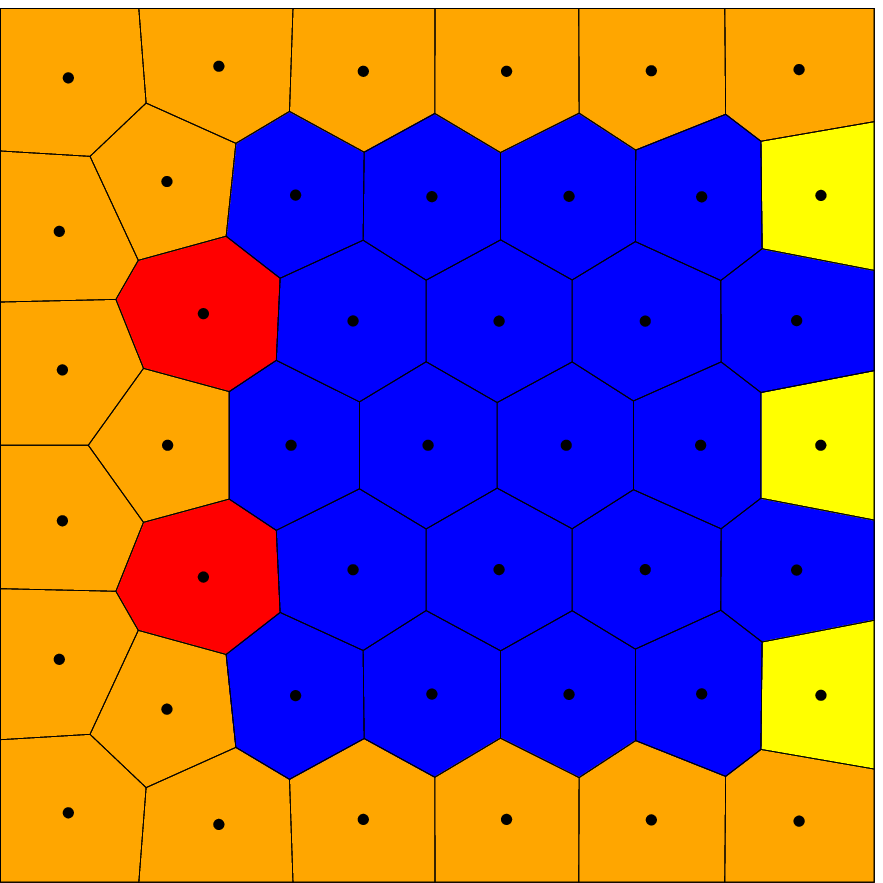}
\end{subfigure}
\begin{subfigure}[t]{0.31\linewidth}
	\centering
	\subcaption*{\footnotesize $\alpha=0.1$, $\delta=7.567 \times 10^{-6}$}
	\includegraphics[width=\linewidth]{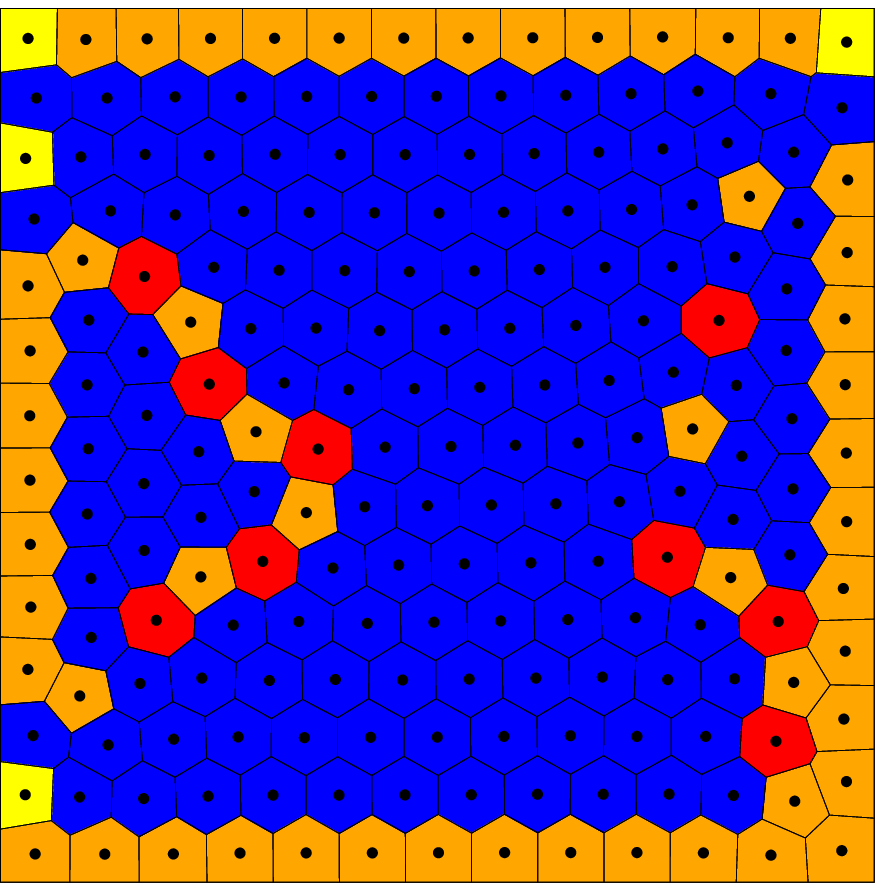}
	
\bigskip	
	
\end{subfigure}
\begin{subfigure}[t]{0.31\linewidth}
	\centering
	\subcaption*{\footnotesize $\alpha=0.583$, $\delta=1.472 \times 10^{-2}$}
	\includegraphics[width=\linewidth]{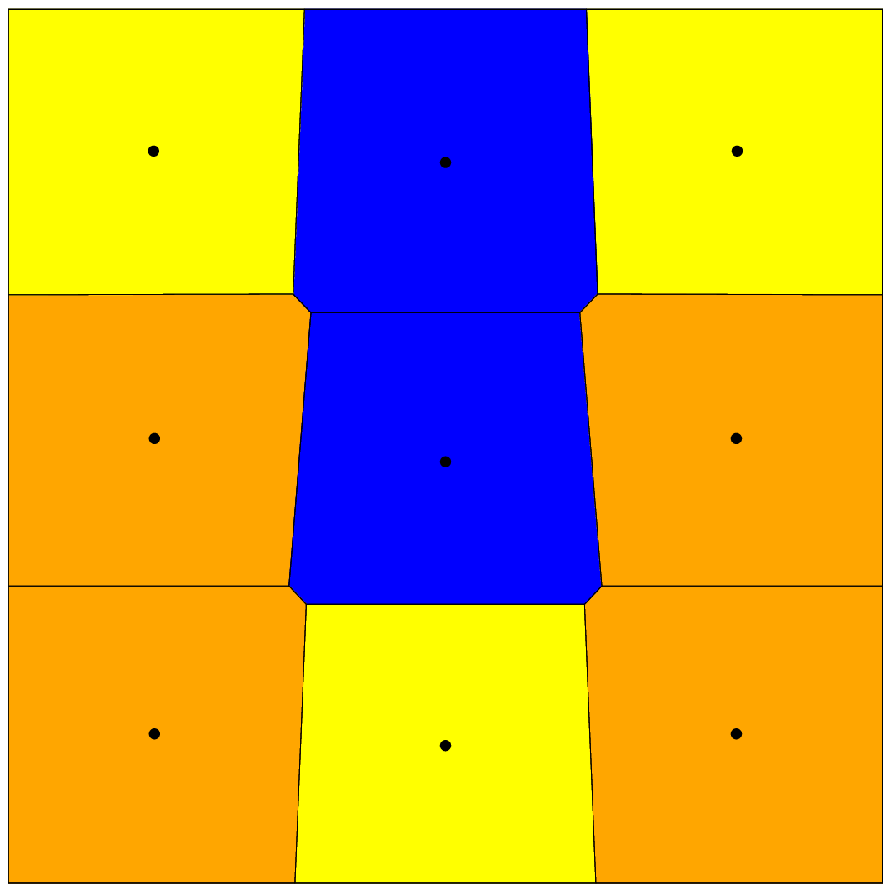}
\end{subfigure}
\begin{subfigure}[t]{0.31\linewidth}
	\centering
	\subcaption*{\footnotesize $\alpha=0.583$, $\delta=1.763 \times 10^{-3}$}
	\includegraphics[width=\linewidth]{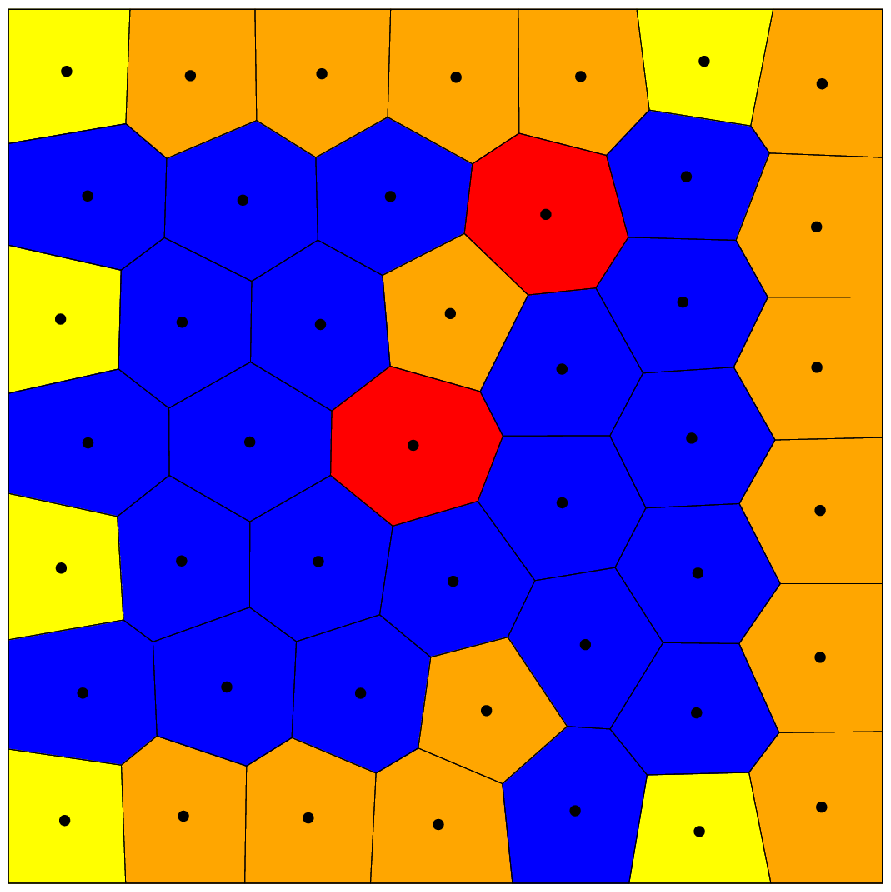}
	
\end{subfigure}
\begin{subfigure}[t]{0.31\linewidth}
	\centering
	\subcaption*{\footnotesize $\alpha=0.583$, $\delta=2.111 \times 10^{-4}$}
	\includegraphics[width=\linewidth]{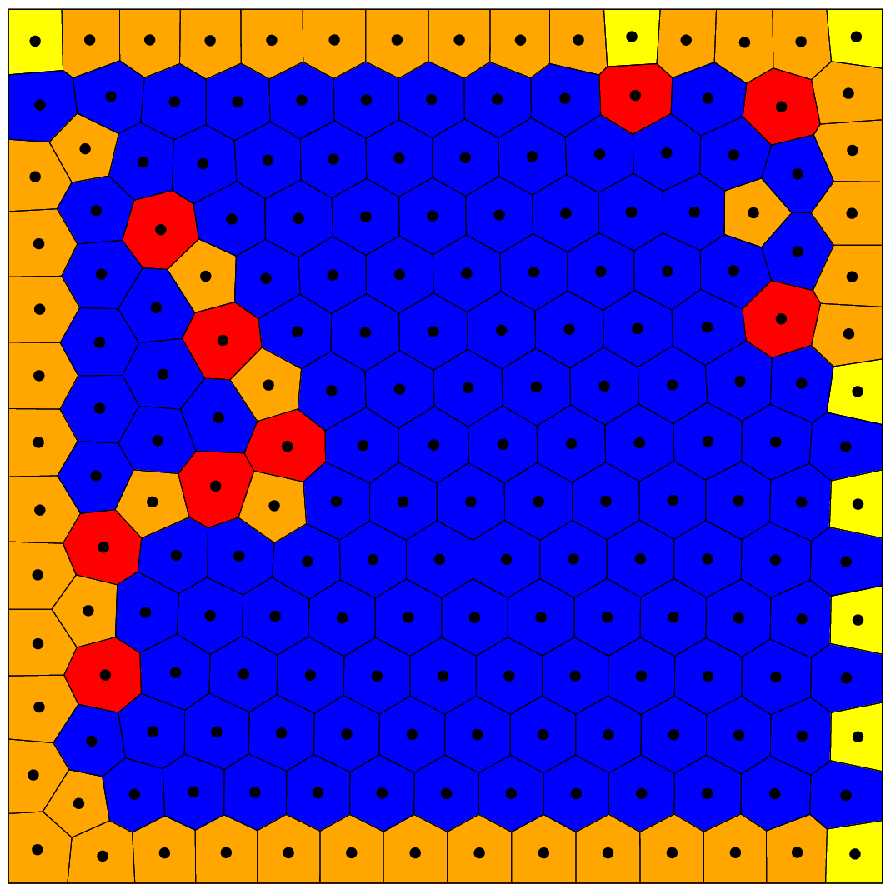}
\end{subfigure}

\bigskip

\begin{subfigure}[t]{0.31\linewidth}
	\centering
	\subcaption*{\footnotesize $\alpha=0.9$, $\delta=1.274 \times 10^{-1}$}
	\includegraphics[width=\linewidth]{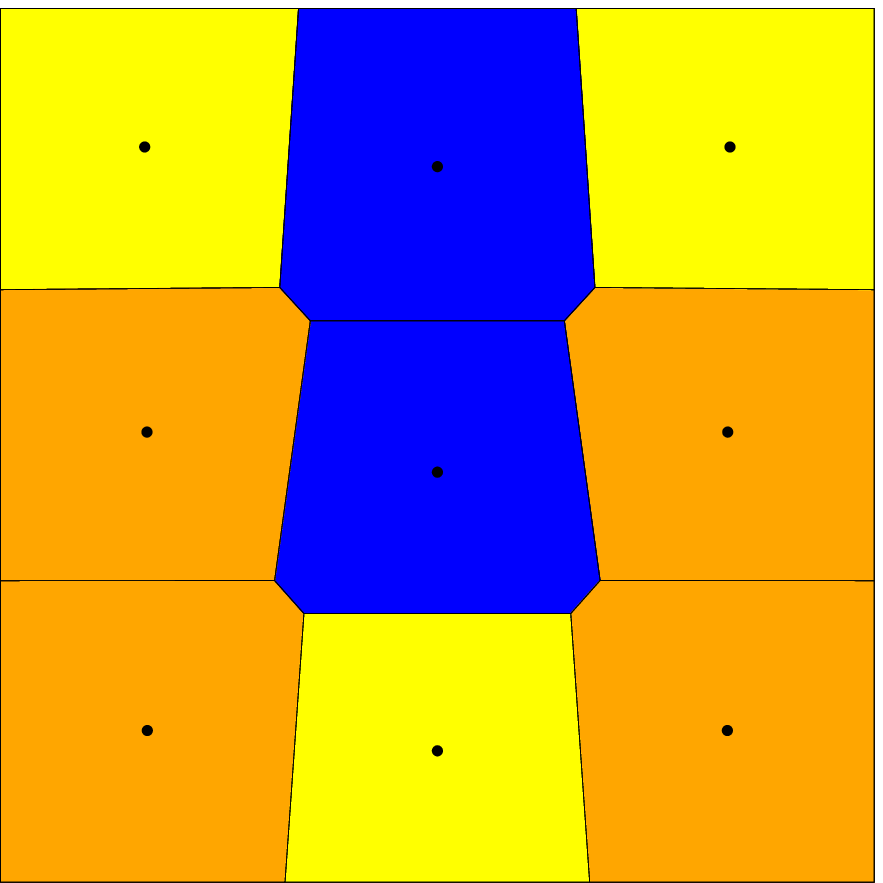}
\end{subfigure}
\begin{subfigure}[t]{0.31\linewidth}
	\centering
	\subcaption*{\footnotesize $\alpha=0.9$, $\delta=2.452 \times 10^{-2}$}
	\includegraphics[width=\linewidth]{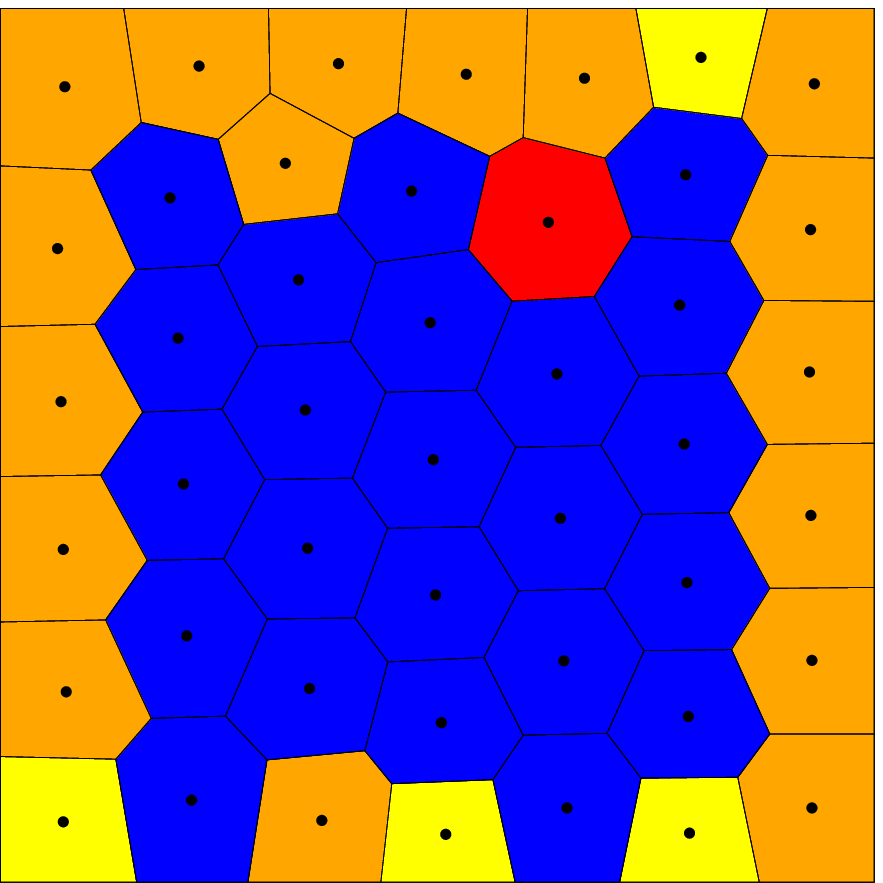}
\end{subfigure}
\begin{subfigure}[t]{0.31\linewidth}
	\centering
	\subcaption*{\footnotesize $\alpha=0.9$, $\delta=4.721 \times 10^{-3}$}
	\includegraphics[width=\linewidth]{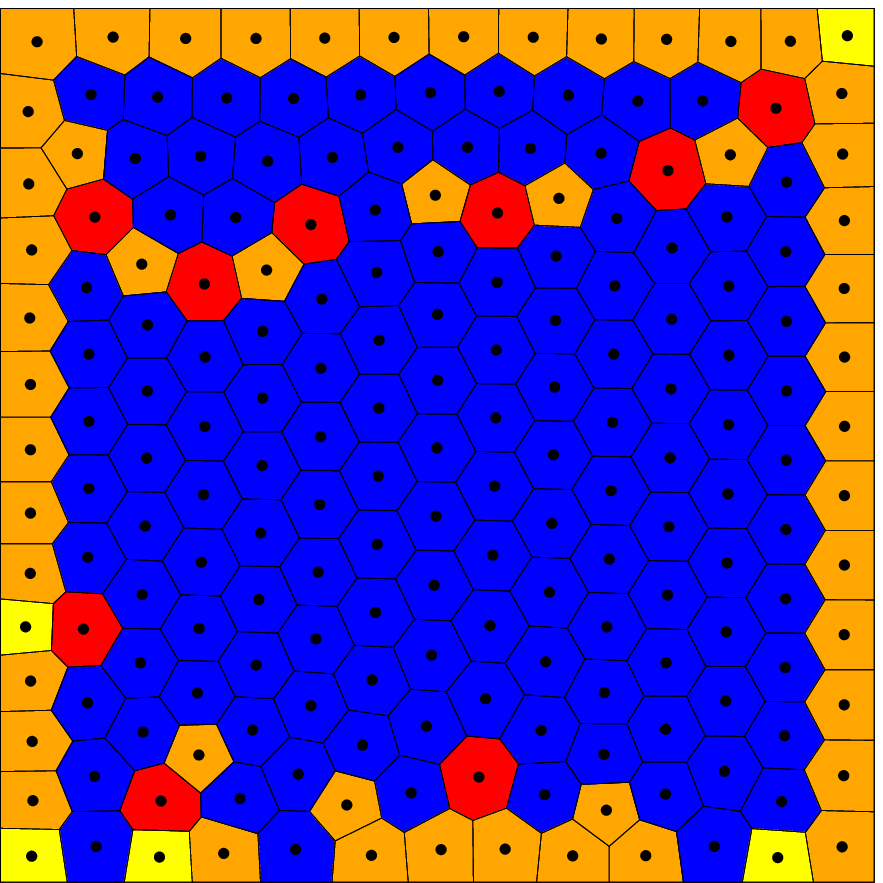}
\end{subfigure}	
\caption{\label{fig:numerics}Approximate local minimizers for the penalized problem \eqref{eq:minpbpenal} for the case $p=d=2$, $\Omega=[0,1]^2$, $f=\ca_\Omega$, $H_\alpha(\mu)=\sum_i m_i^\alpha$, for several values of $\alpha$ and $\delta$. The value of $\alpha$ is constant in each row, and the value of $\delta$ decreases from left to right in each row.  
	The black dots are the particles $z_i$,  where $\mu = \sum_{i=1}^{N_\mu} m_i \delta_{z_i}$ is an approximate local minimizer of \eqref{eq:minpbpenal}.
The polygons are the sets $T^{-1}(\{z_i\})$, where $T$ is the optimal transport map in \eqref{eq:Wassdist_p}. The particles $z_i$ are located at the centroids of the polygons. The masses $m_i$ are the areas of the polygons. 
The colours correspond to the number of sides: squares are yellow, pentagons are orange, hexagons are blue, and heptagons are red. For each value of $\alpha$, a hexagonal tiling (with defects) starts to emerge as $\delta$ is decreased.
This figure, Figure \ref{fig:numerics2} and Table \ref{Table1} were made by Steven Roper using the generalized Lloyd algorithm from \cite{BouRop}. To search for a global minimizer in the highly non-convex energy landscape, the algorithm was ran many times using different, randomly generated initial conditions. The values of $\delta$ were chosen by first choosing a target value of $N_\mu$ and then using the heuristic \eqref{eq:Nheuristic} to generate the corresponding $\delta$. Better results, without defects, can be achieved by taking the initial particle locations to be a perturbation of a triangular lattice; see Figure \ref{fig:numerics2}.}
\end{figure}

\begin{figure}
\centering
\begin{subfigure}[t]{0.45\linewidth}
\centering
\subcaption*{$\alpha=0.583$, $\delta=1.763 \times 10^{-3}$}
\includegraphics[width=\linewidth]{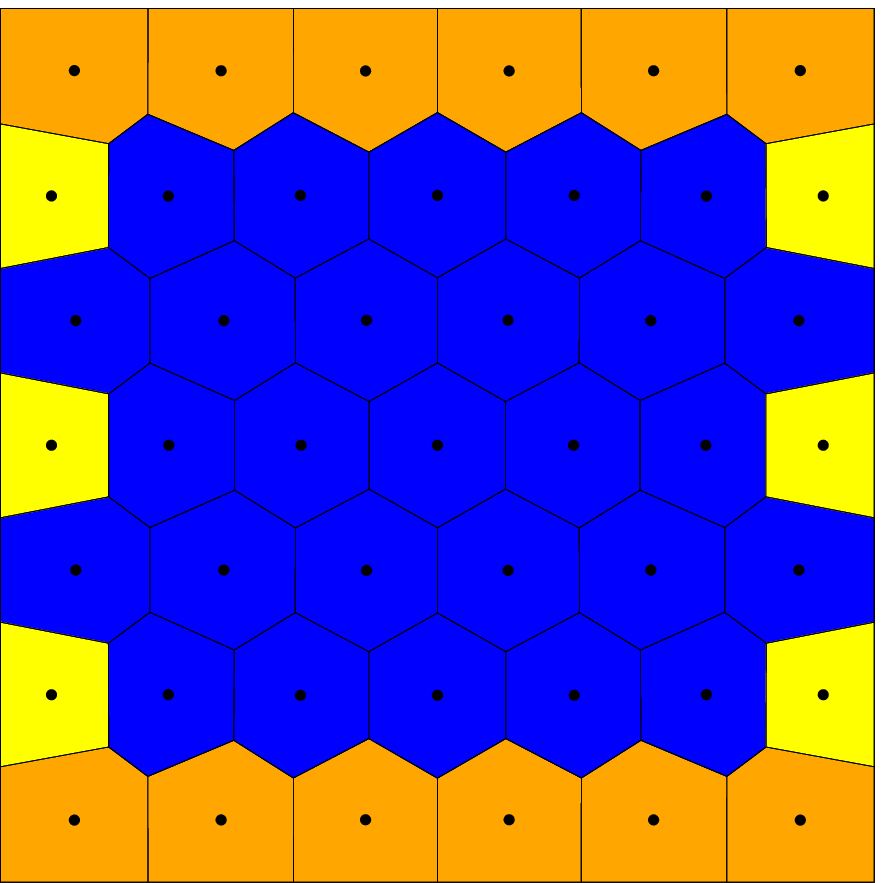}
\end{subfigure}
\begin{subfigure}[t]{0.45\linewidth}
\centering
\subcaption*{$\alpha=0.583$, $\delta=2.111 \times 10^{-4}$}
\includegraphics[width=\linewidth]{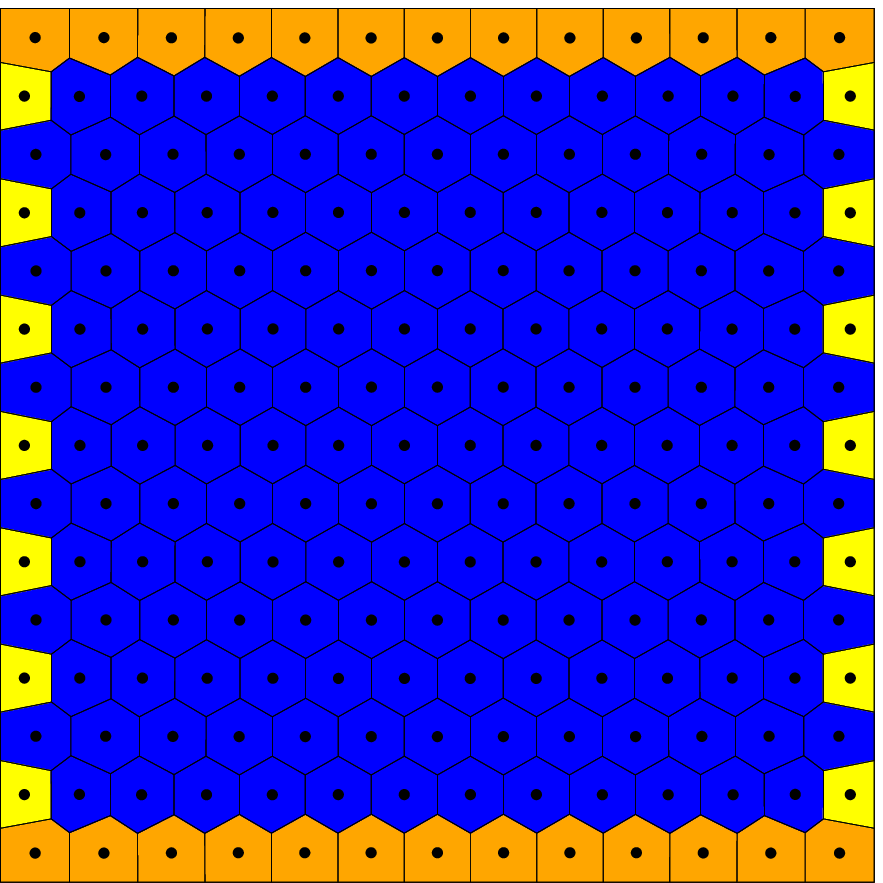}
\end{subfigure}
\caption{\label{fig:numerics2}Approximate global minimizers for the penalized problem \eqref{eq:minpbpenal} for the case $p=d=2$, $\Omega=[0,1]^2$, $f=\ca_\Omega$, $H_\alpha(\mu)=\sum_i m_i^\alpha$, $\alpha=0.583$ for the values of $\delta$ used in Figure \ref{fig:numerics2} (middle row, middle and right columns). See the caption to Figure \ref{fig:numerics} for a description of the polygons and the colour scheme. These configurations have lower energy ($W_2^2(f,\mu)+\delta H_\alpha(\mu)$) than the corresponding configurations shown in Figure \ref{fig:numerics}, and they do not have defects. This figure was generated by Steven Roper using the generalized Lloyd algorithm from \cite{BouRop} and by taking the initial conditions to be perturbations of a triangular lattice. In Figure \ref{fig:numerics} (middle row, right column) there are $N_\mu=200$ particles whereas in this figure (right) there are $N_\mu=202$ particles; algorithm \cite{BouRop} attempts to find the optimum number of particles.}
\end{figure}

For the entropy
\begin{equation}
\label{eq:alpha=0}
H(\mu)=\#\mathrm{supp}(\mu)
\end{equation}
Zador's Theorem for the asymptotic quantization error states that
\begin{equation}\label{eq:asympt}
\lim_{L\to\infty} \left[ L^{\frac{p}{d}} \mathcal{E}^{p,d}_H(L) \right]
	= C_{p,d} \left( \int_\Omega f(x)^{\frac{d}{d+p}} \,\dd x \right)^{\frac{d+p}{d}}
\end{equation}
for some positive constant $C_{p,d}$ that is independent of the density $f$.
See for example \cite{BucWis,GL,Gr04,Za82} and see \cite{Gru_Riem,Iac,Klo} for the more general case where $\Omega$ is a Riemannian manifold.
The constant $C_{p,d}$ is known in two dimensions:
\begin{equation}
\label{eq:Cp20}
C_{p,2} = \int_{P_6} |x|^p \,\dd x,
\end{equation}
where $P_6$ is a regular hexagon of unit area centred at the origin. This follows from Fejes T\'oth's Theorem on Sums of Moments (see \cite{FEJ,Gru_Fej}), which has also been proved in various levels of generality by several other authors including \cite{BollobasStern72,Fej_Stability,BolMor,New}.

The geometric interpretation of \eqref{eq:asympt} and \eqref{eq:Cp20} is the following: In two dimensions it is asymptotically optimal to arrange the atoms of the discrete measure at the centres of regular hexagons, i.e., on a regular triangular lattice, where the areas of the hexagons depend on the density $f$. Locally, where $f$ is approximately constant, these hexagons form a regular honeycomb. 
\blue{By the \emph{regular triangular lattice} we mean the set $\mathbb{Z}(1,0)\oplus \mathbb{Z}(1/2,\sqrt{3}/2)$ up to dilation and isometry.}
See Remark \ref{rem:rescaling} below for more on this geometric interpretation.

Formula \eqref{eq:asympt} was extended to more general entropies by Bouchitt\'{e}, Jimenez and Mahadevan in \cite{BouJimMaha}.
Their class of entropies includes the case
\begin{equation}\label{eq:entropies}
H_\alpha(\mu)=\sum_{i=1}^{N_\mu} m_i^\alpha,
\end{equation}
\blue{where $\alpha\in(-\infty,1)$.}
This reduces to the entropy \eqref{eq:alpha=0} when $\alpha=0$.
Bouchitt\'{e}, Jimenez and Mahadevan \cite[Proposition 3.11(i)]{BouJimMaha} proved that
\begin{equation}
\label{eq:Prop3.11(i)}
\lim_{L\to\infty} \left[ L^{\frac{p}{d(1-\alpha)}} \mathcal{E}^{p,d}_{H_\alpha}(L) \right]
	= C_{p,d}(\alpha) \left( \int_\Omega f(x)^{\frac{d(1-\alpha)+\alpha p}{d(1-\alpha)+p}} \,\dd x
		\right)^{1+\frac{p}{d(1-\alpha)}}
\end{equation}
for some positive constant $C_{p,d}(\alpha)$.
Moreover, they conjectured \cite[Section 3.6 (ii)]{BouJimMaha} that
\[
C_{p,d}(\alpha) \textrm{ is independent of } \alpha. 
\]
If this conjecture is true, then by \eqref{eq:Cp20}
\[
C_{p,2}(\alpha) = C_{p,2}(0) = \int_{P_6} |x|^p \,\dd x.
\]
In particular, the conjecture for the case $p=2$, $d=2$ is
\begin{equation}
\label{eq:conjecture}
C_{2,2}(\alpha)=\int_{P_6} |x|^2 \,\dd x=\frac{5}{18\sqrt{3}} =: c_6
\end{equation}
\blue{for all $\alpha \in (-\infty,1)$. It is known that $C_{2,2}(\alpha)=c_6$ for all $\alpha \in (-\infty,0]$ (see \cite[Section 3.6]{BouJimMaha})
and so it remains to establish the conjecture for the case $\alpha \in (0,1)$.}
The conjecture would mean that in two dimensions a discrete measure supported on a regular triangular lattice gives asymptotically the best constrained approximation of the Lebesgue measure (again, see Remark \ref{rem:rescaling} below for this geometric interpretation).

\subsection{Main results}
In this paper we prove conjecture \eqref{eq:conjecture} 
 for all
$\alpha\in(-\infty,\overline{\alpha}]$, where $\overline{\alpha}=0.583$; see Theorem \ref{thm:constr}. The conjecture for $\alpha \in (\overline{\alpha},1)$ remains open, although we suggest a direction for proving it in Theorem \ref{thm:alpha01}, where we prove it under an additional assumption. 
In Theorem \ref{thm:pen} we prove an analogous asymptotic quantization formula for the penalized optimal location problem \eqref{eq:minpbpenal} \blue{for all $\alpha\in(-\infty,\overline{\alpha}]$.} This generalises the crystallization result of \cite{BouPelTheil}, where Theorem \ref{thm:pen} was proved for the special case $\alpha=0.5$, $f=1$.
\blue{Moreover, for the case $f= 1$, we prove that minimal configurations are `asymptotically approximately' a triangular lattice; see Theorem \ref{thm:stability}. To be more precise, we prove that, as $\delta\to0$, rescaled minimal configurations for the penalized quantization problem are quantitatively close to a triangular lattice. This result will be proved for the case $\alpha=\overline{\alpha}$. The proof can be easily modified for any $\alpha\leq\overline{\alpha}$.
}

Define the constrained optimal quantization error by
\begin{equation}
\label{eq:m_c}
\mathrm{m_c}(\alpha,L):= \mathcal{E}^{2,2}_{H_\alpha}(L) = \inf\left\{ W^2_2(f,\mu) : \mu\in\Pd,\,\, \sum_{i=1}^{N_\mu} m_i^\alpha\leq L \right\},
\end{equation}
and the penalized optimal quantization error by
\begin{equation}
\label{eq:m_p}
\mathrm{m_p}(\alpha,\delta):= \mathcal{F}^{2,2}_{H_\alpha}(\delta) = \inf\left\{ W^2_2\left(\,f,\mu\,\right) + \delta \sum_{i=1}^{N_\mu} m_i^\alpha : \mu\in\Pd \right\}.
\end{equation}
Since the Wasserstein distance on the compact set $\Omega$ metrizes the tight convergence of probability measures, and the map $\mu \mapsto \sum_i m_i^\alpha$ is lower semi-continuous with respect to this convergence \cite[Lemma 7.11]{S}, both infima above are attained.
Our main results are the following.

\begin{theorem}[Asymptotic crystallization for the penalized optimal location problem]
\label{thm:pen}
Let \blue{$\alpha\in(-\infty,\overline{\alpha}]$}, where $\overline{\alpha}:=0.583$.
Let $\Omega\subset\R^2$ be the closure of an open and bounded set.
Assume that $f:\Omega \to [0,\infty)$ is lower semi-continuous with $f\geq c>0$ and $\int_\Omega f \, \dd x = 1$.
Then
\begin{equation}\label{eq:limitpen}
\lim_{\delta\to0}\left[ \left(\frac{c_6}{\delta(1-\alpha)}\right)^{\frac{1}{2-\alpha}}
	\mathrm{m_p}(\alpha,\delta) \right]
		=\frac{2-\alpha}{1-\alpha} \, c_6\int_\o f(x)^{\frac{1}{2-\alpha}} \,\dd x.
\end{equation}
\end{theorem}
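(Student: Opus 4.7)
The plan is to deduce Theorem \ref{thm:pen} from Theorem \ref{thm:constr} via the elementary constrained/penalized identity
\[
\mathrm{m_p}(\alpha,\delta) \;=\; \inf_{L>0}\bigl[\mathrm{m_c}(\alpha,L) + \delta L\bigr].
\]
The ``$\leq$'' direction follows by inserting any near-optimizer of $\mathrm{m_c}(\alpha,L)$ (which automatically satisfies $H_\alpha\leq L$) into the penalized functional; the ``$\geq$'' direction comes from the observation that every admissible $\mu$ obeys $W_2^2(f,\mu)\geq \mathrm{m_c}(\alpha,H_\alpha(\mu))$, so the penalized value at $\mu$ dominates $\mathrm{m_c}(\alpha,L)+\delta L$ with $L=H_\alpha(\mu)$. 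Writing $I_\alpha:=\int_\Omega f^{1/(2-\alpha)}\,\dd x$, Theorem \ref{thm:constr} (with $p=d=2$) gives
\[
\mathrm{m_c}(\alpha,L) \;=\; c_6\, I_\alpha^{\frac{2-\alpha}{1-\alpha}}\, L^{-\frac{1}{1-\alpha}}\bigl(1+o(1)\bigr) \qquad\text{as } L\to\infty,
\]
so the right-hand side of the identity is asymptotically the minimum of the one-variable profile
\[
g(L) \;:=\; c_6\, I_\alpha^{\frac{2-\alpha}{1-\alpha}} L^{-\frac{1}{1-\alpha}} + \delta L,
\]
attained at $L^\star_\delta = \bigl(c_6 I_\alpha^{(2-\alpha)/(1-\alpha)}/(\delta(1-\alpha))\bigr)^{(1-\alpha)/(2-\alpha)}\to\infty$ as $\delta\to 0$.

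For the upper bound I would substitute $L=L^\star_\delta$ into the identity; since $L^\star_\delta\to\infty$, Theorem \ref{thm:constr} yields $\mathrm{m_c}(\alpha,L^\star_\delta) + \delta L^\star_\delta = g(L^\star_\delta)(1+o(1))$. A direct calculation of $g(L^\star_\delta)$ followed by multiplication by $\bigl(c_6/(\delta(1-\alpha))\bigr)^{1/(2-\alpha)}$ produces precisely $\frac{2-\alpha}{1-\alpha}\,c_6\, I_\alpha$, the right-hand side of \eqref{eq:limitpen}.

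For the lower bound, fix $\varepsilon>0$. By Theorem \ref{thm:constr} there is $L_0$ such that $\mathrm{m_c}(\alpha,L)\geq(1-\varepsilon)c_6 I_\alpha^{(2-\alpha)/(1-\alpha)} L^{-1/(1-\alpha)}$ for all $L\geq L_0$. Since $\mathrm{m_c}(\alpha,\cdot)$ is non-increasing, on the complementary range $L\in(0,L_0]$ we have $\mathrm{m_c}(\alpha,L)+\delta L\geq \mathrm{m_c}(\alpha,L_0)>0$, a positive quantity independent of $\delta$; whereas the minimum over $L\geq L_0$ of the $\varepsilon$-perturbed profile $g_\varepsilon(L):=(1-\varepsilon)c_6 I_\alpha^{(2-\alpha)/(1-\alpha)}L^{-1/(1-\alpha)}+\delta L$ is of order $\delta^{1/(2-\alpha)}\to 0$. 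Hence for all sufficiently small $\delta$ the infimum over $L>0$ coincides with $\min_{L\geq L_0} g_\varepsilon(L)$, and the same computation as in the upper bound (with $c_6$ replaced by $(1-\varepsilon)c_6$) yields the lower bound $(1-\varepsilon)^{(1-\alpha)/(2-\alpha)}\cdot\frac{2-\alpha}{1-\alpha}\,c_6\, I_\alpha$ after the rescaling. Letting $\varepsilon\to 0$ closes the argument.

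The main obstacle is upstream, inside Theorem \ref{thm:constr}: identifying $c_6$ as the sharp asymptotic constant (the crystallization content) is the geometric heart of the paper, and is exactly what limits the range of admissible $\alpha$ to $(-\infty,\overline{\alpha}]$. The deduction above is a purely variational manoeuvre; its only subtlety is the two-regime splitting used to lift the asymptotic-only bound of Theorem \ref{thm:constr} to the $\inf_{L>0}$ appearing in the constrained/penalized identity.
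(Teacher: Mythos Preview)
Your constrained--penalized identity $\mathrm{m_p}(\alpha,\delta)=\inf_{L>0}[\mathrm{m_c}(\alpha,L)+\delta L]$ is correct, and your asymptotic optimisation over $L$ (including the two-regime splitting for the lower bound) is carried out cleanly. As a deduction of Theorem~\ref{thm:pen} from Theorem~\ref{thm:constr}, the argument is valid.

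The problem is that this reverses the logical flow of the paper. Here Theorem~\ref{thm:constr} is \emph{not} proved independently: it is obtained in Section~\ref{sec:proofconstrained} as a consequence of Theorem~\ref{thm:pen}, via the inequality $C_{\mathrm c}(\alpha)\geq C_{\mathrm p}(\alpha)-\tfrac{c_6}{1-\alpha}$ (Lemma~\ref{ref:RelCP}), which is essentially your identity run in the other direction. The crystallization content---the lower bound $C_{\mathrm p}(\overline{\alpha})\geq \tfrac{2-\overline{\alpha}}{1-\overline{\alpha}}\,c_6$---is established directly for the \emph{penalized} problem (Theorem~\ref{thm:LBCp}), because the per-particle energy $g_\alpha(m,n)=\tfrac{c_6}{1-\alpha}m^\alpha+c_n m^2$ is the natural object for the convexity inequality (Lemma~\ref{lem:inequality}) and the mass lower bound (Lemma~\ref{lem:lowerboundmass}). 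So while your ``upstream obstacle'' diagnosis is right, that obstacle is resolved in this paper only through the penalized formulation; invoking Theorem~\ref{thm:constr} to prove Theorem~\ref{thm:pen} would be circular. For $\alpha\leq 0$ your route would be self-contained (since $C_{\mathrm c}(\alpha)=c_6$ is known independently, see Remark~\ref{rem:Gdelta1}), but for $\alpha\in(0,\overline{\alpha}]$ there is no independent proof of Theorem~\ref{thm:constr} to lean on.
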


Taking the special case $f=1$, $|\Omega|=1$, $\alpha=0.5$ in Theorem \ref{thm:pen} gives \cite[Theorem 2]{BouPelTheil}.
We illustrate Theorem \ref{thm:pen} in Table \ref{Table1} and Figures \ref{fig:numerics} and \ref{fig:numerics2}.

\begin{theorem}[Asymptotic crystallization for the constrained optimal location problem]
\label{thm:constr}
Let \blue{$\alpha\in(-\infty,\overline{\alpha}]$}, where $\overline{\alpha}:=0.583$.
Let $\Omega\subset\R^2$ be the closure of an open and bounded set.
Assume that $f:\Omega \to [0,\infty)$ is lower semi-continuous with $f\geq c>0$ and $\int_\Omega f \, \dd x = 1$.
Then
\begin{equation}\label{eq:limitcon}
\lim_{L\to\infty}\left[ L^{\frac{1}{1-\alpha}} \, \mathrm{m_c}(\alpha,L) \right]
	= c_6\left( \int_\o f(x)^{\frac{1}{2-\alpha}} \,\dd x \right)^{\frac{2-\alpha}{1-\alpha}}.
\end{equation}
\end{theorem}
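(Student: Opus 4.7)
I would deduce Theorem~\ref{thm:constr} from Theorem~\ref{thm:pen} via Legendre duality between the constrained and penalised problems. The starting point is the identity
\[
\mathrm{m_p}(\alpha,\delta) = \inf_{L>0}\bigl[\mathrm{m_c}(\alpha,L) + \delta L\bigr],
\]
obtained by stratifying the infimum defining $\mathrm{m_p}(\alpha,\delta)$ according to the value $L = H_\alpha(\mu)$. This yields the unconditional pointwise bound $\mathrm{m_c}(\alpha,L) \ge \mathrm{m_p}(\alpha,\delta) - \delta L$ for all $\delta, L > 0$.

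\textbf{Lower bound.} I would parameterise $\delta = tL^{-(2-\alpha)/(1-\alpha)}$ for a free constant $t>0$, so that $\delta\to 0$ as $L\to\infty$. Multiplying the duality inequality by $L^{1/(1-\alpha)}$ and invoking Theorem~\ref{thm:pen} gives
\[
\liminf_{L\to\infty} L^{\frac{1}{1-\alpha}}\mathrm{m_c}(\alpha,L) \ge \frac{2-\alpha}{1-\alpha}\, c_6^{\frac{1-\alpha}{2-\alpha}}\bigl(t(1-\alpha)\bigr)^{\frac{1}{2-\alpha}}\int_\Omega f^{\frac{1}{2-\alpha}}\,\dd x - t.
\]
A direct optimisation in $t$ (setting the derivative to zero and simplifying) shows that the supremum of the right-hand side is exactly $c_6\bigl(\int_\Omega f^{1/(2-\alpha)}\,\dd x\bigr)^{(2-\alpha)/(1-\alpha)}$, which is the desired lower bound.

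\textbf{Upper bound and main obstacle.} For the matching upper bound I would use a direct competitor construction, adapted from the upper-bound construction in Theorem~\ref{thm:pen}. Given $L$ large, set
\[
\rho_L(x) := L^{\frac{1}{1-\alpha}}\bigl(\textstyle\int_\Omega f^{1/(2-\alpha)}\,\dd x\bigr)^{-\frac{1}{1-\alpha}} f(x)^{\frac{1-\alpha}{2-\alpha}},
\]
the density predicted by the formal Euler--Lagrange analysis of the constrained problem, and build $\mu_L\in\Pd$ by placing atoms at the centres of a quasi-regular hexagonal tiling of $\Omega$ with local cell-area $1/\rho_L(x)$. Standard Fejes-T\'{o}th/hexagonal-tiling estimates give, up to vanishing error,
\[
W_2^2(f,\mu_L) \sim c_6\int_\Omega\frac{f}{\rho_L}\,\dd x, \qquad H_\alpha(\mu_L) \sim \int_\Omega f^\alpha \rho_L^{1-\alpha}\,\dd x,
\]
and substituting $\rho_L$ yields $H_\alpha(\mu_L) = L + o(L)$ together with $W_2^2(f,\mu_L) \sim c_6\bigl(\int_\Omega f^{1/(2-\alpha)}\,\dd x\bigr)^{(2-\alpha)/(1-\alpha)} L^{-1/(1-\alpha)}$. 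After a small rescaling of $\mu_L$ to enforce the constraint $H_\alpha(\mu_L)\le L$ strictly, the measure becomes admissible for $\mathrm{m_c}(\alpha,L)$ and produces the matching upper bound. The hard part is the rigorous execution of this hexagonal-tiling construction with the non-uniform density $\rho_L$: one must partition $\Omega$ into small cells where $f$ is nearly constant, quantise on each cell via a scaled regular triangular lattice, carefully control the boundary layer between cells, and verify that the resulting errors are of smaller order than the leading term $L^{-1/(1-\alpha)}$. This parallels, and adapts to the constrained setting, the construction used in the upper bound of Theorem~\ref{thm:pen} and in \cite{BouPelTheil, BouJimMaha}.
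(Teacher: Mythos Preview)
Your lower bound via Lagrangian duality is correct and coincides with the paper's argument (Lemma~\ref{ref:RelCP}): the paper also uses $\mathrm{m_c}(\alpha,L) \ge \mathrm{m_p}(\alpha,\delta) - \delta L$ and combines it with Theorem~\ref{thm:pen}. The only cosmetic difference is that the paper works at the level of the asymptotic constants $C_{\mathrm{c}}(\alpha), C_{\mathrm{p}}(\alpha)$ and makes the specific choice $\delta = \tfrac{c_6}{1-\alpha} L^{(\alpha-2)/(1-\alpha)}$ (your optimal $t$), so the one-line inequality $C_{\mathrm{c}}(\alpha) \ge C_{\mathrm{p}}(\alpha) - \tfrac{c_6}{1-\alpha}$ replaces your optimisation over $t$.

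Where your route diverges is the upper bound. You propose to build an explicit nonuniform hexagonal competitor adapted to $f$; this can be made to work, but it is substantially more effort than needed. The paper obtains the upper bound essentially for free by quoting Bouchitt\'e--Jimenez--Mahadevan: their result (Corollary~\ref{cor:monotonicityCc}, i.e.\ \eqref{eq:Prop3.11(i)} with $p=d=2$) already asserts that $\lim_{L\to\infty} L^{1/(1-\alpha)} \mathrm{m_c}(\alpha,L) = C_{\mathrm{c}}(\alpha)\bigl(\int_\Omega f^{1/(2-\alpha)}\,\dd x\bigr)^{(2-\alpha)/(1-\alpha)}$ for an abstract constant $C_{\mathrm{c}}(\alpha)$. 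The bound $C_{\mathrm{c}}(\alpha) \le c_6$ is then immediate, either from the monotonicity of $\alpha \mapsto C_{\mathrm{c}}(\alpha)$ together with the classical $C_{\mathrm{c}}(0)=c_6$ (Fejes T\'oth), or simply by taking $\rho = \delta_1$ in the variational formula \eqref{eq:DefCcalpha} and using $G(\delta_1)=c_6$. Thus the paper's entire proof of Theorem~\ref{thm:constr} is the squeeze $c_6 = C_{\mathrm{c}}(0) \ge C_{\mathrm{c}}(\alpha) \ge C_{\mathrm{p}}(\alpha) - \tfrac{c_6}{1-\alpha} = c_6$. Your direct construction would in effect re-prove part of \cite{BouJimMaha}; there is no need to, and in particular neither the paper's proof of Theorem~\ref{thm:pen} nor of Theorem~\ref{thm:constr} ever carries out the nonuniform tiling you describe.
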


By comparing equation \eqref{eq:Prop3.11(i)} to equation \eqref{eq:limitcon} with $p=d=2$, we read off that $C_{2,2}(\alpha)=c_6$ for all \blue{$\alpha\in(-\infty,\overline{\alpha}]$}, which proves conjecture \eqref{eq:conjecture} for this range of $\alpha$.
We believe that Theorem \ref{thm:pen} and Theorem \ref{thm:constr} hold for all \blue{$\alpha\in(-\infty,1)$}, not just for \blue{$\alpha\in(-\infty,\overline{\alpha}]$}, but we are only able to prove them for the whole range of $\alpha$ if we make an ansatz about minimal configurations; see Theorem \ref{thm:alpha01}.

\begin{table}
\begin{tabular}{ccc}
	\hline
	$\, \; \alpha \; \,$ & $\, \; \delta\; \, $ 
	& $\, \; \frac{ \left( \frac{c_6}{\delta (1-\alpha)}\right)^{\frac{1}{2-\alpha}\phantom{\big|}} \! \! \! \left( W_2^2(\ca_{\Omega},\mu)+\delta H_\alpha(\mu) \right)}{ c_6 (2-\alpha)/(1-\alpha)} \, \;$ 
	\vspace{0.1cm}
	\\
	\hline
	 0.1  & 0.0022433361900  
	 & 1.025664680453751 
	 \\
	 0.1 & 0.0001302910000 
	 & 1.012634927464421 
	 \\
	 0.1 & $7.5672376050508 \times 10^{-6}$ 
	 & 1.006166789745220 
	 \\
	 \hline
	 0.583 & 0.0147231527000 
	 & 1.015173622346968 
	 \\
	 0.583 & 0.0017628730000 
	 &  1.007372522192174
	 \\
	 0.583 & $2.1107719443098 \times 10^{-4}$ 
	 & 1.003575800389361 
	 \\
	 \hline
	 0.9 & 0.1273904500000 
	 & 1.004506412114665
	 \\
	 0.9 & 0.0245228180000 
	 & 1.002133746895388
	 \\
	 0.9 & 0.004720672550584 
	 & 1.001028468127525
	 \\
	\hline
	\\
\end{tabular}
\caption{\label{Table1}Illustration of Theorem \ref{thm:pen} for the case $\Omega=[0,1]^2$, $f=\ca_\Omega$. In the last column we give the ratio of a numerical approximation of
$\big(\tfrac{c_6}{\delta(1-\alpha)}\big)^{\frac{1}{2-\alpha}} \mathrm{m_p}(\alpha,\delta)$ (which appears on the left-hand side of \eqref{eq:limitpen}) to $\frac{2-\alpha}{1-\alpha} \, c_6$ (which appears on the right-hand side of \eqref{eq:limitpen}; note that $\int_\o f(x)^{\frac{1}{2-\alpha}} \,\dd x=1$ here). For three representative values of $\alpha$, we see that the ratio tends to $1$ as $\delta$ tends to $0$, which supports Theorems \ref{thm:pen} and \ref{thm:alpha01}.
The numerical approximation of the minimum energy and the choice of $\delta$ is described in Figures \ref{fig:numerics} and \ref{fig:numerics2}.}
\end{table}

\begin{remark}[Energy scaling and the geometric interpretation of Theorems \ref{thm:pen} \& \ref{thm:constr}]\label{rem:rescaling}
To motivate the rescaling on the left-hand side of \eqref{eq:limitpen} we reason as follows.
Let
\[
\Omega = \bigcup_{i=1}^N H_i
\]
be the union of $N$ disjoint regular hexagons of equal area $|\Omega|/N$. Let $z_i$ be the centroid of $H_i$ and let
$f = \tfrac{1}{|\Omega|}\ca_\Omega$ be the uniform probability distribution on $\Omega$.
Here $\ca_\Omega$ denotes the characteristic function of the set $\Omega$. By definition of $c_6$ (equation \eqref{eq:conjecture}) and a change of variables,
\[
\int_{H_i} |x-z_i|^2 \, \dd x = c_6 |H_i|^2 = c_6 \left(\frac{|\Omega|}{N}\right)^2
\]
for all $i$.
Therefore the penalized quantization error of approximating $f \dd x$ by $\mu=\sum_{i=1}^N \tfrac{1}{N} \delta_{z_i}$ is
\begin{equation}\label{eq:opten1}
W_2^2(f,\mu)+\delta H_\alpha(\mu) = \sum_{i=1}^N \int_{H_i} |x-z_i|^2 \frac{1}{|\Omega|} \, \dd x + \delta N \left( \frac{1}{N} \right)^{\alpha}
=   c_6\frac{|\o|}{N} + \delta N^{1-\alpha}.
\end{equation}
 The right-hand side of \eqref{eq:opten1} is minimized when
\begin{equation}
\label{eq:Nheuristic}
N = \left(\frac{c_6 |\Omega|}{\delta(1-\alpha)}\right)^{\frac{1}{2-\alpha}}.
\end{equation}
Substituting this value of $N$ into \eqref{eq:opten1} (assuming for a moment that it is an integer) gives
\[
\left(\frac{c_6}{\delta(1-\alpha)}\right)^{\frac{1}{2-\alpha}} \left( W_2^2(f,\mu)+\delta H_\alpha(\mu) \right)
= \frac{2-\alpha}{1-\alpha} \, c_6 \, |\o|^{\frac{1-\alpha}{2-\alpha}} = \frac{2-\alpha}{1-\alpha} \, c_6\int_\o f(x)^{\frac{1}{2-\alpha}} \,\dd x,
\]
which motivates the rescaling used in \eqref{eq:limitpen}. This heuristic computation suggests an upper bound for the left-hand side of \eqref{eq:limitpen}, for the case where $f$ is the uniform distribution. Theorem \ref{thm:pen} says that this upper bound is in fact asymptotically optimal. In this sense we can say that the honeycomb structure gives asymptotically the best approximation of the uniform distribution.

The rescaling used in \eqref{eq:limitcon} can be derived in a similar way.
Indeed, fix $L>0$ and consider the constraint
\[
H_\alpha(\mu)=\sum_{i=1}^{N_\mu} m_i^\alpha\leq L.
\]
If all the masses are the same, $m_i = 1/N_\mu$ for all $i$, then the biggest number $N_\mu$ for which this constraint is satisfied is
\[
N_\mu=L^{\frac{1}{1-\alpha}}.
\]
Assuming that $N_\mu$ is an integer, take as above
\[
\Omega = \bigcup_{i=1}^{N_\mu} H_i, \quad\quad
f=\frac{1}{|\Omega|} \ca_\Omega, \quad\quad
\mu=\sum_{i=1}^{N_\mu} \frac{1}{N_\mu} \delta_{z_i}.
\]
Then
\[
L^{\frac{1}{1-\alpha}} \, W_2^2(f,\mu) =
c_6 \, |\o| = c_6\left(\, \int_\o f(x)^{\frac{1}{2-\alpha}} \,\dd x\,\right)^{\frac{2-\alpha}{1-\alpha}},
\]
which motivates the rescaling used in \eqref{eq:limitcon}. Combining this formal calculation with Theorem \ref{thm:constr} again suggests the asymptotic optimality of the honeycomb.
\end{remark}

\blue{
Theorem \ref{thm:pen} gives the asymptotic minimum value of the penalized quantization error but says nothing about the configuration of the particles; it says that the triangular lattice is asymptotically optimal, but it does not say that asymptotically optimal configurations are close to a triangular lattice. We prove this in the following theorem.

\begin{theorem}[Asymptotically optimal configurations are close to a regular triangular lattice]\label{thm:stability}
Let $\Omega\subset\R^2$ be a convex polygon with at most six sides, $|\Omega|=1$, $f=\ca_\Omega$, and $\alpha = \overline{\alpha}$.
There exist constants $\varepsilon_0, c, \beta_1, \beta_2 >0$
with the following property.
Let $\delta>0$ and $\mu_\delta =\sum_{i=1}^{N_\delta} \widetilde{m}_i\delta_{\widetilde{z}_i}\in\Pd$ be a solution of the penalized quantization problem defining $\mathrm{m}_{\mathrm{p}}(\alpha,\delta)$. Define the defect of $\mu_\delta$ by
\[
\mathrm{d}(\mu_\delta):=  \left(\frac{c_6}{\delta(1-\alpha)}\right)^{\frac{1}{2-\alpha}}\mathrm{m_p}(\alpha,\delta) - \frac{2-\alpha}{1-\alpha} \, c_6.
\]
Note that $\lim_{\delta \to 0} \mathrm{d}(\mu_\delta) = 0$  by Theorem \ref{thm:pen}.
Define
\[
V_{\delta,\alpha}= \left( \frac{c_6}{\delta(1-\alpha)} \right)^{\frac{1}{2-\alpha}}.
\]
Define rescaled particle positions $z_i = V_{\delta,\alpha}^{1/2} \widetilde{z}_i$, $i \in \{1,\ldots,N_\delta \}$.
Let $\{ V_i \}_{i=1}^{N_\delta}$ be the Voronoi tessellation of $\Omega$ generated by $\{z_i\}_{i=1}^{N_\delta}$, i.e., 
\[
V_i = \{ z \in V_{\delta,\alpha}^{1/2} \Omega : |z-z_i| \le |z-z_j| \; \forall  \, j \in \{1,\ldots,N_\delta\}\}.
\]
\begin{itemize}
\item[(a)] The optimal number of particles $N_\delta$ is asymptotically equal to 
$V_{\delta,\alpha}$:
\[
\lim_{\delta \to 0} \frac{V_{\delta,\alpha}}{N_\delta}=1.
\]
\item[(b)]
If $\delta>0$ is sufficiently small, and if $\varepsilon\in(0,\varepsilon_0)$ and $\mu_\delta$ satisfy
\begin{equation}
\label{eq:dmulesseps}
\beta_1 \mathrm{d}(\mu_\delta) + \beta_2 V_{\delta,\alpha}^{-1/2} \leq \varepsilon,
\end{equation}
then, with the possible exception of at most $N_\delta c \varepsilon^{1/3}$ indices $i\in\{1,\dots,N_\delta \}$, the following hold:
\begin{itemize}
\item[(i)] $V_i$ is a hexagon;
\item[(ii)] the distance between $z_i$ and each vertex of $V_i$ is between $(1 \pm \varepsilon^{1/3}) \sqrt{\frac{V_{\delta,\alpha}}{N_\delta}} \sqrt{\frac{\phantom{|}2\phantom{|}}{3 \sqrt{3}}}$;
\item[(iii)] the distance between $z_i$ and each edge of $V_i$ is between $(1 \pm \varepsilon^{1/3}) \sqrt{\frac{V_{\delta,\alpha}}{N_\delta}} \sqrt{\frac{\phantom{|}1\phantom{|}}{2 \sqrt{3}}}$. 
\end{itemize}
\end{itemize}
\end{theorem}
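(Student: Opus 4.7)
My approach combines two asymptotically sharp lower bounds on the energy—one on cell shape (Fejes T\'oth) and one on the mass distribution (convex envelope)—with quantitative quadratic stability of each. Since $f = \ca_\Omega$, the optimal transport cells $\widetilde V_i$ are the Voronoi cells of $\{\widetilde z_i\}$ in $\Omega$, each convex, with $|\widetilde V_i|=\widetilde m_i$. Fejes T\'oth's sum-of-moments theorem gives
\[
W_2^2(\ca_\Omega,\mu_\delta) \ge c_6 \sum_i \widetilde m_i^{\,2}, \quad \text{hence} \quad \mathrm{m_p}(\alpha,\delta) \ge \sum_i g(\widetilde m_i),
\]
where $g(t) := c_6 t^2 + \delta t^\alpha$. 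A direct calculation shows that $g(t) \ge g'(t_0)\,t$ for all $t \ge 0$ with equality precisely at $t \in \{0, t_0\}$, where $t_0 = 1/V_{\delta,\alpha}$ and $g'(t_0) = \frac{2-\alpha}{1-\alpha}\frac{c_6}{V_{\delta,\alpha}}$. Summing and using $\sum_i \widetilde m_i = 1$ yields $\sum_i g(\widetilde m_i) \ge g'(t_0)$, and thus $V_{\delta,\alpha}\,\mathrm{m_p}(\alpha,\delta) \ge \frac{2-\alpha}{1-\alpha}c_6$, matching Theorem \ref{thm:pen} asymptotically. Equality requires both that the cells are regular hexagons centred at the $\widetilde z_i$ and that each $\widetilde m_i = t_0$, hence $N_\delta = V_{\delta,\alpha}$. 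Writing the two nonnegative defects
\[
D^{\mathrm{FT}} := W_2^2 - c_6\sum_i \widetilde m_i^{\,2}, \qquad D^{\mathrm{env}}_i := g(\widetilde m_i) - g'(t_0)\widetilde m_i,
\]
we obtain the clean decomposition $\mathrm{d}(\mu_\delta) = V_{\delta,\alpha}\bigl(D^{\mathrm{FT}} + \sum_i D^{\mathrm{env}}_i\bigr)$. Under \eqref{eq:dmulesseps} each summand is $\lesssim \varepsilon$ modulo a boundary correction of order $V_{\delta,\alpha}^{-1/2}$, which comes from the $O(\sqrt{V_{\delta,\alpha}})$ Voronoi cells touching $\partial\Omega$ that necessarily fail to be regular hexagons; this is precisely the role of the $\beta_2 V_{\delta,\alpha}^{-1/2}$ term in \eqref{eq:dmulesseps}.

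For part (a), a Taylor expansion shows that $D^{\mathrm{env}}_i \ge c(\widetilde m_i - t_0)^2$ whenever $\widetilde m_i \in [t_0/2,\, 2t_0]$, and a standard first-order splitting/merging argument (as in \cite{BouPelTheil}) rules out masses far outside this range. Therefore $\sum_i (\widetilde m_i - t_0)^2 \lesssim \varepsilon/V_{\delta,\alpha}$, and since $\sum_i \widetilde m_i = 1$ gives $\sum_i (\widetilde m_i - t_0) = 1 - N_\delta t_0$, Cauchy-Schwarz combined with the a priori bound $N_\delta \lesssim V_{\delta,\alpha}$ (also from the splitting/merging argument) yields $|1 - N_\delta / V_{\delta,\alpha}| \lesssim \sqrt{\varepsilon N_\delta/V_{\delta,\alpha}} \to 0$, proving $V_{\delta,\alpha}/N_\delta \to 1$.

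Part (b) follows from two sequential applications of Markov's inequality, together with a quantitative, scale-invariant stability version of Fejes T\'oth's sum-of-moments inequality in the form
\[
D^{\mathrm{FT}} \ge c_{\mathrm{FT}} \sum_i \widetilde m_i^{\,2}\, \eta_i^{\,2},
\]
where $\eta_i$ measures the relative Hausdorff distance from the rescaled cell $\widetilde V_i/\sqrt{\widetilde m_i}$ to the regular unit-area hexagon centred at $\widetilde z_i/\sqrt{\widetilde m_i}$. The envelope bound and Markov show that at most $C\varepsilon^{1/3} N_\delta$ indices violate $|\widetilde m_i - t_0| \le \varepsilon^{1/3} t_0$. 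Among the remaining indices, where $\widetilde m_i^{\,2} \gtrsim t_0^2$, the Fejes T\'oth bound yields $\sum_i \eta_i^{\,2} \lesssim \varepsilon V_{\delta,\alpha}$, and a second application of Markov gives $\eta_i \le \varepsilon^{1/3}$ with at most $C\varepsilon^{1/3} N_\delta$ further exceptions. The remaining ``good'' cells are therefore hexagonal with mass and shape each $\varepsilon^{1/3}$-close to the regular hexagon of area $V_{\delta,\alpha}/N_\delta$, and items (i)--(iii) follow by direct comparison of vertex and apothem distances in near-regular hexagons.

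The principal technical obstacle is establishing the quantitative, scale-invariant stability of Fejes T\'oth's sum-of-moments inequality at the required quadratic rate. Because the per-cell Fejes T\'oth inequality can fail for Voronoi cells with more than six sides (which must be combinatorially compensated by cells with fewer sides via Euler's formula), the stability argument must either quantitatively refine Fejes T\'oth's original perimeter/angle proof or reduce the problem to the quadratic stability of the hexagonal isoperimetric inequality together with Fuglede-type estimates on near-regular polygons.
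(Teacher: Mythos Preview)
Your decomposition contains two related gaps. First, for $\alpha\ne 0$ the optimal transport cells of a minimizer $\mu_\delta$ are \emph{Laguerre} cells with weights $w_i=-\frac{\alpha}{1-\alpha}c_6 m_i^{\alpha-1}$ (Lemma~\ref{lem:propoptcell}), not Voronoi cells; the two coincide only when all $m_i$ are equal, which is precisely what you are trying to prove. So you cannot simultaneously have $|\widetilde V_i|=\widetilde m_i$ (true for the Laguerre cells) and work with the Voronoi tessellation that the theorem is about. Second, and more seriously, the inequality $W_2^2(\ca_\Omega,\mu_\delta)\ge c_6\sum_i\widetilde m_i^{\,2}$ is \emph{not} Fejes T\'oth's sum-of-moments theorem --- that theorem gives only $W_2^2\ge c_6|\Omega|^2/N_\delta$, which by Jensen is strictly weaker than your claim. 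The per-cell Moment Lemma yields $\int_{L_i}|x-z_i|^2\,\dd x\ge c_{n_i}m_i^2$ with $c_{n_i}<c_6$ whenever $n_i>6$, and Euler's formula controls only $\sum_i(n_i-6)$, not the weighted sum $\sum_i m_i^2(n_i-6)$. Hence $D^{\mathrm{FT}}$ as you define it is not known to be nonnegative, the ``clean decomposition'' $\mathrm d(\mu_\delta)=V_{\delta,\alpha}(D^{\mathrm{FT}}+\sum_i D_i^{\mathrm{env}})$ into two nonnegative pieces collapses, and with it both the Cauchy--Schwarz step in (a) and the Markov arguments in (b). The quadratic stability of Fejes T\'oth that you postulate at the end inherits the same defect: it cannot hold as stated because its left-hand side can be negative.

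The paper avoids this by refusing to decouple shape from mass. Working with the Laguerre cells and the coupled function $g_{\overline\alpha}(m,n)=\frac{c_6}{1-\overline\alpha}m^{\overline\alpha}+c_n m^2$, it proves a sharpened convexity inequality $h_{\overline\alpha}(m,n)\ge\xi(m-1)^2$ for $m\ge\overline m$ (Lemma~\ref{lem:sharp_convexity}); summing and applying Euler's formula in one stroke yields $\mathrm d(\mu_\delta)\ge \xi V_{\delta,\overline\alpha}^{-1}\sum_i(m_i-1)^2$ up to a boundary term of order $V_{\delta,\overline\alpha}^{-1/2}$, from which (a) follows by Jensen. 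For (b), no new quantitative Fejes T\'oth stability is needed: one bounds the Voronoi defect $\hat\varepsilon(\{z_i\})$ of Theorem~\ref{thm:FT_stability} via $\sum_i\int_{V_i}|x-z_i|^2\,\dd x\le W_2^2=\f_{\delta,\overline\alpha}(\mu)-\frac{c_6}{1-\overline\alpha}\sum_i m_i^{\overline\alpha}$, estimates $\sum_i m_i^{\overline\alpha}$ from below using the mass control just obtained, and then invokes G.~Fejes T\'oth's existing stability theorem directly.
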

}

\blue{
Even though Theorem \ref{thm:stability} is stated only for the case $\alpha=\overline{\alpha}$, the same proof holds for any $\alpha\leq\overline{\alpha}$, up to proving the convexity inequality \eqref{eq:ciintro} for that specific value of $\alpha$ (by using the same strategy we used for the case $\alpha=\overline{\alpha}$). A similar result can be proved for the constrained quantization problem.}

\blue{
\begin{remark}[Geometric interpretation of Theorem \ref{thm:stability}]
Note that the term $ \beta_2 V_{\delta,\alpha}^{-1/2}$ in \eqref{eq:dmulesseps} converges to $0$ as $\delta \to 0$.
Theorem \ref{thm:stability} essentially states that if the defect $\mathrm{d}(\mu_\delta)$ is small, then the support of $\mu_\delta$ is close to a regular triangular lattice, and it quantifies how close. Note that the Voronoi tessellation generated by the regular triangular lattice is a regular hexagonal tessellation. The theorem states that the Voronoi tessellation of $ V_{\delta,\alpha}^{1/2} \Omega$ generated by the rescaled particles $z_i$ is close to a regular hexagonal tessellation in the sense that, except for at most $N_\delta c \varepsilon^{1/3}$ Voronoi cells, the Voronoi cells are hexagons, and it quantifies how far the hexagons are from being regular. For a \emph{regular} hexagon of area $\frac{V_{\delta,\alpha}}{N_\delta}$, the distance between the centre of the hexagon and each vertex is 
$\sqrt{\frac{V_{\delta,\alpha}}{N_\delta}}  
\sqrt{\frac{\phantom{|}2\phantom{|}}{3 \sqrt{3}}}$, and the distance between the centre of the hexagon and each edge is $\sqrt{\frac{V_{\delta,\alpha}}{N_\delta}} \sqrt{\frac{\phantom{|}1\phantom{|}}{2 \sqrt{3}}}$. Since $\lim_{\delta \to 0} V_{\delta,\alpha} /N_\delta \to 1$, `most' of the rescaled Voronoi cells $V_i$ are `close' to a regular hexagon of area 1.
\end{remark}
}

\blue{
\begin{remark}[Locality and weaker assumptions on $f$]
Theorems \ref{thm:pen}, \ref{thm:constr} say that the quantization problems are essentially independent of $f$, in the sense that the optimal constants $\frac{2-\alpha}{1-\alpha} c_6$ and $c_6$ are independent of $f$ and are determined by the corresponding quantization problems with $f=1$; see Remarks \ref{remark:3_5} and \ref{rem:Cp}. The locality of the quantization problems is independent of the crystallization and is easier to prove. The locality for the constrained problem was proved by \cite{BouJimMaha} and the locality for the penalised problem follows easily from this, as we shall see in Section \ref{sec:asympt_pen}. Locality results for the classical quantization problem were proved among others by \cite{BucWis,Gr04,Klo,Za82}. 
We believe that the assumption of lower semi-continuity on $f$ in Theorems \ref{thm:pen}, \ref{thm:constr} could be relaxed by using the approach in \cite{MosTil}, where a locality result is proved for the related \emph{irrigation problem}, which concerns the best approximation of an absolutely continuous probability measure by a
one-dimensional Hausdorff measure supported on a curve.
\end{remark} 
}

\blue{
\begin{remark}[$\alpha \ge 1$]
For $\alpha\geq1$, the constrained and penalized quantization problems $\mathrm{m_c}(1,L)$ and $\mathrm{m_p}(1,\delta)$ do not have a minimizer. The infimum is zero since both the Wasserstein distance and the entropy can be sent to zero by sending the number of particles to infinity. In \cite{BouJimMaha} the authors considered the constraint
\[
\sum_{i=1}^{N_\mu}m_i^\alpha \ge \frac{1}{L}
\]
for $\alpha > 1$.
For $f \in L^\infty(\Omega)$, $\alpha \in (1,2) \cup (2,\infty)$ they proved that there exists a constant $C_{2,2}(\alpha)$ such that
\[
\lim_{L \to \infty} L^{\frac{1}{\alpha - 1}} \inf \left\{ W^2_2(f,\mu) : \mu\in\Pd,\, \sum_{i=1}^{N_\mu}m_i^\alpha \ge \frac{1}{L} \right\} =
C_{2,2}(\alpha) \left(\int_\Omega f(x)^{\frac{1}{2-\alpha}} \, \mathrm{d}x \right)^{\frac{2-\alpha}{1-\alpha}}.
\]
See \cite[Proposition 3.11(iii), Remark 3.13(iii)]{BouJimMaha}. For $\alpha >2$, $C_{2,2}(\alpha)=0$. For $\alpha \in (1,2)$, $C_{2,2}(\alpha)$ is not known, but it satisfies the bounds
\[
\int_B |x|^2 \, \mathrm{d}x \le C_{2,2}(\alpha) \le c_6
\]
where $B$ is the ball of unit area centred at the origin \cite[Lemma 3.10]{BouJimMaha}. 
\end{remark}
}

\blue{
\begin{remark}[Motivation for the choice of entropy $H_\alpha$]
The are several reasons why we chose to study the entropy $H_\alpha(\mu)=\sum_i m_i^\alpha$, both mathematical and from a modelling point of view.
\begin{itemize}
\item[(i)] The functional $\mu \mapsto W_2^2(f,\mu) + \delta \sum_i h(m_i)$ is lower semi-continuous if $h(0)=0$, $h(t) \ge0$, $h$ is lower semi-continuous, subadditive and $\lim_{t\to0+}h(t)/t=+\infty$; see \cite[Lemma 7.11]{S}. This includes our entropy $h(m)=m^\alpha$. There is evidence, however, that crystallization does not hold for all entropies in this class, or at least that optimal configurations consist of particles of different sizes; see \cite[Section 3.4]{BouJimMaha}. In this paper we have found a subclass for which crystallization holds. It is an open problem to find the largest class of such entropies. 
\item[(ii)] Functionals of the form $\mu \mapsto W_2^2(f,\mu) + \delta \sum_i h(m_i)$ arise in models of economic planning; see \cite{ButSan}. For example, consider the problem of the optimal location of warehouses in a county $\Omega$ with population density $f$. The measure $\mu=\sum_i m_i \delta_{z_i}$ represents the locations $z_i$ and sizes $m_i$ of the warehouses. The Wasserstein term in the functional above penalizes the average distance between the population and the warehouses, and the entropy term penalizes the building or running costs of the warehouses. The subadditive nature of the entropy $H_\alpha$ corresponds to an economy of scale, where it is cheaper to build one warehouses of size $m$ than two of size $m/2$.
\item[(iii)] The special case $\alpha=0.5$ arises in a simplified model of a two-phase fluid, namely a diblock copolymer melt, in two dimensions; see \cite{BouPelRop}. Here the entropy $\sqrt{m}$ corresponds to the interfacial length between a droplet of one phase of area $m$ and the surrounding, dominant phase. 
\item[(iv)] Finally, from a mathematical perspective, we were inspired to study the entropy $H_\alpha$ by the conjecture of Bouchitt\'e et al.~\cite[Section 3.6 (ii)]{BouJimMaha}.
\end{itemize}
\end{remark}
}


\subsection{Sketch of the proofs of Theorems \ref{thm:pen} \& \ref{thm:constr}}

We briefly present the main ideas of the paper.
We will see that Theorem \ref{thm:constr} is an easy consequence of Theorem \ref{thm:pen} (see Section \ref{sec:proofconstrained}), and so here we just focus on the \blue{ideas} behind the proof of Theorem \ref{thm:pen}. \blue{The strategy for proving Theorem \ref{thm:stability} is discussed in Section \ref{sec:stability}.} 

First we identify the scaling of the penalized quantization error $\mathrm{m_p}(\alpha,\delta)$ as $\delta \to 0$ using the
$\Gamma$-convergence result of  \cite{BouJimMaha}. This gives
\begin{equation}
\label{eq:scaling}
\lim_{\delta\to0}\left[\left(\frac{c_6}{\delta(1-\alpha)}\right)^{\frac{1}{2-\alpha}}
	\mathrm{m_p}(\alpha,\delta)\right]=C_{\mathrm{p}}(\alpha) \int_\o f(x)^{\frac{1}{2-\alpha}} \,\dd x
\end{equation}
where
\begin{equation}
\label{eq:FormOfC}
C_{\mathrm{p}}(\alpha)=\
\lim_{\delta\to0}\left[\left(\frac{c_6}{\delta(1-\alpha)}\right)^{\frac{1}{2-\alpha}}
	\min\left\{  \delta\sum_{i=1}^{N_\mu} m_i^\alpha + W^2_2(\ca_Q,\mu) \,:\, \mu\in\mathcal{P}_{\mathrm{d}}(Q) \right\} \right]
\end{equation}
and $Q=[-1/2,1/2]^2$;
see Corollary \ref{cor:monoCp} and Remark \ref{rem:Cp}. The main challenge in this paper is to show that the optimal constant is $C_{\mathrm{p}}(\alpha)= c_6 (2-\alpha)/(1-\alpha)$ for all \blue{$\alpha\in(-\infty,\overline{\alpha}]$}. Thanks to equations \eqref{eq:scaling} and \eqref{eq:FormOfC},  to prove Theorem \ref{thm:pen}
it is sufficient to prove it for the case where $\Omega= Q$ and $f=\ca_\Omega$.

Next we prove a monotonicity result (Lemma \ref{lem:monoCp}), which is analogous to a monotonicity result proved by \cite{BouJimMaha} for the constrained quantization problem, which asserts that
 if Theorem \ref{thm:pen} holds for some \blue{$\widetilde{\alpha}\in(-\infty,1)$}, then it holds for all \blue{$\alpha\in(-\infty,\widetilde{\alpha}]$}. Therefore we only need to prove Theorem \ref{thm:pen} for the single value $\alpha=\overline{\alpha}=0.583$.
Therefore for the rest of the paper we can take $\Omega=Q$, $f=\ca_\Omega$, $\alpha=\overline{\alpha}$ without loss of generality.

From the definition of the Wasserstein distance, equation \eqref{eq:Wassdist_p}, if $\mu=\sum_{i=1}^{N_\mu} m_i \delta_{z_i}$, then
\begin{equation}
\label{eq:Sketch1}
W_2^2(\ca_Q,\mu) = \sum_{i=1}^{N_\mu} \int_{T^{-1}(\{z_i\})} |x-z_i|^2 \,\dd x,
\end{equation}
where $T$ is the optimal transport map.
\blue{Since $Q$ is a polygonal set}, it is well known (see Lemma \ref{lem:cellspolygons}) that the sets $T^{-1}(\{z_i\})$ are convex polygons, called \emph{Laguerre cells}.

A classical result by Fejes T\'{o}th (see Lemma \ref{lem:Fejes}) states that
the second moment of a polygon about any point in the plane is greater than or equal to the second moment of a regular polygon (with the same area and same number of edges) about its centre of mass:
\begin{equation}
\label{eq:Sketch2}
\int_{P(m,n)} |x-z|^2 \,\dd x \ge \int_{R(m,n)} |x|^2 \, \dd x = m^2 \int_{R(1,n)} |y|^2 \, \dd y =: m^2 c_n
\end{equation}
where $P(m,n)$ is a polygon with area $m$ and $n$ edges, $R(m,n)$ is a regular polygon centred at the origin with area $m$ and $n$ edges, and $z \in \mathbb{R}^2$. Combining \eqref{eq:Sketch1} and \eqref{eq:Sketch2} gives
\begin{equation}\label{eq:ineqW}
W_2^2(\ca_\Omega,\mu) \geq \sum_{i=1}^{N_\mu} m_i^2 c_{n_i},
\end{equation}
for all $\mu \in \Pd$, where $n_i$ denotes the number of edges of the polygon $T^{-1}(\{z_i\})$ and $m_i$ denotes its area.
Our proofs are limited to the $p$-Wasserstein metric with $p=2$ since, for $p \ne 2$, the transport regions $T^{-1}(\{ z_i \})$ are not convex polygons. Moreover, our proofs are limited to two dimensions since there is no equivalent statement of Fejes T\'oth's Moment Lemma in higher dimensions (due to the lack of regular polytopes in higher dimensions).

Next we recall the proof of Theorem \ref{thm:constr} due to Gruber \cite{Gru_Fej} for the case $\alpha=0$, which we will adapt to prove
Theorem \ref{thm:pen} (and consequently Theorem \ref{thm:constr}) for general \blue{$\alpha\in(-\infty,\overline{\alpha}]$}.
It can be shown that the function
\[
(m,n)\mapsto g(m,n):=m^2 c_n
\]
is convex. (Note that $n \mapsto c_n$ can be extended from a function on $\mathbb{N} \cap [3,\infty)$ to a function on $[3,\infty)$; see Lemma \ref{lem:Fejes}.) 
If $\mu$ is a minimizer of $W_2^2(\ca_\Omega,\cdot)$ subject to the constraint $N_\mu \le L$, then clearly 
$N_\mu=L$ (assuming that $L$ is an integer) since we get the best constrained approximation of $\ca_\Omega$ by taking as many Dirac masses as possible. By convexity,
\begin{equation}\label{eq:ci}
m^2 c_n = g(m,n) \geq g\left(\tfrac 1L,6\right) + \nabla g\left(\tfrac 1L,6\right)\cdot \left(m-\tfrac 1L,n-6 \right) = \frac{c_6}{L^2} + \frac{2c_6}{L} \left(m - \tfrac 1L \right) + \frac{\kappa}{L^2} (n-6)
\end{equation}
where $\kappa:=g_n(1,6)=\partial_n c_n |_{n=6}<0$.
Combining equations \eqref{eq:ineqW} and \eqref{eq:ci} gives
\begin{equation}
\label{eq:14}
W_2^2(\ca_\Omega,\mu) \geq \sum_{i=1}^{L} \left( \frac{c_6}{L^2} + \frac{2c_6}{L} \left(m_i - \tfrac 1L \right) + \frac{\kappa}{L^2} (n_i-6) \right)
= \frac{c_6}{L} + \frac{\kappa}{L^2}\sum_{i=1}^{L}(n_i-6)
\end{equation}
since $\sum_i m_i = |\Omega| = 1$.
Euler's formula for planar graphs implies that the average number of edges in any partition of the unit square $\Omega$ by convex polygons is less than or equal to $6$: $\frac 1L \sum_{i} n_i \le 6$; see Lemma \ref{lemma:PCP}. Therefore, by equation \eqref{eq:14} and since $\kappa < 0$,
\[
L \, \mathrm{m_c}(0,L) \ge c_6.
\]
This is the lower bound in Theorem \ref{thm:constr} for the case $\alpha=0$.
A matching upper bound can be obtained in the limit $L \to \infty$ by taking $\mu=\sum_{i=1}^L \tfrac 1L \delta_{z_i}$ where $z_i$ lie on a regular triangular lattice.

In  \cite{BouPelRop} Gruber's strategy was generalized to prove Theorem \ref{thm:pen} for the case $\alpha=0.5$ and $f=1$. 
Thanks to the results of \cite{BouJimMaha} and our results in Section \ref{sec:asympt_pen}, it follow that Theorem \ref{thm:pen} holds
for all \blue{$\alpha\in(-\infty,0.5]$} and all lower semi-continuous $f$ satisfying \eqref{eq:f_ass}.  
In this paper we extend these ideas further to prove Theorem \ref{thm:pen} for the case $\alpha=0.583$, and hence all \blue{$\alpha\in(-\infty,0.583]$}.
First of all, we rescale the square $Q$ as follows (see Remark \ref{rem:rescaling}):
\[
Q_{\delta,\alpha}:=
V_{\delta,\alpha}^{\frac{1}{2}}Q, \qquad V_{\delta,\alpha}= \left( \frac{c_6}{\delta(1-\alpha)} \right)^{\frac{1}{2-\alpha}}.
\]
The rescaling factor is chosen in such a way that 
a discrete measure supported at the centres of regular hexagons of \emph{unit area} is asymptotically optimal. 
Up to a multiplicative factor, the rescaled energy is
\[
\f_{\delta,\alpha}(\mu):= \frac{c_6}{1-\alpha} \sum_{i=1}^{N_\mu} m_i^\alpha
	+ W^2_2(\ca_{Q_{\delta,\alpha}},\mu),
\]
where $\ca_{Q_{\delta,\alpha}}$ denotes the characteristic function of the square $Q_{\delta,\alpha}$.
Here $\mu$ is a Borel measure on \blue{$Q_{\delta,\alpha}$} of the form $\mu=\sum_{i=1}^{N_\mu} m_i \delta_{z_i}$ with $\sum_{i=1}^{N_\mu} m_i = V_{\delta,\alpha}$.
By \eqref{eq:ineqW} we have
\[
\f_{\delta,\alpha}(\mu) \geq \sum_{i=1}^{N_\mu} \left[ \frac{c_6}{1-\alpha} m_i^\alpha + m_i^2 c_{n_i} \right]
= \sum_{i=1}^{N_\mu} g_\alpha(m_i,n_i),
\]
where
\[
g_\alpha(m,n):=\frac{c_6}{1-\alpha} m^\alpha + m^2 c_n.
\]
Unfortunately, for $\alpha \in (0,1)$, $g_\alpha$ is not convex. Our first main technical result is to show that
 for $\alpha=\overline{\alpha}$ there exists $m_0>0$ such that 
the following `convexity inequality' holds for
all $m\geq m_0$, $n \in \mathbb{N} \cap [3,\infty)$:
\begin{equation}\label{eq:ciintro}
g_{\overline{\alpha}}(m,n)\geq g_{\overline{\alpha}}(1,6) + \nabla g_{\overline{\alpha}}(m,n)\cdot(m-1,n-6).
\end{equation}
See Lemma \ref{lem:inequality}, Corollary \ref{cor:reduction} and Corollary \ref{Cor:CI}.
Our second main technical result (Lemma \ref{lem:lowerboundmass}) is to show that if $\mu=\sum_{i=1}^{N_\mu} m_i \delta_{z_i}$ minimizes $\f_{\delta,\alpha}$, then
\begin{equation}
\label{eq:Sketch3}
m_i > 2.0620 \times 10^{-4} > m_0.
\end{equation}
Therefore minimizers satisfy the convexity inequality \eqref{eq:ciintro}, and the proof of Theorem \ref{thm:pen} now follows using Gruber's strategy.

To be precise, we are only able to prove the inequality \eqref{eq:Sketch3} for particles $z_i$ that are not too close to the boundary (\blue{Lemma \ref{lem:lowerboundmass}(i)}). Nevertheless, we are able to prove a worse lower bound on the mass $m_i$ of particles near the boundary (\blue{Lemma \ref{lem:lowerboundmass}(ii)}), which is still sufficient to show that the number of particles near the boundary is asymptotically negligible. This fixes what appears to be a gap in the proof in \cite{BouPelTheil}, where it was tacitly assumed that all of the particles were sufficiently far from the boundary of the rescaled domain (at least distance $3.2143$; see the proof of \cite[Lemma 7]{BouPelTheil}).

The idea of the proof of \eqref{eq:Sketch3} is to compare the energy of a minimizer $\mu$ with that of a competitor $\widetilde{\mu}$ that is obtained by gluing the smallest particle of $\mu$ with one of its neighbours. The proofs of 
\eqref{eq:ciintro} and \eqref{eq:Sketch3} require some delicate positivity estimates. As in the proof of \cite{BouPelTheil}, we also use computer evaluation at several points in the proof to check the sign of some explicit numerical constants (that are much larger than machine precision).


\subsection{Literature on crystallization, optimal partitions and quantization}\label{sec:other}
Our work belongs to the very active research programme of establishing crystallization results for nonlocal interacting particle systems. This problem is known as the \emph{crystallization conjecture} \cite{BlaLew}.
Despite experimental evidence that many particle systems, such as atoms in metals, have periodic ground states, until recently there
were few rigorous mathematical results.
Results in one dimension include \cite{BlaLeB,GarRad} and results in two dimensions include 
\cite{AuYeungFrieseckeSchmidt12,Betermin16,BetDeLPet,BeterminKnupfer,BeterminPetrache2019,BouPelTheil,DeLFri,ELi09,HeiRad, PetracheSerfaty2020,Rad81,The}.
\blue{Let us recall that a central open problem in mathematical physics is to establish the optimality of the Abrikosov (triangular) lattice for the Ginzburg-Landau energy \cite{SerfatySandier12}}.
In three dimensions there are few rigorous results.
Even establishing the optimal configuration of just \emph{five} charges on a sphere was only achieved in 2013 via a computer-assisted proof
\cite{Schwartz13}. 
The Kepler conjecture about optimal sphere packing was also computer-assisted \cite{Hales05,HalesEtAl10},
while the optimal sphere \emph{covering} remains to this day unknown.
In even higher dimensions (in particular 8 and 24), there start to be more rigorous results again, e.g., \cite{CohnEtAl2017,CohnKumarMillerRadchenkoViazovska,Viazovska}.
For a thorough survey of recent crystallization results for nonlocal particle systems see \cite{BlaLew} and  \cite{Ser_Review1}.

Our result also falls into the field of \emph{optimal partitions} (see Remark \ref{remark:equiv}). The optimality of hexagonal tilings, or \emph{Honeycomb conjectures}, have been proved for example by \cite{BucurFragala,BucurFragala2019,BucurFragalaVelichkovVerzini,Hales99}.
Kelvin's problem of finding the optimal foam in 3D (the `three-dimensional Honeycomb conjecture') remains to this day unsolved; for over 100 years it was believed that truncated octahedra gave the optimal tessellation, until the remarkable discovery of a better tessellation by Weire and Phelan \cite{WeairePhelan94}.

Finally, our result also belongs to the field of \emph{optimal quantization} or \emph{optimal sampling} \cite{GL,Gr04}, \cite[Section 33]{Gruber07}, which concerns the best approximation of a probability measure by a discrete probability measure. The most commonly used notion of best approximation is the Wasserstein distance.  
This problem has been studied by a wide range of communities including applied analysis \cite{BouJimMaha,ButSan,Iacobelli2018}, computational geometry \cite{DuVen}, discrete geometry \cite{ConwaySloane,Gr04}, and probability \cite{GL}. Applications include 
optimal location of resources \cite{BollobasStern72}, signal and image compression \cite{DuGunzburgerJuWang2006,GrayNeuhoff}, numerical integration \cite[Section 2.3]{PagesPhamPrintems2004}, 
mesh generation \cite{DuWan,MerigotMeyronThibert2018},
finance \cite{PagesPhamPrintems2004}, materials science \cite[Section 3.2]{BourneKokRoperSpanjer2020}, and particle methods for PDEs (sampling the initial distribution) \cite[Example 7.1]{BourneEganPelloniWilkinson}.

It is well known that if $\mu=\sum_{i=1}^N m_i z_i$ is a minimizer of $W_2(f  \dd x,\cdot)$, then the particles $z_i$ generate a  \emph{centroidal Voronoi tessellation} (CVT) \cite{DuVen,LiuEtAl2009}, which means the particles $z_i$ lie at the centroids of their Voronoi cells. 
Numerical methods for computing CVTs include 
Lloyd's algorithm \cite{DuVen} and quasi-Newton methods \cite{LiuEtAl2009}.
More generally, minimizers of the penalized energy $\mu \mapsto \delta \sum_i m_i^\alpha + W_2^2(f \dd x,\mu)$ generate \emph{centroidal Laguerre tessellations} (see Remark \ref{rem:CLTs}). Numerical methods for solving the constrained and penalized quantization problems include \cite{BouRop} (which was used to produce Figures \ref{fig:numerics} and \ref{fig:numerics2}) and \cite{LevyCentroidalPower}.

There is a large literature on optimal CVTs of $N$ points (global minima of $\mu \mapsto W_2(\ca_\Omega ,\mu)$ subject to $\# \mathrm{supp}(\mu)=N$).
According to \emph{Gersho's conjecture} (see \cite{Ger}), minimizers correspond to regular tessellations consisting of the repetition of a single polytope whose shape depends only on the spatial dimension. In two dimensions the polytope is a hexagon  \cite{BollobasStern72,Fej_Stability,FEJ,Gru_Fej,BolMor,New} and moreover the result holds for any $p$-Wasserstein metric, $p \in [1,\infty)$. Gersho's conjecture is open in three dimensions, although it is believed that the optimal CVT is a tessellation by truncated octahedra, which is generated by the body-centred cubic (BCC) lattice. Some numerical evidence for this is given in \cite{DuWang2005}, and
in \cite{BarSlo} it was proved that the BCC lattice is optimal among lattices (but we do not know whether the optimal configuration is in fact a lattice). Geometric properties of optimal CVTs in 3D were recently proved in \cite{ChoLu}, who also suggested a strategy for a computed-assisted proof of Gersho's conjecture.


\subsection{Organization of the paper}
In Section \ref{sec:prel} we recall some basic notions from optimal transport theory 
and convex geometry.
In Section \ref{sec:asympt_con} we recall from \cite{BouJimMaha} the result \eqref{eq:Prop3.11(i)} for the case $d=p=2$, namely the 
scaling of the minimum value of the energy 
 for the constrained problem \eqref{eq:m_c}. In Section  \ref{sec:asympt_pen} we derive the scaling of the minimum value of the energy for the penali\blue{z}ed problem \eqref{eq:m_p}. These results give the optimal \emph{scaling} of the minimum values of the constrained and penalized energies, but they do not give the optimal constants. In Section \ref{sec:proofpenalized} we identify the optimal constant for the penalized problem (which proves Theorem \ref{thm:pen}) and in Section \ref{sec:proofconstrained} we identify the optimal constant for the constrained problem (which proves Theorem \ref{thm:constr}). In Section \ref{sec:alpha1} we prove the asymptotic crystallization result for all \blue{$\alpha\in(-\infty,1)$} under an additional assumption.
\blue{Finally, Section \ref{sec:stability} is devoted to the proof of Theorem \ref{thm:stability}.}


\section{Preliminaries}\label{sec:prel}

\subsection{Main assumptions}
We assume that $\Omega\subset\R^2$ is the closure of an open and bounded set, and $f\in L^1(\Omega)$ is a lower semi-continuous function satisfying $f\geq c>0$ and
\[
\int_\Omega f(x) \,\dd x =1.
\]


\subsection{Notation}

Define \blue{$\R^+:=(0,\infty)$.}
For a Lebesgue-measurable set $A\subset\R^2$, we denote by $|A|$ its area and by $\ca_A$ its characteristic function.
We let $\mathcal{M}(X)$ denote the set of non-negative finite Borel measures on $X\subset\R^d$ and $\mathcal{P}(X)\subset\mathcal{M}(X)$ denote the set of probability measures on $X$.
Moreover, we let $\mathcal{M}_\mathrm{d}(X)\subset\mathcal{M}(X)$ be the following set of discrete measures:
\[
\mathcal{M}_\mathrm{d}(X):=\left\{ \sum_{i=1}^{N} m_i \delta_{z_i} : 
	N \in \mathbb{N}, \, m_i > 0, \, z_i \in X, \, z_i \ne z_j \textrm{ if } i \ne j \right\}.
\]
Recall that $\mathcal{P}_\mathrm{d}(X)$ denotes the set of discrete probability measures, $\mathcal{P}_\mathrm{d}(X)=\mathcal{M}_\mathrm{d}(X)\cap \mathcal{P}(X)$,
and that, for $\mu \in \Pd$, $N_\mu := \# \mathrm{supp}(\mu)$.
For brevity, in an abuse of notation, we denote the preimage of a singleton set $\{ z\} \subset X$ under a map $T:X \to X$ by $T^{-1}(z)$ instead of $T^{-1}(\{z\})$.


\subsection{Facts from optimal transport theory and convex geometry}
We start by recalling the characterization of solutions of the semi-discrete transport problem  \eqref{eq:Wassdist_p} for the case $p=2$. The following result goes back to \cite{Aurenhammer98} and  is now well-known in the optimal transport community; see for example \cite{LevySchwindt2018,Merigot2011,MerigotThibert2021}.

\begin{lemma}[Characterization of the optimal transport map]\label{lem:cellspolygons}
Let $U\subset\R^2$ be a convex polygon, $\mu = \sum_{i=1}^{N_\mu} m_i \delta_{z_i} \in\mathcal{P}_\mathrm{d}(U)$,
 $g \in L^1(U;\mathbb{R}^+)$, $\int_U g \, \dd x =1$, and $W_2(g,\mu)$ be the Wasserstein metric
\[
W_2(g,\mu) \:=\inf\left\{ \int_U |x-T(x)|^2 g(x)\,\dd x \, : \,
T : U\to \{z_i\}_{i=1}^{N_\mu} \textrm{ is Borel}, \, \int_{T^{-1}(z_i) }g(x) \,\dd x = m_i \, \forall \, i \right\}^{\frac 12}.
\]
Then the infimum is attained and the minimizer
$T:U\to\{z_i\}_{i=1}^{N_\mu}$ is unique (up to a set of measure zero).
Moreover, by possibly modifying $T$ on a set of measure zero,
there exists $(w_1,\dots, w_{N_\mu})\in\R^{N_\mu}$ such that
\[
\overline{T^{-1}(z_i)}=\left\{ z\in U : |z-z_i|^2 - w_i \leq |z-z_j|^2 - w_j 
	\text{ for all } j=1,\dots, N_\mu \right\}.
\]
\end{lemma}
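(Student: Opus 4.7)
The plan is the classical Kantorovich-duality argument for semi-discrete quadratic optimal transport, carried out in three steps.

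First, I would relax the Monge formulation to a Kantorovich one and eliminate the continuous dual potential via the $c$-transform $\phi_w(x) := \min_i (|x-z_i|^2 - w_i)$, reducing to the finite-dimensional concave maximization $\sup_{w \in \R^{N_\mu}} \Phi(w)$ with
\[
\Phi(w) := \int_U \phi_w(x)\, g(x)\, \dd x + \sum_{i=1}^{N_\mu} w_i m_i.
\]
I would check that $\Phi$ is continuous, concave, invariant under adding a constant to all $w_i$, and coercive modulo this direction (using $m_i > 0$ and $g \geq c > 0$), so that a maximizer $w^* \in \R^{N_\mu}$ exists.

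Second, I would define the Laguerre cells
\[
L_i := \{ x \in U : |x-z_i|^2 - w_i^* \leq |x-z_j|^2 - w_j^* \text{ for all } j\},
\]
and the map $T(x) := z_i$ on $L_i \setminus \bigcup_{j<i} L_j$. Since each defining inequality is affine in $x$ (the $|x|^2$ terms cancel), each $L_i$ is the intersection of $U$ with $N_\mu - 1$ closed half-planes, hence a closed convex polygon — this is the structural conclusion of the lemma. Pairwise intersections $L_i \cap L_j$ lie on lines and so are Lebesgue-null, which makes $\Phi$ differentiable at $w^*$; the first-order condition $\partial_i \Phi(w^*) = 0$ reads $\int_{L_i} g\, \dd x = m_i$, so $T$ is admissible.

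Third, I would verify optimality and uniqueness. For any admissible $T'$ with $T'(x) = z_{I(x)}$, the dual constraint $\phi_{w^*}(x) + w_i^* \leq |x-z_i|^2$ yields
\[
\int_U |x - T'(x)|^2 g \, \dd x \geq \int_U \phi_{w^*}(x)\, g\, \dd x + \sum_{i=1}^{N_\mu} w_i^* m_i,
\]
with equality for $T' = T$ because $\phi_{w^*}(x) = |x-z_i|^2 - w_i^*$ on $L_i$. Hence $T$ minimizes, and any other minimizer must saturate the dual constraint almost everywhere, forcing agreement with $T$ on the interior of every cell. The main obstacle is establishing coercivity of $\Phi$ modulo constants needed for existence of $w^*$; here the positivity of every mass $m_i$ is essential, and one shows that driving any $w_i$ to $\pm\infty$ relative to the others sends $\Phi \to -\infty$ because the mass charged to $z_i$ through $\phi_w$ cannot match the fixed positive quantity $m_i$.
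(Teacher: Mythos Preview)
Your proof outline is correct and follows the standard Kantorovich-duality approach to semi-discrete optimal transport. The paper itself does not prove this lemma at all: it simply states the result as well-known and cites \cite{Aurenhammer98}, \cite{LevySchwindt2018}, \cite{Merigot2011}, and \cite{MerigotThibert2021}. So you have supplied a genuine proof where the paper supplies only references; the argument you sketch is essentially the one found in those references (maximize the concave dual functional $\Phi(w)$, read off the Laguerre cells from the $c$-transform, and close with weak duality).

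One small point to tighten: you invoke ``$g \geq c > 0$'' in the coercivity step, but the lemma as stated only assumes $g \in L^1(U;\R^+)$, i.e.\ $g > 0$ a.e.\ with no uniform lower bound. Fortunately the coercivity of $\Phi$ modulo constants does not need the uniform bound. Fixing all coordinates but $w_k$: if $w_k \to +\infty$ then eventually $L_k = U$ and $\Phi(w) \sim (\textrm{const}) + w_k(m_k - 1) \to -\infty$ since $m_k < 1$; if $w_k \to -\infty$ then $L_k$ empties and the linear term $w_k m_k \to -\infty$ since $m_k > 0$. The same reasoning extends along any direction orthogonal to $(1,\dots,1)$. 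So your argument stands once you drop the unnecessary hypothesis.
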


\begin{remark}[Laguerre cells]
The previous lemma implies that the partition $\{\overline{T^{-1}(z_i)} \}_{i=1}^{N_\mu}$ is the \emph{Laguerre tessellation} or \emph{power diagram} generated by the weighted points $\{ (z_i, w_i) \}_{i=1}^{N_\mu}$; see \cite{AurenhammerKleinLee13,MerigotThibert2021}.
The sets $\overline{T^{-1}(z_i)}$
are convex polygons, known as \emph{Laguerre cells} or \emph{power cells}.
\end{remark}

We now recall a classical result by L.~Fejes T\'{o}th (see \cite[p.~198]{FEJ}), which says that the minimal second moment of an $n$-gon is greater than or equal to the minimal second moment of a \emph{regular} $n$-gon of the same area:

\begin{lemma}[Fejes T\'oth's Moment Lemma]\label{lem:Fejes}
For $n\in\N$, $n\geq 3$, define
\[
c_n:=\inf\left\{ \min_{\xi\in\R^2} \int_P |x-\xi|^2 \dd x : P \text{ is an $n$-gon},\, |P|=1  \right\}.
\]
Then the infimum is attained by a regular $n$-gon.
Consequently a direct calculation gives
\begin{equation}\label{eq:cn}
c_n=\frac{1}{2n}\left( \frac{1}{3}\tan\frac{\pi}{n}+\cot\frac{\pi}{n} \right).
\end{equation}
\end{lemma}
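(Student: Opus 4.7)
The plan is to reduce the problem to a finite-dimensional convex optimisation over triangle data, via a classical triangulation-and-symmetrisation argument.

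First, by the parallel axis theorem, for each fixed $P$ the map $\xi \mapsto \int_P |x-\xi|^2 \, \dd x$ is minimised at the centroid $\xi_P$ of $P$, so I may replace $\min_{\xi\in\R^2}$ by evaluation at $\xi=\xi_P$. Next I reduce to convex polygons: passing from a non-convex $n$-gon to its convex hull keeps (or strictly decreases) the second moment about the centroid while producing a convex polygon with at most $n$ sides, and any convex $k$-gon with $k<n$ can be regarded as a degenerate convex $n$-gon by placing extra vertices on existing edges without changing the moment. So I may assume $P$ is a convex $n$-gon with $|P|=1$ and $\xi_P\in\mathrm{int}\,P$, and I draw the $n$ segments from $\xi_P$ to the vertices of $P$ to obtain a partition $P=\bigcup_{i=1}^n T_i$ into triangles with common apex $\xi_P$. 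Write $a_i:=|T_i|$ and $\theta_i$ for the apex angle of $T_i$ at $\xi_P$, so $\sum_i a_i=1$ and $\sum_i \theta_i = 2\pi$.

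The core geometric step is then a one-triangle inequality: for any triangle $T$ with apex $\xi$, apex angle $\theta\in(0,\pi)$ and area $a$,
\[
\int_T |x-\xi|^2 \, \dd x \;\ge\; \psi(\theta,a), \qquad \psi(\theta,a):=\frac{a^2}{2}\Big(\cot\tfrac{\theta}{2}+\tfrac{1}{3}\tan\tfrac{\theta}{2}\Big),
\]
with equality iff $T$ is isoceles (i.e.\ symmetric about the angle bisector at $\xi$). I would prove this by Steiner symmetrisation, fixing $\xi$, the apex angle, and the area (equivalently the distance from $\xi$ to the opposite line times a factor), and observing that symmetrising the opposite edge about the angle bisector can only decrease $\int_T |x-\xi|^2\,\dd x$; the value $\psi(\theta,a)$ for the symmetric case is a direct polar-coordinate computation of $\frac14\int_{-\theta/2}^{\theta/2}(h/\cos\varphi)^4\,\dd \varphi$ followed by eliminating the height $h$ via $a=h^2\tan(\theta/2)$. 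Summing over $i$ yields
\[
\int_P |x-\xi_P|^2\,\dd x \;\ge\; \sum_{i=1}^n \psi(\theta_i,a_i).
\]

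It then remains to show that $\sum_i \psi(\theta_i,a_i) \ge n\,\psi(2\pi/n,1/n)$ under the constraints $\sum a_i=1$, $\sum \theta_i=2\pi$. Writing $\psi(\theta,a)=a^2 g(\theta)$ with $g(\theta):=\frac{1}{2}(\cot(\theta/2)+\tfrac13\tan(\theta/2))$, I first minimise in the $a_i$ for fixed $\theta_i$: by a Lagrange multiplier (or Cauchy--Schwarz) argument the minimum is $a_i^\star\propto 1/g(\theta_i)$, giving
\[
\min_{\sum a_i=1}\sum_i a_i^2 g(\theta_i) = \Big(\sum_{i=1}^n \tfrac{1}{g(\theta_i)}\Big)^{-1}.
\]
It therefore suffices to show that $\theta\mapsto 1/g(\theta)$ is concave on $(0,\pi)$, so that Jensen's inequality gives $\sum_i 1/g(\theta_i)\le n/g(2\pi/n)$ and hence $\sum_i \psi(\theta_i,a_i)\ge \frac{g(2\pi/n)}{n} = n\,\psi(2\pi/n, 1/n)$. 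The concavity of $1/g$ reduces to a single-variable calculus check on $(0,\pi)$ and is the main analytic obstacle of the proof. Finally, plugging $\theta_i=2\pi/n$, $a_i=1/n$ into the chain of inequalities and summing gives
\[
\int_P |x-\xi_P|^2\,\dd x \;\ge\; n\cdot \frac{1}{2n^2}\Big(\cot\tfrac{\pi}{n}+\tfrac13\tan\tfrac{\pi}{n}\Big) = \frac{1}{2n}\Big(\tfrac13\tan\tfrac{\pi}{n}+\cot\tfrac{\pi}{n}\Big),
\]
with equality when $P$ is a regular $n$-gon centred at $\xi_P$ (each $T_i$ then isoceles and congruent), which simultaneously attains the infimum and gives the formula \eqref{eq:cn}.
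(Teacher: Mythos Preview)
The paper does not prove this lemma; it merely cites Fejes T\'oth's book. Your proposal reconstructs the classical triangulation argument and is essentially correct, but two steps deserve tightening.

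First, the reduction to convex polygons via ``passing to the convex hull keeps (or strictly decreases) the second moment about the centroid'' is not justified as written: the convex hull has \emph{larger} area, and after rescaling to unit area there is no general monotonicity of the normalised second moment. For the paper's purposes this is harmless---the lemma is only ever applied to Laguerre cells, which are already convex---so you may simply restrict to convex $n$-gons from the outset (and treat $k$-gons with $k<n$ as degenerate $n$-gons, as you already do).

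Second, calling the one-triangle step ``Steiner symmetrisation'' is imprecise: symmetrising a generic triangle about the apex angle bisector need not return a triangle. A cleaner argument is direct: with apex at the origin, apex angle $\theta$, and the other two vertices at distances $r_1,r_2$, one has $a=\tfrac12 r_1r_2\sin\theta$ and $\int_T|x|^2\,\dd x=\tfrac{a}{6}\bigl(r_1^2+r_2^2+r_1r_2\cos\theta\bigr)$; with $r_1r_2$ fixed by the area constraint, AM--GM gives the minimum at $r_1=r_2$, and this recovers your $\psi(\theta,a)$. Finally, the concavity of $1/g$ on $(0,\pi)$ that you flag as ``the main analytic obstacle'' is in fact routine: writing $1/g(\theta)=3\sin\theta/(2+\cos\theta)$, one computes $(1/g)''(\theta)=6\sin\theta(\cos\theta-1)/(2+\cos\theta)^3<0$.
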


\begin{remark}\label{rem:cnerscaled}
Note that a change of variables gives 
\[
\inf\left\{ \min_{\xi\in\R^2} \int_P |x-\xi|^2 \dd x : P \text{ is an $n$-gon},\, |P|=m  \right\}
	=c_n m^2
\]
for all $m>0$.
\end{remark}

We extend the definition of $c_n$ to all $n\in[3,\infty)$
using equation \eqref{eq:cn}. 
\blue{Its main properties are stated in the next result, whose proof is a direct computation (see \cite{Gru_Fej}).}

\begin{lemma}[Properties of $c_n$]\label{lem:cn}
The function $n \mapsto c_n$, $n \in [3,\infty)$, is convex and decreasing. Moreover
\[
\lim_{n\to\infty}c_n=c_\infty:=\frac{1}{2\pi}.
\]
\end{lemma}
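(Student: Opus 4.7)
The plan is to treat $c_n$ as a smooth function of the real variable $n \in [3,\infty)$ via the explicit formula
\[
c_n = \frac{1}{2n}\Bigl(\tfrac{1}{3}\tan\tfrac{\pi}{n}+\cot\tfrac{\pi}{n}\Bigr),
\]
and then verify each of the three claims by direct computation. A convenient substitution is $t := \pi/n$, which maps $n \in [3,\infty)$ bijectively and decreasingly onto $t \in (0,\pi/3]$; under this change of variables,
\[
c_n = \frac{1}{2\pi}\,\phi(t), \qquad \phi(t) := t\bigl(\tfrac{1}{3}\tan t + \cot t\bigr).
\]
Since $t$ is a strictly decreasing smooth function of $n$, convexity and monotonicity statements for $n \mapsto c_n$ translate directly into statements about $\phi$ and its derivatives in $t$ on $(0,\pi/3]$.

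For the limit $n \to \infty$ (i.e.\ $t \to 0^+$), I would use the expansions $\tan t = t + \tfrac{t^3}{3} + O(t^5)$ and $\cot t = \tfrac{1}{t} - \tfrac{t}{3} + O(t^3)$. The leading singular term $\tfrac{1}{t}$ survives while the $\pm\tfrac{t}{3}$ contributions cancel after multiplying $\tan t$ by $1/3$, giving
\[
\tfrac{1}{3}\tan t + \cot t = \tfrac{1}{t} + O(t^3), \qquad \phi(t) = 1 + O(t^4),
\]
so $c_n \to \tfrac{1}{2\pi}$. This also furnishes a quantitative rate, which is useful for the monotonicity check near $t=0$.

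For monotonicity, note that $n \mapsto c_n$ is decreasing on $[3,\infty)$ iff $\phi$ is increasing on $(0,\pi/3]$. A direct computation gives
\[
\phi'(t) = \tfrac{1}{3}\tan t + \cot t + t\bigl(\tfrac{1}{3}\sec^2 t - \csc^2 t\bigr),
\]
which I would rewrite using $\sin^2 t\cos^2 t = \tfrac{1}{4}\sin^2 2t$ to obtain a single trigonometric expression whose positivity on $(0,\pi/3]$ can be checked by expanding as a power series (this controls $t$ near $0$) and by monotonicity/explicit evaluation on a compact sub-interval bounded away from $0$. For convexity I would differentiate again, using the chain rule with $\tfrac{dt}{dn} = -t^2/\pi$ and $\tfrac{d^2 t}{dn^2} = 2t^3/\pi^2$, and reduce $\tfrac{d^2 c_n}{dn^2} \ge 0$ to the inequality $t\phi''(t) + 2\phi'(t) \ge 0$ on $(0,\pi/3]$, which is again a trigonometric identity to be verified.

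The main obstacle is not any conceptual issue but the bookkeeping of trigonometric expressions: the derivatives $\phi'$ and $\phi''$ each involve several terms of mixed sign, and one has to show their combined positivity uniformly on $(0,\pi/3]$. I would handle this in two regimes, using Taylor expansions near $t = 0$ (equivalently, large $n$, where the leading coefficients can be computed explicitly) and exploiting smoothness together with monotonicity of the constituent trigonometric functions on the compact complementary interval, exactly as carried out in \cite{Gru_Fej}.
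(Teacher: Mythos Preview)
Your proposal is correct and aligned with the paper's treatment: the paper does not give an explicit proof but simply states that the result follows by a direct computation, referring to \cite{Gru_Fej}. Your outline via the substitution $t=\pi/n$ and the reduction of convexity to $t\phi''(t)+2\phi'(t)\ge 0$ is exactly the kind of direct computation intended, and in fact supplies more detail than the paper itself.
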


Finally, we recall one more result from convex geometry, which follows from Euler's polytope formula. It is proved for example in \cite[Lemma 4]{BouPelTheil} or \cite[Lemma 3.3]{BolMor}.

\blue{
\begin{lemma}[Partitions by convex polygons]\label{lemma:PCP}
Let $U \subset \mathbb{R}^2$ be a convex polygon with at most 6 sides. 
In any partition of $U$ 
by convex polygons, the average number of edges per polygon is less than or equal to $6$. 	
\end{lemma}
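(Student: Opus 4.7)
The plan is a standard Euler-characteristic argument for planar subdivisions, adapted to account for the boundary of $U$. Let $\{P_i\}_{i=1}^N$ be the partition, let $n_i$ be the number of sides of $P_i$, and let $c \leq 6$ denote the number of corners of $U$. I would first encode the partition as a planar graph $G$ whose vertices are the union of the corners of all the $P_i$ with the corners of $U$, whose edges are the segments of $\bigcup_i \partial P_i$ between consecutive vertices, and whose faces are the open cells $P_i$ together with a single unbounded face, so that $F = N + 1$ and Euler's formula reads $V - E + F = 2$, equivalently $V = E - N + 1$.

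The next step is a degree count to bound $E$ from above. Because each cell is convex and non-degenerate, at every vertex lying in the interior of $U$ or in the relative interior of an edge of $\partial U$ at least three edges of $G$ meet (otherwise no vertex would be needed there), while at each of the $c$ corners of $U$ at least two edges meet. Summing degrees therefore gives
\[
2E \;=\; \sum_v \deg(v) \;\geq\; 3(V-c)+2c \;=\; 3V - c,
\]
and combining with $V = E - N + 1$ yields $E \leq 3N + c - 3$.

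I would then translate this into a bound on $\sum_i n_i$. Splitting $E = E_{\mathrm{int}}+E_{\mathrm{bdry}}$ according to whether an edge lies on $\partial U$ or not, each interior edge is shared by exactly two cells and each boundary edge by exactly one, so $\sum_i n_i = 2E_{\mathrm{int}} + E_{\mathrm{bdry}} = 2E - E_{\mathrm{bdry}}$. Since the $c$ corners of $U$ are all vertices of $G$ and subdivide $\partial U$ into at least $c$ edges, $E_{\mathrm{bdry}} \geq c$, and hence
\[
\sum_{i=1}^N n_i \;\leq\; 2(3N + c - 3) - c \;=\; 6N + c - 6 \;\leq\; 6N,
\]
the last inequality using the hypothesis $c \leq 6$. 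Dividing by $N$ gives the claim.

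The main point requiring care, and really the only subtle one, is the convention for what counts as a vertex or edge of $G$: at a T-junction, where a vertex of one cell lies in the relative interior of a side of another, that side must be subdivided into two $G$-edges. This convention can only increase the $G$-based side count compared with the geometric number of sides $n_i$ of each $P_i$, so the combinatorial bound controls the geometric quantity appearing in the statement.
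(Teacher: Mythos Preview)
Your argument is correct and is precisely the Euler-formula computation that the paper invokes (the paper does not give its own proof but refers to \cite[Lemma 4]{BouPelTheil} and \cite[Lemma 3.3]{BolMor}, where exactly this counting is carried out). One small point you might make explicit is that the $1$-skeleton $\bigcup_i \partial P_i$ is connected (since each $\partial P_i$ is connected and adjacent cells share an edge, connectedness of the dual graph propagates), so that Euler's relation $V-E+F=2$ applies; alternatively, note that allowing $V-E+F\ge 2$ only improves the bound $E\le 3N+c-3$.
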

}


\section{Scaling of the asymptotic quantization error}\label{sec:asympt}

\subsection{The constrained optimal location problem}
\label{sec:asympt_con}

We report here a result about the asymptotic quantization error from \cite{BouJimMaha}.

\begin{definition}[Young measures]
Given $\varepsilon>0$ and a measure $\mu\in\Pd$ of the form
\[
\mu=\sum_{i=1}^{N_\mu} m_i \blue{\delta_{z_i}},
\]
define the measures $\rho(\mu)\in\dmr$ and
$\lambda_\varepsilon(\mu)\in\mathcal{M}_\mathrm{d}(\Omega\times\R^+)$ by
\[
\rho(\mu):=\sum_{i=1}^{N_\mu} m_i \delta_{m_i},
\quad\quad
\lambda_\varepsilon(\mu):=\sum_{i=1}^{N_\mu} m_i\delta_{\left( z_i,\frac{m_i}{\varepsilon^2} \right)}.
\]
Observe that the first marginal of $\lambda_\varepsilon(\mu)$ is $\mu$ and that the second marginal of $\lambda_1(\mu)$ is $\rho(\mu)$.
\end{definition}

In order to define the cell formula for the asymptotic quantization error, we need to introduce the following metric on the space of probability measures. Given $\rho_1, \rho_2\in \pr$, we define
\[
\mathrm{d}_{\mathrm{BL}} (\rho_1, \rho_2):= \sup \left\{ \int_{\mathbb{R}^+} \varphi \, d(\rho_1-\rho_2) :  \varphi\in \mathrm{Lip}(\R^+), \,  |\varphi|_{\infty} + \mathrm{Lip}(\varphi) \leq 1 \right\},
\]
where $\mathrm{Lip}(\R^+)$ is the space of Lipschitz continuous functions on $\R^+$ and $ \mathrm{Lip}(\varphi)$ denotes the Lipschitz constant of $\varphi$.
It is well known that $\mathrm{d}_{\mathrm{BL}}$ metrizes tight convergence (see \cite[Theorem 11.3.3]{DM}).

The energy density of the asymptotic quantization error is introduced as follows.

\begin{definition}[Cell formula]
Given $t>0$ and $\rho\in\pr$, define
\[
G_t(\rho):=\inf_{k>0}\frac{S_t(\rho, Q_k)}{k^2}
\]
where $Q_k:=[-k/2,k/2]^2\subset\R^2$ and
\[
S_t(\rho, Q_k):=\inf\left\{ W^2_2(\ca_{Q_k}, \mu) +\frac{k^2}{t^2}\mathrm{d}_{\mathrm{BL}}(\rho,\rho(\mu))
	: \mu\in\mathcal{P}_\mathrm{d}(Q_k) \right\}.
\]
Define $G:\pr\to\R$ by
\[
G(\rho):=\sup_{t>0} G_t(\rho) = \lim_{t \to 0} G_t(\rho).
\]
\end{definition}

Given $\lambda \in \mathcal{M}(\Omega \times \mathbb{R}^+)$, let $\pi_1 \# \lambda$ denote its first marginal, where $\pi_1 : \Omega \times \mathbb{R}^+ \to \Omega$ is the projection $\pi_1(x,t)=x$.
One of the main results of Bouchitt\'{e}, Jimenez and Mahadevan \cite[Theorem 3.1]{BouJimMaha} is the following:

\begin{theorem}[Gamma-limit of the quantization error]\label{thm:Bouchitte}
For $\varepsilon>0$, let
\begin{equation}\label{eq:funcBou}
\mathcal{E}_{\varepsilon}(\lambda):=
\begin{cases}
\frac{1}{\varepsilon^2} W^2_2(f, \mu) & \text{if } \lambda=\lambda_\varepsilon(\mu), \, \mu\in\Pd,
\\
+ \infty & \text{otherwise.}
\end{cases}
\end{equation}
Then $\mathcal{E}_{\varepsilon}\stackrel{\Gamma}{\rightarrow}\mathcal{E}_{0}$ with respect to tight convergence on $\mathcal{M}(\Omega\times\R^+)$ where
\[
\mathcal{E}_{0}(\lambda):=
\begin{cases}
\displaystyle\int_\Omega G(\lambda^x) \dx & \text{if } 
\lambda=f  \dd x\otimes\lambda^x, 
\\
+ \infty & \text{otherwise,}
\end{cases}
\]
where $f \dd x\otimes \lambda^x$ denotes the disintegration of $\lambda$ with respect to $f \dd x$; see \cite[Theorem 2.28]{AFP}.
\end{theorem}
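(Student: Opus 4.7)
The proof follows the standard two-step structure of a $\Gamma$-convergence theorem via a blow-up argument at the scale $\varepsilon$, tailored to the scaling of masses in the definition of $\lambda_\varepsilon(\mu)$. First I would establish compactness: if $\mathcal{E}_\varepsilon(\lambda_\varepsilon)$ is uniformly bounded, then $\lambda_\varepsilon = \lambda_\varepsilon(\mu_\varepsilon)$ with $W_2^2(f, \mu_\varepsilon) \le C\varepsilon^2 \to 0$, so $\mu_\varepsilon = \pi_1 \# \lambda_\varepsilon \to f\,\dd x$ tightly. Hence any tight limit $\lambda$ of a subsequence has first marginal $f\,\dd x$ and therefore admits the disintegration $\lambda = f\,\dd x \otimes \lambda^x$; this justifies the form of the domain of $\mathcal{E}_0$.

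\textbf{Liminf inequality.} Fix $k, t > 0$ and partition $\Omega$ (up to a boundary layer whose measure vanishes as $\varepsilon \to 0$) into disjoint squares $Q^j_\varepsilon := x^\varepsilon_j + k\varepsilon\, Q$ of side length $k\varepsilon$. By lower semi-continuity and strict positivity of $f$, its restriction to each $Q^j_\varepsilon$ is nearly the constant $f(x^\varepsilon_j)$. On each cube, rescaling positions by $\varepsilon^{-1}$ (which maps $Q^j_\varepsilon$ to $Q_k$) and masses by $\varepsilon^{-2}$ turns the local quantization problem into one in the scope of the cell formula $S_t$. Applying the definition of $S_t$ to the rescaled local Young measure $\lambda_\varepsilon|_{Q^j_\varepsilon \times \R^+}$, and absorbing the $\mathrm{d}_{\mathrm{BL}}$-penalty between this measure and $\lambda^{x^\varepsilon_j}$, yields a lower bound of the form $|Q^j_\varepsilon|\, G_t(\lambda^{x^\varepsilon_j}) + o_\varepsilon(1)$ on each cube. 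Summing over $j$ produces a Riemann sum converging to $\int_\Omega G_t(\lambda^x)\,\dd x$, and taking the supremum over $t$ gives the bound.

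\textbf{Limsup inequality.} Given $\lambda = f\,\dd x \otimes \lambda^x$ with $\mathcal{E}_0(\lambda) < \infty$, fix $k, t > 0$ and use a measurable selection theorem to pick, for a.e.~$x \in \Omega$, a near-minimizer $\nu^x \in \mathcal{P}_\mathrm{d}(Q_k)$ of $S_t(\lambda^x, Q_k)$. Cover $\Omega$ by a grid of squares $Q^j_\varepsilon$ of side $k\varepsilon$ and populate each one with a translated, $\varepsilon$-dilated copy of $\nu^{x^\varepsilon_j}$ (positions dilated by $\varepsilon$, masses scaled by $\varepsilon^2$). A direct computation yields tight convergence $\lambda_\varepsilon(\mu_\varepsilon) \to \lambda$ together with
\[
\limsup_{\varepsilon\to 0}\mathcal{E}_\varepsilon(\lambda_\varepsilon) \le \int_\Omega \frac{S_t(\lambda^x, Q_k)}{k^2}\,\dd x.
\]
A diagonal argument, passing to the infimum in $k$ (using $G_t(\rho) = \inf_k S_t(\rho, Q_k)/k^2$) and then the supremum in $t$ (using $G(\rho) = \sup_t G_t(\rho)$), recovers $\int_\Omega G(\lambda^x)\,\dd x$.

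\textbf{Main obstacle.} The delicate step is controlling the bounded-Lipschitz defect between the Young measure of mass values of the local configurations (constructed or given) and the target $\lambda^x$, uniformly in $x$ and $\varepsilon$. The $k^2/t^2$-weighted penalty in the definition of $S_t$ is engineered precisely to absorb this defect in the liminf, while in the limsup one must verify that the constructed local Young measures track $\lambda^x$ in $\mathrm{d}_{\mathrm{BL}}$ uniformly enough for the penalty term to become negligible in the limit. Coordinating the interplay of the three parameters $\varepsilon, k, t$ through a careful diagonal argument, and justifying the measurability of the selection $x \mapsto \nu^x$, constitutes the technical heart of the proof.
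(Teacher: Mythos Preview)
The paper does not prove this theorem. It is quoted verbatim as \cite[Theorem 3.1]{BouJimMaha} and used as a black box; the sentence preceding the statement reads ``One of the main results of Bouchitt\'{e}, Jimenez and Mahadevan \cite[Theorem 3.1]{BouJimMaha} is the following,'' and no proof is given. So there is nothing in the paper to compare your proposal against.

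That said, your outline is a plausible sketch of how the result in \cite{BouJimMaha} is actually established: the liminf via a localization/blow-up at scale $\varepsilon$ exploiting the very definition of $S_t$ and $G_t$, and the limsup via a gluing construction over a grid of $k\varepsilon$-squares populated by near-optimal local configurations, with a diagonal argument in $(k,t,\varepsilon)$. A few cautionary remarks if you intend to make this rigorous rather than heuristic. First, in the liminf the passage from $f$ to the constant $f(x_j^\varepsilon)$ on each cube is not free: lower semi-continuity alone does not give a two-sided approximation, and the cell problem $S_t$ is defined with the \emph{uniform} background $\ca_{Q_k}$, so you need a genuine comparison of the transport cost with density $f$ to that with a constant density, controlled by the oscillation of $f$ on the cube. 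Second, in the limsup the order of limits matters: $G(\rho)=\sup_t G_t(\rho)$ but you need an \emph{upper} bound by $G$, so you must first take the infimum in $k$ (which gives $G_t$) and then show that letting $t\to 0$ does not increase the limsup; this is where the $\mathrm{d}_{\mathrm{BL}}$ penalty vanishing in the limit is used, and it requires that the constructed $\lambda_\varepsilon(\mu_\varepsilon)$ actually converge tightly to $\lambda$ itself, not merely have the correct first marginal. Third, the measurable selection needs some care because the map $x\mapsto S_t(\lambda^x,Q_k)$ is only known to be Borel, and the argmin is a set of measures; Aumann's theorem (as used later in the paper for Corollary~\ref{cor:monoCp}) is the right tool. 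None of these are fatal, but each swallows a page or two in \cite{BouJimMaha}.
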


Bouchitt\'{e}, Jimenez and Mahadevan used Theorem \ref{thm:Bouchitte} to prove the following result about the scaling of the asymptotic quantization error for the constrained optimal location problem; see \cite[Lemma 3.10, Proposition 3.11(i)]{BouJimMaha}\blue{.}

\begin{corollary}[Asymptotic quantization error for the constrained problem]\label{cor:monotonicityCc}
For all \blue{$\alpha\in(-\infty,1)$},
\[
\lim_{L\to\infty}\left[L^{\frac{1}{1-\alpha}}\,\mathrm{m_c}(\alpha,L)\right]
	= C_{\mathrm{c}}(\alpha)\left( \int_\o f(x)^{\frac{1}{2-\alpha}} \, \dd x \right)^{\frac{2-\alpha}{1-\alpha}}
\]
where
\begin{equation}
\label{eq:DefCcalpha}
C_{\mathrm{c}}(\alpha) := \min \left\{ G(\rho) : \rho\in\mathcal{P}(\R^+),\,
	\int_0^\infty t^{\alpha-1} \,\mathrm{d}\rho(t)\leq 1 \right\}.
\end{equation}
Moreover, the function $\alpha\mapsto C_{\mathrm{c}}(\alpha)$ is non-increasing.
\end{corollary}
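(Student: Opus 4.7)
The plan is to derive this corollary from the Gamma-convergence of Theorem~\ref{thm:Bouchitte} by a rescaling argument, together with a scaling analysis of the cell formula, and to obtain the monotonicity by Jensen's inequality.

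First I would fix the rescaling. Set $\varepsilon = L^{-1/(2(1-\alpha))}$, so that $L^{1/(1-\alpha)} = \varepsilon^{-2}$ and $L\varepsilon^{2(1-\alpha)} = 1$. For $\mu = \sum_i m_i \delta_{z_i}\in\Pd$, a direct computation gives
\[
\int_0^\infty t^{\alpha-1}\,\mathrm{d}(\pi_2\#\lambda_\varepsilon(\mu))(t) \;=\; \sum_i m_i \left(\frac{m_i}{\varepsilon^2}\right)^{\alpha-1} \;=\; \varepsilon^{2(1-\alpha)}\sum_i m_i^\alpha,
\]
so the constraint $\sum_i m_i^\alpha\le L$ is equivalent to $\int t^{\alpha-1}\,\mathrm{d}(\pi_2\#\lambda_\varepsilon(\mu))\le 1$. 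Since $\mathcal{E}_\varepsilon(\lambda_\varepsilon(\mu)) = \varepsilon^{-2}W_2^2(f,\mu)$, the constrained quantization problem becomes
\[
L^{1/(1-\alpha)}\,\mathrm{m_c}(\alpha,L) \;=\; \inf\Big\{\mathcal{E}_\varepsilon(\lambda_\varepsilon(\mu))\,:\,\mu\in\Pd,\ \int t^{\alpha-1}\,\mathrm{d}(\pi_2\#\lambda_\varepsilon(\mu))\le 1\Big\}.
\]
I would then pass to the limit via Theorem~\ref{thm:Bouchitte}: equicoercivity along the constrained set (the only singularity of $t^{\alpha-1}$ is at $t=0^+$, which the bound $\int t^{\alpha-1}\,\mathrm{d}(\pi_2\#\lambda_\varepsilon)\le 1$ precisely forbids, preventing mass concentration at zero) together with stability of the constraint under tight convergence yields, by the Fundamental Theorem of $\Gamma$-convergence,
\[
\lim_{L\to\infty} L^{1/(1-\alpha)}\,\mathrm{m_c}(\alpha,L) \;=\; \min\Big\{\mathcal{E}_0(\lambda)\,:\, \pi_1\#\lambda = f\,\mathrm{d}x,\ \int t^{\alpha-1}\,\mathrm{d}(\pi_2\#\lambda)\le 1\Big\}.
\]

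To unpack this as the claimed formula, I would disintegrate $\lambda = f\,\mathrm{d}x\otimes\lambda^x$ with $\lambda^x\in\mathcal{P}(\R^+)$, so that the right-hand side reads
\[
\min\Big\{\int_\Omega G(\lambda^x)\,\mathrm{d}x \;:\; \int_\Omega\Big(\int t^{\alpha-1}\,\mathrm{d}\lambda^x\Big) f(x)\,\mathrm{d}x \le 1\Big\}.
\]
From the cell formula I would extract the homogeneity $G((D_\beta)_\#\rho) = \beta\, G(\rho)$ for $\beta>0$, where $D_\beta(t) = \beta t$ (obtained by simultaneously rescaling $Q_k\mapsto \sqrt\beta\,Q_k$ and the candidate particles in $S_t(\rho,Q_k)$), while $\int t^{\alpha-1}\,\mathrm{d}((D_\beta)_\#\rho) = \beta^{\alpha-1}\int t^{\alpha-1}\,\mathrm{d}\rho$. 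Introducing a Lagrange multiplier for the global constraint and optimising pointwise in $x$ over $\rho$ by choosing the optimal dilation $\beta(x)$, the exponent bookkeeping $1+(\alpha-1)/(2-\alpha) = 1/(2-\alpha)$ produces the factor $\big(\int_\Omega f^{1/(2-\alpha)}\,\mathrm{d}x\big)^{(2-\alpha)/(1-\alpha)}$ and identifies the remaining scalar as $C_\mathrm{c}(\alpha)$ from \eqref{eq:DefCcalpha}.

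For the monotonicity, let $\alpha_1<\alpha_2<1$ and $\rho\in\mathcal{P}(\R^+)$. Since $(1-\alpha_2)/(1-\alpha_1)\in(0,1)$, the map $s\mapsto s^{(1-\alpha_2)/(1-\alpha_1)}$ is concave on $\R^+$, and Jensen's inequality applied to $t^{\alpha_1-1}$ under $\rho$ gives
\[
\int_0^\infty t^{\alpha_2-1}\,\mathrm{d}\rho \;=\; \int \big(t^{\alpha_1-1}\big)^{(1-\alpha_2)/(1-\alpha_1)}\,\mathrm{d}\rho \;\le\; \Big(\int t^{\alpha_1-1}\,\mathrm{d}\rho\Big)^{(1-\alpha_2)/(1-\alpha_1)}.
\]
Thus any $\rho$ feasible for $C_\mathrm{c}(\alpha_1)$ is feasible for $C_\mathrm{c}(\alpha_2)$, so $C_\mathrm{c}(\alpha_2)\le C_\mathrm{c}(\alpha_1)$. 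I expect the main technical hurdle to be the third step: deriving the scaling $G(D_\beta{}_\#\rho)=\beta G(\rho)$ directly from the cell-formula definition of $G$ (where spatial and mass scales are coupled through $S_t(\rho,Q_k)$) and then executing the pointwise Lagrangian optimisation cleanly enough to expose the $f^{1/(2-\alpha)}$ weight.
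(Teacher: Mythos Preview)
The paper does not give its own proof of this corollary: it is quoted directly from \cite[Lemma 3.10, Proposition 3.11(i)]{BouJimMaha}. Your outline is precisely the strategy used there, and it is also the template the present paper follows when proving the penalized analogue (Corollary~\ref{cor:monoCp}): pass to the $\Gamma$-limit of Theorem~\ref{thm:Bouchitte}, then exploit the dilation homogeneity $G((D_\beta)_\#\rho)=\beta\,G(\rho)$ (this is \cite[Prop.~3.2(ii)]{BouJimMaha}, used in the paper at \eqref{eq:Grhoa}) together with the obvious scaling of $\int t^{\alpha-1}\,\mathrm{d}\rho$ to decouple the $x$-dependence and produce the factor $\big(\int_\Omega f^{1/(2-\alpha)}\big)^{(2-\alpha)/(1-\alpha)}$.

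One point where your sketch is a little optimistic: you write that the constraint is ``stable under tight convergence'' and invoke the Fundamental Theorem of $\Gamma$-convergence. Lower semi-continuity of $\lambda\mapsto\int t^{\alpha-1}\,\mathrm{d}(\pi_2\#\lambda)$ does give closedness of the admissible set, which handles the liminf side; but for the limsup you need a recovery sequence that \emph{also} satisfies the constraint, and this is not automatic from the unconstrained $\Gamma$-convergence. In \cite{BouJimMaha} (and in the present paper, see Lemma~\ref{lem:remove_small_masses}) this is exactly what the mass-truncation lemma provides: one modifies a generic recovery sequence by removing asymptotically small atoms, which controls $\int t^{\alpha-1}\,\mathrm{d}\lambda_\varepsilon$ without increasing the Wasserstein energy too much. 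Your sketch should acknowledge that this step is where the real work lies.

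Your Jensen argument for monotonicity is correct and is essentially the argument of \cite[Lemma~3.10]{BouJimMaha}: feasibility for $C_\mathrm{c}(\alpha_1)$ implies feasibility for $C_\mathrm{c}(\alpha_2)$ whenever $\alpha_1<\alpha_2$, so the constrained minimum can only decrease.
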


Note that the result proved in \cite{BouJimMaha} holds more generally: in any dimension, for any $p$-Wasserstein metric, and for more general entropies.

\begin{remark} 
	\label{remark:3_5}
	Let $Q \subset \mathbb{R}^2$ be a unit square. Taking $\Omega=Q$ and $f=\ca_Q$ in Corollary \ref{cor:monotonicityCc} yields
\begin{equation}\label{eq:charCc}
C_{\mathrm{c}}(\alpha)=\lim_{L\to\infty}  L^{\frac{1}{1-\alpha}} \, \inf\left\{ W^2_2(\ca_Q,\mu) :
	\mu\in\mathcal{P}_{\mathrm{d}}(Q),\,\, \sum_{i=1}^{N_\mu} m_i^\alpha\leq L \right\}.
\end{equation}
\end{remark}

\begin{remark}[Optimal constant]\label{rem:Gdelta1}
	The constant $C_{\mathrm{c}}(\alpha)$ in Corollary \ref{cor:monotonicityCc} was known explicitly for the case \blue{$\alpha \in (-\infty,0]$}, where \blue{$C_\mathrm{c}(\alpha) = C_\mathrm{c}(0)=G(\delta_1)=c_6$ for all $\alpha \le 0$}. We briefly recall the proof: By Fejes T\'oth's Theorem on Sums of Moments \cite{Gru_Fej},
	\[
	c_6= C_{\mathrm{c}}(0) 
	\stackrel{\eqref{eq:DefCcalpha}}{\le} G(\delta_1)
	 \le  c_6
	\]
	where the final inequality follows from \cite[Prop.~3.2(iv)]{BouJimMaha}. \blue{Therefore $C_{\mathrm{c}}(0) =c_6$. In addition, $ C_{\mathrm{c}}(\alpha) \ge  C_{\mathrm{c}}(0)  = c_6$ for all $\alpha \le 0$ by the monotonicity of the map $\alpha \mapsto  C_{\mathrm{c}}(\alpha)$ (see Corollary \ref{cor:monotonicityCc}). On the other hand, $ C_{\mathrm{c}}(\alpha) \le c_6$ by 
	 \cite[Prop.~3.2(iv)]{BouJimMaha}. We conclude that $ C_{\mathrm{c}}(\alpha) = c_6$ for all $\alpha \le 0$, as required.} 
	One of our contributions is to prove that $C_{\mathrm{c}}(\alpha)=c_6$ for all \blue{$\alpha\in(-\infty,0.583]$}; see Section \ref{sec:proofconstrained}.
\end{remark}


\subsection{The penalized optimal location problem}
\label{sec:asympt_pen}

Here we prove analogous results to those presented in the previous section.

\begin{definition}[penalized energy]
	\label{def:PenEn}
	Let $\delta>0$ and \blue{$\alpha\in(-\infty,1)$}.
	Define $\e_{\delta,\alpha} : \mathcal{P}_\mathrm{d}(\Omega) \to [0,\infty)$ by
	\[
	\e_{\delta,\alpha}(\mu):=\delta\sum_{i=1}^{N_\mu} m_i^\alpha + W^2_2(f,\mu)
	\]
	where $\mu = \sum_{i=1}^{N_\mu} m_i \delta_{z_i}$.
\end{definition}

\begin{proposition}[Gamma-limit of the penalized energy]\label{prop:Glimpen}
Let $\delta>0$, \blue{$\alpha\in(-\infty,1)$} and 
\[
\varepsilon_{\delta,\alpha}:=\left(\frac{\delta(1-\alpha)}{c_6}\right)^{\frac{1}{2(2-\alpha)}}.
\]
Define the rescaled penalized energy $\widetilde{\e}_{\delta,\alpha}:\mathcal{M}(\o\times\R^+)\to[0,\infty]$ by
	\[
	\widetilde{\e}_{\delta,\alpha}(\lambda):=
	\begin{cases}
	\varepsilon_{\delta,\alpha}^{-2} \, \e_{\delta,\alpha}(\mu)
	& \text{if } \lambda=\lambda_{\varepsilon_{\delta,\alpha}}(\mu), \, \mu\in\Pd,
	\\
	+ \infty & \text{otherwise.}
	\end{cases}
	\]
Then $\widetilde{\e}_{\delta,\alpha}\stackrel{\Gamma}{\rightarrow}\mathcal{G}_\alpha$ as $\delta\to0$ with respect to tight convergence on $\mathcal{M}(\o\times\R^+)$ where
\[
\mathcal{G}_\alpha(\lambda):=
\begin{cases}
\displaystyle\int_\o\left[ G(\lambda^x) + f(x)\frac{c_6}{1-\alpha}\int_0^{\infty} t^{\alpha-1}
	\,\dd \lambda^x(t)  \right] \, \dd x & \text{if } 
	\lambda=f \dd x\otimes\lambda^x,
	\\
+\infty & \text{otherwise}.
\end{cases}
\]
\end{proposition}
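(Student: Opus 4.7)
The plan is to derive the proposition from the $\Gamma$-convergence of the Wasserstein quantization functional in Theorem~\ref{thm:Bouchitte}, by observing that the entropy part of $\widetilde{\e}_{\delta,\alpha}$ decouples as a $\delta$-independent perturbation. The starting observation is an elementary algebraic identity: for $\mu=\sum_i m_i\delta_{z_i}\in\Pd$ and $\lambda=\lambda_\varepsilon(\mu)$, directly from the definition of $\lambda_\varepsilon(\mu)$,
\[
\int_{\o\times\R^+} t^{\alpha-1}\,\dd\lambda \;=\; \sum_i m_i\Big(\frac{m_i}{\varepsilon^2}\Big)^{\alpha-1} \;=\; \varepsilon^{2(1-\alpha)}\sum_i m_i^\alpha.
\]
Specialising to $\varepsilon=\varepsilon_{\delta,\alpha}$ and using the identity $\delta\,\varepsilon_{\delta,\alpha}^{2\alpha-4}=c_6/(1-\alpha)$, which is immediate from the definition of $\varepsilon_{\delta,\alpha}$, one obtains the decomposition
\[
\widetilde{\e}_{\delta,\alpha}(\lambda) \;=\; \mathcal{E}_{\varepsilon_{\delta,\alpha}}(\lambda) \;+\; \Phi(\lambda), \qquad \Phi(\lambda):=\frac{c_6}{1-\alpha}\int_{\o\times\R^+} t^{\alpha-1}\,\dd\lambda,
\]
with $\mathcal{E}_\varepsilon$ as in \eqref{eq:funcBou}; both sides agree (equal to $+\infty$) off the image of $\lambda_{\varepsilon_{\delta,\alpha}}$. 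All $\delta$-dependence is now concentrated in $\mathcal{E}_{\varepsilon_{\delta,\alpha}}$, which is governed by Theorem~\ref{thm:Bouchitte}.

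For the $\Gamma$-liminf, I would take $\delta_n\to 0$ and $\lambda_n\to\lambda$ tightly with $\sup_n\widetilde{\e}_{\delta_n,\alpha}(\lambda_n)<\infty$, so that $\lambda_n=\lambda_{\varepsilon_n}(\mu_n)$ with $\varepsilon_n=\varepsilon_{\delta_n,\alpha}\to 0$. Theorem~\ref{thm:Bouchitte} yields $\liminf_n\mathcal{E}_{\varepsilon_n}(\lambda_n)\geq\mathcal{E}_0(\lambda)$, which forces $\lambda=f\dd x\otimes\lambda^x$. Since $(x,t)\mapsto t^{\alpha-1}$ is non-negative and continuous on $\o\times\R^+$, the Portmanteau theorem applied to the bounded continuous truncations $\min\{t^{\alpha-1},M\}$, followed by monotone convergence in $M$, gives $\liminf_n\Phi(\lambda_n)\geq\Phi(\lambda)$. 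Summing the two bounds produces $\liminf_n\widetilde{\e}_{\delta_n,\alpha}(\lambda_n)\geq\mathcal{G}_\alpha(\lambda)$.

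For the $\Gamma$-limsup, fix $\lambda=f\dd x\otimes\lambda^x$ with $\mathcal{G}_\alpha(\lambda)<\infty$. I would proceed by a truncation and diagonal argument. First, approximate $\lambda$ by $\lambda^{(k)}=f\dd x\otimes\lambda^{(k),x}$ whose fibres $\lambda^{(k),x}$ are supported in a fixed compact interval $[1/k,k]\subset\R^+$ uniformly in $x$, chosen so that both $\mathcal{E}_0(\lambda^{(k)})\to\mathcal{E}_0(\lambda)$ and $\Phi(\lambda^{(k)})\to\Phi(\lambda)$ as $k\to\infty$. For each $k$, Theorem~\ref{thm:Bouchitte} supplies a recovery sequence $\lambda_{\varepsilon_n}(\mu_n^{(k)})\to\lambda^{(k)}$ tightly with $\mathcal{E}_{\varepsilon_n}(\lambda_{\varepsilon_n}(\mu_n^{(k)}))\to\mathcal{E}_0(\lambda^{(k)})$. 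By inspection of the cell-problem construction of \cite{BouJimMaha}, possibly after a small modification that pushes any outlying mass back into the admissible range, the masses of $\mu_n^{(k)}$ can be arranged to lie in $[\varepsilon_n^2/k,\,k\varepsilon_n^2]$, so the $t$-coordinates of $\lambda_{\varepsilon_n}(\mu_n^{(k)})$ stay in $[1/k,k]$. On this compact set $t^{\alpha-1}$ is bounded and continuous, so tight convergence upgrades to $\Phi(\lambda_{\varepsilon_n}(\mu_n^{(k)}))\to\Phi(\lambda^{(k)})$, and a diagonal extraction in $n,k$ delivers the recovery sequence.

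The main obstacle is the density step, namely approximating $\lambda$ simultaneously in $\mathcal{E}_0$ and in $\Phi$. Because $\Phi$ is singular at $t=0$, any uncontrolled redistribution of small masses can destroy the convergence of $\mathcal{E}_0(\lambda^{(k)})$ to $\mathcal{E}_0(\lambda)$; one must redistribute the tails $\{t<1/k\}$ and $\{t>k\}$ of each fibre $\lambda^x$ onto the threshold values in a manner compatible with the cell formula $G$, using the monotonicity and scaling properties of $G$ recorded in \cite[Prop.~3.2]{BouJimMaha}. Once this approximation lemma is in place, the $\Gamma$-limsup inequality follows from the diagonal extraction above.
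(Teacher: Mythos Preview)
Your proposal is correct and follows essentially the same approach as the paper: the same algebraic decomposition $\widetilde{\e}_{\delta,\alpha}=\mathcal{E}_{\varepsilon_{\delta,\alpha}}+\Phi$, the same liminf argument via truncation $\min\{t^{\alpha-1},M\}$ and monotone convergence, and a limsup via a truncation/diagonal argument resting on technical machinery from \cite{BouJimMaha}. The only difference is in the packaging of the limsup: the paper invokes \cite[Lemma~6.3]{BouJimMaha} directly (restated as Lemma~\ref{lem:remove_small_masses}), which already produces a doubly-indexed sequence $\lambda_\varepsilon^k$ supported in $\Omega\times[t_k,\infty)$ with $\limsup_\varepsilon\int t^{\alpha-1}\,\dd\lambda_\varepsilon^k\le\int t^{\alpha-1}\,\dd\lambda$ and the right Wasserstein control, whereas you propose to first truncate $\lambda$ and then modify the recovery sequence from Theorem~\ref{thm:Bouchitte}; your ``main obstacle'' is exactly what that lemma supplies, and only a one-sided lower truncation is needed since a limsup bound on $\Phi$ suffices.
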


To prove Proposition \ref{prop:Glimpen} we need the following technical result from \cite[Lemma 6.3]{BouJimMaha}, which says 
that we can modify $\mu$ to remove asymptotically small Dirac masses (as $\delta \to 0$)
without increasing the energy \blue{$\widetilde{\e}_{\delta,\alpha}(\mu)$} too much. 

\begin{lemma}
	\label{lem:remove_small_masses}
	Let $\lambda=f \dd x \otimes\lambda^x \in \mathcal{M}(\o\times\R^+)$ satisfy $		\mathcal{E}_0(\lambda)<\infty$.
	Then, for every $\gamma>1$, there exists a decreasing sequence $(t_k)_{k\in\N} 	\subset (0,\infty)$, $t_k \to 0$, and a doubly-indexed sequence $(  \lambda_	\varepsilon^k )_{\varepsilon>0, k\in\N}\subset \mathcal{M}(\o\times\R^+)$ 		
	satisfying the following:
	\begin{itemize}
	\item[(i)] $\lambda_\varepsilon^k$ is supported in $\Omega\times [t_k,\infty)$;
	\item[(ii)] \blue{$\limsup_{k \to\infty}  \limsup_{\varepsilon\to0} \|\lambda^k_	\varepsilon - \lambda \| 
	= 0$, where $\| \cdot \|$ denotes the total variation norm on the space of signed measures on $\Omega \times \R^+$};
	\item[(iii)] for all \blue{$\alpha\in(-\infty,1)$}, $k \in \mathbb{N}$,
			\[ \limsup_{\varepsilon \to 0} \int_{\Omega \times (0,\infty)} t^{\alpha-1} \, \dd \lambda_\varepsilon^k(x,t)  \le 
			\int_{\Omega \times (0,\infty)} t^{\alpha-1} \, \dd \lambda (x,t);
			\] 
	\item[(iv)] there exist\blue{s} $\mu_\varepsilon^k \in \mathcal{P}_\mathrm{d}(\Omega)$ 	such that $\lambda_\varepsilon^k=\lambda_\varepsilon(\mu_\varepsilon^k)$ and  
	\[
	\limsup_{k\to\infty} \, \limsup_{\varepsilon\to0} \, \varepsilon^{-2} \, W_2^2(f,	\mu^k_\varepsilon) \leq \gamma  \int_\Omega G(\lambda^x) \dx .
	\]
	\end{itemize}
\end{lemma}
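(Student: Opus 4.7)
This lemma is a recovery-sequence construction: given a target $\lambda = f \dd x \otimes \lambda^x$ with $\mathcal{E}_0(\lambda) < \infty$, I must produce, for every $\gamma > 1$, discrete measures $\mu^k_\varepsilon \in \mathcal{P}_\mathrm{d}(\Omega)$ whose Young measures $\lambda_\varepsilon(\mu^k_\varepsilon)$ approximate $\lambda$, whose particle masses lie above a controlled threshold, and whose Wasserstein cost matches the $\Gamma$-limit $\int_\Omega G(\lambda^x) \dx$ up to a factor $\gamma$. The strategy is the familiar pattern for $\Gamma$-limsup on integral functionals defined through a cell formula, followed by a post-processing step to remove asymptotically small particles.

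\emph{Step 1: Cell-formula near-optimizers.} Since $\mathcal{E}_0(\lambda) < \infty$, $G(\lambda^x) < \infty$ for $f \dd x$-a.e.\ $x$. Fix $\eta \in (0, \gamma - 1)$. Using $G(\lambda^x) = \sup_t G_t(\lambda^x)$ and $G_t(\lambda^x) = \inf_k S_t(\lambda^x, Q_k)/k^2$, for a.e.\ $x$ I select a scale $t = t(x,\eta)$, a square side length $k = k(x,\eta)$, and a measure $\nu^x \in \mathcal{P}_\mathrm{d}(Q_k)$ with
\[
W^2_2(\ca_{Q_k}, \nu^x) + \frac{k^2}{t^2} \mathrm{d}_{\mathrm{BL}}(\lambda^x, \rho(\nu^x)) \le k^2 \bigl(G(\lambda^x) + \eta\bigr).
\]
After approximating $\lambda^x$ in $x$ by a piecewise-constant version on a fine partition (using Lusin-type regularity and finiteness of $\int G(\lambda^x) \dx$), I may replace the family $\{\nu^x\}$ by finitely many templates $\{\nu^{x_j}\}_{j=1}^J$.

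\emph{Step 2: Tiling, rescaling and pasting.} Partition $\Omega$ (up to a set of arbitrarily small $f \dd x$-measure) into small squares $R_{j,\ell}$ of side length $\varepsilon k_j$ around centres in the $j$-th region, where $k_j = k(x_j, \eta)$. On each $R_{j,\ell}$, place a rescaled, translated copy of $\nu^{x_j}$: if $\nu^{x_j} = \sum_i a_i^{(j)} \delta_{y_i^{(j)}}$, put particles of mass $\varepsilon^2 f(x_j) a_i^{(j)}$ at the corresponding rescaled positions. This yields a discrete measure $\mu_\varepsilon \in \mathcal{P}_\mathrm{d}(\Omega)$. A change of variables and a Lebesgue-differentiation type argument show that the disintegration of $\lambda_\varepsilon(\mu_\varepsilon)$ converges to $\lambda$ in the sense compatible with (ii), and that
\[
\varepsilon^{-2} W_2^2(f, \mu_\varepsilon) \le \int_\Omega \bigl(G(\lambda^x) + \eta\bigr) \dx + o(1)
\]
as $\varepsilon \to 0$, since per-cell costs aggregate to this integral and the $f$-inhomogeneity contributes lower-order terms by the continuity of $f$ (handled after mollification).

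\emph{Step 3: Mass truncation to obtain $\mu^k_\varepsilon$.} Set $t_k := 1/k$. Starting from $\mu_\varepsilon$ built in Step 2, I define $\mu^k_\varepsilon$ by iteratively merging every particle of mass below $\varepsilon^2 t_k$ into its nearest neighbouring particle in the Laguerre tessellation, yielding (i). Because each merge moves at most a small mass over a distance comparable to the local cell diameter $O(\varepsilon)$, the additional Wasserstein cost is $O(\varepsilon^2 t_k \cdot (\text{number of small particles}))$, which vanishes as $\varepsilon \to 0$ and then $k \to \infty$: the fraction of mass carried by particles below the threshold goes to $0$ because the templates $\nu^{x_j}$ are finite and their rescalings have masses bounded below by a positive constant times $\varepsilon^2$. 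Since $t \mapsto t^{\alpha - 1}$ is strictly decreasing on $\R^+$ for $\alpha < 1$, the merge increases the local mass $m_i$ and therefore strictly decreases $m_i \cdot (m_i/\varepsilon^2)^{\alpha-1}$; summing, $\int t^{\alpha-1} \dd \lambda^k_\varepsilon \le \int t^{\alpha-1} \dd \lambda_\varepsilon(\mu_\varepsilon)$, which in the limit $\varepsilon \to 0$ is controlled by $\int t^{\alpha-1} \dd \lambda$ thanks to the approximation of $\lambda^{x_j}$ by $\rho(\nu^{x_j})$, giving (iii). Finally, taking $\eta \to 0$ at the outer level and extracting a diagonal sequence produces (iv) with the factor $\gamma$.

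\emph{Main obstacle.} The delicate point is reconciling (iii) with (iv): both involve the templates $\nu^{x_j}$, and the finite $t^{\alpha-1}$-moment of $\rho(\nu^{x_j})$ may be much larger than $\int_0^\infty t^{\alpha-1} \dd \lambda^{x_j}(t)$ if $\nu^{x_j}$ places mass on tiny particles (which would be cheap in $W_2^2$ but costly in entropy). The cell-formula near-optimizers in Step 1 can be assumed to have particle masses bounded below by a constant depending on $x_j$, using that the infimum in $G$ only sees the Wasserstein cost and the bounded-Lipschitz matching of $\rho(\nu^x)$ with $\lambda^x$ — so tiny masses can be pre-merged at the template level without affecting the cell-formula bound. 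This preparatory clean-up of the templates is the technical heart of the construction and must precede the global tiling argument.
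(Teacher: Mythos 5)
The first thing to say is that the paper does not prove this lemma at all: it is imported verbatim as \cite[Lemma 6.3]{BouJimMaha} and used as a black box in the limsup part of Proposition \ref{prop:Glimpen}. So you are supplying a proof where the paper supplies a citation. Your overall architecture --- near-optimizers of the cell formula $G_t(\lambda^x)=\inf_k S_t(\lambda^x,Q_k)/k^2$, piecewise-constant approximation in $x$, tiling and rescaling, then a post-hoc merge of sub-threshold particles --- is the standard recovery-sequence skeleton, and your accounting for (i) and for the Wasserstein cost of the merges in (iv) (a total extra cost of order $\varepsilon^2 t_k$ by the merging identity of Lemma \ref{lem:mergsplit}, hence negligible after dividing by $\varepsilon^2$ and sending $k\to\infty$) is plausible.

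The genuine gap is item (iii), and it sits exactly where you flag your ``main obstacle'' without resolving it. Your argument reduces (iii) to the claim that $\int_0^\infty t^{\alpha-1}\,\dd\rho(\nu^{x_j})\le\int_0^\infty t^{\alpha-1}\,\dd\lambda^{x_j}$ up to errors, justified only by $\mathrm{d}_{\mathrm{BL}}$-closeness of $\rho(\nu^{x_j})$ to $\lambda^{x_j}$ plus a ``pre-merging'' that bounds template masses from below. Neither ingredient gives the moment bound: $\mathrm{d}_{\mathrm{BL}}$ controls integrals of bounded Lipschitz test functions only, while (iii) must hold for \emph{every} $\alpha\in(-\infty,1)$ with the \emph{same} sequence $\lambda^k_\varepsilon$, i.e.\ for the whole family of unbounded decreasing integrands $t\mapsto t^{\alpha-1}$. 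For instance, if $\lambda^x=\delta_1$ and the cleaned-up template has $\rho(\nu^x)=\delta_{1/2}$ (masses bounded below, $\mathrm{d}_{\mathrm{BL}}$-distance $1/2$), the moment ratio is $2^{1-\alpha}$, which is unbounded as $\alpha\to-\infty$; no fixed $\mathrm{d}_{\mathrm{BL}}$-accuracy or mass floor rescues this. What is actually needed is a form of stochastic domination: the size-marginal of $\lambda^k_\varepsilon$ must push mass \emph{upwards} relative to $\lambda^x$, so that monotonicity of $t\mapsto t^{\alpha-1}$ gives (iii) for all $\alpha$ at once. The natural way to achieve this is to truncate $\lambda$ \emph{first}, replacing $\lambda^x$ by its pushforward under $t\mapsto\max(t,t_k)$ (this decreases every $t^{\alpha-1}$-moment and costs $2\lambda(\Omega\times(0,t_k))$ in total variation, giving (i) and (iii) cleanly), and then to build a recovery sequence whose particle sizes reproduce the truncated law rather than merely $\mathrm{d}_{\mathrm{BL}}$-approximate it, while showing that this truncation increases $\int_\Omega G(\lambda^x)\,\dd x$ by at most the factor $\gamma$. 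That stability of $G$ under truncation from below is the technical heart of \cite[Lemma 6.3]{BouJimMaha}, and it is absent from your sketch. A smaller point in the same spirit: your treatment of (ii) (``converges in the sense compatible with (ii)'') is not an argument, and since the total variation distance between a discrete measure and one with absolutely continuous first marginal is identically $2$, whatever convergence you establish has to be phrased with the care that the truncation step provides (control of $\|\lambda^k-\lambda\|$ plus convergence of $\lambda^k_\varepsilon$ to $\lambda^k$), not read off from the tiling construction.
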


\begin{proof}[Proof of Proposition \ref{prop:Glimpen}]
For $\mu\in\Pd$, $\lambda=\lambda_{\varepsilon_{\delta,\alpha}}(\mu)$,  we can write
\begin{align*}
\widetilde{\e}_{\delta,\alpha}(\lambda) 
	&=\frac{c_6}{1-\alpha}\varepsilon_{\delta,\alpha}^{2(1-\alpha)}\sum_{i=1}^{N_\mu}m_i^\alpha
		+ \frac{1}{\varepsilon_{\delta,\alpha}^2}W^2_2(f,\mu) 
		\\
&=\frac{c_6}{1-\alpha}\int_{\Omega\times(0,\infty)} t^{\alpha-1}\,
	\dd \lambda_{\varepsilon_{\delta,\alpha}}(\mu)(x,t)
	+ \frac{1}{\varepsilon_{\delta,\alpha}^2}W^2_2(f,\mu).
\end{align*}
Since the function $t\mapsto t^{\alpha-1}$ is unbounded, and thus the first term of $\widetilde{\e}_{\delta,\alpha}(\lambda)$ is not continuous in $\lambda$, the $\Gamma$-convergence result does not follow directly from Theorem \ref{thm:Bouchitte} and the stability of $\Gamma$-limits under continuous perturbations.
We therefore reason as follows.

\emph{Step 1: liminf inequality.}
Fix $(\delta_n)_{n\in\N}$ with $\delta_n\to0$ as $n\to\infty$.
Let $\lambda\in\mathcal{M}(\o\times\R^+)$ and $(\lambda_n)_{n\in\N}\subset\mathcal{M}(\o\times\R^+)$ satisfy $\lambda_n\to\lambda$ tightly. Without loss of generality we can assume that
\begin{equation}
	\label{eq:BoundedEnergy}
\liminf_{n\to\infty} \widetilde{\e}_{\delta_n,\alpha}(\lambda_n)<\infty.
\end{equation}
Therefore there exists $(\mu_n)_{n\in\N}\subset\dmo$ such that
$\lambda_n=\lambda_{\varepsilon_{\delta_n,\alpha}}(\mu_n)$.
Observe that $\pi_1 \# \lambda_n = \mu_n$. 
By \eqref{eq:BoundedEnergy}, and since $W_2$ metrizes weak convergence of measures \cite[Theorem 5.9]{S}, then $\mu_n \to f \dd x$ as $n \to \infty$.
Therefore $\pi_1 \# \lambda = \lim_{n \to \infty} \pi_1 \# \lambda_n = f \dd x$. By the Disintegration Theorem \cite[Theorem 2.28]{AFP} there exists $\lambda^x \in \mathcal{M}(\mathbb{R}^+)$ satisfying 
$\lambda=f \dd x\otimes\lambda^x$.

For $M>0$ define the continuous bounded function $g_M:(0,\infty) \to \mathbb{R}$ by 
\[
g_M(t):=\min\{ t^{\alpha-1}, M\}.
\]
Then, by using the liminf inequality of Theorem \ref{thm:Bouchitte}, we get
\begin{align*}
\liminf_{n\to\infty} \widetilde{\e}_{\delta_n,\alpha}(\lambda_n)
	&\geq 
	\liminf_{n\to\infty}\left(
	\frac{c_6}{1-\alpha}\int_{\Omega\times(0,\infty)} g_M(t)\,
	\dd \lambda_{\varepsilon_{\delta_n,\alpha}}(\mu_n)(x,t)
	+ \frac{1}{\varepsilon_{\delta_n,\alpha}^2}W^2_2(f,\mu_n)\right)
	 \\
&\geq \frac{c_6}{1-\alpha} \int_\o  \left( \int_0^{\infty} g_M(t)\,\dd \lambda^x(t) \right) f(x) \, \dd x
	+ \int_\o  G(\lambda^x) \,\dd x.
\end{align*}
Since the function $g_M$ is non-negative and pointwise non-decreasing in $M$, we obtain the liminf inequality by passing to the limit $M \to \infty$ using the Monotone Convergence Theorem.

\emph{Step 2: limsup inequality.}
Let $\lambda=f \dd x \otimes\lambda^x \in \mathcal{M}(\o\times\R^+)$
satisfy
$\mathcal{G}_\alpha(\lambda) < \infty$,
which implies that $\mathcal{E}_0(\lambda)<\infty$.
Let $\gamma >1$. By Lemma \ref{lem:remove_small_masses}(iii),(iv), 
there exists a decreasing sequence $(t_k)_{k\in\N} \subset (0,\infty)$, a sequence $\delta_n \to 0$, and 
 a doubly-indexed sequence $(  \lambda_{\varepsilon_{\delta_n,\alpha}}^k )_{n, k\in\N}\subset \mathcal{M}(\o\times\R^+)$ such that
\[
		\limsup_{k\to\infty} \, \limsup_{n \to \infty} \,
		\widetilde{\e}_{\delta_n,\alpha} \left( \lambda_{\varepsilon_{\delta_n,\alpha}}^k \right) \leq 
		\frac{c_6}{1-\alpha} 	\int_{\Omega \times (0,\infty)} t^{\alpha-1} \, \dd \lambda (x,t)
		+ \gamma  \int_\Omega G(\lambda^x) \dx \le \gamma  \mathcal{G}_\alpha (\lambda).
\]
By a diagonalization argument and Lemma \ref{lem:remove_small_masses}(ii), we can find a subsequence
$\delta_n$ (not relabelled) such that  $\lambda_{\varepsilon_{\delta_{n},\alpha}}^{k_n} \to \lambda$ tightly as $n \to \infty$ and 
\[
\limsup_{n \to \infty}  \,
\widetilde{\e}_{\delta_n,\alpha} \left( \lambda_{\varepsilon_{\delta_{n},\alpha}}^{k_n} \right)
\le \gamma  \mathcal{G}_\alpha (\lambda).
\]
Since $\gamma>1$ is arbitrary, the limsup inequality follows.
\end{proof}

\begin{corollary}[Asymptotic quantization error for the penalized problem]\label{cor:monoCp}
For all \blue{$\alpha\in(-\infty,1)$},
\begin{equation}\label{eq:asymptCp}
\lim_{\delta\to0}\left[\left(\frac{c_6}{\delta(1-\alpha)}\right)^{\frac{1}{2-\alpha}}
	\mathrm{m_p}(\alpha,\delta)\right] = C_{\mathrm{p}}(\alpha) \int_\o f(x)^{\frac{1}{2-\alpha}} \,\dd x
\end{equation}
where
\begin{equation}\label{eq:Cp}
C_{\mathrm{p}}(\alpha):=\min\left\{ G(\rho)+\frac{c_6}{1-\alpha}\int_0^{\infty} t^{\alpha-1} \,\dd\rho(t)
			: \rho\in\pr \right\}.
\end{equation}
\end{corollary}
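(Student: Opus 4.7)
The plan is to deduce the corollary from the Gamma-convergence of Proposition \ref{prop:Glimpen}, mirroring the strategy used in \cite{BouJimMaha} to deduce Corollary \ref{cor:monotonicityCc} from Theorem \ref{thm:Bouchitte}. First I would establish equi-coercivity of $\widetilde{\e}_{\delta,\alpha}$ with respect to tight convergence on $\mathcal{M}(\Omega\times\R^+)$. Tightness in the $x$-variable is automatic since $\Omega$ is bounded; tightness of the second marginal near $t=0$ is forced by the entropy term $\tfrac{c_6}{1-\alpha}\int t^{\alpha-1}\,\dd\lambda$, whose boundedness prevents mass from concentrating at $0$ (recall $\alpha<1$); tightness at infinity follows from the uniform bound on total mass combined with the Wasserstein bound, by an argument analogous to the one used in \cite{BouJimMaha} for the constrained energies. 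Combined with Proposition \ref{prop:Glimpen}, this yields $\min_{\lambda}\widetilde{\e}_{\delta,\alpha}(\lambda)\to\min_{\lambda}\mathcal{G}_\alpha(\lambda)$ as $\delta\to 0$, and since $\min_{\lambda}\widetilde{\e}_{\delta,\alpha}(\lambda)=\varepsilon_{\delta,\alpha}^{-2}\,\mathrm{m_p}(\alpha,\delta)=\big(c_6/(\delta(1-\alpha))\big)^{1/(2-\alpha)}\,\mathrm{m_p}(\alpha,\delta)$, this matches the rescaling in \eqref{eq:asymptCp}.

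It then suffices to show $\min\mathcal{G}_\alpha=C_{\mathrm{p}}(\alpha)\int_\Omega f(x)^{1/(2-\alpha)}\,\dd x$. Using the disintegration $\lambda=f\,\dd x\otimes\lambda^x$ with each $\lambda^x\in\mathcal{P}(\R^+)$, the functional $\mathcal{G}_\alpha$ is an $x$-integral of a pointwise integrand depending only on $f(x)$ and $\lambda^x$, so a measurable selection argument gives
\[
\min\mathcal{G}_\alpha=\int_\Omega M(f(x))\,\dd x,\qquad
M(f):=\inf_{\rho\in\pr}\left[G(\rho)+f\,\frac{c_6}{1-\alpha}\int_0^\infty t^{\alpha-1}\,\dd\rho(t)\right].
\]
To compute $M(f)$ I would exploit the scaling of $G$. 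For $s>0$, let $T_s\rho$ denote the pushforward of $\rho$ under $t\mapsto st$; a direct inspection of the cell formula defining $G$ (rescale $Q_k\to Q_{\sqrt{s}k}$ and multiply masses by $s$, so that the mean density is preserved at $1$) yields $G(T_s\rho)=s\,G(\rho)$, while $\int t^{\alpha-1}\,\dd(T_s\rho)=s^{\alpha-1}\int t^{\alpha-1}\,\dd\rho$. The choice $s=f^{1/(2-\alpha)}$ makes $fs^{\alpha-1}=s$, so the substitution $\rho=T_s\tilde\rho$ identifies $M(f)=s\,M(1)=f^{1/(2-\alpha)}\,C_{\mathrm{p}}(\alpha)$, which produces \eqref{eq:asymptCp}. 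Attainment of the infimum defining $C_{\mathrm{p}}(\alpha)$ follows from the lower semi-continuity of $G$ on $\pr$ (see \cite[Proposition 3.2]{BouJimMaha}) together with the tightness near $t=0$ enforced by the penalization.

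The main obstacle is the rigorous justification of the scaling identity $G(T_s\rho)=s\,G(\rho)$. The heuristic is purely dimensional, but the $\mathrm{d}_{\mathrm{BL}}$ penalty in $S_t$ has to be handled carefully when passing to $t\to 0$: under the rescaling, an admissible competitor $\mu$ for $S_t(\rho,Q_k)$ transforms into a competitor $\hat\mu$ for $S_t(T_s\rho,Q_{\sqrt{s}k})$ with $\rho(\hat\mu)=T_s\rho(\mu)$, and one has to verify that $\mathrm{d}_{\mathrm{BL}}(T_s\rho,T_s\rho(\mu))\to 0$ whenever $\mathrm{d}_{\mathrm{BL}}(\rho,\rho(\mu))\to 0$; this amounts to a uniform continuity of the push-forward $T_s$ in the $\mathrm{d}_{\mathrm{BL}}$ metric, which can be checked directly. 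Once this scaling is established, the remainder of the argument is straightforward.
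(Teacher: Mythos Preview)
Your approach is essentially the same as the paper's: $\Gamma$-convergence plus equi-coercivity gives convergence of minima, disintegration plus a measurable selection reduces $\min\mathcal{G}_\alpha$ to a pointwise minimum over $\rho$, and the scaling of $G$ under dilations of $\R^+$ collapses that pointwise minimum to $f(x)^{1/(2-\alpha)}C_{\mathrm{p}}(\alpha)$.

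Two small remarks. First, the scaling identity $G(T_s\rho)=sG(\rho)$ that you flag as the main obstacle is already proved in \cite[Proposition~3.2(ii)]{BouJimMaha}; the paper simply cites it, and so can you. Second, for attainment of the minimum defining $C_{\mathrm{p}}(\alpha)$ you only argue tightness of minimizing sequences near $t=0$, but you also need tightness at $t=\infty$: since $\alpha<1$ the entropy integrand $t^{\alpha-1}$ vanishes at infinity and so the penalization does not prevent escape of mass there. The paper handles this with the lower bound $G(\rho)\ge\gamma_{2,2}\int_0^\infty t\,\dd\rho(t)$ from \cite[Proposition~3.2(iv)]{BouJimMaha}, which forces $\int t\,\dd\rho$ to remain bounded along minimizing sequences.
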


\begin{proof}
\emph{Step 1.} The functional $\widetilde{\e}_{\delta,\alpha}$ has at least one minimizer (by \cite[Theorem 2.1]{ButSan}), sequences $(\lambda_\delta)$ with bounded energy have tightly convergent subsequences (by \cite[Theorem 3.1(i)]{BouJimMaha}), and $\widetilde{\e}_{\delta,\alpha}$ $\Gamma$-converges to $\mathcal{G}_\alpha$ (by Proposition \ref{prop:Glimpen}). Therefore a standard result in the theory of $\Gamma$-convergence implies that the minimum value of  $\widetilde{\e}_{\delta,\alpha}$ converges to the minimum value of $\mathcal{G}_\alpha$:
\[
\lim_{\delta\to0}\left[\left(\frac{c_6}{\delta(1-\alpha)}\right)^{\frac{1}{2-\alpha}}
	\mathrm{m_p}(\alpha,\delta)\right]
	=\min_{\mathcal{M}(\o\times\R^+)} \mathcal{G}_\alpha.
\]
We are thus left with proving that
\begin{equation}\label{eq:claim_minimum}
\min_{\mathcal{M}(\o\times\R^+)} \mathcal{G}_\alpha
	= C_{\mathrm{p}}(\alpha) \int_\o f(x)^{\frac{1}{2-\alpha}} \,\dd x,
\end{equation}
where $C_{\mathrm{p}}(\alpha)$ is defined in \eqref{eq:Cp}.

\emph{Step 2.} For each $x \in \Omega$, define $\mathcal{G}_\alpha^x:\mathcal{P}(\mathbb{R^+}) \to \mathbb{R}$ by
\[
\mathcal{G}^x_\alpha(\rho) := G(\rho)+f(x)\frac{c_6}{1-\alpha}\int_0^{\infty} t^{\alpha-1} \, \dd\rho(t).
\]
By definition, if $\lambda=f \dd x\otimes\lambda^x$,
\begin{equation}
\label{eq:GGx}
\mathcal{G}_\alpha(\lambda)=\int_\Omega \mathcal{G}^x_\alpha(\lambda^x) \, \dd x.
\end{equation} 
For each $x \in \Omega$, $\mathcal{G}^x_\alpha$ is lower semi-continuous since $G$ is lower semi-continuous \cite[Prop.~3.2(i)]{BouJimMaha} and since $\rho \mapsto \int_0^\infty 
 t^{\alpha-1} \, \dd\rho(t)$ is lower semi-continuous \cite[Lemma 1.6]{S}. By \cite[Prop.~3.2(iv)]{BouJimMaha},
 \[
 \gamma_{2,2} \int_0^\infty t \, \dd \rho(t) \le G(\rho)
 \]
 where $\gamma_{2,2} = \int_{B_1(0)} |x|^2 \, \dd x$. Therefore, for each $x \in \Omega$, minimising sequences for $\mathcal{G}^x_\alpha$ are tight. Consequently $\mathcal{G}^x_\alpha$ has at least one minimizer.
 
We claim that there exits a Borel measurable function $x \mapsto \rho^x \in \mathcal{P}(\mathbb{R}^+)$, $x \in \Omega$, such that
\begin{equation}
\label{eq:Selection}
\mathcal{G}^x_\alpha(\rho^x) = \min_{\mathcal{P}(\mathbb{R}^+)} \mathcal{G}^x_\alpha.
\end{equation}
This will follow from Aumann's Selection Theorem (see \cite[Theorem 6.10]{FL}) once we prove that the graph of the multifunction $\Gamma: \Omega \to 2^{\mathcal{P}(\mathbb{R}^+)} \setminus \emptyset$, defined by $\Gamma(x):=\mathrm{argmin} \, \mathcal{G}^x_\alpha$, belongs to $\mathcal{B}(\Omega)\otimes \mathcal{B}(\mathcal{P}(\R^+))$, the product $\sigma$-algebra of the Borel sets of $\Omega$ and the Borel sets of $\mathcal{P(\R^+)}$.
To prove this, we define the function $\Psi:\mathbb{R}^+ \times \mathcal{P}(\mathbb{R}^+) \to \mathbb{R}$ by 
\[
\Psi(s,\rho):= G(\rho)+s \frac{c_6}{1-\alpha}\int_0^{\infty} t^{\alpha-1} \, \dd\rho(t).
\]
In the following, the target space $\mathbb{R}$ will always be equipped with the Borel $\sigma$-algebra.
For each $\rho \in \mathcal{P}(\mathbb{R}^+)$, the function $s \mapsto \Psi(s,\rho)$ is continuous. For each $s \in \mathbb{R}^+$, the function $\rho \mapsto \Psi(s,\rho)$ is lower semi-continuous and hence $\mathcal{B}(\mathcal{P(\R^+)})$-measurable. Therefore $\Psi$ is a Carath\'eodory function and hence $\mathcal{B}(\mathbb{R}^+) \otimes \mathcal{B}(\mathcal{P(\R^+)})$-measurable (see, e.g., \cite[Lemma 4.51]{AliprantisBorder}). Define the composite function 
	$\Phi:\Omega \times \mathcal{P}(\mathbb{R}^+) \to \mathbb{R}$ by 
	\[
	\Phi(x,\rho):=\Psi(f(x),\rho) = \mathcal{G}^x_\alpha(\rho). 
	\]
	This is  $\mathcal{B}(\Omega) \otimes \mathcal{B}(\mathcal{P(\R^+)})$-measurable since $f$ and $\Psi$ are Borel measurable. 
	
We claim that the map $x \mapsto \min_{\rho\in\pr}\Phi(x,\rho)$ is  $\mathcal{B}(\Omega)$-measurable. 
Then $\overline{\Phi}:\Omega \times \mathcal{P}(\mathbb{R}^+) \to\R$ defined by $\overline{\Phi}(x,\nu):=\min_{\rho\in\pr}\Phi(x,\rho)$ is $\mathcal{B}(\Omega)\otimes \mathcal{B}(\mathcal{P(\R^+)})$-measurable (since $\overline{\Phi}$ is constant in its second argument). The required  $\mathcal{B}(\Omega)\otimes \mathcal{B}(\mathcal{P(\R^+)})$-measurability of graph of the multifunction $\Gamma$ then follows by noticing that
\[
\mathrm{graph}(\Gamma) = (\Phi - \overline{\Phi})^{-1}(\{0\}).
\]
To show that $x \mapsto \min_{\rho\in\pr}\Phi(x,\rho)$ is  $\mathcal{B}(\Omega)$-measurable, we write it as the composite function 
$x \mapsto f(x) \mapsto \min_{\rho\in\pr}\Psi(f(x),\rho)$. This is $\mathcal{B}(\Omega)$-measurable since $x \mapsto f(x)$ is 
$\mathcal{B}(\Omega)$-measurable and $s \mapsto  \min_{\rho\in\pr}\Psi(s,\rho)$ is the pointwise infimum of a family of continuous functions, hence upper semi-continuous and $\mathcal{B}(\mathbb{R}^+)$-measurable. This completes the proof that there exists a 
Borel measurable function $x \mapsto \rho^x \in \mathcal{P}(\mathbb{R}^+)$ satisfying \eqref{eq:Selection}.

\emph{Step 3.}
Define $\lambda :=f\dd x \otimes \rho^x \in \mathcal{M}(\o\times\R^+)$, where $\rho^x$ is the minimizer of $\mathcal{G}^x_\alpha$ constructed in Step 2
(note that $\lambda$ is well\blue{-}defined by \cite[Definition 2.27]{AFP} since $x \mapsto \rho^x$ is Borel measurable and hence Lebesgue measurable). By equations \eqref{eq:GGx}, \eqref{eq:Selection}, $\lambda$ is a minimizer of $\mathcal{G}_\alpha$ and
\begin{equation}\label{eq:integral_minimum}
\min_{\mathcal{M}(\o\times\R^+)} \mathcal{G}_\alpha =
	\int_\Omega  \min\left\{ \mathcal{G}^x_\alpha(\rho) : \rho\in\pr  \right\} \, \dd x.
\end{equation}
We now rewrite
\[
\min\left\{ \mathcal{G}^x_\alpha(\rho) : \rho\in\pr  \right\}
\]
as follows.
For $a>0$, define the dilation $L^a:\R^+\to\R^+$ by $L^a(t):=at$.
Let $\rho\in\pr$ and consider the push-forward $\rho_a:=L^a\#\rho \in\pr$. 
It was proved in \cite[Prop.~3.2(ii)]{BouJimMaha} that 
\begin{equation}
\label{eq:Grhoa}
G(\rho_a)=a G(\rho).
\end{equation} 
Note that 
\begin{equation}
\label{eq:rescalegterm}
\int_0^{\infty} t^{\alpha-1} \,\dd \rho_a(t) = a^{\alpha-1} \int_0^{\infty} t^{\alpha-1} \,\dd\rho(t).
\end{equation}
Fix $x\in\o$ and let $a:=f(x)^{-\frac{1}{2-\alpha}}$. 
By \eqref{eq:Grhoa} and \eqref{eq:rescalegterm} we can write
\begin{equation}\label{eq:rescaling}
\mathcal{G}^x_\alpha(\rho) = G(\rho)+f(x)\frac{c_6}{1-\alpha}\int_0^{\infty} t^{\alpha-1} \, \dd\rho(t)
	=f(x)^{\frac{1}{2-\alpha}}
		\left[ G(\rho_a)+\frac{c_6}{1-\alpha}\int_0^{\infty} t^{\alpha-1} \, \dd\rho_a(t)  \right].
\end{equation}
Therefore, by using \eqref{eq:rescaling} and the definition of $C_{\mathrm{p}}(\alpha)$ (see \eqref{eq:Cp}), we have that
\begin{align}
\nonumber
 \min\left\{ \mathcal{G}^x_\alpha(\rho) : \rho\in\pr  \right\} 
&  =
\min\left\{ G(\rho)+f(x)\frac{c_6}{1-\alpha}\int_0^{\infty} t^{\alpha-1} \, \dd\rho(t) 
	: \rho\in\pr  \right\} 
	\\
	\label{eq:min_form}
	& = f(x)^{\frac{1}{2-\alpha}}  C_{\mathrm{p}}(\alpha)
\end{align}
for all $x\in\o$.
 By combining \eqref{eq:integral_minimum} and \eqref{eq:min_form} we prove \eqref{eq:claim_minimum} and conclude the proof.
\end{proof}

\begin{remark}\label{rem:Cp}
	Let $Q \subset \mathbb{R}^2$ be a unit square. Taking $\Omega=Q$ and $f=\ca_Q$ in Corollary \ref{cor:monoCp} yields
\begin{equation}\label{eq:charCp}
C_{\mathrm{p}}(\alpha)=\
\lim_{\delta\to0}\left(\frac{c_6}{\delta(1-\alpha)}\right)^{\frac{1}{2-\alpha}}
	\min\left\{  \delta\sum_{i=1}^{N_\mu} m_i^\alpha + W^2_2(\ca_Q,\mu) : \mu\in\mathcal{P}_{\mathrm{d}}(Q) \right\}.
\end{equation}
\end{remark}

By Corollary \ref{cor:monoCp}, in order to prove Theorem \ref{thm:pen} it is sufficient to prove that
\[
C_{\mathrm{p}}(\alpha) = \frac{2-\alpha}{1-\alpha} \, c_6
\]
for all \blue{$\alpha\in(-\infty,\overline{\alpha}]$}.
The next result, which is analogous to the monotonicity of the map $\alpha\mapsto C_{\mathrm{c}}(\alpha)$, means that
 in order to prove Theorem \ref{thm:pen} for all \blue{$\alpha\in(-\infty,\overline{\alpha})$}, it is sufficient to prove it for the single value $\alpha=\overline{\alpha}$.

\begin{lemma}[Monotonicity of the constant $C_{\mathrm{p}}$]\label{lem:monoCp}
Assume that for some \blue{$\tilde{\alpha}\in (-\infty,1)$},
\[
C_{\mathrm{p}}(\tilde{\alpha}) = \frac{2-\tilde{\alpha}}{1-\tilde{\alpha}} \, c_6.
\]
Then
\[
C_{\mathrm{p}}(\alpha) = \frac{2-\alpha}{1-\alpha} \, c_6
\]
for every $\alpha<\tilde{\alpha}$.
\end{lemma}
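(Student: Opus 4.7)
I will prove matching upper and lower bounds on $C_{\mathrm{p}}(\alpha)$.

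The upper bound $C_{\mathrm{p}}(\alpha)\le \frac{2-\alpha}{1-\alpha}c_6$ holds for every $\alpha\in(-\infty,1)$ by testing the competitor $\rho=\delta_1\in\pr$: indeed $\int_0^\infty t^{\alpha-1}\dd\delta_1=1$ and $G(\delta_1)=c_6$ by Remark \ref{rem:Gdelta1}, so the functional
$\Phi_\alpha(\rho):=G(\rho)+\frac{c_6}{1-\alpha}\int_0^\infty t^{\alpha-1}\dd\rho(t)$ satisfies $\Phi_\alpha(\delta_1)=\frac{2-\alpha}{1-\alpha}c_6$.

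For the matching lower bound, I will first recast everything as a scale-invariant inequality on $\pr$. Weighted AM--GM applied to the two summands in $\Phi_\alpha$ with weights $\lambda_1=\frac{1-\alpha}{2-\alpha}$ and $\lambda_2=\frac{1}{2-\alpha}$ yields (after the constants simplify, and with equality at $\rho=\delta_1$)
\[
\Phi_\alpha(\rho)\ge \frac{2-\alpha}{1-\alpha}\left(c_6\int_0^\infty t^{\alpha-1}\dd\rho\right)^{\!\frac{1}{2-\alpha}}G(\rho)^{\frac{1-\alpha}{2-\alpha}} \qquad \forall\,\rho\in\pr.
\]
Raising this bound to the power $2-\alpha$ shows that $C_{\mathrm{p}}(\alpha)\ge \frac{2-\alpha}{1-\alpha}c_6$ is a consequence of the scale-invariant statement
\[
(\star_\alpha) \qquad G(\rho)^{1-\alpha}\int_0^\infty t^{\alpha-1}\dd\rho(t)\ge c_6^{1-\alpha} \qquad \forall\,\rho\in\pr.
\]
I will then verify that the hypothesis implies $(\star_{\tilde\alpha})$. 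Using the dilation identity $G(L^a\#\rho)=aG(\rho)$ of \cite[Prop.~3.2(ii)]{BouJimMaha} and the rescaling $\int t^{\tilde\alpha-1}\dd(L^a\#\rho)=a^{\tilde\alpha-1}\int t^{\tilde\alpha-1}\dd\rho$, elementary calculus gives
\[
\inf_{a>0}\Phi_{\tilde\alpha}(L^a\#\rho)=\frac{2-\tilde\alpha}{1-\tilde\alpha}\left(c_6\int_0^\infty t^{\tilde\alpha-1}\dd\rho\right)^{\!\frac{1}{2-\tilde\alpha}}G(\rho)^{\frac{1-\tilde\alpha}{2-\tilde\alpha}},
\]
and this infimum is at least $\min_{\rho'}\Phi_{\tilde\alpha}(\rho')=\frac{2-\tilde\alpha}{1-\tilde\alpha}c_6$ by assumption, so rearranging gives $(\star_{\tilde\alpha})$.

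It remains only to propagate $(\star_{\tilde\alpha})$ to $(\star_\alpha)$ when $\alpha<\tilde\alpha$, and this is the role of Jensen's inequality. Setting $p:=\frac{1-\alpha}{1-\tilde\alpha}>1$, the convexity of $x\mapsto x^p$ on $[0,\infty)$ yields, for every $\rho\in\pr$,
\[
\int_0^\infty t^{\alpha-1}\dd\rho(t)=\int_0^\infty\!\bigl(t^{\tilde\alpha-1}\bigr)^{p}\dd\rho(t)\ge \left(\int_0^\infty t^{\tilde\alpha-1}\dd\rho(t)\right)^{\!p}.
\]
Multiplying both sides by $G(\rho)^{1-\alpha}=(G(\rho)^{1-\tilde\alpha})^p$ and invoking $(\star_{\tilde\alpha})$,
\[
G(\rho)^{1-\alpha}\int_0^\infty t^{\alpha-1}\dd\rho\ge \left(G(\rho)^{1-\tilde\alpha}\int_0^\infty t^{\tilde\alpha-1}\dd\rho\right)^{\!p}\ge c_6^{(1-\tilde\alpha)p}=c_6^{1-\alpha},
\]
which is $(\star_\alpha)$. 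The argument is essentially algebraic, so no real obstacle arises; the only points deserving care are the choice of AM--GM weights (pinned down by the requirement that $\delta_1$ saturate both bounds) and the direction of Jensen's inequality (ensured by $p>1$).
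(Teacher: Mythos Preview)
Your proof is correct and takes a genuinely different route from the paper's. The paper argues directly that the map $\alpha\mapsto \Phi_\alpha(\rho)-\Phi_\alpha(\delta_1)$ is non-increasing by differentiating the integrand $\phi(t,\alpha)=\frac{t^{\alpha-1}-1}{1-\alpha}$ in $\alpha$ and checking the sign via an auxiliary function $\psi(t,\alpha)=t^{\alpha-1}(1+(1-\alpha)\ln t)-1$; the hypothesis at $\tilde\alpha$ then forces $\Phi_\alpha(\rho)\ge\Phi_\alpha(\delta_1)$ for all $\alpha\le\tilde\alpha$. You instead exploit the dilation identity $G(L^a\#\rho)=aG(\rho)$ to recast the statement $C_{\mathrm{p}}(\alpha)=\frac{2-\alpha}{1-\alpha}c_6$ as the scale-invariant inequality $(\star_\alpha)$, after which the propagation $(\star_{\tilde\alpha})\Rightarrow(\star_\alpha)$ for $\alpha<\tilde\alpha$ is a one-line application of Jensen, since $t^{\alpha-1}=(t^{\tilde\alpha-1})^{p}$ with $p=\frac{1-\alpha}{1-\tilde\alpha}>1$. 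Your argument is arguably more conceptual, isolating exactly the scale-invariant content and making the monotonicity transparent; the paper's is more hands-on but avoids introducing the auxiliary inequality $(\star_\alpha)$. One minor remark: your AM--GM step and your dilation minimization are really the same computation (the weights are chosen precisely so that equality holds at the optimal dilation), so you could streamline by using only the latter; and the degenerate cases $G(\rho)=+\infty$ or $\int t^{\tilde\alpha-1}\dd\rho=+\infty$ deserve a sentence, though they are indeed harmless (note that divergence of $\int t^{\tilde\alpha-1}\dd\rho$ can only come from mass near $0$, where $t^{\alpha-1}\ge t^{\tilde\alpha-1}$, so $(\star_\alpha)$ is then trivial).
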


\begin{proof}
Recall from Remark \ref{rem:Gdelta1} that $c_6 = G(\delta_1)$.
By \eqref{eq:Cp}, for all \blue{$\alpha\in(-\infty,1)$},
\begin{equation}
\label{eq:3.11a}
C_{\mathrm{p}}(\alpha) \le G(\delta_1) + \frac{c_6}{1-\alpha}
\int_0^\infty t^{\alpha-1} \, \dd \delta_1(t)
= c_6 +  \frac{c_6}{1-\alpha}
 = \frac{2-\alpha}{1-\alpha} \, c_6.
\end{equation}
Write $C_{\mathrm{p}}(\alpha)=\min \{ F(\alpha,\rho) : \rho \in \mathcal{P}(\mathbb{R}^+)\}$, where
\[
F(\alpha,\rho) = G(\rho)+\frac{c_6}{1-\alpha}\int_0^{\infty} t^{\alpha-1} \, \dd\rho(t).
\]
For all $ \rho \in \mathcal{P}(\mathbb{R}^+)$, \blue{$\alpha\in(-\infty,1)$}, we have
\[
F(\alpha,\rho) - \blue{F(\alpha,\delta_1)}
= G(\rho) - G(\delta_1) + c_6 \int_0^\infty \frac{t^{\alpha-1}-1}{1-\alpha} \, \dd \rho(t).
\]
Let $\phi(t,\alpha)=(t^{\alpha-1}-1)/(1-\alpha)$ denote the integrand on the right-hand side. Then
\[
(1-\alpha)^2 \partial_\alpha \phi (t,\alpha)
= t^{\alpha-1}( 1+(1-\alpha)\ln t )-1 =: \psi(t,\alpha). 
\]
Since $\partial_\alpha \psi(t,\alpha)=(1-\alpha)(\ln t)^2 t^{\alpha-1}\geq 0$ and $\psi(t,1)=0$, we obtain that $\psi(t,\alpha)\leq 0$ for all $t\in(0,\infty)$, \blue{$\alpha\in(-\infty,1)$}. Therefore $\partial_\alpha \phi \le 0$ and $\phi$ is non-increasing in $\alpha$. Consequently the map
$\alpha \mapsto F(\alpha,\rho) -  \blue{F(\alpha,\delta_1)}$ is non-increasing. 

Let \blue{$\tilde{\alpha}\in(\infty,1)$} be such that
\[
C_{\mathrm{p}}(\tilde{\alpha}) = \frac{2-\tilde{\alpha}}{1-\tilde{\alpha}} \, c_6
= \blue{F(\tilde{\alpha},\delta_1)}.
\]
For all $\rho \in \mathcal{P}(\mathbb{R}^+)$ and all \blue{$\alpha\in(-\infty,\tilde{\alpha}]$},
\[
 F(\alpha,\rho) -  \blue{F(\alpha,\delta_1)} \ge
  F(\tilde{\alpha},\rho) -  \blue{F(\tilde{\alpha},\delta_1)}
  \ge C_{\mathrm{p}}(\tilde{\alpha})  - \blue{F(\tilde{\alpha},\delta_1)} = 0.
\]
Taking the infimum over $\rho$ gives 
\begin{equation}
\label{eq:3.11b}
C_{\mathrm{p}}(\alpha) \ge 
 \blue{F(\alpha,\delta_1)}
= \frac{2-\alpha}{1-\alpha} \, c_6.
\end{equation}
Combining \eqref{eq:3.11a} and \eqref{eq:3.11b} completes the proof.
\end{proof}


\section{The penalized optimal location problem: Proof of Theorem \ref{thm:pen}}\label{sec:proofpenalized}

This section is devoted to the proof of Theorem \ref{thm:pen}. In particular, we prove that
\[
C_{\mathrm{p}}(\overline{\alpha}) = \frac{2 - \overline{\alpha}}{1-\overline{\alpha}} \, c_6. 
\]
The upper bound is easy to prove:
\begin{lemma}[Upper bound on $C_{\mathrm{p}}(\alpha)$]
	\label{lem:UBCp}
For all \blue{$\alpha\in(-\infty,1)$},
\[
C_{\mathrm{p}}(\alpha) \le \frac{2 - \alpha}{1-\alpha} \, c_6. 
\]
\end{lemma}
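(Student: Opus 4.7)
The plan is to exploit the characterization of $C_{\mathrm{p}}(\alpha)$ as a minimum over $\mathcal{P}(\mathbb{R}^+)$ given in \eqref{eq:Cp} by testing with a single, carefully chosen probability measure, namely $\rho = \delta_1$. Since $C_{\mathrm{p}}(\alpha)$ is a minimum, any admissible $\rho$ yields an upper bound, so the whole argument reduces to evaluating the functional
\[
F(\alpha,\rho) := G(\rho) + \frac{c_6}{1-\alpha}\int_0^\infty t^{\alpha-1}\,\dd\rho(t)
\]
at $\rho=\delta_1$.

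The evaluation is immediate. First, $G(\delta_1)=c_6$ by Remark \ref{rem:Gdelta1}, which records the fact (established via Fejes T\'oth's Theorem on Sums of Moments) that a regular hexagonal tessellation is asymptotically optimal for the unpenalized quantization problem with a uniform distribution. Second, $\int_0^\infty t^{\alpha-1}\,\dd\delta_1(t)=1$. Plugging these into $F(\alpha,\delta_1)$ gives
\[
F(\alpha,\delta_1) = c_6 + \frac{c_6}{1-\alpha} = \frac{2-\alpha}{1-\alpha}\,c_6,
\]
and taking the minimum over $\rho\in\mathcal{P}(\mathbb{R}^+)$ yields the claimed bound. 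This is exactly the computation already carried out at \eqref{eq:3.11a} inside the proof of Lemma \ref{lem:monoCp}, so no new ingredients are needed.

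There is no real obstacle in this direction: the choice $\rho=\delta_1$ corresponds precisely to the heuristic configuration in Remark \ref{rem:rescaling}, where all particles have equal mass (so the distribution of masses is a Dirac at the rescaled value) and are placed at the centres of regular hexagons of unit area. The hard direction is the matching lower bound $C_{\mathrm{p}}(\overline{\alpha}) \ge \tfrac{2-\overline{\alpha}}{1-\overline{\alpha}}c_6$, which requires the convexity inequality \eqref{eq:ciintro} together with the mass lower bound \eqref{eq:Sketch3} and the Euler-type edge count of Lemma \ref{lemma:PCP}; but that is the content of subsequent lemmas and not of the present statement.
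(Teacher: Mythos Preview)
Your proof is correct and is essentially identical to the paper's own argument: test the variational characterization \eqref{eq:Cp} with $\rho=\delta_1$, use $G(\delta_1)=c_6$ from Remark \ref{rem:Gdelta1}, and compute directly. The paper's proof is a one-line version of exactly this computation.
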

\begin{proof} Recall from Remark \ref{rem:Gdelta1} that $G(\delta_1)=c_6$. Therefore
\[
C_\mathrm{p}(\alpha) \stackrel{\eqref{eq:Cp}}{\le} G(\delta_1) + \frac{c_6}{1-\alpha} \int_{0}^{\infty} t^{\alpha-1} \, \dd \delta_1(t) 
= c_6 + \frac{c_6}{1-\alpha} 
= \frac{2 - \alpha}{1-\alpha} \, c_6.
\]
\end{proof}

\begin{remark}[Direct proof of the upper bound]
	Lemma \ref{lem:UBCp} can also be proved without using the result from \cite{BouJimMaha} that $G(\delta_1)=c_6$. Instead we can start from equation \eqref{eq:charCp} and directly build a sequence of asymptotically optimal competitors $\mu_\delta$ supported on a subset of a triangular lattice. This is done by covering the square $Q$ with regular hexagons of a suitable size and making the heuristic calculation from Remark \ref{rem:rescaling} rigorous; cf.~\cite[Lemma 8]{BouPelTheil}.
\end{remark}

The matching lower bound 
\begin{equation}
\label{eq:CLB}
C_{\mathrm{p}}(\overline{\alpha}) \ge \frac{2 - \overline{\alpha}}{1-\overline{\alpha}} \, c_6
\end{equation}
requires much more work.
Owing to Corollary \ref{cor:monoCp} and Remark \ref{rem:Cp} we can assume without loss of generality that
\[
\o= Q=[-1/2,1/2]^2, \qquad f = \ca_Q.
\]
We will do this throughout the rest of the paper.

\subsection{Rescaling of the energy and the energy of a partition}
To prove \eqref{eq:CLB} it is convenient to rescale the domain $Q$.
As $\delta \to 0$, the optimal masses $m_i$ in \eqref{eq:charCp} go to $0$. Following \cite{BouPelTheil}, instead of keeping the domain $Q$ fixed as $\delta \to 0$, we blow up $Q$ in such a way that the optimal masses $m_i$ tend to $1$. The following definition is motivated by the heuristic calculation given in Remark \ref{rem:rescaling}.
  
\begin{definition}[Rescaled domain and energy]
For \blue{$\alpha\in(-\infty,1)$} and $\delta>0$, define
\[
V_{\delta,\alpha}:=\left(\frac{c_6}{\delta(1-\alpha)} \right)^{\frac{1}{2-\alpha}}
\]
and define the rescaled square domain $Q_{\delta,\alpha}$ by
\[
Q_{\delta,\alpha}:= V_{\delta,\alpha}^{\frac{1}{2}} Q.
\]
Moreover, define the set of admissible discrete measures $\mathcal{A}_{\delta,\alpha}$ by
\[
\mathcal{A}_{\delta,\alpha}:=
	\left\{ \mu = \sum_{i=1}^{N_\mu} m_i \delta_{z_i} : N_\mu \in \mathbb{N}, \, m_i > 0, \, \sum_{i=1}^{N_\mu}  m_i = V_{\delta,\alpha}, \, z_i \in Q_{\delta,\alpha}, \, z_i \ne z_j \textrm{ if } i \ne j \right\}
\]
and define the rescaled energy $\f_{\delta,\alpha}:\mathcal{A}_{\delta,\alpha} \to \mathbb{R}$ by
\begin{equation}
\label{eq:Fda}
\f_{\delta,\alpha}(\mu):= \frac{c_6}{1-\alpha} \sum_{i=1}^{N_\mu} m_i^\alpha + W_2^2(\ca_{Q_{\delta,\alpha}},\mu).
\end{equation}
\end{definition}

\begin{remark}[Restating $C_\mathrm{p}(\alpha)$ in terms of $\f_{\delta,\alpha}$]\label{rem:rescaled}
Let $\mu = \sum_i m_i \delta_{z_i}\in \mathcal{P}_{\mathrm{d}}(Q)$. For \blue{$\alpha\in(-\infty,1)$} and $\delta>0$, define 
$\widetilde{z}_i:=V_{\delta,\alpha}^{1/2} z_i$, $\widetilde{m}_i:=V_{\delta,\alpha} m_i$, and 
$\widetilde{\mu}_{\delta,\alpha}:=\sum_i \widetilde{m}_i \delta_{\widetilde{z}_i} \in\mathcal{A}_{\delta,\alpha}$.
Then
\begin{equation}\label{eq:relationrescaled}
V^{-2}_{\delta,\alpha}\f_{\delta,\alpha}(\widetilde{\mu}_{\delta,\alpha})
		= \delta\sum_i m_i^\alpha + W^2_2(\ca_{Q},\mu).
\end{equation}
Therefore, 
by \eqref{eq:charCp} and \eqref{eq:relationrescaled},  
\begin{equation}
\label{eq:suff}
C_\mathrm{p}(\alpha)  = \lim_{\delta\to 0} V^{-1}_{\delta,\alpha} \min \left\{
\f_{\delta,\alpha}(\mu) : \mu\in\mathcal{A}_{\delta,\alpha} \right\}.
\end{equation}
\end{remark}

We now state two first-order necessary conditions for minimizers of $\mathcal{F}_{\delta,\alpha}$. For a proof see, for instance, \cite[Theorem 4.5]{BouPelRop}.

\begin{lemma}[Properties of minimizers]
	\label{lem:propoptcell}
Let $\mu = \sum_{i=1}^{N_\mu} m_i \delta_{z_i}\in \mathcal{A}_{\delta,\alpha}$ be a minimizer of $\mathcal{F}_{\delta,\alpha}$.
Let $T$ be the optimal transport map defining 
$W_2(\ca_{Q_{\delta,\alpha}},\mu)$ and let $(w_1,\ldots,w_{N_\mu})$ be the weights of the corresponding Laguerre tessellation (see Lemma \ref{lem:cellspolygons}).
	\begin{itemize}
		\item[(i)] For all $i \in \{1,\dots,N_\mu\}$, we have
		\[
		w_i=-\frac{\alpha}{1-\alpha} c_6 m_i^{\alpha-1}.
		\]
		\item[(ii)]   
		The point $z_i$ is the centroid of the Laguerre cell $\overline{T^{-1}(z_i)}$, namely
		\[
		z_i = \frac{1}{m_i}\int_{T^{-1}(z_i)} x\,\dd x.
		\]
		In particular, $z_i\in T^{-1}(z_i)$.
	\end{itemize}
\end{lemma}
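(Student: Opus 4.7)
The plan is to derive both conditions as first-order optimality conditions of $\mathcal{F}_{\delta,\alpha}$ along admissible variations of the atoms $(m_i, z_i)$. The key tool will be Kantorovich duality for semi-discrete optimal transport, which states that
\[
W_2^2(\ca_{Q_{\delta,\alpha}}, \mu) = \max_{v \in \mathbb{R}^{N_\mu}} \Bigl[ \sum_{i=1}^{N_\mu} v_i m_i + \int_{Q_{\delta,\alpha}} \min_{j}\bigl(|x-z_j|^2 - v_j\bigr) \dx \Bigr],
\]
the maximizer being (up to an additive constant) the vector of Laguerre weights $w$ from Lemma \ref{lem:cellspolygons}. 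A gauge freedom is crucial here: shifting every $w_i$ by the same constant leaves the Laguerre cells, the transport map $T$ and the primal cost unchanged.

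For (i), I would take a variation $m_i \mapsto m_i + \e \eta_i$ with $\sum_i \eta_i = 0$ to respect the constraint $\sum_i m_i = V_{\delta,\alpha}$. Applying the envelope theorem to the dual formulation (at frozen $z_i$), the directional derivative of $W_2^2$ in the direction $\eta$ is $\sum_i w_i \eta_i$. Combining with the derivative of the entropy yields the Euler--Lagrange condition
\[
\frac{c_6 \alpha}{1-\alpha}\, m_i^{\alpha-1} + w_i = \lambda \qquad \text{for all } i,
\]
for some Lagrange multiplier $\lambda \in \mathbb{R}$. Using the gauge freedom I absorb $\lambda$ into the weights to obtain the stated formula.

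For (ii), I fix every $m_j$ and every $z_j$ with $j\neq i$ and vary only $z_i$. The entropy does not depend on $z_i$, so only the Wasserstein term contributes. By the envelope theorem applied to the primal representation -- exploiting that the mass constraint $|\overline{T^{-1}(z_i)}| = m_i$ can be preserved by an infinitesimal adjustment of $w_i$ whose contribution vanishes to first order by (i) -- I would show
\[
0 = \nabla_{z_i} \int_{T^{-1}(z_i)} |x-z_i|^2 \dx = -2 \int_{T^{-1}(z_i)} (x-z_i) \dx,
\]
where the boundary contribution cancels because on $\partial T^{-1}(z_i) \cap \partial T^{-1}(z_j)$ one has $|x-z_i|^2 - w_i = |x-z_j|^2 - w_j$. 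Dividing by $m_i = |T^{-1}(z_i)|$ gives the centroid formula, and $z_i \in \overline{T^{-1}(z_i)}$ then follows because the Laguerre cell is convex by Lemma \ref{lem:cellspolygons}.

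The main subtlety is justifying the envelope arguments rigorously: one must check that $(m,z) \mapsto W_2^2(\ca_{Q_{\delta,\alpha}}, \mu)$ is differentiable at the optimal configuration, which amounts to showing that the Laguerre tessellation depends smoothly on $(m,z)$ under small perturbations. This is standard provided the cells have positive area (which holds at a minimizer since every $m_i > 0$), and is the point at which one must take greatest care; the full argument is carried out in the reference \cite[Theorem 4.5]{BouPelRop}.
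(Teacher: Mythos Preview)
The paper does not give its own proof of this lemma; it simply states the result and refers to \cite[Theorem 4.5]{BouPelRop}. Your outline is correct and is precisely the standard argument carried out in that reference: Kantorovich duality for the semi-discrete problem, the envelope theorem to differentiate $W_2^2$ in $(m,z)$, and then the gauge freedom in the Laguerre weights to normalise the Lagrange multiplier. Since you also cite the same source for the regularity needed to justify the envelope step, there is nothing to add.
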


\begin{remark}[Centroidal Laguerre tessellations]
\label{rem:CLTs}	
Lemma \ref{lem:propoptcell}(ii) implies that minimizers of $\mathcal{F}_{\delta,\alpha}$ generate \emph{centroidal Laguerre tessellations},  which means that the particles $z_i$ lie at the centroids of their Laguerre cells $T^{-1}(z_i)$ \cite{BouRop,LevyCentroidalPower}. 
\end{remark}

In the following it will also be convenient to reason from a geometrical point of view.
Each $\mu\in\mathcal{A}_{\delta,\alpha}$ induces a partition of $Q_{\delta,\alpha}$ by the Laguerre cells $\overline{T^{-1}(z_i)}$, where $T$ is the optimal transport map defining $W_2(\ca_{Q_{\delta,\alpha}},\mu)$. We define a wider class of partitions as follows:

\begin{definition}[Admissible partitions]
	\label{def:Sdeltaalpha}
Let $\mathcal{S}_{\delta,\alpha}$ denote the family of partitions of $Q_{\delta,\alpha}$ of the form $\mathcal{C}=(C_1,\dots, C_k)$ where $k \in \mathbb{N}$,  
$C_i \subset Q_{\delta,\alpha}$ is measurable, and $\sum_{i=1}^k \ca_{C_i} = 1$ a.e.~in $Q_{\delta,\alpha}$.
\end{definition}

The advantage of working with partitions instead of measures is that it allows us to localise the nonlocal energy $\f_{\delta,\alpha}$.

\begin{definition}[Optimal partitions]
	\label{def:optpart}
Define the partition energy $F:\mathcal{S}_{\delta,\alpha} \to \mathbb{R}$ by
\[
F(\mathcal{C}):= \sum_{i=1}^k \left( \frac{c_6}{1-\alpha} |C_i|^\alpha
	+ \int_{C_i} |x-\xi_{C_i}|^2 \, \dd x \right),
\]
where $\xi_{C_i}:=\frac{1}{|C_i|} \int_{C_i} x\, \dd x$ is the centroid of $C_i$, $i\in\{1,\dots,k\}$.
We say that $\mathcal{C}\in\mathcal{S}_{\delta,\alpha}$ is an \emph{optimal partition} if it minimizes $F$.
\end{definition}

To each $\mu\in\mathcal{A}_{\delta,\alpha}$ it is possible to associate an element of $\mathcal{S}_{\delta,\alpha}$ as follows:

\begin{definition}[Partition associated to a discrete measure]
	\label{def:partmeas}
Let $\mu\in\mathcal{A}_{\delta,\alpha}$ be of the form $\mu=\sum_{i=1}^{N_\mu} m_i \delta_{z_i}$.
Define $\mathcal{C}^\mu = (C^\mu_1,\dots,C^\mu_{N_\mu})\in\mathcal{S}_{\delta,\alpha}$ by
\[
C^\mu_i := \overline{T^{-1}(z_i)}
\]
for all $i\in\{1,\dots,N_\mu\}$, where $T$ is the optimal transport map defining $W_2(\ca_{Q_{\delta,\alpha}},\mu)$.
\end{definition}

\begin{remark}[Equivalence of the partition formulation]
	\label{remark:equiv}
It was proved in \cite[p.~125]{BouPelTheil} that
\[
\min \{ \f_{\delta,\alpha}(\mu) : \mu\in\mathcal{A}_{\delta,\alpha} \}
 = \min\{ F(\mathcal{C}) : \mathcal{C}\in \mathcal{S}_{\delta,\alpha}  \}.
\]
\end{remark}

Let $\mu=\sum_{i=1}^{N_\mu} m_i \delta_{z_i}$
be a minimizer of $\f_{\delta,\alpha}$.
For $i\in\{1,\dots,N_\mu\}$, let $n_i$ denote the number of edges of $C_i^\mu$.
Then we can bound the energy from below as follows:
\begin{align*}
\f_{\delta,\alpha}(\mu) = F(\mathcal{C}^\mu)
	 =\sum_{i=1}^{N_\mu} \left(  \frac{c_6}{1-\alpha}|C^\mu_i|^\alpha + \int_{C^\mu_i} |x-z_i|^2 \, \dd x \right)  
\geq \sum_{i=1}^{N_\mu} \left( \frac{c_6}{1-\alpha}
|C^\mu_i|^\alpha + c_{n_i} |C^\mu_i|^2 \right) 
\end{align*}
by Lemma \ref{lem:Fejes} and Remark \ref{rem:cnerscaled}. 
For \blue{$\alpha\in(-\infty,1)$}, define $g_\alpha:[0,\infty) \times [3,\infty) \to \mathbb{R}$ by
\[
g_\alpha(m,n):=\frac{c_6}{1-\alpha} m^\alpha+c_n m^2.
\]
In this notation the lower bound above becomes
\begin{equation}
\label{eq:Putin}
\f_{\delta,\alpha}(\mu) = F(\mathcal{C}^\mu) 
\geq \sum_{i=1}^{N_\mu} g_\alpha(|C_i^\mu|, n_i).
\end{equation}
In the following section we study the function $g_\alpha$.


\subsection{The convexity inequality}\label{sec:convexity}

We start by proving a technical result that plays the role of a convexity inequality for $g_\alpha$.
We want to show that, for large enough values of $m$,
\[
g_\alpha(m,n)\geq g_\alpha(1,6) + \nabla g_\alpha(1,6)\cdot(m-1,n-6).
\]
Writing this out explicitly gives
\begin{equation}\label{eq:ineq}
\frac{c_6}{1-\alpha}m^\alpha + c_n m^2 \geq c_6\left(\frac{2-\alpha}{1-\alpha}\right)m+\kappa(n-6),
\end{equation}
where
\[
\kappa:=\partial_n c_n |_{n=6}=\frac{2\pi}{243}-\frac{5\sqrt{3}}{324}<0.
\]
For $\alpha\in(0,1)$, define the function $h_\alpha:[0,\infty)\times[3,\infty)\to\mathbb{R}$ to be the difference between $g_\alpha$ and its tangent plane approximation at $(1,6)$:
\begin{align*}
h_\alpha(m,n) := &
g_\alpha(m,n) - \left( g_\alpha(1,6) + \nabla g_\alpha(1,6)\cdot(m-1,n-6) \right)
\\
 = &
\frac{c_6}{1-\alpha}m^\alpha + c_n m^2 -c_6\left(\frac{2-\alpha}{1-\alpha}\right)m-\kappa(n-6).
\end{align*}
\blue{Note that in this section we restrict our attention to $\alpha >0$ without loss of generality since by the monotonicity result (Lemma \ref{lem:monoCp}) in the end we only need to consider $\alpha= \overline{\alpha}=0.583$.}
The typical behaviour of the function $m\mapsto h_\alpha(m,n)$ is depicted in Figure \ref{fig:Falpha}. Our aim is to prove that $h_\alpha(m,n)$ is non-negative for all integers $n \ge 3$ for large enough values of $m$, as suggested by the figure.

\begin{figure}
\includegraphics[scale=0.4]{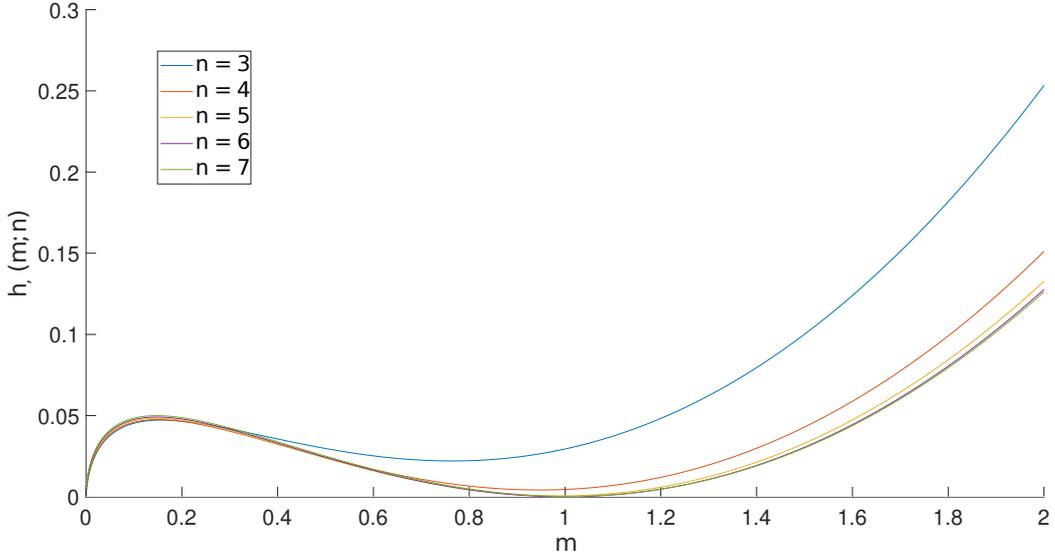}
\caption{Behaviour of the map $m\mapsto h_\alpha(m,n)$ for different values of $n$ and for $\alpha=0.583$. Note that $h_\alpha(0,n)<0$ for $n \in \{3,4,5\}$, even if this is not evident from the figure.}
\label{fig:Falpha}
\end{figure}

\begin{lemma}[Positivity of $h_{\overline{\alpha}}$]\label{lem:inequality}
Let $n\ge 3$ be a integer.
If $h_{\overline{\alpha}}(m_1,n)\geq0$ for some $m_1\geq0$, then $h_{\overline{\alpha}}(m,n)\geq0$ for all $m\geq m_1$.
\end{lemma}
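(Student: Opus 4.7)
The plan is to analyse, for each fixed integer $n\geq 3$, the shape of the one-variable function $\phi(m):=h_{\overline{\alpha}}(m,n)$ on $[0,\infty)$. The second derivative
\[
\phi''(m)=-c_6\,\overline{\alpha}\,m^{\overline{\alpha}-2}+2c_n
\]
is strictly increasing, running from $-\infty$ at $m=0^+$ to $2c_n>0$ at infinity, so it has a unique zero $m^*>0$; hence $\phi$ is concave on $(0,m^*)$ and convex on $(m^*,\infty)$. Consequently $\phi'$ is itself unimodal with minimum at $m^*$, and since $\phi'(0^+)=\phi'(\infty)=+\infty$, $\phi'$ has either no zero or exactly two zeros $m_a<m^*<m_b$. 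In the first case $\phi$ is strictly increasing, and the lemma is immediate.

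In the second case $\phi$ is increasing on $[0,m_a]$, decreasing on $[m_a,m_b]$, and increasing on $[m_b,\infty)$, so $\inf_{m\geq m_1}\phi(m)\in\{\phi(m_1),\phi(m_b)\}$ for any $m_1\geq 0$. Therefore the lemma reduces to the single inequality $\phi(m_b)\geq 0$: granted this, whenever $\phi(m_1)\geq 0$ one has $\phi(m)\geq\min\{\phi(m_1),\phi(m_b)\}\geq 0$ for all $m\geq m_1$. To compute $\phi(m_b)$, I would use the critical point relation $\phi'(m_b)=0$, multiplied by $m_b$, to eliminate the linear term; a direct calculation gives
\[
\phi(m_b)=c_6\,m_b^{\overline{\alpha}}-c_n\,m_b^{2}-\kappa(n-6).
\]

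The main obstacle is to verify this inequality $c_6 m_b^{\overline{\alpha}}-c_n m_b^2\geq\kappa(n-6)$ for every integer $n\geq 3$, since $m_b$ is defined only implicitly by a transcendental equation involving both $\overline{\alpha}$ and $c_n$. I expect to handle this in two regimes. For all large $n$, I would exploit that $c_n\to c_\infty=\tfrac1{2\pi}$ and $-\kappa(n-6)\to+\infty$, and combine this with explicit monotone bounds on $m_b$ (obtained from the convexity of $n\mapsto c_n$ and the decay of $c_n-c_\infty$) to obtain $\phi(m_b)\geq 0$ uniformly. For the remaining finitely many small $n$, I would follow the philosophy used elsewhere in the paper (cf.\ \cite{BouPelTheil}): rigorously enclose $m_b$ using a Newton-type iteration and interval arithmetic, then evaluate $\phi(m_b)$; the numerical cushion indicated by Figure \ref{fig:Falpha} is comfortably above machine precision. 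This splitting — delicate asymptotics plus a finite computer-assisted check — is where the specific value $\overline{\alpha}=0.583$ enters in an essential way, and it is the step whose precise quantitative form will govern the later results.
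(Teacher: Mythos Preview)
Your proposal is correct and follows essentially the same strategy as the paper: analyse the shape of $m\mapsto h_{\overline\alpha}(m,n)$ via the sign change of $\phi''$, reduce the lemma to the single inequality $\phi(m_b)\geq 0$ at the local minimum, then verify this by a numerical check for a finite list of small $n$ together with an asymptotic argument for large $n$. Two small differences are worth noting: your closed-form identity $\phi(m_b)=c_6 m_b^{\overline\alpha}-c_n m_b^2-\kappa(n-6)$ (obtained from $m_b\phi'(m_b)=0$) is a clean simplification the paper does not exploit, while for large $n$ the paper takes the slightly sharper route of showing $\partial_n\varphi(\overline\alpha,n)>0$ for $n\geq 7$ (using the uniform bound $m_b\leq 3/2$), which reduces the entire range $n\geq 7$ to the single numerical check at $n=7$ rather than relying on the linear growth of $-\kappa(n-6)$.
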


Before proving this, we prove the following easy but important corollary, which allows us to reduce the proof of the convexity inequality $h_{\overline{\alpha}}(\cdot,n)\ge 0$ for all integers $n \ge 3$ to the finite number of cases $n \in \{3,4,5\}$:
\begin{corollary}[Reduction to $n \in \{3,4,5\}$]
	\label{cor:reduction}
Let $n \ge 6$ be an integer. Then
	$h_{\overline{\alpha}}(m,n)\geq0$ for all $m \ge 0$.
\end{corollary}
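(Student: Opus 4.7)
The plan is to apply Lemma \ref{lem:inequality} with the choice $m_1 = 0$, after verifying that $h_{\overline{\alpha}}(0,n)\ge 0$ for every integer $n \ge 6$. Since $\overline{\alpha} = 0.583 > 0$, the term $m^{\overline{\alpha}}$ vanishes at $m=0$, and the term $-c_6\tfrac{2-\overline{\alpha}}{1-\overline{\alpha}} m$ also vanishes at $m=0$. Thus
\[
h_{\overline{\alpha}}(0,n) = -\kappa(n-6).
\]

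Recalling that $\kappa = \partial_n c_n|_{n=6} = \tfrac{2\pi}{243}-\tfrac{5\sqrt{3}}{324} < 0$, the product $-\kappa(n-6)$ is non-negative for every integer $n \ge 6$ (and equals zero precisely when $n=6$). Hence $h_{\overline{\alpha}}(0,n) \ge 0$.

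The hypothesis of Lemma \ref{lem:inequality} is therefore satisfied with $m_1 = 0$, and the lemma yields $h_{\overline{\alpha}}(m,n) \ge 0$ for all $m \ge m_1 = 0$, which is exactly the claim of the corollary. There is no genuine obstacle here: the whole content of the corollary is that the ``large $n$'' cases reduce to a trivial sign check at $m=0$ (because the boundary term $-\kappa(n-6)$ already provides the required non-negativity for $n\ge 6$), so that the remaining work in proving the convexity inequality \eqref{eq:ineq} is concentrated in the three cases $n \in \{3,4,5\}$, as stated after the corollary.
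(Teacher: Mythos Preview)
Your proof is correct and follows essentially the same approach as the paper: compute $h_{\overline{\alpha}}(0,n)=-\kappa(n-6)\ge 0$ for $n\ge 6$ and invoke Lemma~\ref{lem:inequality} with $m_1=0$. The paper's proof is just the one-line version of what you wrote.
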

\begin{proof}
Observe that $h_{\overline{\alpha}}(0,n)=-\kappa(n-6) \ge 0$ for all $n \ge 6$. Therefore the result follows immediately from Lemma \ref{lem:inequality}.
\end{proof}

On the other hand, $h_{\overline{\alpha}}(0,n)<0$ for $n \in \{3,4,5\}$. This is why in the next section we will need to prove a lower bound on the masses $m_i$ of minimizers of $\f_{\delta,\overline{\alpha}}$ to ensure the validity of the convexity inequality \eqref{eq:ineq}.

\begin{proof}[Proof of Lemma \ref{lem:inequality}]
\emph{Step 1.} First we study the shape of the function  $m\mapsto h_\alpha(m,n)$. 
\blue{In particular, we show that it has exactly one local minimum point.}
Its derivative is
\begin{equation}
\label{eq:h_m}
\partial_m h_\alpha(m,n) = \frac{\alpha}{1-\alpha} c_6 m^{\alpha-1} + 2 c_n m
	- c_6\frac{2-\alpha}{1-\alpha}.
\end{equation}
\blue{It is easy to see that $\partial_m h_\alpha(m,n)$ is strictly convex in $m$ \blue{(since $\alpha > 0$)}. Therefore $m \mapsto \partial^2_{mm} h_\alpha(m,n)$ is increasing and so  $m\mapsto \partial_m h_\alpha(m,n)$ has at most one critical point. On the other hand, $\lim_{m \to 0}\partial_m h_\alpha(m,n)=+\infty$, $\lim_{m \to \infty}\partial_m
 h_\alpha(m,n)=+\infty$.} Therefore  
 $m\mapsto \partial_m h_\alpha(m,n)$ has exactly one critical point and 
  $m\mapsto h_\alpha(m,n)$ has at most two critical points. 
  
  \blue{
  Next we prove that  $m\mapsto h_\alpha(m,n)$ has exactly two critical points.
  It is sufficient to prove that 
  \[
  \partial_m h_\alpha(1/2,n) < 0 
  \]
  for all $n \ge 3$ (since $\lim_{m \to 0}\partial_m h_\alpha(m,n) >0$ and $\lim_{m \to \infty}\partial_m
 h_\alpha(m,n) > 0$). We have
 \begin{equation}
 \label{eq:qui}
  \partial_m h_\alpha(1/2,n) = c_6 \frac{\alpha 2^{1-\alpha}-2+\alpha}{1-\alpha} + c_n.
 \end{equation}
 It is straightforward to check that
 \begin{equation}
 \label{eq:quo}
 \frac{d}{d \alpha} \frac{\alpha 2^{1-\alpha}-2+\alpha}{1-\alpha}
 = \frac{\psi(\alpha)}{(1-\alpha)^2}
 \end{equation}
 with
 \[
 \psi(\alpha) = 2^{1-\alpha}(1-\alpha \ln 2 + \alpha^2 \ln 2) - 1.
 \]
 Differentiating again gives
 \[
 \psi'(\alpha) = q(\alpha) 2^{1-\alpha} \ln 2, \qquad q(\alpha):=- \alpha^2 \ln 2 + (2+\ln 2)\alpha -2.
 \]
 The concave quadratic polynomial $q$ has roots $\alpha=1$ and $\alpha = 2/\ln 2 > 2$. Therefore, for all $\alpha \in (0,1)$, $q(\alpha)<0$, $\psi'(\alpha)<0$, and
 \begin{equation}
 \label{eq:qua}
 \psi(\alpha) \ge \psi(1) = 0.
 \end{equation}
 From equations \eqref{eq:qui}--\eqref{eq:qua} we conclude that
$\alpha \mapsto \partial_m h_\alpha(1/2,n)$
 is increasing. Therefore
\begin{align*}
\partial_m h_\alpha(1/2,n)  \le \lim_{\alpha \to 1} \partial_m h_\alpha(1/2,n)
 = \lim_{\alpha \to 1} c_6 \frac{\alpha 2^{1-\alpha}-2+\alpha}{1-\alpha} + c_n
 = (\ln 2 - 2)c_6 + c_n.
\end{align*}
Since $n \mapsto c_n$ is decreasing (see Lemma \ref{lem:cn}),
\[
\partial_m h_\alpha(1/2,n) \le  (\ln 2 - 2)c_6 + c_3 <  -0.017 < 0
\]
as required.

We have shown that $m\mapsto h_\alpha(m,n)$ has exactly two critical points.
 The smallest critical point is a local maximum point and the largest critical point is a local minimum point (since  $m\mapsto \partial^2_{mm} h_\alpha(m,n)$ is increasing). Let $\widetilde{m}(\alpha,n)$ denote the local minimum point.} To prove the lemma it is sufficient to prove that 
\begin{equation}
\label{eq:non-neg}
\varphi(\alpha,n) := h_{\alpha}(\widetilde{m}(\alpha,n), n) \ge 0
\end{equation}
for $\alpha=\overline{\alpha}$ and for all $n \in \mathbb{N} \cap [3,\infty)$.

\emph{Step 2.} Next we prove \eqref{eq:non-neg} for the case $n=6$.
A direct computation shows that $1$ is \blue{a} local minimum point of $m\mapsto h_\alpha(m,6)$ for all $\alpha\in(0,1)$. Therefore
$\widetilde{m}(\alpha,6)=1$ and 
$\varphi(\alpha,6)= h_{\alpha}(1,6) =0$. 

\emph{Step 3.} Next we prove \eqref{eq:non-neg} for the case  $n \in \{3,4,5,7\}$. 
Let
\[
m_1(n):=
\begin{cases}
0.764 & n=3,
\\
0.946 & n=4,
\\
0.98705 & n=5,
\\
1.00516 & n=7,
\end{cases}
\quad\quad
m_2(n):=
\begin{cases}
0.765 & n=3,
\\
0.947 & n=4,
\\
0.9871 & n=5,
\\
1.00518 & n=7.
\end{cases}
\]
Then numerically evaluating $\partial_m h_{\overline{\alpha}}$ gives
\[
\partial_m h_{\overline{\alpha}}(m_1(n),n)\leq
\begin{cases}
-8\times 10^{-6} & n=3,\\
-2\times 10^{-5} & n=4,\\
-1\times 10^{-6} & n=5,\\
-4\times 10^{7} & n=7,
\end{cases}
\quad\quad
\partial_m h_{\overline{\alpha}}(m_2(n),n)\geq
\begin{cases}
3\times 10^{-5} & n=3,\\
8\times 10^{-6} & n=4,\\
4\times 10^{-7} & n=5,\\
1\times 10^{-7} & n=7.
\end{cases}
\]
Let $n \in \{3,4,5,7\}$.
By the Intermediate Value Theorem, the map $m \mapsto \partial h_{\overline{\alpha}}(m,n)$ has a root between $m_1(n)$ and $m_2(n)$. Moreover, since $\partial_m h_{\overline{\alpha}}(m_1(n),n) < 0$, we have bracketed the largest root $\widetilde{m}(\overline{\alpha},n)$:  $m_1(n)<\widetilde{m}(\overline{\alpha},n)<m_2(n)$.
Therefore
\begin{align*}
\varphi(\overline{\alpha},n)\geq \frac{c_6}{1-\overline{\alpha}}m_1(n)^{\overline{\alpha}} + c_n m_1(n)^2
- c_6\left(\frac{2-\overline{\alpha}}{1-\overline{\alpha}}\right)m_2(n) - \kappa(n-6)
\geq
\begin{cases}
2\times 10^{-2} & n=3,\\
3\times 10^{-3} & n=4,\\
5\times 10^{-4} & n=5,\\
2\times 10^{-4} & n=7,
\end{cases}
\end{align*}
which proves \eqref{eq:non-neg} for the case  $n \in \{3,4,5,7\}$.

\emph{Step 4.} Finally, we prove 
 \eqref{eq:non-neg} for the case $n \in \mathbb{N} \cap [8,\infty)$.
To do this
we prove that $\partial_n \varphi(\overline{\alpha},n)>0$ for $n\geq 7$.
Then the result follows from the case $n=7$ proved in Step 3.
By definition of $\widetilde{m}(\alpha,n)$, for all $\alpha\in(0,1)$, we have
$\partial_m h_\alpha(\widetilde{m}(\alpha,n),n)=0$. Therefore
\[
\partial_n \varphi(\alpha,n) = 
\frac{\partial h_\alpha}{\partial n} (\tilde{m}(\alpha,n),n)
=
\widetilde{m}^2(\alpha,n) \, \partial_n c_n - \kappa.
\]
Since $n\mapsto c_n$ is convex (Lemma \ref{lem:cn}), then $\partial_n c_n$ is increasing and we get the lower bound
\begin{equation}\label{eq:estpartialvarphi1}
\partial_n \varphi(\alpha,n) \geq  \widetilde{m}^2(\alpha,n) \, \partial_n c_n|_{n=7} - \kappa
\end{equation}
for all $n \ge 7$.
We will prove below that
\begin{equation}\label{eq:ubm}
\widetilde{m}(\overline{\alpha},n)\leq\frac{3}{2}.
\end{equation}
Observe that
\begin{equation}
\label{eq:Dcn}
\partial_n c_n = - \frac 1n c_n
+ \frac{1}{2n} \left( - \frac{\pi}{3n^2} \sec^2 \left( \frac{\pi}{n} \right)+ \frac{\pi}{n^2}  \csc^2 \left(\frac{\pi}{n} \right) \right).
\end{equation}
From \eqref{eq:estpartialvarphi1}, \eqref{eq:ubm}, \eqref{eq:Dcn} 
we obtain that 
\begin{align*}
\partial_n \varphi(\overline{\alpha},n) \geq  \widetilde{m}^2(\overline{\alpha},n) \, \partial_n c_n |_{n=7} -\kappa
\geq \left( \frac{3}{2} \right)^2 \, (\partial_n c_n)|_{n=7} - \kappa
 > 1.5 \times 10^{-5} >0,
\end{align*}
as required.

To prove \eqref{eq:ubm} we reason as follows. Using \eqref{eq:h_m} and the fact that $n\mapsto c_n$ is decreasing (Lemma \ref{lem:cn}), we deduce that 
$n \mapsto \partial_m h_\alpha(m,n)$ is decreasing for all $m$, and hence 
$n\mapsto \widetilde{m}(\alpha,n)$ is increasing.
Therefore $\widetilde{m}(\alpha,n)\leq \widetilde{m}(\alpha,\infty)$, where $\widetilde{m}(\alpha,\infty)$ is defined to be the largest root of 
\[
\partial_m h_\alpha(m,\infty) := \frac{\alpha}{1-\alpha} c_6 m^{\alpha-1} + 2 c_\infty m
- c_6\frac{2-\alpha}{1-\alpha},
\]
where $c_\infty$ was defined in Lemma \ref{lem:cn}.
We want to show that $\widetilde{m}(\overline{\alpha},\infty)\leq 3/2$.
We have
\begin{align*}
\partial_m h_{\overline{\alpha}} \left( 1, \infty \right) 
& =
\frac{\overline{\alpha}}{1-\overline{\alpha}} c_6  + 2 c_\infty 
- c_6\frac{2-\overline{\alpha}}{1-\overline{\alpha}} 
< -0.0024 < 0,
\\
\partial_m h_{\overline{\alpha}} \left(3/2,\infty \right) 
& =
\frac{\overline{\alpha}}{1-\overline{\alpha}} c_6 \left(\frac{3}{2}\right)^{\overline{\alpha}-1} + 2 c_\infty \frac 32
- c_6\frac{2-\overline{\alpha}}{1-\overline{\alpha}} 
> 0.12 > 0.
\end{align*}
Therefore $\widetilde{m}(\overline{\alpha},\infty) \in (1,3/2)$ by the Intermediate Value Theorem. This proves \eqref{eq:ubm} and completes the proof.
\end{proof}

\begin{remark}
Despite the fact that we used the specific value of ${\overline{\alpha}}$ in several places in the proof of  Lemma \ref{lem:inequality}, we expect Lemma \ref{lem:inequality} to hold for all $\alpha\in(0,1)$.
\end{remark}


\subsection{Lower bound on the area of optimal cells}
\label{sec:lbmi}

Let $\mu=\sum_{i=1}^{N_\mu}m_i\delta_{z_i} \in \mathcal{A}_{\delta,\overline{\alpha}}$ be a minimizer of $\f_{\delta,\overline{\alpha}}$.
We will prove the convexity inequality $h_{\overline{\alpha}}(m_i,n)\ge 0$ for all $i \in \{1,\ldots,N_\mu\}$, $n \in \mathbb{N} \cap [3,\infty)$.
The idea is to prove a lower bound $m_i\geq \overline{m}$ such that 
$h_{\overline{\alpha}}(\overline{m},n)\ge 0$ for all  $n \in \mathbb{N} \cap [3,\infty)$. Then the convexity inequality follows from Lemma 
\ref{lem:inequality}. 

We prove the lower bound $m_i\geq \overline{m}$ following the strategy of the proof of \cite[Lemma 7]{BouPelTheil}, which was developed for the case $\alpha=0.5$.
The main differences are that we have to deal with the more difficult case of $\alpha=\overline{\alpha}>0.5$ and that we optimise some of the estimates.
Our proof can be used to give a lower bound $\overline{m}$ on the areas $m_i$ for all $\alpha\in(0,1)$, \blue{but this lower bound does not satisfy the convexity inequality $h_{\alpha}(\overline{m},n)\ge 0$ if $\alpha > \overline{\alpha}$}.
Our lower bound on the area of the cells holds for cells with arbitrarily many sides, although by Corollary \ref{cor:reduction} we only need the lower bound for cells with $3$, $4$ or $5$ sides. We saw no advantage in the proof of restricting the number of sides.

The following result gives the difference in energy of a 
partition and the one obtained by merging two of its cells. Recall that 
$\mathcal{S}_{\delta,\alpha}$ and
$F$ were defined in Definitions \ref{def:Sdeltaalpha} and \ref{def:optpart}.

\begin{lemma}[Merging]\label{lem:mergsplit}
Let $\mathcal{C}=(C_1,\dots,C_k)\in\mathcal{S}_{\delta,\alpha}$, $k\geq 2$.
For $i\in\{1,\dots,k\}$ let $m_i=|C_i|$ and let $z_i\in C_i$ be the centroid of $C_i$.
Define $\mathcal{D}\in\mathcal{S}_{\delta,\alpha}$ by $\mathcal{D}:=(C_1\cup C_2, C_3,\dots, C_k)$.
For all \blue{$\alpha\in(-\infty,1)$},
\begin{equation}\label{eq:splitting}
F(\mathcal{D}) - F(\mathcal{C}) = \frac{c_6}{1-\alpha}\left(  (m_1+m_2)^\alpha -m_1^\alpha-m_2^\alpha \right)
	+ |z_2-z_{1}|^2\frac{m_1m_2}{m_1+m_2}.
\end{equation}
\end{lemma}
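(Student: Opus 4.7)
The plan is a direct calculation, exploiting the parallel axis theorem (Huygens--Steiner) to handle the second-moment part. I would split the difference $F(\mathcal{D})-F(\mathcal{C})$ into two contributions: the entropy part (the sum of the $\frac{c_6}{1-\alpha}|\cdot|^\alpha$ terms) and the quadratic moment part. Since the cells $C_3,\dots,C_k$ are unchanged in passing from $\mathcal{C}$ to $\mathcal{D}$, both contributions localise to cells $1$ and $2$. The entropy part immediately contributes $\frac{c_6}{1-\alpha}\bigl((m_1+m_2)^\alpha - m_1^\alpha - m_2^\alpha\bigr)$, matching the first summand on the right-hand side of \eqref{eq:splitting}.

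For the quadratic part, let $\xi_{12}$ denote the centroid of $C_1 \cup C_2$. A direct computation gives
\[
\xi_{12} \,=\, \frac{1}{m_1+m_2}\int_{C_1\cup C_2} x\,\dd x \,=\, \frac{m_1 z_1 + m_2 z_2}{m_1+m_2}.
\]
I would then apply the elementary identity (valid for any bounded measurable $A\subset\R^2$ with positive area, centroid $\xi_A$, and any $p\in\R^2$)
\[
\int_A |x-p|^2 \,\dd x \,=\, \int_A |x-\xi_A|^2 \,\dd x \,+\, |A|\,|\xi_A - p|^2,
\]
which follows by expanding the square and using $\int_A (x-\xi_A)\,\dd x = 0$. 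Applying it to $A = C_1$ and $A = C_2$ with $p = \xi_{12}$ yields
\[
\int_{C_1\cup C_2}|x-\xi_{12}|^2 \,\dd x \,=\, \sum_{i=1}^{2}\int_{C_i}|x - z_i|^2\,\dd x \,+\, m_1|z_1 - \xi_{12}|^2 + m_2|z_2 - \xi_{12}|^2.
\]

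Finally, the explicit formula for $\xi_{12}$ gives $z_1-\xi_{12} = \frac{m_2}{m_1+m_2}(z_1-z_2)$ and $z_2-\xi_{12} = \frac{m_1}{m_1+m_2}(z_2-z_1)$, so
\[
m_1|z_1-\xi_{12}|^2 + m_2|z_2-\xi_{12}|^2 \,=\, \frac{m_1 m_2^2 + m_2 m_1^2}{(m_1+m_2)^2}|z_1-z_2|^2 \,=\, \frac{m_1 m_2}{m_1+m_2}|z_2 - z_1|^2,
\]
which is the second summand on the right-hand side of \eqref{eq:splitting}. There is no real obstacle here; the whole argument is an instance of the parallel axis theorem combined with the fact that $F$ decouples over the cells of the partition.
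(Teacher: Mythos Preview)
Your proof is correct and is essentially the same as the paper's: both reduce the question to computing $\int_{C_1\cup C_2}|x-\xi_{12}|^2\,\dd x - \sum_{i=1}^2\int_{C_i}|x-z_i|^2\,\dd x = m_1|z_1-\xi_{12}|^2 + m_2|z_2-\xi_{12}|^2$ and then substitute the expression for $\xi_{12}$. The only cosmetic difference is that you invoke the parallel axis identity by name, whereas the paper derives it in-line by expanding the squares.
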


\begin{proof}
By definition
\[
z_1 = \frac{1}{m_1}\int_{C_1} x\,\dd x,
\quad\quad
z_2 = \frac{1}{m_2}\int_{C_2} x\,\dd x.
\]
Let $\overline{z}\in C_1\cup C_2$ be the centroid of $C_1\cup C_2$:
\[
\bar{z} = \frac{m_1}{m_1+m_2}z_{1} + \frac{m_2}{m_1+m_2}z_2.
\]
A direct computation gives
\begin{align*}
\int_{C_1\cup C_2} & |x-\bar{z}|^2 \,\dd x - \int_{C_1} |x-z_{1}|^2 \,\dd x - \int_{C_2} |x-z_2|^2 \,\dd x  \\
&= \int_{C_1} \left( |\bar{z}|^2 - 2 x\cdot\bar{z} - |z_{1}|^2 + 2 x\cdot z_{1}  \right) \,\dd x
	+ \int_{C_2} \left( |\bar{z}|^2 - 2 x\cdot\bar{z} - |z_2|^2 + 2 x\cdot z_2  \right) \, \dd x \\
&= m_1|\bar{z}|^2 - 2m_1 z_{1}\cdot\bar{z} +m_1|z_{1}|^2
	+ m_2|\bar{z}|^2 - 2m_2 z_2\cdot\bar{z} + m_2|z_2|^2   \\
& = m_1 |\bar{z}-z_1|^2 + m_2 |\bar{z}-z_2|^2 \\
& = m_1 \left| \frac{m_2}{m_1+m_2}(z_2-z_1)\right|^2 +
m_2 \left| \frac{m_1}{m_1+m_2}(z_1-z_2)\right|^2
\\
&= \frac{m_1 m_2}{m_1+m_2}|z_2-z_{1}|^2.
\end{align*}
The result now follows immediately from the definition of $F$.
\end{proof}

\begin{figure}
\includegraphics[scale=2.5]{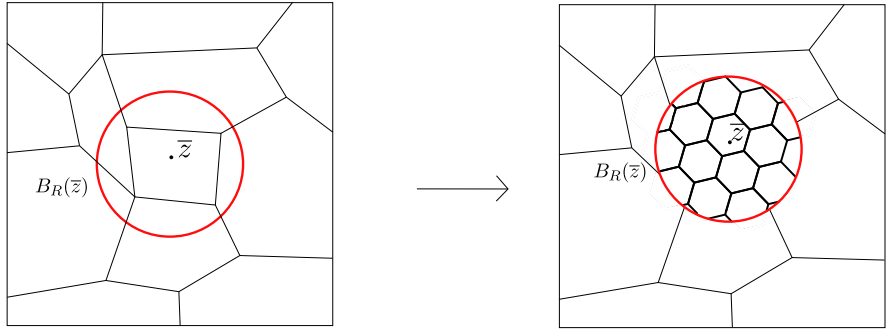}
\caption{The construction inside the ball $B_R(\bar{z})$ (see the proof of Lemma \ref{lem:lowerboundmass}).}
\label{fig:ball}
\end{figure}

We now prove a lower bound on the area of optimal cells, as well as an upper bound on the diameter of the cells and the maximum distance between the centroids.
The latter two estimates will be used later to deal with the fact that the lower bound on the area of cells close to the boundary of $Q_{\delta,\overline{\alpha}}$ is not good enough to ensure the validity of the convexity inequality \eqref{eq:ineq}.

\begin{lemma}[Lower bound on the area of optimal cells]\label{lem:lowerboundmass}
Let $\mu=\sum_{i=1}^{N_\mu}m_i\delta_{z_i} \in \mathcal{A}_{\delta,\overline{\alpha}}$ be a minimizer of $\f_{\delta,\overline{\alpha}}$.
If $\delta >0$ is sufficiently small, then the following hold:
\begin{itemize}
\item[(i)] If $\mathrm{dist}(z_i,\partial Q_{\delta,\overline{\alpha}}) \ge 4$, then 	
\[
m_i > \overline{m} :=  2.0620 \times 10^{-4}.
\]
\item[(ii)] If $\mathrm{dist}(z_i,\partial Q_{\delta,\overline{\alpha}}) < 4$, then 
\[
m_i > m_\mathrm{b} := 1.5212 \times 10^{-5}.
\]
\item[(iii)] Let $B \subset Q_{\delta,\overline{\alpha}}$ be a ball of radius $R$. If $B \cap \mathrm{supp}(\mu) = \emptyset$, then 
$R<R_0 := 3.3644$.
\item[(iv)] Let $T$ be the optimal transport map defining $W_2(\ca_{Q_{\delta,\overline{\alpha}}},\mu)$. For all $i \in \{1,\ldots,N_\mu\}$,
\[
\mathrm{diam}(T^{-1}(z_i)) \le D_0 := 2 \left( 8 R_0^2 + \frac{\overline{\alpha}}{1-\overline{\alpha}} c_6  m_{\mathrm{b}}^{\overline{\alpha}-1}  \right)^{\frac 12}.
\]
\end{itemize}
\end{lemma}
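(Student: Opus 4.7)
The proof follows the approach of \cite[Lemma 7]{BouPelTheil} (which treated $\alpha=0.5$), adapted to the harder case $\alpha=\overline{\alpha}>0.5$ and extended to handle particles near $\partial Q_{\delta,\overline{\alpha}}$. The two local perturbations driving the argument are: (a) inserting a new Dirac mass in a large empty ball, which will give (iii); and (b) merging a small Laguerre cell with its nearest neighbour, which will give (i) and (ii). The non-optimality of each such perturbation applied to $\mu$ produces a quantitative inequality. The plan is to prove (iii) first, feed it into the merging estimate to obtain (ii) and (i), and then deduce (iv) from the Laguerre structure.

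\textbf{Proof of (iii).} Given $B_R(\bar{z})\subset Q_{\delta,\overline{\alpha}}$ containing no $z_j$, and any $r\in(0,R)$, I will build $\widetilde{\mu}\in\mathcal{A}_{\delta,\overline{\alpha}}$ by adjoining a new atom at $\bar{z}$ of mass $|B_r(\bar{z})|=\pi r^2$ and reducing each $m_j$ by $|T^{-1}(z_j)\cap B_r(\bar{z})|$. The associated transport map agrees with $T$ outside $B_r(\bar{z})$ and sends every $x\in B_r(\bar{z})$ to $\bar{z}$. Since the $z_j$ all lie outside $B_R(\bar{z})$, $|x-T(x)|\ge R-r$ on $B_r(\bar{z})$, so the Wasserstein cost drops by at least $\pi r^2(R-r)^2-\tfrac{\pi}{2}r^4$; the entropy contribution from the old cells does not increase (since $t\mapsto t^{\overline{\alpha}}$ is monotone), whereas the new atom contributes $\tfrac{c_6}{1-\overline{\alpha}}(\pi r^2)^{\overline{\alpha}}$. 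Optimality of $\mu$ then forces
\[
\frac{c_6\pi^{\overline{\alpha}}}{1-\overline{\alpha}}\,r^{2\overline{\alpha}}+\frac{\pi}{2}r^4\ \ge\ \pi r^2(R-r)^2\qquad\text{for every }r\in(0,R),
\]
and optimising the choice of $r$ (using the numerical values of $c_6$ and $\overline{\alpha}$) yields $R<R_0$.

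\textbf{Proofs of (i) and (ii).} Let $z_j$ be the particle closest to $z_i$ (so $m_j\ge m_i$ whenever $m_i$ is minimal). Lemma \ref{lem:mergsplit} together with $F(\mathcal{D})\ge F(\mathcal{C}^\mu)$ yields
\[
|z_i-z_j|^2\,\frac{m_i m_j}{m_i+m_j}\ \ge\ \frac{c_6}{1-\overline{\alpha}}\bigl(m_i^{\overline{\alpha}}+m_j^{\overline{\alpha}}-(m_i+m_j)^{\overline{\alpha}}\bigr).
\]
Elementary estimates on the right-hand side (the map $t\mapsto 1+t^{\overline{\alpha}}-(1+t)^{\overline{\alpha}}$ is bounded below on $[1,\infty)$) produce a bound of the form $m_i^{1-\overline{\alpha}}\gtrsim |z_i-z_j|^{-2}$, so it suffices to bound $|z_i-z_j|$ from above. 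Since $z_j$ is nearest to $z_i$, the open ball of radius $|z_i-z_j|/2$ about $(z_i+z_j)/2$ contains no particles; when this ball sits inside $Q_{\delta,\overline{\alpha}}$, part (iii) forces $|z_i-z_j|<2R_0$. For $\mathrm{dist}(z_i,\partial Q_{\delta,\overline{\alpha}})\ge 4$ the required containment is automatic and the sharp bound $m_i\ge\overline{m}$ follows (part (i)). For $z_i$ within distance $4$ of the boundary, the midpoint ball may protrude out of $Q_{\delta,\overline{\alpha}}$; we translate the test ball inward, paying a larger upper bound on $|z_i-z_j|$ and therefore obtaining the weaker bound $m_i\ge m_{\mathrm{b}}$ (part (ii)).

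\textbf{Proof of (iv) and main obstacles.} For $x\in\overline{T^{-1}(z_i)}$, Lemma \ref{lem:cellspolygons} gives $|x-z_i|^2\le|x-z_j|^2+(w_i-w_j)$ for every $j$. Lemma \ref{lem:propoptcell}(i) combined with part (ii) yields $w_i-w_j\le\tfrac{\overline{\alpha}c_6}{1-\overline{\alpha}}m_{\mathrm{b}}^{\overline{\alpha}-1}$. Meanwhile, applying (iii) to a suitable ball of radius $R_0$ inside $Q_{\delta,\overline{\alpha}}$ (slid inward when $x$ lies near a corner) furnishes $z_j$ with $|x-z_j|\le 2\sqrt{2}\,R_0$. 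Combining these, $|x-z_i|^2\le 8R_0^2+\tfrac{\overline{\alpha}c_6}{1-\overline{\alpha}}m_{\mathrm{b}}^{\overline{\alpha}-1}$; since $z_i\in T^{-1}(z_i)$ by Lemma \ref{lem:propoptcell}(ii), $\mathrm{diam}(T^{-1}(z_i))\le 2|x-z_i|\le D_0$. The main technical obstacle is obtaining a sufficiently sharp numerical value of $R_0$ in (iii), because the resulting $\overline{m}$ must lie inside the region where the convexity inequality $h_{\overline{\alpha}}(\overline{m},n)\ge 0$ holds for $n\in\{3,4,5\}$ (Lemma \ref{lem:inequality} and Corollary \ref{cor:reduction}). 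A secondary subtlety is the treatment of particles within distance $4$ of $\partial Q_{\delta,\overline{\alpha}}$, which appears to have been overlooked in \cite{BouPelTheil}; the weaker bound $m_{\mathrm{b}}$ in (ii) is precisely what is needed later to show that boundary particles contribute asymptotically negligibly.
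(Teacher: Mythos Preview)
Your single-atom insertion argument for (iii) is correct and, in fact, sharper and cleaner than what the paper does: the paper instead replaces the empty ball by a tiling of regular hexagons of area $A$ and optimises over $A$, obtaining $R<3.3644$, whereas your single-atom competitor (after optimising over $r$) actually yields $R\lesssim 1.2$.

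The problem is in how you pass from (iii) to (i). Your midpoint-ball argument gives only $|z_i-z_j|<2R_0$. Plugging $2R_0=6.7288$ into the merging inequality (with the sharp lower bound $\inf_{\lambda\in(0,1]}\Theta_{\overline\alpha}(\lambda)\approx 0.855$) yields $m_i\gtrsim 7\times 10^{-6}$, well below $\overline m=2.06\times 10^{-4}$; at that value $h_{\overline\alpha}(m_i,3)<0$, so Corollary~\ref{Cor:CI} would fail. The paper avoids this factor of $2$ by running the insertion argument directly on a ball $B_R(z_i)$ containing \emph{only} $z_i$ (its Step~1, a variant of (iii)), which gives $|z_i-z_j|<R_0$ without loss. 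Your own (iii) bound is sharp enough to absorb the factor of $2$, but only if you record and use the sharper value $R_0\approx 1.2$ rather than $3.3644$; as written there is a numerical gap. A related subtlety: ``the required containment is automatic'' for the midpoint ball only if $|z_i-z_j|\le 4$, which is not known a priori --- centering the ball at $z_i$ (as the paper does) also fixes this.

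Two smaller points. First, your merging argument relies on $m_j\ge m_i$, which you justify only ``when $m_i$ is minimal''; but (i) must hold for \emph{every} interior $z_i$. The paper handles this by a swap: if $m_j<m_i$ one bounds $m_j$ instead and concludes $m_i>m_j>\overline m$. Second, for (ii) the paper does not use merging at all but rather the Laguerre weight identity $w_i=-\tfrac{\overline\alpha}{1-\overline\alpha}c_6 m_i^{\overline\alpha-1}$ together with (i) applied to a nearby interior particle; your merging route for (ii) can also be made to work, but again needs $m_j\ge m_i$ to be justified. Your argument for (iv) matches the paper's.
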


\begin{proof}
\emph{Step 1: Upper bound on the distance between Dirac masses.}
Let 
$\bar{z}\in\supp(\mu)$ satisfy $\mathrm{dist}(\bar{z},\partial Q_{\delta,\overline{\alpha}}) \ge 4$.
Take $R \in (0,4)$ such that
$B_{R}(\bar{z})\cap\supp(\mu)=\{\bar{z}\}$.
We want to get an upper bound on $R$.
Let $S:B_{R}(\bar{z}) \to \supp(\mu)$ be any Borel map. Then
\begin{align}\label{eq:lbW22}
\nonumber
\int_{B_{R}(\bar{z})} |x-S(x)|^2 \, \dd x &= \int_{B_{R/2}(\bar{z})} |x-S(x)|^2 \, \dd x
	+ \int_{B_{R}(\bar{z})\setminus B_{R/2}(\bar{z})} |x-S(x)|^2 \, \dd x  
	\\
\nonumber
& \geq \int_{B_{R/2}(\bar{z})} |x-\bar{z}|^2 \, \dd x
+ \int_{B_{R}(\bar{z})\setminus B_{R/2}(\bar{z})} \Big| x-R\frac{x}{|x|} \Big|^2 \, \dd x \\
&= \frac{\pi}{12}R^4.
\end{align}
It is now convenient to use the partition energy $F$. Let $\mathcal{C}^\mu$ be the partition associated to the minimizer $\mu$ 
(see Definition \ref{def:partmeas}).
Consider the partition $\widetilde{\mathcal{C}} \in \mathcal{S}_{\delta,\alpha}$ obtained by modifying  $\mathcal{C}^\mu$ 
in the ball $B_R(\bar{z})$ as follows. Write $\mathcal{C}^\mu=(C_1,\ldots,C_{N_\mu})$ and define $\widetilde{C}_i =  C_i \setminus B_R(\bar{z})$ for $i \in \{ 1 ,\ldots, N_\mu\}$. 
Let $\{H_j\}$ be a tiling of the plane by regular hexagons of area $A$, where $A>0$ will be defined below, and let $\widetilde{H_j}=H_j \cap B_R(\bar{z})$.  Define $\widetilde{\mathcal{C}}$ to be the partition consisting of the sets  $\widetilde{C}_i$ for all $i \in \{ 1 ,\ldots, N_\mu\}$ and $\widetilde{H}_j$ for all $j$ such that $\widetilde{H_j} \ne \emptyset$;
see Figure \ref{fig:ball}.

Let $d_A:=2^{\frac{3}{2}}3^{-\frac{3}{4}}A^{\frac{1}{2}}$
be the diameter of a regular hexagon of area $A$.
The number of hexagons $N_A$ needed to cover a ball of radius $R$ is bounded above by
\begin{equation}\label{eq:NA}
N_A\leq \frac{\pi(R+d_A)^2}{A}.
\end{equation}
If $H \in \widetilde{\mathcal{C}}$ is a (whole) regular hexagon of area $A$ with centroid $\xi_H$, then
\begin{equation}
\label{eq:Hex_energy}
\frac{c_6}{1-\overline{\alpha}} |H|^{\overline{\alpha}} + \int_H |x-\xi_H|^2 \, \dd x
= \frac{c_6}{1-\overline{\alpha}} A^{\overline{\alpha}} + c_6 A^2.
\end{equation}
Since $\mu$ is a minimizer of $\f_{\delta,\overline{\alpha}}$, then $C^\mu$ is a minimizer of $F$. Therefore 
\begin{align}
\nonumber
0 & \ge F(\mathcal{C}^\mu) - F(\widetilde{\mathcal{C}})
\\
\nonumber
& \ge \sum_{\substack{i=1\\C_i \cap B_R(\bar{z}) \ne \emptyset}}^{N_\mu}
\left( \frac{c_6}{1-\overline{\alpha}} |C_i|^{\overline{\alpha}} + \int_{C_i \setminus B_R(\bar{z})} |x-\xi_{C_i}|^2 \, \dd x + \int_{C_i \cap B_R(\bar{z})} |x-\xi_{C_i}|^2 \, \dd x
\right)
\\
\nonumber
& \quad -  \sum_{\substack{i=1\\C_i \cap B_R(\bar{z}) \ne \emptyset}}^{N_\mu}
\left(
\frac{c_6}{1-\overline{\alpha}} |C_i \setminus B_R(\bar{z})|^{\overline{\alpha}}
+  \int_{C_i \setminus B_R(\bar{z})} |x-\xi_{C_i \setminus B_R(\bar{z})}|^2 \, \dd x
\right)
\\
\nonumber
& \quad - N_A \left( \frac{c_6}{1-\overline{\alpha}} |H|^{\overline{\alpha}} + \int_H |x-\xi_H|^2 \, \dd x\right)
\\
\nonumber
& \stackrel{\eqref{eq:Hex_energy}}{\ge} 
\sum_{i=1}^{N_\mu}
\int_{C_i \cap B_R(\bar{z})} |x-\xi_{C_i}|^2 \, \dd x
 - 
N_A\left( \frac{c_6}{1-\overline{\alpha}}A^{\overline{\alpha}} + c_6 A^2 \right)
\\
\nonumber
& \quad
+ \sum_{\substack{i=1\\C_i \cap B_R(\bar{z}) \ne \emptyset}}^{N_\mu}
\left( \int_{C_i \setminus B_R(\bar{z})} |x-\xi_{C_i}|^2 \, \dd x  
-
\int_{C_i \setminus B_R(\bar{z})} |x-\xi_{C_i \setminus B_R(\bar{z})}|^2 \, \dd x
\right)
\\
\label{eq:44}
& \ge \int_{B_R(\bar{z})} |x-S(x)|^2 \dd x - 
N_A\left( \frac{c_6}{1-\overline{\alpha}}A^{\overline{\alpha}} + c_6 A^2 \right),
\end{align}
where $S:B_R(\bar{z}) \to \mathrm{supp}(\mu)$ is defined by $S(x)=\xi_{C_i}$ if $x \in C_i$, and where in the final inequality we used the property of the centroid that 
\[
\int_{C_i \setminus B_R(\bar{z})} |x-\xi_{C_i \setminus B_R(\bar{z})}|^2 \, \dd x
 = \inf_{y \in \mathbb{R}^2}
 \int_{C_i \setminus B_R(\bar{z})} |x-y|^2 \, \dd x.
\]
Combining estimates \eqref{eq:44} and \eqref{eq:lbW22} gives
\begin{equation}
\label{eq:45}
\frac{\pi}{12}R^4 \le  N_A\left( \frac{c_6}{1-\overline{\alpha}}A^{\overline{\alpha}} + c_6 A^2 \right) 
 \stackrel{\eqref{eq:NA}}{\leq} \frac{\pi(R+d_A)^2}{A}\left( \frac{c_6}{1-\overline{\alpha}}A^{\overline{\alpha}} + c_6 A^2 \right). 
\end{equation}
Define
\[
p(R,A):=R^2 - (R+d_A)\left( \frac{12 c_6}{1-\overline{\alpha}}A^{\overline{\alpha}-1} + 12 c_6 A \right)^{\frac{1}{2}}.
\]
Then \eqref{eq:45} implies that $p(R,A)\le 0$. The quadratic polynomial $R \mapsto p(R,A)$ has one positive root and one negative root. Let $\widetilde{R}(A)$ denote the positive root:
\[
\widetilde{R}(A):= \frac 12 \left[12c_6\left( \frac{A^{\overline{\alpha}-1}}{1-\overline{\alpha}} + A \right)\right]^{\frac{1}{2}}
	+ \frac 12 \sqrt{12c_6\left( \frac{A^{\overline{\alpha}-1}}{1-\overline{\alpha}} + A \right)+
		4\left[12c_6\left( \frac{A^{\overline{\alpha}-1}}{1-\overline{\alpha}} + A \right)\right]^{\frac{1}{2}}d_A}.
\]
Since  $p(R,A)\le 0$, we have $R \in [0,\widetilde{R}(A)]$ for all $A$, and so 
\begin{equation}\label{eq:R}
R\leq \min_{A>0} \widetilde{R}(A)
\leq \widetilde{R}(0.52) < 3.3644,
\end{equation}
where the final inequality was obtained by numerically evaluating $\widetilde{R}(0.52)$. The choice $A=0.52$ was motivated by numerically minimising $\widetilde{R}(A)$.

\emph{Step 2: Proof of (i).}
Let $\bar{z}\in\mathrm{supp}(\mu)$ satisfy $\mathrm{dist}(\bar{z},\partial Q_{\delta,\overline{\alpha}}) \ge 4$. Let  $R_0=3.3644$. By Step 1, there exists at least one point $z\in B_{R_0}(\bar{z}) \cap \mathrm{supp}(\mu)$. In particular, 
	\begin{equation}
	\label{eq:UpperBoundOnR}
	|\bar{z}-z| \le 3.3644.
	\end{equation}
	Let $m=\mu(\{\bar{z}\})$, $M=\mu(\{z\})$. We can assume without loss of generality that $m \le M$ (otherwise simply interchange the roles of $\bar{z}$ and $z$). 
	
Let $\mathcal{C}^\mu$ be the partition associated to the minimizer $\mu$. We can define a new partition $\mathcal{D}$ by replacing the cells $\overline{T^{-1}(\bar{z})}$ and
$\overline{T^{-1}(z)}$ with their union. Then  
Lemma \ref{lem:mergsplit} yields 
\[
0 \leq F(\mathcal{D}) - F(\mathcal{C}^\mu) \leq
\frac{c_6}{1-\overline{\alpha}}\left( (m+M)^{\overline{\alpha}}-m^{\overline{\alpha}}-M^{\overline{\alpha}} \right)
	+ |\bar{z}-z|^2 \frac{mM}{m+M}.
\]
Define $\lambda:=\frac{m}{M} \in (0,1]$.
By dividing the previous inequality by \blue{$M^{\overline{\alpha}}$} and rearranging we obtain
\begin{equation}\label{eq:mb}
m \geq \left[ \frac{1}{|\bar{z}-z|^2}\frac{c_6}{1-\overline{\alpha}}
	\inf_{\lambda\in(0,1]}\left( (1+\lambda)
	\frac{1+\lambda^{\overline{\alpha}}-(1+\lambda)^{\overline{\alpha}}}{\lambda^{\overline{\alpha}}} \right) \right]^{\frac{1}{1-\overline{\alpha}}}.
\end{equation}
For $\alpha\in(0,1)$, let
\[
\Theta_\alpha(\lambda):=(1+\lambda)\frac{1+\lambda^{\alpha}-(1+\lambda)^{\alpha}}{\lambda^{\alpha}}.
\]
\blue{We can restrict out attention to $\alpha >0$ since eventually we will apply this result to $\alpha=\overline{\alpha}>0$.}
We want to bound  $\Theta_{\alpha}$ from below for the case $\alpha=\overline{\alpha}$.
The idea is the following: We first prove in Step 2a that the function $\lambda\mapsto\Theta_{\alpha}(\lambda)$ has one minimum point for all $\alpha\in(0,1)$. In Step 2b we estimate this minimum point for the case $\alpha=\overline{\alpha}$.

\emph{Step 2a.} In this substep we prove that, for all $\alpha\in(0,1)$, the function $\lambda\mapsto\Theta'_\alpha(\lambda)$ vanishes at only one point $\lambda\in[0,1)$. 
Fix $\alpha\in(0,1)$. We have
\[
\Theta'_\alpha(\lambda) = \frac{1}{\lambda^{1+\alpha}}\left[ (1-{\alpha})\lambda + \lambda^{1+\alpha}
	-(\alpha+1)(1+\lambda)^{\alpha}\lambda + \alpha(1+\lambda)^{1+\alpha} -\alpha \right]
	=:\frac{1}{\lambda^{1+\alpha}} \Lambda(\lambda).
\]
The strategy we use to prove that there exists a unique $\lambda\in(0,1)$ such that
$\Lambda(\lambda)=0$ is the following: Using the fact that
\[
\Lambda(0)<0,\quad\quad
\Lambda(1)=(1-\alpha)(2-2^\alpha)>0, \quad \quad
\Lambda'(0) = -\alpha + \alpha^2 < 0,
\]
the desired result is proved once we show that $\Lambda'$ vanishes at only one point $\lambda\in(0,1)$.
A direct computation gives
\begin{align*}
\Lambda'(\lambda) & = \frac{1}{(1+\lambda)^{1-\alpha}}\left[ (1-\alpha)(1+\lambda)^{1-\alpha} +
	(1+\alpha)\lambda^\alpha (1+\lambda)^{1-\alpha} - 1 +\alpha^2 - (1+\alpha)\lambda \right]
	\\
	& =: \frac{1}{(1+\lambda)^{1-\alpha}} \Phi(\lambda)
\end{align*}
where
\[
\Phi(\lambda) = (1-\alpha)(1+\lambda)^{1-\alpha} +
(1+\alpha)\lambda^\alpha (1+\lambda)^{1-\alpha}
-1+\alpha^2 - (1+\alpha)\lambda.
\]
We claim that
\begin{equation}
\label{eq:vp12_1}
\Phi(0)<0, \quad \quad \Phi(1)>0,
\end{equation}
and that
\begin{equation}\label{eq:vp12_2}
\Phi'(\lambda)>0
\end{equation}
for all $\lambda\in[0,1)$. 
This will show that $\Phi$ vanishes at only one point $\lambda\in(0,1)$ and, in turn, that the same holds for $\Lambda'$.

We start by proving \eqref{eq:vp12_1}. Note that
$\Phi(0)=-\alpha + \alpha^2 < 0$ since $\alpha\in(0,1)$. Let
\[
\psi(\alpha):=\Phi(1)=2^{2-\alpha} - 2 - \alpha + \alpha^2.
\]
We want to prove that $\psi(\alpha)>0$. 
Since $\psi(1)=0$, it is sufficient to prove that $\psi'(\alpha)<0$.
Note that $\psi'(\alpha) = -2^{2-\alpha}\ln 2 - 1 +2\alpha$, $\psi''(\alpha)=2^{2-\alpha}(\ln 2)^2 + 2 >0$, and $\psi'(1)=-2\ln 2 + 1 <0$. Therefore $\psi(\alpha)>0$ for all $\alpha\in(0,1)$, which completes the proof of \eqref{eq:vp12_1}.

Finally, we prove \eqref{eq:vp12_2}. We have
\begin{align*}
\Phi'(\lambda)& = (1-\alpha)^2(1+\lambda)^{-\alpha} + \alpha(1+\alpha)\lambda^{\alpha-1}(1+\lambda)^{1-\alpha}
	+(1-\alpha^2)\lambda^\alpha (1+\lambda)^{-\alpha} - (1+\alpha),
	\\
\Phi''(\lambda) & = 
-\alpha(1-\alpha)^2 (1+\lambda)^{-1-\alpha}
- \alpha (1-\alpha^2) \lambda^{\alpha-2} (1+\lambda)^{-\alpha}(1-\lambda)
 - \alpha (1-\alpha^2) \lambda^\alpha (1+\lambda)^{-1-\alpha}.
\end{align*}
Since $\Phi''(\lambda)<0$ for all $\lambda\in[0,1)$, we have that
\[
\Phi'(\lambda)\geq\Phi'(1) = 2^{1-\alpha}(1+\alpha^2) - (1+\alpha)
=: \varphi(\alpha).
\]
We have
\[
\varphi'(\alpha) = -\ln 2 (1+\alpha^2)2^{1-\alpha} + 2\alpha 2^{1-\alpha} -1, \quad
\varphi''(\alpha)  = 2^{1-\alpha}\left[ (\ln 2)^2 \alpha^2 -4\alpha \ln 2 + 2 +(\ln 2)^2  \right].
\]
Therefore $\varphi''(\alpha)=0$ if and only if $\alpha \in \{\alpha_-,\alpha_+\}$, where 
\[
\alpha_{\pm} = \frac{2\pm \sqrt{2-(\ln2)^2}}{\ln2}.
\]
Since $1<\alpha_-<\alpha_+$, $\varphi$ is strictly convex in $(0,\alpha_-)$. Therefore, for all $\alpha\in(0,1)$,
\[
\varphi(\alpha) > \varphi(1) + \varphi'(1)(\alpha - 1) 
= (1-2\ln 2)(\alpha-1) > 0.
\]
Therefore $\Phi'(\lambda) \ge \varphi(\alpha)>0$, which completes the proof of \eqref{eq:vp12_2}.

\emph{Step 2b.} Let $\bar{\lambda} \in (0,1)$ denote the unique root of $\Theta'_{\overline{\alpha}}$. 
We now estimate $\Theta_{\overline{\alpha}}(\bar{\lambda})=\inf_{\lambda \in (0,1]}\Theta_{\overline{\alpha}}(\lambda)$.
Let $\lambda_1 = 0.160764$ and $\lambda_2 = 0.160767$.
Numerically we see that
\[
\Theta'_{\overline{\alpha}}(\lambda_1) < -2\times 10^{-6}<0,
\quad\quad
\Theta'_{\overline{\alpha}}(\lambda_2) > 2\times 10^{-6}>0.
\]
Therefore $\bar{\lambda}\in(\lambda_1,\lambda_2)$ by the Intermediate Value Theorem.
Recall that
\[
\Theta'_{\overline{\alpha}}(\lambda)
= \lambda^{-1-\alpha} [(1-\overline{\alpha})\lambda + \lambda^{1+\overline{\alpha}}
+ (1+\lambda)^{\overline{\alpha}} (\overline{\alpha}-\lambda)
- \overline{\alpha}].
\]
Therefore, 
for all $\lambda\in(\lambda_1,\lambda_2)$,
\begin{align*}
\Theta'_{\overline{\alpha}}(\lambda)
& \le \lambda_1^{-1-\overline{\alpha}}
[ (1-\overline{\alpha})\lambda_2 +\lambda_2^{1+\overline{\alpha}} + (1+\lambda_2)^{\overline{\alpha}} (\overline{\alpha} - \lambda_1) - \overline{\alpha}] < 6.2 \times 10^{-5}
\\
\Theta'_{\overline{\alpha}}(\lambda)
& \ge \lambda_1^{-1-\overline{\alpha}}
[ (1-\overline{\alpha})\lambda_1 +\lambda_1^{1+\overline{\alpha}} + (1+\lambda_1)^{\overline{\alpha}} (\overline{\alpha} - \lambda_2) - \overline{\alpha}] > - 6.3 \times 10^{-5}.
\end{align*}
Therefore $|\Theta'_{\overline{\alpha}}(\lambda)|\le 6.3 \times 10^{-5}$ for all 
 $\lambda\in(\lambda_1,\lambda_2)$. It follows that
\begin{equation}\label{eq:lowerboundf}
\Theta_{\overline{\alpha}}(\bar{\lambda})\geq \Theta_{\overline{\alpha}}(\lambda_1)
	- (\lambda_2-\lambda_1)\| \Theta'_{\overline{\alpha}} \|_{L^\infty((\lambda_1,\lambda_2))}
	\geq 0.85482.
\end{equation}
From \eqref{eq:UpperBoundOnR}, \eqref{eq:mb} and \eqref{eq:lowerboundf} we conclude that
\begin{equation}
\label{eq:m>2e-4}
m > \left( \frac{1}{(3.3644)^2} \cdot \frac{c_6}{1-\overline{\alpha}}\cdot 0.85482 \right)
	^{\frac{1}{1-\overline{\alpha}}} >  2.0620 \times 10^{-4} = \overline{m},
\end{equation}
which proves (i).

\emph{Step 3: Proof of (iii).} This is very similar to Step 1. Let $B=B_R(x_0) \subset Q_{\delta,\overline{\alpha}}$
satisfy $B \cap \mathrm{supp}(\mu) = \emptyset$. Let $S:B \to \supp(\mu)$ be any Borel map. We have
\[
\int_{B} |x-S(x)|^2 \, \dd x 
\ge \int_{B} \mathrm{dist}(x,\partial B)^2 \, \dd x 
\ge \int_{B} \mathrm{dist}(x,\{x_0\} \cup \partial B)^2 \, \dd x \stackrel{\eqref{eq:lbW22}}{=} \frac{\pi}{12}R^4.
\]
The second inequality is clearly suboptimal, but it is sufficient for our purposes. 
Repeating exactly the same argument used in Step 1 (with $B_R(\bar{z})$ replaced by $B$) gives $R<3.3644$, as required. 

\emph{Step 4: Proof of (ii).} Let $z_i \in \mathrm{supp}(\mu)$ satisfy $\mathrm{dist}(z_i,\partial Q_{\delta,\overline{\alpha}}) < 4$. 
Let $U$ be the square $U = \{ x \in Q_{\delta,\overline{\alpha}} : \mathrm{dist}(x,\partial Q_{\delta,\overline{\alpha}}) \ge 4 \}$.
Take $x \in \partial U$ satisfying $|x-z_i| = \mathrm{dist}(z_i,\partial U)$. Let $K$ be a closed square of side-length $2R_0$ such that $x \in \partial K$ and $K \subset U$ (such a square exists if $\delta$ is sufficiently small). By Step 3, there exists $z_j \in \mathrm{supp}(\mu) \cap K$, $z_j \ne z_i$. Therefore
\begin{equation}
\label{eq:Boris}
|z_i - z_j| \le |z_i - x| + |x-z_j| \le \sqrt{32}+\mathrm{diam}(K) = \sqrt{32} + \sqrt{8} R_0.
\end{equation}
Without loss of generality can assume that $m_i < \overline{m}$ (otherwise $m_i \ge \overline{m} > m_{\mathrm{b}}$ and there is nothing to prove). Let $T$ be the optimal transport map defining 
$W_2(\ca_{Q_{\delta,\alpha}},\mu)$. Let $z \in T^{-1}(z_i)$. By
 Lemma \ref{lem:cellspolygons} and Lemma \ref{lem:propoptcell}(i),
\begin{equation}
\label{eq:Trump}
|z-z_i| + \frac{\overline{\alpha}}{1-\overline{\alpha}} c_6 m_i^{\overline{\alpha}-1}
 \le |z-z_j| + \frac{\overline{\alpha}}{1-\overline{\alpha}} c_6 m_j^{\overline{\alpha}-1}.
\end{equation}
By Lemma \ref{lem:propoptcell}(ii), $z_i \in T^{-1}(z_i)$. Taking $z=z_i$ in \eqref{eq:Trump} gives
\[
\frac{\overline{\alpha}}{1-\overline{\alpha}} c_6 m_i^{\overline{\alpha}-1}
\le |z_i-z_j| + \frac{\overline{\alpha}}{1-\overline{\alpha}} c_6 m_j^{\overline{\alpha}-1}
\le \sqrt{32} + \sqrt{8} R_0 + \frac{\overline{\alpha}}{1-\overline{\alpha}} c_6 \overline{m}^{\overline{\alpha}-1}
\]
by \eqref{eq:Boris} and Lemma \ref{lem:lowerboundmass}(i). Therefore
\[
m_i \ge \left[\frac{1-\overline{\alpha}}{c_6 \overline{\alpha}} 
\left( \sqrt{32} + \sqrt{8} R_0 + \frac{\overline{\alpha}}{1-\overline{\alpha}} c_6 \overline{m}^{\overline{\alpha}-1}\right) \right]^{\frac{1}{\overline{\alpha}-1}} > 1.5212 \times 10^{-5}
\]
as required.

\emph{Step 5: Proof of (iv).} Take any $x \in T^{-1}(z_i)$. Let $K$ be a closed square of side-length $2R_0$ such that $x \in K$ and $K \subset Q_{\delta,\overline{\alpha}}$ (such a square exists if $\delta$ is sufficiently small). By Step 3, there exists at least one point $z_j \in \mathrm{supp}(\mu) \cap K$. Therefore $|x-z_j|^2 \le \mathrm{diam}(K)^2 = 8R_0^2$. By  Lemma \ref{lem:cellspolygons} and Lemma \ref{lem:propoptcell}(i),
\[
|x-z_i|^2 \le |x-z_j|^2 + \frac{\overline{\alpha}}{1-\overline{\alpha}} c_6 m_j^{\overline{\alpha}-1} - \frac{\overline{\alpha}}{1-\overline{\alpha}} c_6 m_i^{\overline{\alpha}-1}
\le 8R_0^2 + \frac{\overline{\alpha}}{1-\overline{\alpha}} c_6 m_{\mathrm{b}}^{\overline{\alpha}-1}.
\] 
By Lemma \ref{lem:propoptcell}(ii), $z_i \in T^{-1}(z_i)$. Therefore 
\[
\mathrm{diam}(T^{-1}(z_i)) \le 2 \max_{x \in T^{-1}(z_i)} |x-z_i| 
\le 2 \left( 8R_0^2 + \frac{\overline{\alpha}}{1-\overline{\alpha}} c_6 m_{\mathrm{b}}^{\overline{\alpha}-1} \right)^{\frac 12}
\]
as required.
\end{proof}

The lower bound we obtained in Lemma \ref{lem:lowerboundmass}(i) is good enough to ensure the validity of the convexity inequality \eqref{eq:ineq}:

\begin{corollary}[Convexity inequality]
	\label{Cor:CI}
Let $\mu=\sum_{i=1}^{N_\mu}m_i\delta_{z_i} \in \mathcal{A}_{\delta,\overline{\alpha}}$ be a minimizer of $\f_{\delta,\overline{\alpha}}$.
If $\delta >0$ is sufficiently small and $\mathrm{dist}(z_i,\partial Q_{\delta,\overline{\alpha}}) \ge 4$, then 	
\begin{equation} \label{eq:milb}
h_{\overline{\alpha}}(m_i,n) \ge 0 \quad \forall \; n \in \mathbb{N} \cap [3,\infty).
\end{equation}
\end{corollary}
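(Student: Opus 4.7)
The plan is to combine the three main technical ingredients developed earlier in this section: the positivity-propagation result of Lemma \ref{lem:inequality}, the reduction result of Corollary \ref{cor:reduction}, and the lower mass bound of Lemma \ref{lem:lowerboundmass}(i). The structure of the argument is to split the range of $n$ into $n \ge 6$ (trivial) and $n \in \{3,4,5\}$ (requires work), and to use the explicit value $\overline{m} = 2.0620 \times 10^{-4}$ as the starting point for the propagation.

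More concretely, first observe that for every integer $n \ge 6$ the inequality $h_{\overline{\alpha}}(m_i,n) \ge 0$ holds for all $m_i \ge 0$ by Corollary \ref{cor:reduction}, so nothing needs to be checked in that range. It therefore suffices to establish \eqref{eq:milb} for $n \in \{3,4,5\}$. For these three values of $n$, Lemma \ref{lem:lowerboundmass}(i) (valid by the hypothesis $\mathrm{dist}(z_i,\partial Q_{\delta,\overline{\alpha}}) \ge 4$ and for $\delta$ sufficiently small) gives $m_i > \overline{m}$, so by Lemma \ref{lem:inequality} it is enough to verify that
\[
h_{\overline{\alpha}}(\overline{m},n) = \frac{c_6}{1-\overline{\alpha}} \overline{m}^{\overline{\alpha}} + c_n \overline{m}^2 - c_6 \frac{2-\overline{\alpha}}{1-\overline{\alpha}} \overline{m} - \kappa(n-6) \ge 0
\]
for each $n \in \{3,4,5\}$. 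This is a finite numerical check that can be performed using \eqref{eq:cn} together with the explicit value $\kappa = \frac{2\pi}{243} - \frac{5\sqrt{3}}{324}$, together with a computer-assisted evaluation of $\overline{m}^{\overline{\alpha}}$; the quadratic term $c_n \overline{m}^2$ is of order $10^{-8}$ and can be bounded trivially from below by $0$, while the remaining terms are of order $10^{-3}$.

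The delicate point is the case $n = 3$. Since $h_{\overline{\alpha}}(0,n) = -\kappa(n-6) < 0$ for $n \in \{3,4,5\}$ (because $\kappa < 0$), one cannot take $\overline{m}$ arbitrarily small, and indeed the inequality $h_{\overline{\alpha}}(\overline{m},3) \ge 0$ is only barely satisfied at the mass threshold produced by Lemma \ref{lem:lowerboundmass}(i): a direct evaluation shows that the leading terms $\frac{c_6}{1-\overline{\alpha}}\overline{m}^{\overline{\alpha}}$ and $c_6 \frac{2-\overline{\alpha}}{1-\overline{\alpha}} \overline{m} + (-\kappa)\cdot 3$ nearly cancel. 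Thus the value $\overline{m} = 2.0620 \times 10^{-4}$ was engineered in Lemma \ref{lem:lowerboundmass}(i) precisely to make this numerical inequality hold (with a small positive margin). To make the verification rigorous, one should use interval arithmetic, or equivalently rigorous rational upper and lower bounds for $\overline{m}^{\overline{\alpha}}$, $c_n$ and $\kappa$, so that the remaining positive margin is guaranteed analytically.

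Once $h_{\overline{\alpha}}(\overline{m},n) \ge 0$ has been checked for $n \in \{3,4,5\}$, Lemma \ref{lem:inequality} immediately gives $h_{\overline{\alpha}}(m,n) \ge 0$ for every $m \ge \overline{m}$ and every such $n$; applying this to $m = m_i > \overline{m}$ yields \eqref{eq:milb} for $n \in \{3,4,5\}$ and, combined with Corollary \ref{cor:reduction} for the remaining values, completes the proof. The main conceptual obstacle is therefore not the deduction itself, which is a one-line combination of the earlier lemmas, but rather the tightness of the numerical estimate at $n=3$, which retrospectively explains the precision demanded in the proof of Lemma \ref{lem:lowerboundmass}.
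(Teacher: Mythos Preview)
Your proposal is correct and follows essentially the same approach as the paper: split into $n\ge 6$ (handled by Corollary~\ref{cor:reduction}) and $n\in\{3,4,5\}$, then combine the lower mass bound $m_i>\overline{m}$ from Lemma~\ref{lem:lowerboundmass}(i) with a direct numerical check that $h_{\overline{\alpha}}(\overline{m},n)>0$ for $n\in\{3,4,5\}$, and conclude via Lemma~\ref{lem:inequality}. Your observation about the tightness at $n=3$ is borne out by the paper's computation, which gives $h_{\overline{\alpha}}(\overline{m},3)>7\times 10^{-7}$, confirming that the margin there is indeed extremely small.
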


\begin{proof}
	Recall that $\overline{m} = 2.0620 \times 10^{-4}$.
By a direct computation we see that
\begin{align*}
h_{\overline{\alpha}}(\overline{m}, 3)  > 7\times 10^{-7} >0,
\qquad
h_{\overline{\alpha}}(\overline{m}, 4)  > 8 \times 10^{-4} > 0,
\qquad
h_{\overline{\alpha}}(\overline{m}, 5)  > 1 \times 10^{-3} > 0.
\end{align*}
Therefore \eqref{eq:milb} follows from Lemma \ref{lem:lowerboundmass}(i), Lemma \ref{lem:inequality}
and Corollary \ref{cor:reduction}.
\end{proof}


\subsection{Proof of Theorem \ref{thm:pen}}\label{sec:proof}
We are now in position to prove one of our main results. 
\begin{theorem}[Lower bound on $C_{\mathrm{p}}(\overline{\alpha})$]
	\label{thm:LBCp}
	We have
	\[
	C_{\mathrm{p}}(\overline{\alpha}) \ge \frac{2 - \overline{\alpha}}{1-\overline{\alpha}} \, c_6. 
	\]
\end{theorem}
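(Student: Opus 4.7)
By Remark \ref{rem:rescaled} it suffices to show
\[
\liminf_{\delta\to 0} V^{-1}_{\delta,\overline{\alpha}}\,\mathcal{F}_{\delta,\overline{\alpha}}(\mu_\delta) \;\ge\; \frac{2-\overline{\alpha}}{1-\overline{\alpha}}\,c_6,
\]
where $\mu_\delta = \sum_{i=1}^{N_\delta} m_i\delta_{z_i}$ denotes an arbitrary minimizer of $\mathcal{F}_{\delta,\overline{\alpha}}$ (assumed to exist for all $\delta$ small enough). Since $Q_{\delta,\overline{\alpha}}$ is a convex polygon, Lemma \ref{lem:cellspolygons} tells us that the Laguerre cells $C_i := \overline{T^{-1}(z_i)}$ associated to the optimal transport map $T$ are convex polygons with, say, $n_i$ edges and area $m_i$. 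Fejes T\'oth's Moment Lemma, in the form \eqref{eq:Putin}, then yields the pointwise lower bound $\mathcal{F}_{\delta,\overline{\alpha}}(\mu_\delta)\ge \sum_{i=1}^{N_\delta} g_{\overline{\alpha}}(m_i,n_i)$, which converts the analytic problem into a purely combinatorial one on the pairs $(m_i,n_i)$.

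I would then split the indices into a \emph{bulk} set $I_{\mathrm b}:=\{i : \mathrm{dist}(z_i,\partial Q_{\delta,\overline{\alpha}})\ge 4\}$ and a \emph{boundary} set $I_\partial:=\{1,\dots,N_\delta\}\setminus I_{\mathrm b}$. For $i\in I_{\mathrm b}$, Corollary \ref{Cor:CI} delivers the convexity inequality
\[
g_{\overline{\alpha}}(m_i,n_i) \;\ge\; \frac{2-\overline{\alpha}}{1-\overline{\alpha}}\,c_6\, m_i + \kappa\,(n_i-6),
\]
while for $i\in I_\partial$ I would simply discard the term using $g_{\overline{\alpha}}\ge 0$. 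Summing and using $\sum_{i=1}^{N_\delta} m_i = V_{\delta,\overline{\alpha}}$ produces the mass contribution $\tfrac{2-\overline{\alpha}}{1-\overline{\alpha}}c_6\bigl(V_{\delta,\overline{\alpha}}-\sum_{i\in I_\partial}m_i\bigr)$. For the edge contribution, Lemma \ref{lemma:PCP} applied to the Laguerre partition of the square $Q_{\delta,\overline{\alpha}}$ gives $\sum_{i=1}^{N_\delta}(n_i-6)\le 0$, hence $\sum_{i\in I_{\mathrm b}}(n_i-6) \le -\sum_{i\in I_\partial}(n_i-6) \le 3\,\# I_\partial$, using the trivial bound $n_i\ge 3$. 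Since $\kappa<0$, multiplying by $\kappa$ gives a lower bound of the form $3\kappa\,\# I_\partial$, so overall
\[
\mathcal{F}_{\delta,\overline{\alpha}}(\mu_\delta) \;\ge\; \frac{2-\overline{\alpha}}{1-\overline{\alpha}}\,c_6\, V_{\delta,\overline{\alpha}} \;-\; A\sum_{i\in I_\partial} m_i \;-\; B\,\#I_\partial
\]
for explicit positive constants $A,B$ depending only on $\overline{\alpha}$ and $\kappa$.

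The main obstacle, which is precisely what the boundary estimates in Lemma \ref{lem:lowerboundmass} were designed for, is to show that both error terms are $O(V_{\delta,\overline{\alpha}}^{1/2})$, hence negligible after dividing by $V_{\delta,\overline{\alpha}}$. For $\sum_{i\in I_\partial}m_i$: Lemma \ref{lem:propoptcell}(ii) gives $z_i\in C_i$ and Lemma \ref{lem:lowerboundmass}(iv) gives $\mathrm{diam}(C_i)\le D_0$, so every boundary cell $C_i$, $i\in I_\partial$, is contained in the tubular neighborhood $\{x\in Q_{\delta,\overline{\alpha}} : \mathrm{dist}(x,\partial Q_{\delta,\overline{\alpha}})<4+D_0\}$, whose area is bounded by $4(4+D_0)V_{\delta,\overline{\alpha}}^{1/2}$. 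Since the cells are essentially disjoint,
\[
\sum_{i\in I_\partial} m_i \;=\; \Bigl|\bigcup_{i\in I_\partial} C_i\Bigr| \;\le\; 4(4+D_0)\,V_{\delta,\overline{\alpha}}^{1/2}.
\]
For $\#I_\partial$: Lemma \ref{lem:lowerboundmass}(ii) gives $m_i\ge m_{\mathrm b}$ for every $i\in I_\partial$, so
\[
\# I_\partial \;\le\; m_{\mathrm b}^{-1}\sum_{i\in I_\partial} m_i \;\le\; \frac{4(4+D_0)}{m_{\mathrm b}}\,V_{\delta,\overline{\alpha}}^{1/2}.
\]
Inserting these bounds, dividing by $V_{\delta,\overline{\alpha}}$, and sending $\delta\to 0$ (so $V_{\delta,\overline{\alpha}}\to\infty$) yields the desired lower bound and completes the proof.
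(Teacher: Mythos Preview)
Your proof is correct and follows essentially the same approach as the paper: split into bulk and boundary indices, apply the convexity inequality (Corollary~\ref{Cor:CI}) on the bulk, discard the non-negative boundary contributions to $g_{\overline{\alpha}}$, control the edge sum via Lemma~\ref{lemma:PCP} and $n_i\ge 3$, and then show both the boundary mass $\sum_{i\in I_\partial} m_i$ and the boundary count $\#I_\partial$ are $O(V_{\delta,\overline{\alpha}}^{1/2})$ using the diameter bound and the mass lower bound $m_{\mathrm b}$ from Lemma~\ref{lem:lowerboundmass}. The paper's argument is organized almost identically (with $\mathcal I,\mathcal J$ in place of your $I_{\mathrm b},I_\partial$), so there is nothing to add.
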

Combining Theorem \ref{thm:LBCp} with Lemma \ref{lem:UBCp}, Lemma \ref{lem:monoCp} and Corollary \ref{cor:monoCp} 
completes the proof of Theorem \ref{thm:pen}.

\begin{proof}[Proof of Theorem \ref{thm:LBCp}]
By \eqref{eq:suff} it is sufficient to prove that
	\begin{equation}
	\label{eq:suffThm415}
\lim_{\delta\to 0} V^{-1}_{\delta,\overline{\alpha}} \min \left\{
	\f_{\delta,\overline{\alpha}}(\mu) : \mu\in\mathcal{A}_{\delta,\overline{\alpha}} \right\}
\ge
	\frac{2 - \overline{\alpha}}{1-\overline{\alpha}} \, c_6.
	\end{equation}
	Let $\mu=\sum_{i=1}^{N_\mu}m_i\delta_{z_i} \in \mathcal{A}_{\delta,\overline{\alpha}}$ be a minimizer of $\f_{\delta,\overline{\alpha}}$ and let
\begin{equation}
\label{eq:IJ}
\begin{aligned}
\mathcal{I}  & = \{ i \in \{1,\ldots,N_\mu \} : \mathrm{dist}(z_i,\partial Q_{\delta,\overline{\alpha}}) \ge 4 \},
\\
\mathcal{J}  & = \{ j \in \{1,\ldots,N_\mu \} : \mathrm{dist}(z_j,\partial Q_{\delta,\overline{\alpha}}) < 4 \}.
\end{aligned}
\end{equation}
Let $U$ be the tubular neighbourhood of $\partial Q_{\delta,\overline{\alpha}}$ of width $4+D_0$, where $D_0$ was defined in  
Lemma \ref{lem:lowerboundmass}(v):
\[
U = \left\{ x + y : x \in \partial Q_{\delta,\overline{\alpha}}, \; y \in B_{4+D_0}(x) \right\}  \cap Q_{\delta,\overline{\alpha}}.
\]
Let $T$ be the optimal transport map defining $W_2(\ca_{Q_{\delta,\overline{\alpha}}},\mu)$.
By Lemma \ref{lem:lowerboundmass}(v),
\begin{equation}
\label{eq:Biden}
 \bigcup_{j \in \mathcal{J}} T^{-1}(z_j) \subset U.
\end{equation}
Recall that $Q_{\delta,\overline{\alpha}}$ is a square of side-length $V_{\delta,\overline{\alpha}}^{1/2}$. 
By Lemma \ref{lem:lowerboundmass}(iv), $|T^{-1}(z_j)| > m_\mathrm{b}$ for all $j \in \mathcal{J}$. Therefore, \blue{for $\delta$ sufficiently small so that $V_{\delta,\overline{\alpha}}^{1/2} > 2(4+D_0)$},
\begin{equation}
\label{eq:Borat}
\# \mathcal{J} < \frac{|U|}{m_{\mathrm{b}}} =  \frac{V_{\delta,\overline{\alpha}}-(V_{\delta,\overline{\alpha}}^{1/2}-2(4+D_0))^2}{m_{\mathrm{b}}}.
\end{equation}
Hence
\begin{equation}
\label{eq:BCardJ}
\lim_{\delta \to 0} \frac{\# \mathcal{J}}{V_{\delta,\overline{\alpha}}} = 0.
\end{equation}
Since $\sum_{i=1}^{N_{\mu}} m_i=V_{\delta,\blue{\overline{\alpha}}}$, we have
\[
1  \ge V_{\delta,\blue{\overline{\alpha}}}^{-1} \sum_{i \in \mathcal{I}} m_i 
 = V_{\delta,\blue{\overline{\alpha}}}^{-1} \sum_{i=1}^{N_\mu} m_i  - V_{\delta,\blue{\overline{\alpha}}}^{-1} \sum_{j \in \mathcal{J}} m_j 
\stackrel{\eqref{eq:Biden}}{\ge} 1 -  V_{\delta,\blue{\overline{\alpha}}}^{-1} |U|.
\]
Using this and \eqref{eq:Borat} we conclude that
\begin{equation}
\label{eq:Bruno}
\lim_{\delta \to 0} V_{\delta,\blue{\overline{\alpha}}}^{-1} \sum_{i \in \mathcal{I}} m_i =1.
\end{equation}
For $i \in \{ 1 \ldots, N_\mu \}$, let $n_i$ be the number of edges of the convex polygon $T^{-1}(z_i)$.
Recall from Lemma \ref{lemma:PCP} that
\begin{equation}
\label{eq:BNE}
\sum_{i=1}^{N_\mu} (n_i - 6) \le 0.
\end{equation}
Finally, we put everything together to complete the proof: 
\begin{align*}
V_{\delta,\overline{\alpha}}^{-1} \f_{\delta,\overline{\alpha}}(\mu) 
& \ge  V_{\delta,\overline{\alpha}}^{-1} \sum_{i=1}^{N_\mu} g_{\overline{\alpha}}(m_i, n_i)
& (\textrm{equation } \eqref{eq:Putin})
\\ 
& \ge  V_{\delta,\overline{\alpha}}^{-1} \sum_{i \in \mathcal{I}} g_{\overline{\alpha}}(m_i, n_i)
\\
& \ge V_{\delta,\overline{\alpha}}^{-1}\sum_{i \in \mathcal{I}} \left( g_{\overline{\alpha}}(1,6) + \nabla g_{\overline{\alpha}}(1,6)\cdot(m_i-1,n_i-6) \right)
& (\textrm{Corollary }\ref{Cor:CI})
\\
& = V_{\delta,\overline{\alpha}}^{-1} \sum_{i \in \mathcal{I}}   \left( \frac{2-{\overline{\alpha}}}{1-{\overline{\alpha}}} \, c_6 \, m_i+\kappa(n_i-6) \right)
& (\textrm{equation } \eqref{eq:ineq})
\\
& =
  \frac{2-{\overline{\alpha}}}{1-{\overline{\alpha}}} \, c_6 \, V_{\delta,\overline{\alpha}}^{-1} \sum_{i \in \mathcal{I}}  m_i 
 		+  V_{\delta,\overline{\alpha}}^{-1} \kappa \underbrace{\sum_{i=1}^{N_\mu} (n_i-6)}_{\le 0 \textrm{ by } \eqref{eq:BNE}}
 		- V_{\delta,\overline{\alpha}}^{-1} \sum_{j \in \mathcal{J}} \kappa (n_j-6)
\\
& \ge  \frac{2-{\overline{\alpha}}}{1-{\overline{\alpha}}} \, c_6 \, V_{\delta,\overline{\alpha}}^{-1} \sum_{i \in \mathcal{I}}  m_i 
+ 3 \kappa \, V_{\delta,\overline{\alpha}}^{-1}  \# \mathcal{J}.
& (\kappa<0, \, n_j \ge 3)	
\end{align*}
Taking the limit $\delta \to 0$ and using \eqref{eq:BCardJ} and \eqref{eq:Bruno} proves \eqref{eq:suffThm415}, as required.
\end{proof}


\section{The constrained optimal location problem: Proof of Theorem \ref{thm:constr}}\label{sec:proofconstrained}

This section is devoted to the proof of Theorem \ref{thm:constr}.
The idea is to use Theorem \ref{thm:pen} together with the following relation between $C_{\mathrm{c}}(\alpha)$ and $C_{\mathrm{p}}(\alpha)$:

\begin{lemma}[Relation between the constrained and penalized problems] 
	\label{ref:RelCP}
	For all 
	 \blue{$\alpha\in(-\infty,1)$},
	\begin{equation}\label{eq:relation}
	C_{\mathrm{c}}(\alpha)\geq C_{\mathrm{p}}(\alpha)- \frac{c_6}{1-\alpha}.
	\end{equation}
\end{lemma}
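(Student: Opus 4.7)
The plan is to show that any feasible point of the constrained problem yields a competitor for the penalized problem, with a controlled additive cost. Let $\rho^\ast \in \pr$ be a minimizer of the constrained problem defining $C_{\mathrm{c}}(\alpha)$; such a minimizer exists by the lower semi-continuity of $G$ (see \cite[Prop.~3.2(i)]{BouJimMaha}) and of the integral term (see \cite[Lemma 1.6]{S}), together with the coercivity estimate $\gamma_{2,2}\int_0^\infty t\,\dd\rho(t) \le G(\rho)$ used in the proof of Corollary \ref{cor:monoCp}. Thus
\[
G(\rho^\ast) = C_{\mathrm{c}}(\alpha), \qquad \int_0^\infty t^{\alpha-1}\,\dd\rho^\ast(t) \le 1.
\]

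Since $\alpha \in (-\infty,1)$, we have $\frac{c_6}{1-\alpha} > 0$, and therefore
\[
\frac{c_6}{1-\alpha}\int_0^\infty t^{\alpha-1}\,\dd\rho^\ast(t) \le \frac{c_6}{1-\alpha}.
\]
Using $\rho^\ast$ as a competitor in the definition \eqref{eq:Cp} of $C_{\mathrm{p}}(\alpha)$ yields
\[
C_{\mathrm{p}}(\alpha) \le G(\rho^\ast) + \frac{c_6}{1-\alpha}\int_0^\infty t^{\alpha-1}\,\dd\rho^\ast(t)
\le C_{\mathrm{c}}(\alpha) + \frac{c_6}{1-\alpha},
\]
which rearranges to \eqref{eq:relation}.

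There is no real obstacle: the statement is essentially the observation that relaxing the hard constraint $\int t^{\alpha-1}\,\dd\rho \le 1$ into a linear penalty with coefficient $\frac{c_6}{1-\alpha}$ can only decrease the minimum value by at most $\frac{c_6}{1-\alpha}$, because the penalty term evaluated at any feasible point is bounded by $\frac{c_6}{1-\alpha}$. Should one wish to avoid invoking existence of a minimizer, the same argument goes through verbatim with an $\varepsilon$-minimizing sequence $\rho_\varepsilon$ satisfying $G(\rho_\varepsilon) \le C_{\mathrm{c}}(\alpha)+\varepsilon$ and $\int t^{\alpha-1}\,\dd\rho_\varepsilon \le 1$, followed by sending $\varepsilon \to 0$.
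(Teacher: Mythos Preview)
Your proof is correct and takes a genuinely different route from the paper's. You work directly with the variational definitions \eqref{eq:DefCcalpha} and \eqref{eq:Cp}: a feasible $\rho^\ast$ for the constrained problem is automatically a competitor for the penalized one, and the constraint bounds the penalty term by $\frac{c_6}{1-\alpha}$. This is the standard Lagrangian relaxation inequality applied at the level of the limiting cell problem, and it is short and clean.

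The paper instead uses the limit characterizations \eqref{eq:charCc} and \eqref{eq:charCp}: starting from the constrained quantization problem on $Q$, it adds and subtracts the penalty term $\frac{c_6}{1-\alpha}L^{\frac{\alpha-2}{1-\alpha}}\sum_i m_i^\alpha$, drops the constraint to pass to an unconstrained infimum, and then changes variables via $\delta = \frac{c_6}{1-\alpha}L^{\frac{\alpha-2}{1-\alpha}}$ to recognise $C_{\mathrm p}(\alpha)$. Your argument avoids the detour through limits and the change of variables entirely; the paper's argument, on the other hand, stays at the level of the original discrete quantization problems and does not need existence of a minimizer for the cell problem.
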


\begin{proof}
By Remark \ref{remark:3_5},
\begin{align*}
C_{\mathrm{c}}(\alpha) &= \lim_{L\to\infty} L^{\frac{1}{1-\alpha}}\inf\left\{ W^2_2(\ca_Q,\mu)
	: \mu\in \mathcal{P}_{\mathrm{d}}(Q), \; \sum_{i=1}^{N_\mu} m_i^\alpha \leq L \right\} 
	\\
&=\lim_{L\to\infty} L^{\frac{1}{1-\alpha}}\inf \left\{ W^2_2(\ca_Q,\mu)
	+  \frac{c_6}{1-\alpha}L^{\frac{\alpha-2}{1-\alpha}}\sum_{i=1}^{N_\mu} m_i^\alpha
	-  \frac{c_6}{1-\alpha}L^{\frac{\alpha-2}{1-\alpha}}\sum_{i=1}^{N_\mu} m_i^\alpha \right. \\
&\hspace{9cm} : \left. \mu\in\mathcal{P}_{\mathrm{d}}(Q), \; \sum_{i=1}^{N_\mu} m_i^\alpha \leq L \right\}
\\
&\geq \lim_{L\to\infty} L^{\frac{1}{1-\alpha}}\inf\left\{W^2_2(\ca_Q,\mu)
	+  \frac{c_6}{1-\alpha}L^{\frac{\alpha-2}{1-\alpha}}\sum_{i=1}^{N_\mu} m_i^\alpha
	: \mu\in\mathcal{P}_{\mathrm{d}}(Q) \right\} - \frac{c_6}{1-\alpha} 
	L^{\frac{1}{1-\alpha}} L^{\frac{\alpha-2}{1-\alpha}} L
	\\
&=\lim_{\delta \to 0}  \left(\frac{c_6}{\delta(1-\alpha)}\right)^{\frac{1}{2-\alpha}}
	\inf\left\{  W^2_2(\ca_Q,\mu)+ \delta\sum_{i=1}^{N_\mu} m_i^\alpha
	: \mu\in \mathcal{P}_{\mathrm{d}}(Q) \right\} - \frac{c_6}{1-\alpha}\\
&= C_{\mathrm{p}}(\alpha)- \frac{c_6}{1-\alpha}
\end{align*}
by Remark \ref{rem:Cp}. In the penultimate equality we used the change of variables $\delta=\frac{c_6}{1-\alpha}L^{\frac{\alpha-2}{1-\alpha}}$.
\end{proof}

\begin{proof}[Proof of Theorem \ref{thm:constr}]
Let \blue{$\alpha\in(-\infty,\overline{\alpha}]$}. 
By Corollary \ref{cor:monotonicityCc}, in order to prove Theorem \ref{thm:constr} it is sufficient to prove that 
$ C_{\mathrm{c}}(\alpha) = c_6$. We have
\begin{align*}
c_6 & = C_{\mathrm{c}}(0)
& (\textrm{Remark } \ref{rem:Gdelta1})
\\
& \geq C_{\mathrm{c}}(\alpha) 
& (C_{\mathrm{c}}(\alpha) \textrm{ is non-increasing})
\\
& \geq C_{\mathrm{p}}(\alpha)- \frac{c_6}{1-\alpha}
& (\textrm{Lemma } \ref{ref:RelCP})
\\
& = c_6
& (\textrm{Theorem }\ref{thm:pen})
\end{align*}
as required.	
\end{proof}


\section{Paving the way towards $\alpha=1$}\label{sec:alpha1}

In this section we prove an asymptotic crystallization result for the full range of \blue{$\alpha\in(-\infty,1)$} under the ansatz (which we are not able to prove) that the neighbouring cells of the smallest ones are not too large.

	\begin{theorem}[Asymptotic crystallization for all $\blue{\alpha \in (-\infty,1)}$]
		\label{thm:alpha01}
		Let \blue{$\alpha\in(-\infty,1)$} and $\delta >0$. 
		Let $\mu=\sum_{i=1}^{N_{\mu}} m_i\delta_{z_i} \in \mathcal{A}_{\delta,\alpha}$ be a minimizer of $\f_{\delta,\alpha}$. 
		Let $i_* \in \{1,\ldots,N_{\mu} \}$ be the index of the smallest cell: $m_{i_*}=\min_i m_i$ ($i_*$ need not be unique). 
		Let $T$ be the optimal transport map defining $W_2(\ca_{Q_{\delta,\alpha}},\mu)$.
		Assume for all $\delta$ sufficiently small
		that the area of the `closest neighbour' to the smallest cell is no more than $\tfrac{19}{4}$ times the area of the smallest cell, i.e., $m_j \le \frac{19}{4} m_{i_*}$ for all $j$ such that
		\[
		\mathrm{dist}(z_{i_*},\partial T^{-1}(z_{i_*})) = \mathrm{dist}(z_{i_*},\partial T^{-1}(z_{j})) =: d_{i_* j}.
		\]
		Assume also that, for all $\delta$ sufficiently small, $z_{i_*}$ is not too close to the boundary:
		\[
		B_{d_{i_* j}}(z_{i_*}) \subset Q_{\delta,\alpha}.
		\]
		Then the asymptotic crystallization results \eqref{eq:limitpen} and \eqref{eq:limitcon} hold.
	\end{theorem}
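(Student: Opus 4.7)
The strategy is to mirror the proof of Theorem \ref{thm:LBCp} for arbitrary $\alpha \in (0,1)$, using the ansatz to supply the mass lower bound that Lemma \ref{lem:lowerboundmass}(i) fails to provide when $\alpha > \overline{\alpha}$. By Corollary \ref{cor:monoCp}, Lemma \ref{lem:UBCp}, Lemma \ref{lem:monoCp} and Lemma \ref{ref:RelCP}, it suffices to establish
\[
\lim_{\delta\to 0} V^{-1}_{\delta,\alpha} \min\bigl\{\f_{\delta,\alpha}(\mu) : \mu \in \cA_{\delta,\alpha}\bigr\} \ge \frac{2-\alpha}{1-\alpha}\, c_6
\]
for each individual $\alpha \in (0,1)$, the case $\alpha\le 0$ being already covered by \cite{BouJimMaha}.

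First I would extend the convexity inequality $h_\alpha(m,n) \ge 0$ to arbitrary $\alpha \in (0,1)$. The shape analysis of $m \mapsto h_\alpha(m,n)$ from Step 1 of the proof of Lemma \ref{lem:inequality} is \emph{uniform in $\alpha \in (0,1)$}: the map has exactly two critical points, the larger one a local minimum, for every integer $n\ge 3$. Combined with $h_\alpha(0,n) = -\kappa(n-6) \ge 0$ for $n\ge 6$, this yields the analog of Corollary \ref{cor:reduction}: $h_\alpha(m,n)\ge 0$ for all $m\ge 0$ and all integers $n\ge 6$. Let $\overline{m}(\alpha) \in (0,1)$ denote the smallest constant such that $h_\alpha(m,n)\ge 0$ for all $m\ge \overline{m}(\alpha)$ and all $n\in\{3,4,5\}$.

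The central new step is to use the ansatz to derive the lower bound $m_{i_*} \ge \overline{m}(\alpha)$. I would apply the merging lemma (Lemma \ref{lem:mergsplit}) to $\mathcal{C}^\mu$ by unifying $T^{-1}(z_{i_*})$ with $T^{-1}(z_j)$ for a closest neighbour $j$; writing $\lambda := m_{i_*}/m_j \in [4/19,1]$, this produces
\[
m_{i_*}^{1-\alpha}\,|z_{i_*}-z_j|^2 \ge \frac{c_6}{1-\alpha}\,\Theta_\alpha(\lambda), \qquad \Theta_\alpha(\lambda) = (1+\lambda)\,\frac{1+\lambda^\alpha-(1+\lambda)^\alpha}{\lambda^\alpha}.
\]
The boundary part of the ansatz forces $B_{d_{i_*j}}(z_{i_*})\subset T^{-1}(z_{i_*})$, and hence $\pi d_{i_*j}^2\le m_{i_*}$. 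Since the Laguerre edge between cells $i_*$ and $j$ is perpendicular to $z_j - z_{i_*}$, the closest boundary point from $z_{i_*}$ lies on the segment $[z_{i_*},z_j]$, so the first-order optimality relation (Lemma \ref{lem:propoptcell}(i)) combined with the ratio bound $m_j\le\tfrac{19}{4}m_{i_*}$ should yield an estimate of the form $|z_{i_*}-z_j|^2 \le K(\alpha)\,m_{i_*}$, whence
\[
m_{i_*}^{2-\alpha} \ge \frac{c_6}{(1-\alpha)\,K(\alpha)}\inf_{\lambda\in[4/19,1]}\Theta_\alpha(\lambda).
\]
The threshold $19/4$ is presumably chosen precisely so that this lower bound exceeds $\overline{m}(\alpha)$.

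With $m_i \ge m_{i_*} \ge \overline{m}(\alpha)$ for all cells sufficiently far from $\partial Q_{\delta,\alpha}$, I would conclude exactly as in Section \ref{sec:proof}: the analogs of Lemma \ref{lem:lowerboundmass}(iii)--(iv) for general $\alpha$ give a uniform upper bound on the diameters of Laguerre cells, so the number of boundary cells (those in $\mathcal{J}$ as in \eqref{eq:IJ}) is asymptotically negligible in $V_{\delta,\alpha}$; then the convexity inequality applied to interior cells, together with Euler's formula (Lemma \ref{lemma:PCP}) and $\kappa<0$, closes the estimate. The hard part will be the explicit geometric control $|z_{i_*}-z_j|^2 \le K(\alpha)\,m_{i_*}$ and verifying that with $K(\alpha)$ computed in this way the value $19/4$ indeed suffices: as $\alpha \to 1$ the threshold $\overline{m}(\alpha)$ approaches $1$, so the required finesse in the bookkeeping of the centroidal Laguerre structure goes well beyond the comparison estimates used in Lemma \ref{lem:lowerboundmass}, which is precisely why the ansatz cannot at present be dispensed with.
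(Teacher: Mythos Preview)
Your overall strategy is correct: derive a lower bound on $m_{i_*}$ via merging, combine with an upper bound on $|z_{i_*}-z_j|$ coming from $\pi d_{i_*j}^2\le m_{i_*}$ and the Laguerre structure, then feed this into the convexity inequality and conclude as in Theorem~\ref{thm:LBCp}. This matches the paper's architecture. However, there is a genuine gap in your handling of the convexity inequality, and the paper's actual route is different in an important way.

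You claim that the shape analysis from Step~1 of Lemma~\ref{lem:inequality} (two critical points, the second a local minimum), together with $h_\alpha(0,n)\ge 0$ for $n\ge 6$, already gives $h_\alpha(m,n)\ge 0$ for all $m\ge 0$ and integers $n\ge 6$. This is false: knowing only the shape and $h_\alpha(0,n)\ge 0$ does \emph{not} exclude the local minimum value being negative. The conclusion of Lemma~\ref{lem:inequality} (and hence of Corollary~\ref{cor:reduction}) rests precisely on showing that the local minimum value $\varphi(\alpha,n)=h_\alpha(\widetilde m(\alpha,n),n)\ge 0$, which in the paper required case-by-case numerical verification at $\alpha=\overline\alpha$. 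Proving this for \emph{every} $\alpha\in(0,1)$ is exactly what the remark after Lemma~\ref{lem:inequality} flags as expected but unproved; your proposal treats it as free.

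The paper avoids this obstacle by a limiting argument combined with monotonicity. It proves a separate Lemma~\ref{lem:inequality_alpha1}: $h_\alpha\to h_1$ locally uniformly as $\alpha\to 1^-$, where $h_1(m,n)=c_n m^2-c_6 m-c_6 m\ln m-\kappa(n-6)$, and then verifies the positivity of the local minimum of $h_1(\cdot,n)$ once and for all. Similarly, the mass lower bound is expressed as $m_{i_*}^{2-\alpha}\ge \overline m_\alpha(\lambda)$ with $\lambda\in[4/19,1]$, and the paper shows $\overline m_\alpha\to\overline m_1$ uniformly on $[4/19,1]$, computes $\overline m_1(4/19)>0.0125$, and checks $h_1(\overline m_1(4/19),n)>0$ for $n\in\{3,4,5\}$. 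This yields the convexity inequality only for $\alpha$ in some interval $(1-\varepsilon,1)$; the full range then follows from the monotonicity Lemma~\ref{lem:monoCp}. You invoke monotonicity only to reduce to $\alpha\in(0,1)$, but the decisive use is to reduce to $\alpha$ \emph{arbitrarily close to}~$1$.

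Two minor points. First, your sketch of the distance bound is correct in spirit; the paper makes it explicit: writing the foot of the perpendicular from $z_{i_*}$ to the shared edge as $z_{i_*}+t(z_j-z_{i_*})$ and using Lemma~\ref{lem:propoptcell}(i) gives $\pi d_{i_*j}^2\le m_{i_*}$ as a quadratic inequality in $R=|z_{i_*}-z_j|$, whence
\[
R\le \Bigl(\tfrac{m}{\pi}\Bigr)^{1/2}+\Bigl(\tfrac{m}{\pi}-c_6\tfrac{\alpha}{1-\alpha}(M^{\alpha-1}-m^{\alpha-1})\Bigr)^{1/2}.
\]
Second, your closing discussion of boundary cells is unnecessary: since $m_{i_*}=\min_i m_i$, the lower bound applies to \emph{all} cells, so the convexity inequality holds for every $i$ and the sum telescopes directly via $\sum_i m_i=V_{\delta,\alpha}$ and Lemma~\ref{lemma:PCP}, with no need to excise~$\mathcal J$.
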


First we prove an analogue of Lemma \ref{lem:inequality} for $\alpha$ close to $1$.

\begin{lemma}[Positivity of $h_{\alpha}$ for $\alpha$ near $1$]\label{lem:inequality_alpha1}
There exist\blue{s} $\varepsilon>0$ such that, for all $\alpha\in(1-\varepsilon,1)$ and all integers $n\geq3$, the following holds: if $h_\alpha(m_1,n) \ge 0$ for some $m_1\geq0$, then $h_\alpha(m,n)\ge 0$ for all $m\geq m_1$.
\end{lemma}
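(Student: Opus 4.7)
The plan is to mirror the four-step proof of Lemma \ref{lem:inequality}, replacing the numerical checks specific to $\alpha = \overline{\alpha}$ by continuity in $\alpha$ at the limit $\alpha \to 1^-$. First, Step 1 of that proof works uniformly for every $\alpha \in (0,1)$: strict convexity of $m \mapsto \partial_m h_\alpha(m,n)$ in $m$, the boundary behaviour $\lim_{m \to 0^+}\partial_m h_\alpha = \lim_{m \to \infty} \partial_m h_\alpha = +\infty$, and the inequality $\partial_m h_\alpha(1/2,n) < 0$ were all shown in that generality. Hence for every such $\alpha$ and integer $n \ge 3$, $m \mapsto h_\alpha(m,n)$ has exactly one local maximum followed by one local minimum $\widetilde{m}(\alpha,n)$, and the conclusion of the lemma is equivalent to the non-negativity of $\varphi(\alpha, n) := h_\alpha(\widetilde{m}(\alpha, n), n)$. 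The case $n=6$ is immediate since $\widetilde{m}(\alpha,6)=1$ and $\varphi(\alpha,6) = h_\alpha(1,6) = 0$.

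Using the Taylor expansion $m^\alpha = m + (\alpha-1) m \ln m + O((\alpha-1)^2)$, one obtains
\[
h_\alpha(m,n) = h_1(m,n) + O(1-\alpha), \qquad h_1(m,n) := -c_6 m(1+\ln m) + c_n m^2 - \kappa(n-6),
\]
uniformly on compact subsets of $(0,\infty) \times [3,\infty)$, and the analogous expansion holds for $\partial_m h_\alpha$. Since $\partial^2_{mm} h_\alpha(\widetilde{m}(\alpha,n), n) > 0$, the implicit function theorem yields a continuous extension of $\widetilde{m}(\alpha, n)$ to $\alpha = 1$, with limit $\widetilde{m}(1,n)$ the largest root of $2 c_n m = c_6(2+\ln m)$, and hence $\varphi$ extends continuously to $\alpha=1$. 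Moreover, since $n \mapsto c_n$ is decreasing, $n \mapsto \widetilde{m}(\alpha,n)$ is increasing (as in Step 4 of Lemma \ref{lem:inequality}), so $\widetilde{m}(\alpha, n) \le \widetilde{m}(\alpha, \infty)$, and $\widetilde{m}(\alpha, \infty)$ converges as $\alpha \to 1$ to the finite largest root of $2 c_\infty m = c_6(2 + \ln m)$. Consequently there exist $M > 0$ and $\varepsilon_1 > 0$ such that $\widetilde{m}(\alpha, n) \le M$ for all $\alpha \in (1-\varepsilon_1, 1)$ and all integers $n \ge 3$.

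To reduce to finitely many $n$, one uses $\partial_n \varphi(\alpha, n) = \widetilde{m}(\alpha,n)^2 \partial_n c_n - \kappa$. By convexity, $\partial_n c_n$ is non-decreasing in $n$ and tends to $0$, so we pick an integer $n_0 \ge 7$ with $M^2 |\partial_n c_n|_{n = n_0}| < |\kappa|$, ensuring $\partial_n \varphi(\alpha, n) > 0$ for all $n \ge n_0$ and $\alpha \in (1-\varepsilon_1, 1)$. For the remaining finite set $n \in \{3,4,5,7,\ldots,n_0\}$ and $\alpha$ near $1$ we argue by continuity to $\alpha=1$. At $\alpha=1$, substituting the critical-point identity $c_n \widetilde{m}(1,n)^2 = c_6 \widetilde{m}(1,n)(1 + \tfrac12 \ln \widetilde{m}(1,n))$ into $h_1$ gives the closed form
\[
\varphi(1,n) = -\tfrac12 c_6\, \widetilde{m}(1,n)\, \ln \widetilde{m}(1,n) - \kappa(n-6),
\]
which one checks to be strictly positive for each integer $n$ in the finite set by explicit numerical estimation with sharp brackets on $\widetilde{m}(1,n)$, analogously to Step 3 of Lemma \ref{lem:inequality}. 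By continuity, each such strict inequality persists on a neighbourhood of $\alpha=1$, and taking the minimum over the finite index set produces the required $\varepsilon$.

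The main obstacle is the strict positivity $\varphi(1,n) > 0$ for $n$ in the finite range. For $n$ close to $6$ this can be deduced from a local analysis at $(m,n)=(1,6)$: implicit differentiation of $2 c_n m = c_6(2 + \ln m)$ yields $\widetilde{m}(1,n) = 1 - \tfrac{2\kappa}{c_6}(n-6) + O((n-6)^2)$, and a second-order expansion of $h_1$ at $(1,6)$ shows that its Hessian has determinant $\tfrac{1}{4} c_6 \partial^2_{nn}c_n|_{n=6} - \kappa^2$, which is strictly positive by direct numerical verification. Hence $(1,6)$ is a non-degenerate local minimum of $h_1$ at which $h_1$ vanishes, so $\varphi(1, n) > 0$ for integer $n \ne 6$ near $6$. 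The remaining finitely many integers $n$ away from $6$ require only a direct numerical verification, much as in the $\alpha = \overline{\alpha}$ case.
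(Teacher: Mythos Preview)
Your proposal is correct and follows essentially the same strategy as the paper: pass to the limiting function $h_1(m,n)=c_n m^2 - c_6 m(1+\ln m) - \kappa(n-6)$, show its unique local minimum value is strictly positive for each integer $n\neq 6$ (after reducing to finitely many $n$ via monotonicity of $n\mapsto\partial_n\varphi$), then transfer this strict positivity back to $h_\alpha$ for $\alpha$ close to $1$ by uniform convergence. The paper works directly with $h_1$ and obtains the tighter bound $\widetilde m(n)<1.05$, reducing immediately to $n\in\{3,4,5,7\}$, whereas you track $\widetilde m(\alpha,n)$ via the implicit function theorem and allow a possibly larger finite set $\{3,4,5,7,\dots,n_0\}$; both routes are valid and yield the same conclusion.

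One remark: your final paragraph, arguing positivity for $n$ ``close to $6$'' via the Hessian of $h_1$ at $(1,6)$, is unnecessary. Since $n$ ranges over integers, the only values near $6$ are $5$ and $7$, and these are already covered by the direct numerical check on the finite set. The Hessian argument would only be needed if $n$ varied continuously.
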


\begin{proof}
Define $h_1:(0,\infty)\times[3,\infty)\to\mathbb{R}$ by
\[
h_1(m,n):= \lim_{\alpha \to 1} h_\alpha(m,n) =  c_n m^2-c_6m -c_6 m \ln m -\kappa(n-6).
\]
\emph{Step 1.} We claim that $h_\alpha$ converges to $h_1$ locally uniformly in $m$ and uniformly in $n$ as $\alpha \to 1^-$.
Let $m \in (0,\infty)$, $n \ge 3$, $\alpha \in (0,1)$. Then
\[
\frac{1}{c_6} (h_\alpha(m,n)-h_1(m,n)) = \frac{m^{\alpha}-m+(1-\alpha)m \ln m}{1-\alpha} =: \frac{\theta({\alpha})}{1-\alpha}.
\]
Observe that $\theta(1)=\theta'(1)=0$ and $\theta''(\alpha)=(\ln m)^2 m^\alpha$. By Taylor's Theorem, there exists $\beta(\alpha) \in (\alpha,1)$ such that
\[
\frac{1}{c_6} |h_\alpha(m,n)-h_1(m,n)| = \frac12 (1-\alpha) |\theta''(\beta(\alpha))| = \frac12 (1-\alpha) (\ln m)^2 m^{\beta(\alpha)}.
\]
Fix $m_1, m_2 \in(0,\infty)$. Then 
\[
\lim_{\alpha \to 1^-}\sup_{\substack{m \in [m_1,m_2]\\n \in [3,\infty)}} |h_\alpha(m,n)-h_1(m,n)| 
\le \lim_{\alpha \to 1^-} \frac{c_6}{2} (1-\alpha) \sup_{m \in [m_1,m_2]} (\ln m)^2 m^{\beta(\alpha)} = 0
\]
as required.

\emph{Step 2.} Next we study the shape of the function  $m\mapsto h_1(m,n)$. Its derivative is
\begin{equation}
\label{eq:h1_m}
\partial_m h_1(m,n) = 2 c_n m-2c_6 -c_6\ln m,
\end{equation}
which is strictly convex in $m$ with $\lim_{m \to 0}\partial_m h_1(m,n)=+\infty$, $\lim_{m \to \infty}\partial_m
h_1(m,n)=+\infty$. Therefore 
$m\mapsto \partial_m h_1(m,n)$ has exactly one critical point, which is $m=\frac{c_6}{2 c_n}$. Since $n \mapsto c_n$ is decreasing, 
\[
\partial_m h_1 \left( \frac{c_6}{2 c_n},n \right) = -c_6 - c_6 \ln \left( \frac{c_6}{2 c_n} \right) \le 
-c_6 - c_6 \ln \left( \frac{c_6}{2 c_3} \right) < -0.019 < 0.
\]
Therefore $m\mapsto h_1(m,n)$ has exactly two critical points, the smallest critical point is a local maximum point, and the largest critical point is a local minimum point. Let $\widetilde{m}(n)$ denote the local minimum point. 
By the calculation above,
\begin{equation}\label{eq:secordcond}
\widetilde{m}(n)>\frac{c_6}{2c_n} > \frac{c_6}{2c_3}.
\end{equation}
Define $\varphi(n)$ to be the value of $h_1(\cdot,n)$ at its local minimum point:
\[
\varphi(n) := h_1(\widetilde{m}(n),n). 
\]
Observe that $\partial_m h_1(1,6)=0$. Therefore, by \eqref{eq:secordcond}, the local minimum point of $h_1$ is $\widetilde{m}(6)=1$ and $\varphi(6)=0$.

\emph{Step 3.} We claim that there exists a constant $c>0$ such that $\varphi(n)>c$ for all integers $n \geq3$, $n\ne 6$.
First we show that it is sufficient to prove this for the finite number of cases $n \in \{3,4,5,7\}$.

\emph{Step 3a: Reduction to the case $n \in \{3,4,5,7\}$.}
By equation \eqref{eq:h1_m}, for all $n \ge 3$,
\[
\partial_m h_1(1.05,n) = 2 c_n \cdot 1.05 -2c_6 -c_6\ln (1.05) \ge 2 c_\infty \cdot 1.05-2c_6 -c_6\ln (1.05) > 0.005 > 0.
\]
Therefore 
\begin{equation}
\label{eq:mtildeLB}
\widetilde{m}(n) < 1.05.
\end{equation}
For all $n \ge 7$,
\begin{align*}
\partial_n \varphi(n) 
& = 
\frac{\partial h_1}{\partial n} (\widetilde{m}(n),n)
& (\textrm{since }\partial_m h_1(\widetilde{m}(n),n)=0)
\\
& =
\widetilde{m}^2(n) \, \partial_n c_n - \kappa
\\
& \geq  \widetilde{m}^2(n) \, \partial_n c_n|_{n=7} - \kappa
& (\textrm{by Lemma } \ref{lem:cn})
\\
& \geq  1.05^2 \, \partial_n c_n|_{n=7} - \kappa
& \textrm{(by }\eqref{eq:mtildeLB})
\\
& >4 \times 10^{-4}.
\end{align*}
Therefore $\varphi$ is increasing for $n \ge 7$. Consequently, for all integers $n \geq3$, $n\ne 6$,
\begin{equation}
\label{eq:3457}
\varphi(n) \ge \min \{ \varphi(3), \varphi(4), \varphi(5), \varphi(7)\}.
\end{equation}

\emph{Step 3b: Proof for the case $n \in \{3,4,5,7\}$.}
We show that the right-hand side of \eqref{eq:3457} is positive.
Let
\[
m_1(n):=
\begin{cases}
0.66 & n=3,
\\
0.92 & n=4,
\\
0.981 & n=5,
\\
1.007 & n=7,
\end{cases}
\quad\quad
m_2(n):=
\begin{cases}
0.661 & n=3,
\\
0.93 & n=4,
\\
0.982 & n=5,
\\
1.0075 & n=7.
\end{cases}
\]
Then evaluating $\partial_m h_1$ gives
\[
\partial_m h_1(m_1(n),n)\leq
\begin{cases}
-7 \times 10^{-5} & n=3,\\
-7 \times 10^{-4} & n=4,\\
-1 \times 10^{-4} & n=5,\\
-5 \times 10^{-5} & n=7,
\end{cases}
\quad\quad
\partial_m h_1(m_2(n),n)\geq
\begin{cases}
6 \times 10^{-5} & n=3,\\
8 \times 10^{-4} & n=4,\\
4 \times 10^{-5} & n=5,\\
2 \times 10^{-5} & n=7.
\end{cases}
\]
Let $n \in \{3,4,5,7\}$.
By the Intermediate Value Theorem, the map $m \mapsto \partial h_1(m,n)$ has a root between $m_1(n)$ and $m_2(n)$. Moreover, since $\partial_m h_1(m_1(n),n) < 0$, we have bracketed the largest root $\widetilde{m}(n)$:  $m_1(n)<\widetilde{m}(n)<m_2(n)$.
Therefore
\begin{align*}
\varphi(n)\geq c_n m_1(n)^2 - c_6 m_2(n) - c_6 m_2(n) \ln m_2(n) - \kappa(n-6)
\geq
\begin{cases}
0.01 & n=3,\\
9\times 10^{-4} & n=4,\\
2\times 10^{-4} & n=5,\\
1\times 10^{-4} & n=7.
\end{cases}
\end{align*}
Combining this with \eqref{eq:3457} proves that $\varphi(n)>c >0$ for all integers $n \geq3$, $n\ne 6$.

\emph{Step 4.} By Step 1 of the proof of Lemma \ref{lem:inequality}, the function  $m\mapsto h_\alpha(m,n)$ has at most two critical points for all $\alpha \in (0,1)$, $n \ge 3$. If it has less than two critical points, then  Lemma \ref{lem:inequality_alpha1} follows immediately, so we just need to consider the case where it has
exactly two critical points. By Step 1 of the proof of Lemma \ref{lem:inequality}, the smallest critical point is a local maximum point and the largest critical point is a local minimum point. 

By Step 1,  $h_\alpha$ converges to $h_1$ uniformly on the interval $[c_6/(2c_3),1.05] \times [3,\infty)$. 
Observe that the local minimum point $\widetilde{m}(n)$ of $m \mapsto h_1(m,n)$ lies in the interval $[c_6/(2c_3),1.05]$ by \eqref{eq:secordcond} and \eqref{eq:mtildeLB}. Let $n$ be an integer, $n \ge 3$, $n \ne 6$.
By Step 3, $h_1(\widetilde{m}(n),n)>c>0$.
Therefore, for $\alpha$ sufficiently close to 1, the value of $m \mapsto h_\alpha(m,n)$ at its local minimum point is also positive by the uniform convergence.  This proves Lemma \ref{lem:inequality_alpha1} for the case $n \ge 3$, $n \ne 6$. 

Finally, the case $n=6$ follows immediately from Step 2  of the proof of Lemma \ref{lem:inequality}, where it was shown that
the value of $h_\alpha(m,6)$ at its local minimum point is $0$ for all $\alpha\in(0,1)$.
\end{proof}

An immediate consequence of Lemma \ref{lem:inequality_alpha1} is the following convexity inequality:
\begin{corollary}[Reduction to $n \in \{3,4,5\}$]
	\label{cor:reduction1}
	Let $n \ge 6$ be an integer. If $\alpha\in(0,1)$ is sufficiently close to $1$, then
	$h_{\alpha}(m,n)\geq0$ for all $m \ge 0$.
\end{corollary}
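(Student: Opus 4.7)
The plan is to proceed in direct analogy with the proof of Corollary \ref{cor:reduction}, simply substituting Lemma \ref{lem:inequality_alpha1} for Lemma \ref{lem:inequality}. First I would evaluate $h_\alpha$ at $m=0$: since $\alpha \in (0,1)$, the term $m^\alpha \to 0$ as $m \to 0^+$, so
\[
h_\alpha(0,n) = -\kappa(n-6).
\]
Recall that $\kappa = \partial_n c_n|_{n=6} = \frac{2\pi}{243}-\frac{5\sqrt{3}}{324} < 0$, hence $-\kappa > 0$, and therefore $h_\alpha(0,n) \geq 0$ for every integer $n \geq 6$.

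Next, I would apply Lemma \ref{lem:inequality_alpha1} with $m_1 = 0$. Choose $\varepsilon>0$ as provided by Lemma \ref{lem:inequality_alpha1}. For every $\alpha \in (1-\varepsilon,1)$ and every integer $n \geq 6$, since $h_\alpha(0,n) \geq 0$, the conclusion of Lemma \ref{lem:inequality_alpha1} yields $h_\alpha(m,n) \geq 0$ for all $m \geq 0$. This is precisely the statement of the corollary.

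I do not expect any obstacle here, as both nontrivial ingredients (the sign of $h_\alpha(0,n)$ and the monotonicity-above-a-nonnegative-value property) have already been established. The only care required is to restrict $\alpha$ to the interval $(1-\varepsilon,1)$ supplied by Lemma \ref{lem:inequality_alpha1}; this is consistent with the statement of Corollary \ref{cor:reduction1}, which only asks for $\alpha$ sufficiently close to $1$.
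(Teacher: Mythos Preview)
Your proposal is correct and matches the paper's proof essentially verbatim: compute $h_\alpha(0,n)=-\kappa(n-6)\ge 0$ for $n\ge 6$, then invoke Lemma~\ref{lem:inequality_alpha1} with $m_1=0$. The paper's proof is just the two-sentence version of what you wrote.
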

\begin{proof}
	Observe that $h_{\alpha}(0,n)=-\kappa(n-6) \ge 0$ for all $n \ge 6$. Therefore the result follows immediately from Lemma \ref{lem:inequality_alpha1}.
\end{proof}

Now we are in a position to prove the main theorem of the section.
\begin{proof}[Proof of Theorem \ref{thm:alpha01}]
\blue{By the monotonicity result (Lemma \ref{lem:monoCp}) we can assume that $\alpha \in (0,1)$.}
Let $\mu$ and $z_{i_*}$ be as in the statement of Theorem \ref{thm:alpha01}. To simplify the notation, let $\overline{z} = z_{i_*}$, $m=\mu(\{\overline{z}\})=\min \{\mu(\{z\}) : z \in \mathrm{supp}(\mu) \}$. Let $z \in \mathrm{supp}(\mu)$ satisfy 
\[
\mathrm{dist}(\overline{z},\partial T^{-1}(\overline{z})) = \mathrm{dist}(\overline{z},\partial T^{-1}(z)) =: d.
\]
Let $x$ belong to the edge
$e_{\overline{z}z} := \overline{T^{-1}(\overline{z})} \cap  \overline{T^{-1}(z)}$ and let $x$ satisfy
$|\overline{z}-x|=d$, which means that $x$ is the closet point on the boundary of the Laguerre cell 
$\overline{T^{-1}(\overline{z})}$ to $\overline{z}$.
Define $M=\mu(z)$. By assumption, $M \le  \frac{19}{4} m$.

\emph{Step 1: Upper bound on $|\overline{z}-z|$.} 
The point $x$ lies in the edge $e_{\overline{z}z}$. Therefore
by Lemma \ref{lem:cellspolygons} and Lemma \ref{lem:propoptcell}(i) we have
\[
|x-\overline{z}|^2 + \frac{\alpha}{1-\alpha} c_6 m^{\alpha-1}   = |x-z|^2 + \frac{\alpha}{1-\alpha} c_6 M^{\alpha-1}.
\]
Moreover, $x = \overline{z} + t (z-\overline{z})$ for some $t \in (0,1)$ (since $x$ is the closest point to $\overline{z}$ in the edge
$e_{\overline{z}z}$, which has normal $z-\overline{z}$). Solving for $t$ gives
\[
x  - \overline{z} =  \left[ \frac 12 + \frac{c_6}{2} \frac{\alpha}{1-\alpha} \frac{M^{\alpha-1}-m^{\alpha-1}}{|\overline{z}-z|^2} \right](z-\overline{z}).
\]
The term in square brackets is positive since $\overline{z}$ lies in its Laguerre cell  $\overline{T^{-1}(\overline{z})}$ (Lemma \ref{lem:propoptcell}(ii)). Moreover, 
the ball $B_d(\overline{z})$ is contained in the Laguerre cell $\overline{T^{-1}(\overline{z})}$ (since $e_{\overline{z}z}$ is the closest edge to $\overline{z}$). Therefore
\[
\pi d^2 \le m \quad \Longleftrightarrow \quad \pi  \left[ \frac 12 + \frac{c_6}{2} \frac{\alpha}{1-\alpha} \frac{M^{\alpha-1}-m^{\alpha-1}}{|\overline{z}-z|^2} \right]^2 |z-\overline{z}|^2 \le m\blue{.}
\]
Let $R=|z-\overline{z}|$. Then we can rewrite the inequality above as
\[
R^2 - 2\left( \frac{m}{\pi} \right)^{\frac 12} R + c_6 \frac{\alpha}{1-\alpha} (M^{\alpha-1}-m^{\alpha-1}) \le 0.
\]
Thus we get the upper bound
\begin{equation}
\label{eq:upperclosest}
R \le \left( \frac{m}{\pi} \right)^{\frac 12} + \left( \frac{m}{\pi} -   c_6 \frac{\alpha}{1-\alpha} (M^{\alpha-1}-m^{\alpha-1}) \right)^{\frac 12}.
\end{equation}

\emph{Step 2: Lower bound on $m$.} 
Let $\mathcal{C}^\mu$ be the partition associated to the minimizer $\mu$. Define a new partition $\mathcal{D}$ by replacing the cells $\overline{T^{-1}(\bar{z})}$ and
$\overline{T^{-1}(z)}$ with their union. By 
Lemma \ref{lem:mergsplit}
\begin{align*}
0 & \leq F(\mathcal{D}) - F(\mathcal{C}^\mu) 
\le \frac{c_6}{1-\alpha}\left( (m+M)^{\alpha}-m^{\alpha}-M^{\alpha} \right)
+ \frac{mM}{m+M} |\bar{z}-z|^2.
\end{align*}
By Young's inequality $(a+b)^2 \le 2 (a^2+b^2)$ and \eqref{eq:upperclosest} we obtain
\begin{align*}
0 & \le \frac{c_6}{1-\alpha}\left( (m+M)^{\alpha}-m^{\alpha}-M^{\alpha} \right)
+ 2 \frac{mM}{m+M} \left[
\frac{2}{\pi} m -  c_6 \frac{\alpha}{1-\alpha} (M^{\alpha-1}-m^{\alpha-1})
\right]
\\
& =
\frac{c_6}{1-\alpha} M^\alpha \left( \left( \frac mM + 1\right)^{\alpha}-\left( \frac mM \right)^{\alpha}-1 \right)
+ 2 \frac{1}{\frac mM + 1}  m^\alpha \left[
\frac{2}{\pi} m^{2-\alpha}-  c_6 \frac{\alpha}{1-\alpha} \left( \left( \frac Mm\right)^{\alpha-1}-1 \right)
\right].
\end{align*}
Define  $\lambda:=\frac{m}{M}$. By assumption, $\lambda \in [\frac{4}{19},1]$. Then rearranging the previous inequality gives the lower bound
\begin{equation}
\label{eq:mgeLB}
m ^{2-\alpha} \ge \overline{m}_\alpha(\lambda) :=
\frac{\pi c_6}{2}
\left[ 
\frac{\alpha}{1-\alpha} (\lambda^{1-\alpha}-1) 
-
\frac{(\lambda+1)((\lambda+1)^\alpha-\lambda^\alpha-1)}{2 (1-\alpha)
	\lambda^\alpha} \right].
\end{equation}
Note that \eqref{eq:mgeLB} gives a non-trivial lower bound on $m$ only when $\overline{m}_\alpha(\lambda)>0$.
For each $\lambda\in[\frac{4}{19},1]$ we have that
\[
\lim_{\alpha\to1}\overline{m}_\alpha(\lambda) 
= \overline{m}_1(\lambda)
:=\frac{\pi c_6}{2}  \left(  \frac{1-\lambda}{2} \ln \lambda + \frac{(\lambda+1)^2}{2\lambda} \ln (\lambda + 1) \right).
\]

\emph{Step 3: Lower bound on $\overline{m}_1(\lambda)$.} We claim that $\overline{m}_1$ is increasing. We have
\begin{align*}
\overline{m}_1'(\lambda)
& = \frac{\pi c_6}{2}  \left(
- \frac{\ln \lambda}{2} + \frac{1-\lambda}{2\lambda} + \frac{(\lambda^2-1)}{2 \lambda^2} \ln (\lambda +1) + \frac{\lambda+1}{2\lambda}
\right),
\\
\overline{m}_1''(\lambda) & = \frac{\pi c_6}{2} \lambda^{-2} \left( \frac{\ln(\lambda+1)}{\lambda} - \frac 32 
\right) =: \frac{\pi c_6}{2} \lambda^{-2} \phi(\lambda).
\end{align*}
Then
\begin{align*}
\phi'(\lambda) & = \frac{\lambda - (\lambda+1)\ln(\lambda+1)}{\lambda^2 (\lambda+1)} =: \frac{\psi(\lambda)}{\lambda^2 (\lambda+1)},
\\
\psi'(\lambda) & = - \ln (\lambda+1) < 0.
\end{align*}
Therefore, for all $\lambda \in [\frac{4}{19},1]$, $\psi(\lambda) \le \psi(\frac{4}{19}) < -0.02 <0$. Hence $\phi'(\lambda)<0$ and so 
\[
\phi(\lambda) \le \phi(\tfrac{4}{19}) < -0.59 < 0, \qquad
\overline{m}_1''(\lambda) =  \frac{\pi c_6}{2} \lambda^{-2} \phi(\lambda) < 0.
\]
We conclude that,  for all $\lambda \in [\frac{4}{19},1]$, $\overline{m}_1'(\lambda) \ge \overline{m}_1'(1)=1$, and so $\overline{m}_1$ is increasing, as claimed. 
Therefore 
\begin{equation}
\label{eq:4/19}
\inf_{\lambda \in [\frac{4}{19},1]} \overline{m}_1(\lambda) = \overline{m}_1(\tfrac{4}{19}) > 0.0125.
\end{equation}
This is essentially a lower bound on $m$ for $\alpha$ near $1$ (we will make this statement precise below) and it is much better than the lower bound you obtain using the method from Lemma \ref{lem:lowerboundmass} (but in Lemma \ref{lem:lowerboundmass} we did not make the ansatz that $\lambda \in [\frac{4}{19},1]$).

\emph{Step 4: Convexity inequality for $h_1$.} Numerical evaluation gives 
\begin{align*}
h_1(\overline{m}_1(\tfrac{4}{19}),3) & > 4.2 \times 10^{-3} > 0,
\\
h_1(\overline{m}_1(\tfrac{4}{19}),4) & > 5.0 \times 10^{-3} >0,
\\ 
h_1(\overline{m}_1(\tfrac{4}{19}),5) & > 5.9 \times 10^{-3} >0.
\end{align*}

\emph{Step 5: Uniform convergence of  $\overline{m}_\alpha$ to $\overline{m}_1$.}
Set 
\begin{align*}
\varphi_\alpha(\lambda) & := \frac{\alpha}{1-\alpha} (\lambda^{1-\alpha}-1) 
-
\frac{(\lambda+1)((\lambda+1)^\alpha-\lambda^\alpha-1)}{2 (1-\alpha) \lambda^\alpha},
\\
\varphi_1(\lambda) & := \frac{1-\lambda}{2} \ln \lambda + \frac{(\lambda+1)^2}{2\lambda} \ln (\lambda + 1).
\end{align*}
By Taylor's Theorem, for all $x \in \mathbb{R}$,
\[
|e^x-1-x| =\frac 12 x^2 e^\xi
\]
for some $\xi(x)$ between $0$ and $x$. Since $x \mapsto e^x$ is increasing we conclude that 
\begin{equation}\label{eq:exponential}
|e^x - 1 - x| \le \frac 12 x^2 \max \{ 1,e^x \}
\end{equation}
for all $x \in \mathbb{R}$. 
We estimate
\begin{align}
\nonumber
|\overline{m}_\alpha(\lambda) - \overline{m}_1(\lambda)| & = \frac{\pi c_6}{2} |\varphi_\alpha(\lambda) - \varphi_1(\lambda)| 
\\
\nonumber
& \leq \frac{\pi c_6}{2} \left| \frac{(\lambda+1)}{2 \lambda^\alpha}
	\frac{(\lambda+1)^\alpha-\lambda^\alpha-1}{1-\alpha}
	  + \frac{(\lambda+1)^2}{2\lambda}\ln(\lambda+1) - \frac{1+\lambda}{2}\ln \lambda \right|
	  \\
	  \label{eq:step5_4}
	  & \qquad +
	  \frac{\pi c_6}{2} \left|\frac{\alpha}{1-\alpha} (\lambda^{1-\alpha}-1) - \ln \lambda \right|.
\end{align}
For all $\lambda \in [\frac{4}{19},1]$, we estimate the second term on the right-hand side of \eqref{eq:step5_4} as follows:
\begin{align}\label{eq:step5_5}
\left|\frac{\alpha}{1-\alpha} (\lambda^{1-\alpha}-1) - \ln \lambda \right|
	&\stackrel{\phantom{\eqref{eq:exponential}}}{\leq} \alpha \left|\frac{1}{1-\alpha} (\lambda^{1-\alpha}-1) - \ln \lambda \right| + (1-\alpha)|\ln \lambda| \nonumber \\
&\stackrel{\phantom{\eqref{eq:exponential}}}{=} \alpha \left|\frac{e^{(1-\alpha)\ln\lambda} - 1 - (1-\alpha)\ln \lambda}{1-\alpha} \right| + (1-\alpha)|\ln \lambda| \nonumber \\
& \stackrel{\eqref{eq:exponential}}{\leq} \frac 12 \alpha  (1-\alpha) |\ln \lambda|^2
	+ (1-\alpha)|\ln \lambda| \nonumber \\
&\stackrel{\phantom{\eqref{eq:exponential}}}{\leq} C(1-\alpha),
\end{align}
where $C>0$ is a constant independent of $\alpha$ and $\lambda$. The existence of $C$ follows from the fact that $\lambda \in [\frac{4}{19},1]$, $\alpha\in(0,1)$. We estimate the first term on the right-hand side of \eqref{eq:step5_4} by
\begin{align}\label{eq:step5_6}
& \left| \frac{(\lambda+1)}{2 \lambda^\alpha}
\frac{(\lambda+1)^\alpha-\lambda^\alpha-1}{1-\alpha}
+ \frac{(\lambda+1)^2}{2\lambda}\ln(\lambda+1) - \frac{1+\lambda}{2}\ln \lambda \right| \nonumber \\
&\hspace{4cm}\leq \frac{\lambda+1}{2\lambda} \left|
	\frac{(\lambda+1)^\alpha-\lambda^\alpha-1}{1-\alpha}
	  + (\lambda+1)\ln(\lambda+1) - \lambda \ln \lambda \right| \nonumber \\
&\hspace{6cm}
	  + \frac{\lambda+1}{2} \left| \frac{1}{\lambda^\alpha} - \frac{1}{\lambda}\right| \, \left| \frac{(\lambda+1)^\alpha-\lambda^\alpha-1}{1-\alpha} \right|\blue{.}
\end{align}
Now we bound the first term on the right-hand side of \eqref{eq:step5_6}:
\begin{align}\label{eq:step5_7}
&\left| \frac{(\lambda+1)^\alpha-\lambda^\alpha-1}{1-\alpha}
	  + (\lambda+1)\ln(\lambda+1) - \lambda \ln \lambda \right| \nonumber \\
&\hspace{2cm} \stackrel{\phantom{\eqref{eq:exponential}}}{\leq} (\lambda+1)\left| \frac{(\lambda+1)^{\alpha-1} - 1
	- (\alpha-1)\ln(\lambda+1)}{1-\alpha} \right|
	+ \lambda \left| \frac{\lambda^{\alpha-1}-1-(\alpha-1)\ln\lambda}{1-\alpha} \right| \nonumber 
	\\
&\hspace{2cm}  \stackrel{\eqref{eq:exponential}}{\leq} \frac 12 (\lambda+1)  (1-\alpha)|\ln(\lambda+1)|^2
	+\frac 12 \lambda^\alpha(1-\alpha)|\ln\lambda|^2 \nonumber \\
&\hspace{2cm} \stackrel{\phantom{\eqref{eq:exponential}}}{\leq} C(1-\alpha)
\end{align}
for some constant $C>0$ since $\lambda \in [\frac{4}{19},1]$, $\alpha\in(0,1)$.
Next we estimate the second term on the right-hand side of \eqref{eq:step5_6}. By Taylor's Theorem there exists $\xi_1,\xi_2,\xi_3 \in (\alpha,1)$ such that
\begin{align}
\nonumber
& \left| \frac{1}{\lambda^\alpha} - \frac{1}{\lambda}\right| \, \left| \frac{(\lambda+1)^\alpha-\lambda^\alpha-1}{1-\alpha} \right|
\\
\nonumber
& = \frac{|\ln \lambda| \lambda^{\xi_1} (1-\alpha) }{\lambda^\alpha \lambda} \,
\frac{|\lambda + 1 + \ln(\lambda+1)(\lambda+1)^{\xi_2}(\alpha-1) - (\lambda + \ln(\lambda) \lambda^{\xi_3} (\alpha -1)) - 1|}{1-\alpha}
\\
\label{eq:step5_8}
& \le C(1-\alpha)
\end{align}
for all $\lambda \in  [\frac{4}{19},1]$, $\alpha \in (0,1)$, where $C>0$ is a constant independent of $\alpha$ and $\lambda$.
By using \eqref{eq:step5_4}--\eqref{eq:step5_8} we get the uniform convergence of $\overline{m}_\alpha$ to $\overline{m}_1$ on the interval $[\frac{4}{19},1]$.

\emph{Step 6: Convexity inequality for $h_\alpha$ for $\alpha$ sufficiently close to $1$.}
Recall from \eqref{eq:mgeLB} that 
\begin{equation}
\label{eq:Kamala}
\min \{\mu(\{z\}) : z \in \mathrm{supp}(\mu) \} = m \ge \left( \inf_{[\frac{4}{19},1]}\overline{m}_\alpha\right)^{\frac{1}{2-\alpha}} =: \eta_\alpha.
\end{equation}
Note that the positivity of 
	$\inf_{[\frac{4}{19},1]}\overline{m}_\alpha$ for $\alpha$ sufficiently close to $1$ follows from the positivity of 
	$\inf_{[\frac{4}{19},1]}\overline{m}_1$ and the uniform convergence of  $\overline{m}_\alpha$ to $\overline{m}_1$.
By equation \eqref{eq:4/19} we have
\begin{align*}
h_\alpha(\eta_\alpha,n) - h_1(\overline{m}_1(\tfrac{4}{19}),n)
 = h_\alpha(\eta_\alpha,n)
- h_1(\eta_\alpha,n)
+ h_1(\eta_\alpha,n)
- h_1(\textstyle \inf_{[\tfrac{4}{19},1]}\overline{m}_1,n).
\end{align*}
By the uniform convergence of $h_\alpha$ to $h_1$ (Step 1 of the proof of Lemma \ref{lem:inequality_alpha1}), the uniform convergence of  $\overline{m}_\alpha$ to $\overline{m}_1$ (which implies that $\eta_\alpha$ converges to $\inf_{[\tfrac{4}{19},1]}\overline{m}_1$), and the continuity of $h_1$, we find that
\begin{equation}
\label{eq:KimJong-un}
\lim_{\alpha \to 1} h_\alpha(\eta_\alpha,n) = h_1(\overline{m}_1(\tfrac{4}{19}),n) > 0
\end{equation}
for all $n \in \{3,4,5\}$ by Step 4.
  By  \eqref{eq:Kamala}, \eqref{eq:KimJong-un}, Lemma \ref{lem:inequality_alpha1}, and Corollary \ref{cor:reduction1} we conclude that
  $h_\alpha(\mu(\{z\}),n) \ge 0$ for all $z \in \mathrm{supp}(\mu)$ and all integers $n \ge 3$, provided that
 $\alpha$ is sufficiently close to $1$.
 
\emph{Step 7: Conclusion.} By using the convexity inequality  $h_\alpha(\mu(\{z\}),n) \ge 0$ from Step 6, we can conclude the proof using the same argument that we used to prove Theorem \ref{thm:LBCp}.
\end{proof}


\section{Proof of Theorem \ref{thm:stability}}
\label{sec:stability}
\blue{
The proof is similar in spirit to the analogous result for the case $\alpha=0.5$ from \cite[Theorem 3]{BouPelTheil}, although we have to do some extra work to take care of particles near the boundary.
The main ingredient is the following stability result due to G.~Fejes T\'oth \cite{Fej_Stability}, which roughly states that if $\mu$
is a discrete measure on a convex $n$-gon $\Omega$, with $n \le 6$, such that the rescaled quantization error
$\frac{N_\mu}{|\Omega|^2} W_2^2(\ca_\Omega,\mu)$ is close to the asymptotically optimal value of $c_6$, then the support of $\mu$ is close to a triangular lattice. 

\begin{theorem}[Stability Theorem of G.~Fejes T\'oth]
\label{thm:FT_stability}
Let $\Omega \subset \mathbb{R}^2$ be a convex polygon with at most six sides. Let $\{z_i\}_{i=1}^N$ be a set of $N$ distinct points in $\Omega$ and let $\{ V_i \}_{i=1}^N$ be the Voronoi tessellation of $\Omega$ generated by $\{z_i\}_{i=1}^N$, i.e., 
\[
V_i = \{ z \in \Omega : |z-z_i| \le |z-z_j| \; \forall j \in \{1,\ldots,N\}\}.
\]
Define the defect of 
the configuration $\{z_i\}_{i=1}^N$ by
\[
\hat{\varepsilon}(\{z_i\}_{i=1}^N):= \frac{N}{|\Omega|^2} \sum_{i=1}^{N} \int_{V_i} |z-z_i|^2\, \mathrm{d}z - c_6 .
\]
There exist $\varepsilon_0 >0$ and $c>0$ such that the following hold.
If $\varepsilon\in(0,\varepsilon_0)$ and $\{z_i\}_{i=1}^N$ satisfy
\[
\hat{\varepsilon}(\{z_i\}_{i=1}^N)\leq \varepsilon,
\]
then, with the possible exception of at most $N c  \varepsilon^{1/3}$ indices $i\in\{1,\dots,N\}$, the following hold:
\begin{itemize}
\item[(i)] $V_i$ is a hexagon;
\item[(ii)] the distance between $z_i$ and each vertex of $V_i$ is between $(1 \pm \varepsilon^{1/3}) \sqrt{\frac{|\Omega|}{N}} \sqrt{\frac{\phantom{|}2\phantom{|}}{3 \sqrt{3}}}$;
\item[(iii)] the distance between $z_i$ and each edge of $V_i$ is between $(1 \pm \varepsilon^{1/3}) \sqrt{\frac{|\Omega|}{N}} \sqrt{\frac{\phantom{|}1\phantom{|}}{2 \sqrt{3}}}$. 
\end{itemize}
\end{theorem}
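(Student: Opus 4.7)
The plan is to apply Fejes T\'oth's stability theorem (Theorem \ref{thm:FT_stability}) to the Voronoi tessellation $\{V_i\}$ of the rescaled domain $Q_{\delta,\overline{\alpha}}=V_{\delta,\overline{\alpha}}^{1/2}\Omega$ generated by the rescaled particles $\{z_i\}$. Since $\Omega$ is a convex polygon with at most six sides, so is $Q_{\delta,\overline{\alpha}}$, and the task reduces to bounding the Voronoi defect $\hat{\varepsilon}:=\frac{N_\delta}{V_{\delta,\overline{\alpha}}^{2}}E_V-c_6$ (with $E_V:=\sum_i\int_{V_i}|z-z_i|^2\,\mathrm{d}z$) linearly by $\mathrm{d}(\mu_\delta)$ and $V_{\delta,\overline{\alpha}}^{-1/2}$. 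By Remark \ref{rem:rescaled}, the rescaled measure $\mu=\sum_i m_i\delta_{z_i}$ minimizes $\mathcal{F}_{\delta,\overline{\alpha}}$ on $\mathcal{A}_{\delta,\overline{\alpha}}$ and satisfies $V_{\delta,\overline{\alpha}}\,\mathrm{d}(\mu_\delta)=\mathcal{F}_{\delta,\overline{\alpha}}(\mu)-\tfrac{2-\overline{\alpha}}{1-\overline{\alpha}}c_6 V_{\delta,\overline{\alpha}}$. Repeating the argument from the proof of Theorem \ref{thm:LBCp}, and using Lemma \ref{lem:lowerboundmass} to absorb the contribution of the boundary cells in $\mathcal{J}$ (whose number is $O(V_{\delta,\overline{\alpha}}^{1/2})$), I would derive the basic energy-excess estimate
\[
\sum_{i\in\mathcal{I}}h_{\overline{\alpha}}(m_i,n_i)+(-\kappa)\sum_{i=1}^{N_\delta}(6-n_i)\leq V_{\delta,\overline{\alpha}}\,\mathrm{d}(\mu_\delta)+C\,V_{\delta,\overline{\alpha}}^{1/2}.
\]

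For part (a), I would introduce the uniform-mass competitor $\widetilde{\mu}:=\sum_{i=1}^{N_\delta}(V_{\delta,\overline{\alpha}}/N_\delta)\delta_{z_i}$, whose Laguerre cells coincide with the Voronoi cells $\{V_i\}$ since all weights are equal, so $\mathcal{F}_{\delta,\overline{\alpha}}(\widetilde{\mu})=\tfrac{c_6}{1-\overline{\alpha}}N_\delta^{1-\overline{\alpha}}V_{\delta,\overline{\alpha}}^{\overline{\alpha}}+E_V$. Minimality of $\mu$ together with the classical Fejes T\'oth inequality $E_V\geq c_6 V_{\delta,\overline{\alpha}}^{2}/N_\delta$ (Lemma \ref{lemma:PCP}-style consequence of the moment lemma, valid on polygons with at most six sides) gives $\mathcal{F}_{\delta,\overline{\alpha}}(\mu)\leq\psi(N_\delta)$ where $\psi(N):=\tfrac{c_6}{1-\overline{\alpha}}N^{1-\overline{\alpha}}V_{\delta,\overline{\alpha}}^{\overline{\alpha}}+c_6 V_{\delta,\overline{\alpha}}^{2}/N$. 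A direct computation (cf.\ Remark \ref{rem:rescaling}) shows that $\psi$ is coercive at $0$ and $+\infty$ with a unique minimum at $N=V_{\delta,\overline{\alpha}}$, where $\psi(V_{\delta,\overline{\alpha}})=\tfrac{2-\overline{\alpha}}{1-\overline{\alpha}}c_6 V_{\delta,\overline{\alpha}}$ and $\psi''(V_{\delta,\overline{\alpha}})=c_6(2-\overline{\alpha})/V_{\delta,\overline{\alpha}}$. A hexagonal-lattice competitor of $V_{\delta,\overline{\alpha}}$ particles (cf.\ the proof of Lemma \ref{lem:UBCp}) provides the matching upper bound $\mathcal{F}_{\delta,\overline{\alpha}}(\mu)\leq\tfrac{2-\overline{\alpha}}{1-\overline{\alpha}}c_6 V_{\delta,\overline{\alpha}}+C\,V_{\delta,\overline{\alpha}}^{1/2}$, so $\mathrm{d}(\mu_\delta)\lesssim V_{\delta,\overline{\alpha}}^{-1/2}\to 0$. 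Coercivity of $\psi$ then forces $N_\delta/V_{\delta,\overline{\alpha}}$ into a bounded positive interval, and a Taylor expansion of $\psi$ at $V_{\delta,\overline{\alpha}}$ yields $(N_\delta/V_{\delta,\overline{\alpha}}-1)^{2}\lesssim\mathrm{d}(\mu_\delta)+V_{\delta,\overline{\alpha}}^{-1/2}\to 0$, establishing (a).

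For part (b), I would combine $E_V\leq E_L:=W_2^{2}(\ca_{Q_{\delta,\overline{\alpha}}},\mu)$ (Voronoi cells minimize the unconstrained quantization cost) with the identity $E_L=\mathcal{F}_{\delta,\overline{\alpha}}(\mu)-\tfrac{c_6}{1-\overline{\alpha}}\sum_i m_i^{\overline{\alpha}}$. Rearranging the convexity inequality (Corollary \ref{Cor:CI}) as $\tfrac{c_6}{1-\overline{\alpha}}m_i^{\overline{\alpha}}\geq(2-\overline{\alpha})c_6\,m_i+\kappa(n_i-6)-c_{n_i}m_i^{2}$, summing over $\mathcal{I}$, and invoking the moment lemma $W_2^{2}\geq\sum_i c_{n_i}m_i^{2}$ produces a sharp lower bound on $\sum_i m_i^{\overline{\alpha}}$. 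Substituting into $E_L$, using part (a) to replace $N_\delta/V_{\delta,\overline{\alpha}}$ by $1+o(1)$, and using the energy-excess estimate from the first paragraph to control the mass variance through $h_{\overline{\alpha}}(m,6)\geq C(m-1)^{2}$ near $m=1$, I expect to reach
\[
\hat{\varepsilon}\leq\beta_1\,\mathrm{d}(\mu_\delta)+\beta_2\,V_{\delta,\overline{\alpha}}^{-1/2}
\]
for suitable positive constants $\beta_1,\beta_2$. Hypothesis \eqref{eq:dmulesseps} then forces $\hat{\varepsilon}\leq\varepsilon<\varepsilon_0$, and applying Theorem \ref{thm:FT_stability} to $\{z_i\}\subset Q_{\delta,\overline{\alpha}}$ with $|Q_{\delta,\overline{\alpha}}|=V_{\delta,\overline{\alpha}}$ and $N=N_\delta$ yields conclusions (i)--(iii) verbatim.

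The hard part will be obtaining the linear-in-$\mathrm{d}(\mu_\delta)$ bound on $\hat{\varepsilon}$. A naive combination of $|N_\delta/V_{\delta,\overline{\alpha}}-1|\lesssim\sqrt{\mathrm{d}(\mu_\delta)}$ from (a) with $E_V\leq E_L$ produces only the weaker estimate $\hat{\varepsilon}\lesssim\sqrt{\mathrm{d}(\mu_\delta)}+V_{\delta,\overline{\alpha}}^{-1/2}$, which is insufficient for the subsequent application of Theorem \ref{thm:FT_stability}. The linear rate should be recovered via a first-order cancellation near the symmetric configuration $(m_i,n_i)=(1,6)$: the contributions from the ratio $N_\delta/V_{\delta,\overline{\alpha}}-1$ and from the Jensen gap $N_\delta(V_{\delta,\overline{\alpha}}/N_\delta)^{\overline{\alpha}}-\sum_i m_i^{\overline{\alpha}}$ enter with opposite signs at first order in $(m_i-1)$ and cancel, leaving only second-order mass variance terms controlled by $\sum_{i\in\mathcal{I}}(m_i-1)^{2}\lesssim\sum_{i\in\mathcal{I}}h_{\overline{\alpha}}(m_i,6)\leq V_{\delta,\overline{\alpha}}\,\mathrm{d}(\mu_\delta)+O(V_{\delta,\overline{\alpha}}^{1/2})$.
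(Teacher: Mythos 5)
Your proposal does not prove the statement in question. The statement is Theorem \ref{thm:FT_stability} itself --- G.~Fejes T\'oth's quantitative stability theorem for the second moment of Voronoi tessellations of a convex polygon with at most six sides --- yet your very first sentence \emph{invokes} Theorem \ref{thm:FT_stability} as a known tool and then sets out to prove a different result, namely Theorem \ref{thm:stability} (the statement about minimizers of the penalized quantization problem). Relative to the assigned statement this is circular: you assume exactly what you were asked to establish. Nothing in your argument addresses the actual content of Theorem \ref{thm:FT_stability}, which is a result in pure discrete geometry with no reference to the energy $\mathcal{F}_{\delta,\alpha}$, to minimizers, or to the defect $\mathrm{d}(\mu_\delta)$: it asserts that if the normalized quantization cost $\frac{N}{|\Omega|^2}\sum_i \int_{V_i}|z-z_i|^2\,\dd z$ of an \emph{arbitrary} point configuration exceeds $c_6$ by at most $\varepsilon$, then all but $O(N\varepsilon^{1/3})$ of the Voronoi cells are hexagons quantitatively close to regular hexagons of area $|\Omega|/N$.

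The paper does not reprove this result either; its proof is a citation to \cite{Fej_Stability} together with a dictionary translating Fejes T\'oth's normalization (the quantities $r(\mu,N)$, $R(\mu,N)$, $h(\mu,N)=b(\mu)|\Omega|/N$ with $b(\mu)=\tfrac{1}{\sqrt 3}\bigl(\tfrac16-\tfrac73\mu+\tfrac16\mu^2\bigr)$, and the verification that the nondegeneracy condition $h(\mu,N)\neq 0$ holds for all $\mu\in(0,(2-\sqrt3)^2)$) into the form stated here, followed by the substitution $\varepsilon=\tilde{\varepsilon}\,b(\mu)$, $\varepsilon_0=\varepsilon(\mu)b(\mu)$. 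A self-contained proof would require redoing Fejes T\'oth's stability analysis of the moment lemma, which your proposal does not attempt. For what it is worth, the material you did write tracks the paper's actual proof of Theorem \ref{thm:stability} in Section \ref{sec:stability} fairly closely: the rescaling, the control of boundary cells via Lemma \ref{lem:lowerboundmass}, the strengthened convexity inequality $h_{\overline{\alpha}}(m,n)\ge \xi(m-1)^2$ of Lemma \ref{lem:sharp_convexity} yielding the linear-in-$\mathrm{d}(\mu_\delta)$ bound on $\hat{\varepsilon}$, and the final appeal to Theorem \ref{thm:FT_stability} all appear there --- but that is a different theorem from the one you were asked to prove.
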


To appreciate the geometric significance of this result, note that for a \emph{regular} hexagon of area $|\Omega|/N$, the distance between the centre of the hexagon and each vertex is $\sqrt{\frac{|\Omega|}{N}}  
\sqrt{\frac{\phantom{|}2\phantom{|}}{3 \sqrt{3}}}$, and the distance between the centre of the hexagon and each edge is $\sqrt{\frac{|\Omega|}{N}} \sqrt{\frac{\phantom{|}1\phantom{|}}{2 \sqrt{3}}}$.

\begin{proof}
This was proved in \cite{Fej_Stability} in a much more general setting. A similar result was proved by Gruber in \cite{Gru_Riem}. We make some quick remarks about how the version stated here can be read off from \cite{Fej_Stability}. In the notation of \cite[p.~123]{Fej_Stability}, we have $f(t)=t^2$ and
\begin{gather*}
r(\mu,N)=(1+\mu) r(H_N), \qquad r(H_N) = \sqrt{\frac{|\Omega|}{N}} r(H_1), \qquad r(H_1)=\sqrt{\frac{1}{2 \sqrt{3}}},
\\
R(\mu,N)=(1-\mu) R(H_N), \qquad R(H_N) = \sqrt{\frac{|\Omega|}{N}} R(H_1),  \qquad R(H_1)=\sqrt{\frac{2}{3 \sqrt{3}}},  
\\
h(\mu,N) = |r(\mu,N)^2 - R(\mu,N)^2| = b(\mu) \frac{|\Omega|}{N}, 
\qquad
 b(\mu)=\frac{1}{\sqrt{3}} \left( \frac 16 - \frac 73 \mu + \frac 16 \mu^2 \right).
\end{gather*}
Since $f$ is strictly increasing, or by a direct computation, it is easy to see that the condition $h(\mu,N) \ne 0$ stated in \cite[equation (3)]{Fej_Stability} holds for all $\mu \in (0,(2-\sqrt{3})^2)$. Fix any $\mu \in (0,(2-\sqrt{3})^2)$. Then by \cite[Theorem p.~213]{Fej_Stability}, there exist $c=c(\mu)>0$ and $\varepsilon(\mu)>0$ such that if $0< \tilde{\varepsilon} < \varepsilon(\mu)$ and 
\[
 \sum_{i=1}^{N} \int_{V_i} |z-z_i|^2\, \mathrm{d}z - N  \left( \frac{|\Omega|}{N} \right)^2 c_6 \le \tilde{\varepsilon} \, b(\mu) \frac{|\Omega|^2}{N},
\]
then statements (i)-(iii) of Theorem \ref{thm:FT_stability} hold. Defining $\varepsilon=\tilde{\varepsilon} \, b(\mu)$ and $\varepsilon_0 = \varepsilon(\mu) b(\mu)$  completes the proof.
\end{proof}
}

\blue{
The other key ingredient is an improved version of the convexity inequality \eqref{eq:ineq} for sufficiently large masses.

\begin{lemma}[Improved convexity inequality]
\label{lem:sharp_convexity}
There exists a constant $\xi>0$ such that
\[
\frac{c_6}{1-\overline{\alpha}}m^{\overline{\alpha}} + c_n m^2
	-c_6\left(\frac{2-\overline{\alpha}}{1-\overline{\alpha}}\right)m-\kappa(n-6) \geq \xi (m-1)^2
\]
for every integer $n\geq 3$ and every $ m\geq \overline{m}$, where $\overline{m}>0$ is given in Lemma \ref{lem:lowerboundmass}.
\end{lemma}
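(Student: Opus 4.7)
The plan is to reduce the claim to showing non-negativity of the perturbed function
$\tilde{h}(m,n) := h_{\overline{\alpha}}(m,n) - \xi(m-1)^2$
on $[\overline{m},\infty)\times(\mathbb{N}\cap[3,\infty))$, and then adapt the proof of Lemma \ref{lem:inequality} and Corollary \ref{Cor:CI} to $\tilde{h}$ for $\xi>0$ sufficiently small.

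First I would verify that the shape analysis of $m\mapsto h_{\overline{\alpha}}(m,n)$ transfers to $\tilde{h}(\cdot,n)$. Since $\partial^2_{mm}\tilde{h}(m,n) = -\overline{\alpha}c_6 m^{\overline{\alpha}-2} + 2(c_n-\xi)$ is strictly convex in $m$ whenever $\xi<c_\infty$, with the same limits $+\infty$ at $m\to 0^+$ and $m\to\infty$, the map $m\mapsto\tilde{h}(m,n)$ has at most two critical points, the smaller being a local maximum and the larger a local minimum $\widetilde{m}_n$. Because the perturbation $-\xi(m-1)^2$ vanishes to second order at $m=1$, the identities $\partial_m\tilde{h}(1,6)=0$ and $\partial^2_{mm}\tilde{h}(1,6)=(2-\overline{\alpha})c_6-2\xi>0$ (for $\xi<(2-\overline{\alpha})c_6/2$) yield $\widetilde{m}_6=1$ and $\tilde{h}(1,6)=0$; for $n\neq 6$ the point $\widetilde{m}_n$ depends continuously on $\xi$ and tends to $\widetilde{m}(\overline{\alpha},n)$ as $\xi\to 0$.

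Next I would establish $\tilde{h}(\widetilde{m}_n,n)\geq 0$ and $\tilde{h}(\overline{m},n)\geq 0$ for every integer $n\geq 3$; the shape analysis above then forces $\tilde{h}(\cdot,n)\geq 0$ on $[\overline{m},\infty)$. The first inequality is an equality for $n=6$. For $n\in\{3,4,5,7\}$ it follows from the explicit lower bounds $\varphi(\overline{\alpha},n)\geq 2\times 10^{-2},\,3\times 10^{-3},\,5\times 10^{-4},\,2\times 10^{-4}$ computed in Step 3 of the proof of Lemma \ref{lem:inequality}, combined with $(\widetilde{m}_n-1)^2\leq 1$ (which holds since $\widetilde{m}(\overline{\alpha},n)\in(0,3/2)$ by \eqref{eq:ubm}, and $\widetilde{m}_n$ lies near $\widetilde{m}(\overline{\alpha},n)$), provided $\xi$ is smaller than the minimum of these four numbers. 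For $n\geq 8$ I would use the monotonicity argument of Step 4 of the proof of Lemma \ref{lem:inequality}: since the perturbation contributes at most $\xi/4$ on the range $\widetilde{m}_n\in[1,3/2]$, the strict positivity $\partial_n\varphi(\overline{\alpha},n)>0$ is preserved up to an $O(\xi)$ correction, yielding $\tilde{h}(\widetilde{m}_n,n)\geq \tilde{h}(\widetilde{m}_7,7)\geq 0$ for $\xi$ small. The boundary bound $\tilde{h}(\overline{m},n)\geq 0$ is immediate from the strict positivity of $h_{\overline{\alpha}}(\overline{m},n)$ verified in Corollary \ref{Cor:CI}, which is uniform in $n$ by Corollary \ref{cor:reduction}, together with $(\overline{m}-1)^2\leq 1$.

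The main obstacle is ensuring that a single $\xi>0$ can satisfy all these constraints simultaneously. Each constraint reduces to an explicit strictly positive quantity, and the reduction to finitely many $n$ is handled exactly as in the proofs of Corollary \ref{cor:reduction} (which gives $\tilde{h}(m,n)\geq 0$ for $n\geq 6$ directly from the monotonicity of $\tilde{h}(\cdot,n)$ once $\tilde{h}(0,n)=|\kappa|(n-6)-\xi\geq 0$, valid for $\xi<|\kappa|$ and $n\geq 7$) and of the monotonicity argument for $n\geq 7$. Taking $\xi$ to be half of the minimum of these finitely many positive quantities yields the desired constant.
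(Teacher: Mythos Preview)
Your approach is correct and gives a genuinely different route from the paper's proof. The paper argues by case analysis on the size of $m$ and on whether $n=6$: for $m$ larger than some threshold $M$ it uses the crude bound $c_n\ge c_\infty$ to extract a quadratic $\tfrac{c_\infty}{2}(m-1)^2$; for $m\in[\overline m,M]$ with $n\neq 6$ it uses compactness together with the strict positivity of $h_{\overline\alpha}$ at $\widetilde m(\overline\alpha,n)$ and at $\overline m$, dividing by the bound $(m-1)^2\le(M-1)^2$; for $n=6$ it treats a neighbourhood $[1-r,1+r]$ of the tangency point by a second-order Taylor expansion, and the complement again by compactness. Your perturbative scheme instead re-runs the entire proof of Lemma~\ref{lem:inequality} for $\tilde h=h_{\overline\alpha}-\xi(m-1)^2$ and verifies that all the sign checks survive for $\xi$ small. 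The paper's approach is more elementary and avoids re-establishing the shape analysis, while yours is more unified and makes transparent why a quadratic improvement is available.

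Two small points to tighten. First, Corollary~\ref{Cor:CI} only records $h_{\overline\alpha}(\overline m,n)>0$ for $n\in\{3,4,5\}$, and Corollary~\ref{cor:reduction} gives only $h_{\overline\alpha}(\overline m,6)\ge0$; you need the strict inequality $h_{\overline\alpha}(\overline m,6)>0$, which follows at once from the shape analysis since $\overline m$ equals neither root $0$ nor $1$ of $h_{\overline\alpha}(\cdot,6)$. Second, your description of the monotonicity step for $n\ge 8$ understates its robustness: because the perturbation is independent of $n$, one has $\partial_n\tilde\varphi(n)=\widetilde m_n^{\,2}\,\partial_n c_n-\kappa$ exactly as before, so there is no $O(\xi)$ correction to absorb --- you only need $\widetilde m_n\le 3/2$, which your bounds on $\partial_m\tilde h(3/2,\infty)$ already give for $\xi$ small.
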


\begin{proof}
The constant $\xi>0$ will be chosen as the minimum of several quantities that we are now going to introduce. Recall that
\[
h_{\overline{\alpha}}(m,n) = \frac{c_6}{1-\overline{\alpha}}m^{\overline{\alpha}} + c_n m^2
	-c_6\left(\frac{2-\overline{\alpha}}{1-\overline{\alpha}}\right)m-\kappa(n-6).
\]
Recall also that $n\mapsto c_n$ is decreasing with $\lim_{n\to\infty}c_n=c_\infty>0$ (see Lemma \ref{lem:cn}) and $\kappa < 0$. Therefore, for all $n \ge 3$,
\begin{align*}
h_{\overline{\alpha}}(m,n) & \ge \frac{c_6}{1-\overline{\alpha}}m^{\overline{\alpha}} + c_\infty m^2
	-c_6\left(\frac{2-\overline{\alpha}}{1-\overline{\alpha}}\right)m + 3 \kappa
\\
& = \frac{c_\infty}{2} (m-1)^2 + \left[ \frac{c_\infty}{2} m^2 
+ \left( c_\infty - c_6\left(\frac{2-\overline{\alpha}}{1-\overline{\alpha}}\right) \right) m +
\frac{c_6}{1-\overline{\alpha}}m^{\overline{\alpha}}
 + 3 \kappa - \frac{c_\infty}{2}
 \right].
\end{align*}
The expression in the square brackets is positive for $m$ sufficiently large. Therefore there exists a constant $M>0$ (independent of $n$) such that
\begin{equation}
\label{eq:Case1}
h_{\overline{\alpha}}(m,n)  \ge \frac{c_\infty}{2} (m-1)^2
\end{equation}
for all $n \ge 3$ and all $m \ge M$. Without loss of generality we can take $M>2$.

Next we treat the case $m \in [\overline{m},M]$, $n \ne 6$.
Recall from the proof of Lemma \ref{lem:inequality}, Step 1, that 
the map $m\mapsto h_{\overline{\alpha}}(m,n)$ has exactly two critical points. The smallest critical point is a local maximum point and the largest critical point is a local minimum point, denoted by $\widetilde{m}(\overline{\alpha},n)$. 
 Note that $\widetilde{m}(\overline{\alpha},n) \le 3/2 < M$ by Steps 3 and 4 of the proof of Lemma \ref{lem:inequality}.
Therefore the global minimum of $m \mapsto 
h_{\overline{\alpha}}(m,n)$ in the interval $[\overline{m},M]$ occurs at either 
 $\widetilde{m}(\overline{\alpha},n)$ or $\overline{m}$. 
 Define
 \[
p_1 := \min_{\substack{n\geq3\\n\neq 6}} \, h_{\overline{\alpha}}(\widetilde{m}(\overline{\alpha},n),n), 
\qquad \qquad
p_2 := \min_{\substack{n\geq3\\n\neq 6}} \, h_{\overline{\alpha}}(\overline{m},n).
 \]
 Observe that $p_1>0$ by Steps 3 and 4 of the proof of Lemma \ref{lem:inequality}, and $p_2>0$  by Corollary \ref{cor:reduction} and the proof of Corollary \ref{Cor:CI}.
 Putting everything together gives the following for all $m \in [\overline{m},M]$, $n \ge 3$, $n \ne 6$:
 \begin{equation}
 \label{eq:Case2}
  h_{\overline{\alpha}}(m,n)
  \ge \min \left\{ p_1, p_2  \right\} \ge \frac{\min \left\{ p_1, p_2  \right\}}{(M-1)^2} (m-1)^2
 \end{equation}
since $m \in [\overline{m},M]$ and $(M-1)^2 > 1 > (\overline{m}-1)^2$. 

Next we treat the case  $n = 6$, $m \in [\overline{m},M]$.
Since $m\mapsto h_{\overline{\alpha}}(m,6)$ is of class $C^2$, $h_{\overline{\alpha}}(1,6)=\partial_m h_{\overline{\alpha}}(1,6)=0$, and $\partial^2_{mm}h_{\overline{\alpha}}(1,6)>0$, there exist $r \in (0,1/2)$ and $l>0$ such that
\begin{equation}\label{eq:Case3}
h_{\overline{\alpha}}(m,6)\geq l(m-1)^2
\end{equation}
for all $m\in[1-r,1+r]$.

Finally, we treat the case  $n = 6$, $m \in [\overline{m},1-r] \cup [1+r,M]$.
The function $m \mapsto h_{\overline{\alpha}}(m,6)$ has no local minima in this interval (by Lemma \ref{lem:inequality}, Step 1). Therefore
\[
h_{\overline{\alpha}}(m,6)\geq
\min \left\{ 
h_{\overline{\alpha}}(\overline{m},6), h_{\overline{\alpha}}(1-r,6),
h_{\overline{\alpha}}(1+r,6), h_{\overline{\alpha}}(M,6)
\right\}
=:p > 0.
\]
Hence, for all  $m \in [\overline{m},1-r] \cup [1+r,M]$,
\begin{equation}
\label{eq:Case4}
h_{\overline{\alpha}}(m,6)\geq  \frac{p}{(M-1)^2} (m-1)^2.
\end{equation}
Define
\begin{align*}
\xi&:=\min\left\{\frac{c_\infty}{2},  \frac{\min \left\{ p_1, p_2  \right\}}{(M-1)^2} , l, \frac{p}{(M-1)^2} \right\}.
\end{align*}
From \eqref{eq:Case1}, \eqref{eq:Case2}, \eqref{eq:Case3}, \eqref{eq:Case4} we conclude that
\[
h_{\overline{\alpha}}(m,n) \geq \xi(m-1)^2
\]
for all $m\geq\overline{m}$ and all $n\geq3$, as desired.
\end{proof}
}

\blue{
We are now in position to prove Theorem \ref{thm:stability}.
The idea is essentially to bound the defect $\hat{\varepsilon}$ from Theorem \ref{thm:FT_stability} by the defect $\mathrm{d}$ from Theorem \ref{thm:stability}.
}

\blue{
\begin{proof}[Proof of Theorem \ref{thm:stability}]
\emph{Step 1.} In this step we rescale the energy.
Let $\mu_\delta =\sum_{i=1}^{N_\delta} \widetilde{m}_i\delta_{\widetilde{z}_i}\in\Pd$ satisfy the hypotheses of Theorem \ref{thm:stability}. Define 
\[
m_i := V_{\delta,\overline{\alpha}}\widetilde{m}_i, \qquad 
z_i := V_{\delta,\overline{\alpha}}^{1/2} \widetilde{z}_i, \qquad
\Omega_{\delta,\overline{\alpha}} := V_{\delta,\overline{\alpha}}^{1/2} \Omega, \qquad
\mu := \sum_{i=1}^{N_\delta} m_i \delta_{z_i}.
\]
In analogy with \eqref{eq:Fda}, define
\[
\f_{\delta,\overline{\alpha}}(\mu):= \frac{c_6}{1-\overline{\alpha}} \sum_{i=1}^{N_\delta} m_i^{\overline{\alpha}} + W_2^2(\ca_{\Omega_{\delta,\overline{\alpha}}},\mu).
\]
As in Remark \ref{rem:rescaled}, it is easy to check that the defect $\mathrm{d}(\mu_\delta)$ can be rewritten as
\begin{equation}
\label{eq:Countryside}
\mathrm{d}(\mu_\delta) = V_{\delta,\overline{\alpha}}^{-1} \f_{\delta,\overline{\alpha}}(\mu) - \frac{2-\overline{\alpha}}{1-\overline{\alpha}} \, c_6.
\end{equation}

\emph{Step 2.} In this step we estimate the number of small particles: $\# \{i \in \{1,\ldots,N_\delta\} : m_i < \overline{m} \}$.  
Despite the fact that Lemma \ref{lem:lowerboundmass} was proved for the case where $\Omega$ is a square, the same argument can be applied to the case where $\Omega$ is any convex polygon with at most six sides (actually, to any Lipschitz domain). The only changes will be to the constant $D_0$ and to the lower bound $m_{\mathrm{b}}$ on the mass of particles close to the boundary (to be precise, for those particles within distance $4$ of the boundary of the rescaled domain). The lower bound $\overline{m}$ on the mass of particles far from the boundary remains the same. This is the only constant whose specific value matters for us. Therefore we will denote the other two constants by$D_0$ and $m_{\mathrm{b}}$ also in this case. Define
\[
\mathcal{K}  := \{ i \in \{1,\ldots,N_\delta \} : m_i < \overline{m} \}, \qquad
\mathcal{K}^c  := \{ i \in \{1,\ldots,N_\delta \} : m_i\geq \overline{m} \}.
\]
By Lemma \ref{lem:lowerboundmass}(i), $\mathcal{K} \subseteq \mathcal{J}$, where $\mathcal{J}$ was defined in equation \eqref{eq:IJ}.
Define
\[
U = \left\{ x + y : x \in \partial \Omega_{\delta,\overline{\alpha}}, \; y \in B_{4+D_0}(x) \right\}  \cap \Omega_{\delta,\overline{\alpha}}.
\]
Similarly to the proof of \eqref{eq:Borat},
\[
\# \mathcal{K} \le \# \mathcal{J} < \frac{|U|}{m_{\mathrm{b}}}.  
\]
Fix any $\eta > 0$. If $\delta$ is sufficiently small, then
\[
|U| \le V_{\delta,\overline{\alpha}}^{1/2} (4+D_0) (\mathcal{H}^1(\partial \Omega) + \eta)
\]
since $\Omega$ is convex polygon and the Minkowski content of $\partial \Omega$ equals $\mathcal{H}^1(\partial \Omega)$ \cite[Theorem 2.106]{AFP}.
Therefore
\begin{equation}\label{eq:asympt_good_cells}
\frac{\# \mathcal{K}}{V_{\delta,\overline{\alpha}}} \le
	V_{\delta,\overline{\alpha}}^{-1/2} \frac{(4+D_0) (\mathcal{H}^1(\partial \Omega) + \eta)}{m_{\mathrm{b}}}.
\end{equation}

\emph{Step 3.} 
Let $\xi>0$ be the constant given by Lemma \ref{lem:sharp_convexity}. Define
\[
\widetilde{\beta}_2 := \frac{2-\overline{\alpha}}{1-\overline{\alpha}} c_6 \overline{m} - 3\kappa + \xi > 0.
\]
In this step we prove the following lower bound on the defect:
\begin{equation}\label{eq:step1}
\mathrm{d}(\mu_\delta)\geq \frac{\xi}{V_{\delta,\overline{\alpha}}} \sum_{i=1}^{N_\delta} (m_i-1)^2
	-\widetilde{\beta}_2 \frac{\#\mathcal{K}}{V_{\delta,\overline{\alpha}}}.
\end{equation}
We estimate
\begin{align*}
F_{\delta,\overline{\alpha}}(\mu) &\geq \sum_{i=1}^{N_\delta} g_{\overline{\alpha}}(m_i,n_i)
 & (\textrm{Lemma }\ref{lem:Fejes})
 \\
 & 
 \ge \sum_{i\in\mathcal{K}^c} g_{\overline{\alpha}}(m_i,n_i) 
& (g_{\overline{\alpha}} \ge 0)
 \\ 
&\geq \sum_{i\in\mathcal{K}^c} \left[c_6\left( \frac{2-\overline{\alpha}}{1-\overline{\alpha}} \right)m_i
	+\kappa(n_i-6) +\xi (m_i-1)^2 \right] 
	& (\textrm{Lemma } \ref{lem:sharp_convexity})
	\\
&= \frac{2-\overline{\alpha}}{1-\overline{\alpha}}c_6 \sum_{i=1}^{N_\delta} m_i
	+ \kappa \sum_{i=1}^{N_\delta}(n_i-6)
	+\xi \sum_{i=1}^{N_\delta} (m_i-1)^2 \\
&\hspace{1cm} -\frac{2-\overline{\alpha}}{1-\overline{\alpha}}c_6 \sum_{i\in\mathcal{K}} m_i
	- \kappa \sum_{i\in\mathcal{K}}(n_i-6)
	-\xi \sum_{i\in\mathcal{K}} (m_i-1)^2 \\
&\geq \frac{2-\overline{\alpha}}{1-\overline{\alpha}}c_6 V_{\delta,\overline{\alpha}}
	+\xi \sum_{i=1}^{N_\delta} (m_i-1)^2 \\
&\hspace{1cm} -\frac{2-\overline{\alpha}}{1-\overline{\alpha}}c_6 \sum_{i\in\mathcal{K}} m_i
	- \kappa \sum_{i\in\mathcal{K}}(n_i-6)
	-\xi \sum_{i\in\mathcal{K}} (m_i-1)^2 ,
\end{align*}
where in the last step we used the fact that $\kappa<0$ together with Lemma \ref{lemma:PCP}.
Combining this estimate and \eqref{eq:Countryside} gives
\begin{align*}
\frac{\xi}{V_{\delta,\overline{\alpha}}} \sum_{i=1}^{N_\delta} (m_i-1)^2
	&\leq 
	\mathrm{d}(\mu_\delta) +\frac{1}{V_{\delta,\overline{\alpha}}} \left[
	\frac{2-\overline{\alpha}}{1-\overline{\alpha}}c_6 \sum_{i\in\mathcal{K}} m_i
	+ \kappa \sum_{i\in\mathcal{K}}(n_i-6)
	+\xi \sum_{i\in\mathcal{K}} (m_i-1)^2 \right] \\
&\leq \mathrm{d}(\mu_\delta) + \frac{\#\mathcal{K}}{V_{\delta,\overline{\alpha}}} \left[
	\frac{2-\overline{\alpha}}{1-\overline{\alpha}}c_6 \overline{m}
	 - 3\kappa + \xi \right],
\end{align*}
where in the last step we used the fact that $m_i \in (0,\overline{m}]$ for each $i\in\mathcal{K}$.
This proves \eqref{eq:step1}.

\emph{Step 4.} In this step we prove Theorem \ref{thm:stability}(a). We have
\begin{align}
\nonumber
\left| \frac{V_{\delta,\overline{\alpha}}}{N_\delta} + \frac{N_\mu}{V_{\delta,\overline{\alpha}}} - 2 \right|
	&=\frac{V_{\delta,\overline{\alpha}}}{N_\delta}\left(
		\frac{V_{\delta,\overline{\alpha}} - N_\delta}{V_{\delta,\overline{\alpha}}} \right)^2
=\frac{V_{\delta,\overline{\alpha}}}{N_\delta}\left( \frac{1}{V_{\delta,\overline{\alpha}}} \sum_{i=1}^{N_\delta} (m_i-1)\right)^2 
\\
\nonumber
&
\leq \frac{1}{V_{\delta,\overline{\alpha}}}\sum_{i=1}^{N_\delta} (m_i-1)^2 
\\
\label{eq:KungFury}
&\leq \frac{1}{\xi} \left( \mathrm{d}(\mu_\delta) + \widetilde{\beta}_2 \frac{\#\mathcal{K}}{V_{\delta,\overline{\alpha}}} \right),
\end{align}
where in the penultimate step we used Jensen's inequality and in the last step we used \eqref{eq:step1}. Therefore $\lim_{\delta \to 0} V_{\delta,\overline{\alpha}} / N_\delta = 1$ by \eqref{eq:asympt_good_cells} and since 
$\lim_{\delta \to 0} \mathrm{d}(\mu_\delta)=0$ by Theorem \ref{thm:pen}. This proves Theorem \ref{thm:stability}(a).
For the future, we record that if $\delta$ is sufficiently small, then we can read off from \eqref{eq:KungFury} that 
\begin{equation}\label{eq:stability_claim2}
\frac{N_\mu}{V_{\delta,\overline{\alpha}}}\leq 3.
\end{equation}

\emph{Step 5.}
In this step we prove the technical estimate
\begin{equation}\label{eq:stability_claim1}
\sum_{i=1}^{N_\delta} m_i^{\overline{\alpha}} \geq N(1-\overline{\alpha}) + \overline{\alpha} V_{\delta,\overline{\alpha}} - (1-\overline{\alpha})\frac{V_{\delta,\overline{\alpha}}}{\xi}\mathrm{d}(\mu_\delta) - (1-\overline{\alpha}) \widetilde{\beta}_2 \frac{\#\mathcal{K}}{\xi}.
\end{equation}
Define $\phi(x)=x^{\overline{\alpha}}$, $x>0$. 
Let $q$ be the unique quadratic polynomial such that $q(0)=\phi(0)$, $q(1)=\phi(1)$, $q'(1)=\phi'(1)$:
\[
q(x):= 1+ \overline{\alpha}(x-1) + (\overline{\alpha}-1)(x-1)^2.
\]
Let $\psi = \phi - q$.
It is easy to check that $\psi'''(x)>0$ for all $x>0$ and hence $\psi'$ is convex with $\psi'(1)=0$, $\psi''(1) = \overline{\alpha}^2 - 3 \alpha + 2 > 0$, $\lim_{x \to 0+} \psi'(x)=+\infty$, $\lim_{x \to + \infty} \psi'(x)=+\infty$. 
Therefore $\psi'(x)=0$ for only two points $x>0$, one of which is $x=1$ and the other is $x^* \in (0,1)$. Moreover, $\psi(0)=\psi(1)=0$, $\lim_{x \to \infty}\psi(x)=+\infty$, and $\psi'(x)<0$ for $x \in (x^*,1)$. Therefore $\psi(x)\ge 0 $ for all $x \ge 0$.
Consequently
\[
\sum_{i=1}^{N_\delta}  m_i^{\overline{\alpha}}
	\geq \sum_{i=1}^{N_\delta} q(m_i)
	= N_\delta + \overline{\alpha}(V_{\delta,\overline{\alpha}} - N_\delta)
		- (1-\overline{\alpha})\sum_{i=1}^{N_\delta} (m_i-1)^2.
\]
The desired inequality \eqref{eq:stability_claim1} follows by \eqref{eq:step1}.

\emph{Step 6.} Finally, we bound the defect $\hat{\varepsilon}(\{ z_i \}_{i=1}^{N_\delta})$ from Theorem \ref{thm:FT_stability}.
Let $T$ be the optimal transport map defining $W_2(\ca_{\Omega_{\delta,\overline{\alpha}}},\mu)$.
For $\delta>0$ sufficiently small we have
\begin{align}
\nonumber
\hat{\varepsilon}(\{ z_i \}_{i=1}^{N_\delta}) 
& \stackrel{\phantom{\eqref{eq:stability_claim1}}}{=} \frac{N_\delta}{V_{\delta,\overline{\alpha}}^2}
	\sum_{i=1}^{N_\delta} \int_{V_i} |z-z_i|^2 \, \mathrm{d} z - c_6
	= \frac{N_\delta}{V_{\delta,\overline{\alpha}}^2} \int_{\Omega_{\delta,\overline{\alpha}}} \min_i |z-z_i|^2 \, \mathrm{d} z - c_6 
	\\
	\nonumber
& \stackrel{\phantom{\eqref{eq:stability_claim1}}}{\le} \frac{N_\delta}{V_{\delta,\overline{\alpha}}^2} \sum_{i=1}^{N_\delta} \int_{T^{-1}(x_i)} |z-z_i|^2 \, \mathrm{d} z - c_6 
= \frac{N_\delta}{V_{\delta,\overline{\alpha}}^2} W_2(\ca_{\Omega_{\delta,\overline{\alpha}}},\mu) - c_6
\\
\nonumber
& \stackrel{\phantom{\eqref{eq:stability_claim1}}}{=} \frac{N_\delta}{V_{\delta,\overline{\alpha}}} \mathrm{d}(\mu_\delta)
	- \frac{N_\delta}{V_{\delta,\overline{\alpha}}^2} \frac{c_6}{1-\overline{\alpha}}\sum_{i=1}^{N_\delta}  m_i^{\overline{\alpha}}
	+\frac{N_\delta}{V_{\delta,\overline{\alpha}}}\frac{2-\overline{\alpha}}{1-\overline{\alpha}}c_6 - c_6 
\\
\nonumber
& \stackrel{\eqref{eq:stability_claim1}}{\leq} \frac{N_\delta}{V_{\delta,\overline{\alpha}}}\left( 1 + \frac{c_6}{\xi} \right)\mathrm{d}(\mu_\delta)
	+ \frac{N_\delta}{V_{\delta,\overline{\alpha}}^2} \frac{c_6 \widetilde{\beta}_2}
	{\xi} \#\mathcal{K} 
	- c_6 \underbrace{\left[
\frac{N_\delta^2}{V_{\delta,\overline{\alpha}}^2} + \frac{N_\delta}{V_{\delta,\overline{\alpha}}}  \frac{\overline{\alpha}}{1-\overline{\alpha}} - 
\frac{N_\delta}{V_{\delta,\overline{\alpha}}} 
\frac{2-\overline{\alpha}}{1-\overline{\alpha}} + 1\right]}_{= \left( \frac{V_{\delta,\overline{\alpha}}-N_\delta}{V_{\delta,\overline{\alpha}}} \right)^2}
	\\
	\nonumber
& \stackrel{\eqref{eq:stability_claim2}}{\leq} 3\left( 1 + \frac{c_6}{\xi} \right)\mathrm{d}(\mu_\delta)
	+ 3\frac{c_6 \widetilde{\beta}_2}{\xi} \frac{\#\mathcal{K}}{V_{\delta,\overline{\alpha}}}
	\\
	\label{eq:TheFountain}
& \stackrel{\eqref{eq:asympt_good_cells}}{\leq} 3\left( 1 + \frac{c_6}{\xi} \right)\mathrm{d}(\mu_\delta)
	+ 3\frac{c_6 \widetilde{\beta}_2}{\xi} 
	V_{\delta,\overline{\alpha}}^{-1/2} \frac{(4+D_0) (\mathcal{H}^1(\partial \Omega) + \eta)}{m_{\mathrm{b}}}.
\end{align}
Let $\varepsilon_0>0$ be the constant given by Theorem \ref{thm:FT_stability}. Define
\[
\beta_1 = 3\left( 1 + \frac{c_6}{\xi} \right), \qquad \beta_2 = 3\frac{c_6 \widetilde{\beta}_2}{\xi} \frac{(4+D_0) (\mathcal{H}^1(\partial \Omega) + \eta)}{m_{\mathrm{b}}}.
\]
By \eqref{eq:TheFountain},
if $\delta>0$ is sufficiently small, and if $\varepsilon\in(0,\varepsilon_0)$ and $\mu_\delta$ satisfy
\begin{equation*}
\beta_1 \mathrm{d}(\mu_\delta) + \beta_2 V_{\delta,\alpha}^{-1/2} \leq \varepsilon,
\end{equation*}
then
\[
\hat{\varepsilon}(\{ z_i \}_{i=1}^{N_\delta})  \le \varepsilon.
\]
Applying Theorem \ref{thm:FT_stability}  completes the proof.
\end{proof}
}


\subsection*{Acknowledgements}
The authors would like to thank Giovanni Leoni, Mark Peletier, Lucia Scardia and Florian Theil for useful discussions,
and Steven Roper for producing Figure \ref{fig:numerics}, Figure \ref{fig:numerics2} and Table \ref{Table1}.
This research was supported by the UK Engineering and Physical Sciences Research Council (EPSRC) via the grant EP/R013527/2 Designer Microstructure via Optimal Transport Theory.

\bibliographystyle{siam}
\bibliography{bib}

\end{document}